\documentclass[11pt, reqno]{amsart}

\usepackage[utf8]{inputenc}

\usepackage[letterpaper,hmargin=1.0in,vmargin=1.0in]{geometry} 
\usepackage{amsmath,amsthm,amsfonts,amssymb,mathrsfs,stmaryrd,bigints}
\usepackage{dsfont}
\usepackage{mathtools}
\usepackage[usenames]{color}
\usepackage{float}
\usepackage[colorlinks=true,linkcolor=blue]{hyperref}
\usepackage{tikz}
\usepackage{subcaption}
\captionsetup{font=small, justification=raggedright, singlelinecheck=true, width=0.9\linewidth}
\usepackage{epstopdf} 
\usepackage{amssymb}
\usepackage{setspace}
\usepackage{enumerate}
\usepackage{bigstrut}
\usepackage{multirow}
\usepackage{mathtools,xparse}
\usepackage{mathrsfs}
\usepackage{hyperref}
\usepackage{xcolor}
\usepackage [autostyle, english = american]{csquotes}
\usepackage{pgfplots}
\newtheorem*{lemma*}{Lemma}
\newtheorem{theorem}{Theorem}[section]
\newtheorem{corollary}[theorem]{Corollary}
\newtheorem{lemma}[theorem]{Lemma}
\newtheorem{proposition}[theorem]{Proposition}
\theoremstyle{definition}

\newtheorem{remark}[theorem]{Remark}
\newtheorem{assumption}{Assumption}

\DeclareMathOperator*{\argmax}{arg\,max}
\allowdisplaybreaks
\usepackage{chngcntr}

\def\it{\textit}

\newcommand{\cov}{\text{Cov}}
\newcommand{\var}{\text{Var}}
\newcommand{\pu}{\phi_{N,u}}
\newcommand{\pv}{\phi_{N,v}}

\newcommand{\ri}{\rightarrow \infty}

\newcommand{\E}{{\mathbb{E}}}
\newcommand{\1}{\mathds{1}}

\newcommand{\Z}{\mathbb{Z}}

\newcommand{\R}{\mathbb{R}}
\newcommand{\norm}[1]{\left\lVert#1\right\rVert}

\newcommand{\e}{\varepsilon}

\newcommand{\ka}{\kappa}

% mathbb

%\newcommand{\E}{\mathbb{E}}

%\let\doubleacute\H renewcommand{\H}{\mathbb{H}}

%\let\Lstroke\L \renewcommand{\L}{\mathbb{L}}

%\let\Oslash\O
%	\renewcommand{\O}{\mathbb{O}}

	\renewcommand{\P}{\mathbb{P}}

%\newcommand{\R}{\mathbb{R}}

%\newcommand{\Z}{\mathbb{Z}}

% mathcal

\newcommand{\cB}{\mathcal{B}}

% textbf

% mathfrak

% mathrsfs

% ----- SYMBOLS -----

% over/underline

%\let\caron\v
%\renewcommand{\v}[1]{\underline{#1}}

% arrows

% trigonometric functions

% linear algebra

% complex analysis
 % imaginary part

 % residue

% general math operators

%\DeclareMathOperator{\argmax}{arg\,max}

 % image
%\DeclareMathOperator{\length}{length}

% delimiters

\newtheorem{maintheorem}{Theorem}
% notational conventions

\renewcommand{\emptyset}{\varnothing}

\renewcommand{\setminus}{\backslash}

% ----- PROBABILITY -----

% probability distributions

% convergence for random variables

% general symbols

 % general probability triple

% stochastic processes

\def\ba{\begin{align}}
\def\ea{\end{align}}
\def\bs{\begin{split}}
\def\es{\end{split}}

\setcounter{tocdepth}{1}

\begin{document}

\title[Level sets and Poisson-Dirichlet law for log-correlated Gaussian fields]{Universality of Poisson-Dirichlet law for log-correlated Gaussian fields via level set statistics } 
\author{Shirshendu Ganguly and Kyeongsik Nam}
 
\begin{abstract}  
Many low temperature disordered systems are expected to exhibit Poisson-Dirichlet (PD) statistics. In this paper, we focus on the case when the underlying disorder is a logarithmically correlated Gaussian process $\phi_N$ on the box $[-N,N]^d\subset \Z^d$. Canonical examples include branching random walk, $*$-scale invariant fields, with the central example being the two dimensional Gaussian free field (GFF), a universal scaling limit of a wide range of statistical mechanics models.

The corresponding Gibbs measure obtained by exponentiating $\beta$ (inverse temperature) times $\phi_N$  is a discrete version of the Gaussian multiplicative chaos (GMC) famously constructed by Kahane \cite{k}. In the low temperature or supercritical regime, i.e., $\beta$ larger than a critical $\beta_c,$ the GMC  is expected to exhibit atomic behavior on suitable renormalization, dictated by the extremal statistics or near maximum values of $\phi_N$.  Moreover, it is predicted  going back to a conjecture made in 2001 in \cite{physics}, that the weights  of this atomic GMC has a PD distribution. 

In a series of works culminating in \cite{bl2}, Biskup and Louidor carried out a comprehensive study of the near maxima of the 2D GFF, and established  the conjectured PD behavior throughout the super-critical regime ($\beta> 2$). 
In another direction, in \cite{drz}, Ding, Roy and Zeitouni  established universal behavior of the maximum for  a general class of log-correlated Gaussian fields.

In this paper we continue this program simply under the assumption of log-correlation and nothing further. We prove that the GMC concentrates on an $O(1)$ neighborhood of the local extrema and the PD  prediction made in \cite{physics} holds, in any dimension $d$, throughout the supercritical regime $\beta > \sqrt{2d}$, significantly generalizing past results. 
While many of the arguments for the GFF make use of the powerful Gibbs-Markov property, in absence of any Markovian structure for general Gaussian fields, we develop and use as our key input a sharp estimate of the size of level sets, a result we believe could have other applications.

\end{abstract}

\address{Department of Statistics, Evans Hall, University of California, Berkeley, CA
94720, USA} 

\email{sganguly@berkeley.edu}

\address{Department of Mathematical Sciences, KAIST, South Korea} 

\email{ksnam@kaist.ac.kr}

\maketitle

\tableofcontents

\section{Introduction}
 The
\it{Gaussian multiplicative chaos} (GMC), introduced in the seminal work of Kahane \cite{k}, is a canonical multi-fractal  random measure arising in various physical situations   such as {two-dimensional Liouville quantum gravity and three-dimensional turbulence theory}. Formally speaking, given a Gaussian field $X$, the Gaussian multiplicative chaos   is a random measure given by
\begin{align} \label{gmc}
M^\beta(A) = \int_A e^{\beta X(x) - \frac{\beta^2}{2}\mathbb{E}[X^2(x)]}dx, \quad A\subseteq \R^d \ \text{Borel},
\end{align}
where   $\beta>0$ denotes the inverse temperature. Often, in applications, the process 
$X$ is highly irregular and cannot be made sense of pointwise as a function and lives in the space of  {distributions} which makes the above expression ill-defined. However, there are by now, albeit highly non-trivial, procedures where one first mollifies $X$ by a suitable smoothing to rigorously define a smoother version of \eqref{gmc} and then passes to the limit (by introducing appropriately chosen renormalization terms) as the mollification becomes progressively singular. Of particular interest are Gaussian processes arising as fluctuation theories in various statistical physics models.  It turns out that in many such cases the covariance structure exhibits a logarithmic dependence property. To make things more concrete, let us consider discrete Gaussian processes $\{X_N\}_{N\geq 1}$ where $X_N$ is defined on the vertex set $V_N,$ formed by the box of length $N$ in $\Z^d$, or interpolated in a natural way to be defined on the counterpart box in $\R^d.$ Then $\{X_N\}_{N\geq 1}$ is said to exhibit the log-correlation property if for $x,y \in V_N$
\begin{align} \label{log correlation}
\cov(X_N(x),X_N(y))\approx \log \frac{N}{ |x-y|}.
\end{align}
While perhaps the canonical log-correlated Gaussian process is the two dimensional Gaussian free field (2D GFF) conjectured to be the universal scaling limit of many critical statistical physics models, other simpler ones include  hierarchical versions known as branching Brownian motion (BBM) or its discrete counterpart (BRW); both very well studied interacting particle systems. More details about the above and other log-correlated fields including $*$-scale invariant fields are included later.

For a large class of Gaussian processes  $\{X_N\}_{N\geq 1}$  satisfying \eqref{log correlation}, the limiting behavior of  the random measures
\begin{align}  \label{gmc2}
M_N^\beta (A) = \int_A e^{\beta X_N(x) - \frac{\beta^2}{2}\mathbb{E}[X_N^2(x)]} dx
\end{align}
(where the integral is interpreted as a sum over the vertices in the discrete case)
is predicted to exhibit a phase transition at $\beta_c=\sqrt{2d}$.
In the high temperature regime (or sub-critical)  $\beta<\beta_c$,  the random measures ought to
converge almost surely  to a non-trivial random measure $M^\beta$  as $N\rightarrow \infty$. Whereas in the super-critical case, when $\beta> \beta_c$, the limiting measure should degenerate.

Nonetheless, in the latter case, an alternate renormalization is still expected to lead to a \it{purely atomic} random measure as a limit exhibiting freezing. In fact,  such ``low temperature glassy" behavior was indeed established in \cite{mrv} for a certain class of exactly solvable Gaussian fields admitting an explicit expression for the covariance (e.g., $*$-scale invariant fields and the two-dimensional continuous Gaussian free field), where it was shown that  there exists a normalizing constant $c_N$ and a purely atomic random measure $K_{\sqrt{2d}/\beta}(dx)$  such that
\begin{align} \label{low}
c_N^{\frac{3\beta}{2\sqrt{2d}}}e ^{c_N (\frac{\beta}{\sqrt{2}}-\sqrt{d})^2} M_N^\beta(dx)  \xrightarrow[N\ri  ]{\textup{law}}C(\beta) K_{\sqrt{2d}/\beta}(dx).
\end{align}

Further, it is also expected that in the supercritical regime, the weight distribution of the limiting atomic measure exhibits Poisson-Dirichlet (PD) statistics (the precise definition appears later \eqref{pd def}). This has appeared as conjectures in the physics literature going back a couple of decades (see \cite{physics} and the comprehensive survey by Rhodes-Vargas \cite{rv}).  Subsequently, versions of the conjecture have been established in a handful of special cases such as log-normal Mandelbrot's cascades \cite{bknsw,w}, branching random walks \cite{brv}, and a specific one-dimensional model similar to the logarithmic Random Energy Model \cite{az}.  {The arguments of the last article employ techniques from spin-glass theory and were extended to the 2D GFF in the follow up article \cite{az2}.}  More recently, a very precise picture about the freezing and clustering phenomenon exhibited by the atomic GMC for the 2D GFF was obtained by \cite{bl2} (we elaborate more on this later in Section \ref{section 1.3}).

The above significant advances provides the necessary footing to be able to undertake an inquiry into the validity of the above results or more broadly the universality features of low temperature glasses, for general log-correlated Gaussian processes. The obvious barrier  such a venture faces is the absence of any exact solvability or probabilistic features such as the underlying Markovian structure exhibited by the GFF or BRW. Nonetheless, in the breakthrough work \cite{drz}, a systematic study of the maxima of general log-correlated fields was carried out establishing a rather sharp understanding under a general axiomatic set up only involving the covariance structure. More recently, the arguments were extended to Gaussian fields with logarithmic correlation up to certain local defects, such as the GFF on a supercritical percolation cluster in $\Z^2$, in \cite{sz1}.

Drawing inspiration from the above set of works, in this paper, we continue and extend this program only under the assumption of log-correlation without any further requirement on the covariance. We prove that the GMC concentrates on an $O(1)$ neighborhood of the local extrema and the PD  prediction holds, in any dimension $d$, throughout the supercritical regime $\beta > \sqrt{2d}$. Thus, this confirms the conjecture going back to \cite{physics} (also reiterated later in \cite{conjecture, rv}) simply under an assumption of logarithmic correlation, significantly generalizing past works.

We now move on to the precise statement of our results.

\subsection{Main result}

Let $V_N = \{0,1,\cdots,N-1\}^d$ be the discrete box of size $N$ in $\mathbb{Z}^d$. We
consider   a centered Gaussian field $\phi_N = (\pv)_{v\in V_N}$. We now state the only two assumptions on $\phi_N$ taken from \cite{drz} that we will be working under. 
The first assumption states that $\phi_N$ is globally  logarithmically bounded and provides a lower bound on the covariance.  The second crucial assumption states the log-correlation. Throughout the paper,  we denote by $|\cdot |$ the {$\ell^2$-Euclidean distance}. 

\begin{assumption} \label{a1}
There exists a constant $\alpha_0>0$ such that for all $u,v\in V_N$,
\begin{align}  \label{a0}
\var \pv \leq \log N + \alpha_0
\end{align}
and
\begin{align} \label{a}
\mathbb{E}(\pu-\pv)^2 \leq 2\log_+|u-v| - |\var \pu - \var \pv| + 4\alpha_0,
\end{align}
where $\log_+x = \log x \vee 0$.
\end{assumption}
 
Another equivalent formulation of the last display is 
\begin{align} \label{aa}
\cov(\pu,\pv) \geq \max\{\var \pu,\var \pv\} - \log_+|u-v| - 2\alpha_0.
\end{align}

The next crucial assumption states the log-correlation of the field. For $\delta>0$, define $V_N^\delta:= \{v\in V_N: d_\infty(v,\partial V_N) \geq \delta N\}$, where  $d_\infty(v,\partial V_N) := \min_{u\notin V_N} |v-u|_\infty$.
\begin{assumption} \label{a2}
For any $\delta>0$, there exists a constant $\alpha(\delta)>0$ such that for any $u,v\in V_N^\delta$,
\begin{align*}
|\cov(\phi_{N,u},\phi_{N,v}) - (\log N-\log_+ |u-v|)| < \alpha(\delta).
\end{align*}
\end{assumption}
 
It is well known that the Assumptions \ref{a1} and \ref{a2}  hold for the 2D GFF and $*$-scale invariant  fields.  However, the expert reader might observe that the  assumptions simply ensure a logarithmic correlation structure and nothing beyond that. As a consequence, by themselves, they are not enough to guarantee convergence of, say, the law of the maximum on appropriate centering. We will expand on this point further after the statement of our main results.

As already alluded to, the key observable we study is the atomic GMC. Towards this we define the pre-limiting Gibbs measure $\mu^\beta_N$ associated with the log-correlated Gaussian field $\phi_N$: For the inverse temperature $\beta>0$,
\begin{align} \label{lqg}
\mu_N^\beta (\{v\})  = \frac{1}{Z} e^{\beta \pv},\quad v\in V_N,
\end{align}
where  $Z$ is the partition function making $\mu_N^{\beta}$ a probability measure.

Recall that the regime of interest for us is the super-critical one, i.e., $\beta>\beta_c  =\sqrt{2d}$ (why this is indeed the location of criticality will be explained in Section \ref{section 1.3}) and that our main result pertains to establishing the   
 PD behavior of $\mu_N^\beta (\{v\})$  throughout the whole low temperature regime.
 We start by defining the  PD law precisely next, denoted by $\textup{PD}(s)$ for $s\in (0,1).$ This is a probability distribution on ordered partitions of unity, i.e. ordered sequences of non-negative numbers which sum to one,
 \begin{align} \label{pd def}
 \{ \{p_i\}_{i\geq 1}:p_1\geq p_2\geq \cdots \geq 0, \sum_{i\in \mathbb{N}} p_i =  1\}.
 \end{align}
The law is described by the following (one of the many known) sampling procedure. Sample points from the Poisson process on $[0,\infty)$ with intensity measure $x^{-1-s}dx$, and then normalize them by their sum and order the values in the decreasing order.
  
Further, for any sequence $a=(a_i)_{i\in \mathbb{N}}$ with $a_i>0$ for all $i$, denote by $\mathsf{Ord}(a),$ the reordering of $a$ in a non-increasing order, when it can  be defined which is what we will implicitly assume whenever using this notation. Finally for a finite vector $a,$ we define $\mathsf{Ord}(a)$ by ordering $a$ and then appending with zeros to pass to an infinite vector.
  
With the above preparation we can now state our main results.
 \begin{maintheorem}\label{theorem 1.1}
 Fix $\beta>\beta_c = \sqrt{2d}$. Let $\{\phi_N\}_{N\ge 1} $ be a sequence of $d$-dimensional centered Gaussian fields satisfying Assumptions \ref{a1} and \ref{a2}. Let $\{r_N\}_{N\geq 1}$ be any sequence such that $r_N\ri$ and $r_N/N\rightarrow 0$ as $N\rightarrow \infty$. Then,  there exists a collection of disjoint $r_N$-balls, say $\{B_1, B_2,\ldots, B_{m}\}$ (where $m$ is random) in $V_N$ such that 
 \begin{align} \label{statement}
\mathsf{Ord} (\{\mu_N^\beta(B_i) \}_{1\le i\le m} ) \xrightarrow[N\ri  ]{\textup{law}}  \textup{PD}(\beta_c/\beta)
 \end{align}
with respect to the $\ell^1$-distance between sequences.
 {This in particular implies that  for any $\e>0$, there exists $k \in \mathbb{N}$  (depending only on  $\e$) such that with probability at least $1-\e,$ there exists disjoint balls $\{B_i\}_{1\le i\le k}$ of radius $r_N,$ such that $\mu_N^\beta ( \cup_{i=1}^kB_i) > 1-\e$ for sufficiently large $N$.}
 \end{maintheorem}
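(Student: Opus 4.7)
\smallskip

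The plan is to identify $\mu_N^\beta$ asymptotically with a decorated Poisson point process of ``cluster heights,'' and to extract $\textup{PD}(\beta_c/\beta)$ from the classical identity that if $(\xi_i)$ is a Poisson point process (PPP) on $\R$ with intensity $c\,e^{-\beta_c x}dx$, then pushing forward under $x\mapsto e^{\beta x}$ yields a PPP on $(0,\infty)$ with intensity $\propto y^{-1-\beta_c/\beta}dy$, whose ordered normalization is $\textup{PD}(\beta_c/\beta)$. Since $\beta_c/\beta<1$, this PPP is summable, so the normalization is well defined. Moreover, the identity is stable under i.i.d.\ positive multiplicative decorations $W_i$ with $\E[W_i^{\beta_c/\beta}]<\infty$, since this only rescales the intensity by a constant without altering its power-law shape.

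\smallskip

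First (localization), let $m_N$ denote a tight centering for $\max_v \pv$. The key level set estimate of the paper gives $|\{v:\pv\ge m_N-t\}|\asymp e^{\beta_c t}$ up to polynomial corrections, so the contribution to $\sum_v e^{\beta \pv}$ from the shell $\{\pv\in[m_N-t,m_N-t+1]\}$ scales as $e^{\beta_c t}\cdot e^{-\beta t}=e^{-(\beta-\beta_c)t}$, summable since $\beta>\beta_c$. Hence for any $\e>0$ there exists $L=L(\e)$ such that $\mu_N^\beta$ places at least $1-\e$ of its mass on $\Lambda_L:=\{\pv\ge m_N-L\}$ with probability $\ge 1-\e$. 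Second (clustering), I would apply the level set estimate spatially to show that $\Lambda_L$ decomposes into finitely many clusters $C_1,\ldots,C_m$ of Euclidean diameter $O_L(1)$, separated by distances tending to $\infty$ with $N$; the separation follows because two points at distance $\gg 1$ both in $\Lambda_L$ would force an atypically high profile of $\phi_N$ along a straight interpolation between them, an event controlled via Assumption \ref{a1} and a union bound. Since $r_N\to\infty$ and $r_N/N\to 0$, each $C_i$ can then be enclosed in a ball $B_i$ of radius $r_N$ with the $\{B_i\}$ disjoint.

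\smallskip

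Third (decoration and PPP convergence), writing $\Phi_i:=\max_{v\in C_i}\pv$, I decompose
\begin{equation}
\mu_N^\beta(B_i)=\frac{e^{\beta\Phi_i}}{Z}\,W_i,\qquad W_i:=\sum_{v\in B_i}e^{\beta(\pv-\Phi_i)}\in[1,\infty).
\end{equation}
The level set estimate applied inside $B_i$ yields $W_i\lesssim \sum_{t\ge 0} e^{-(\beta-\beta_c)t}<\infty$, so $(W_i)$ is tight. The central claim to establish is that the centered heights $\{\Phi_i-m_N\}_i$ converge in distribution to a PPP on $\R$ with intensity $c\,e^{-\beta_c x}dx$, and that $(W_i)$ converges to an asymptotically i.i.d.\ family that is independent of the heights at leading order. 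Given this, the identity of the opening paragraph together with the localization of Step 1 (which allows one to send $L\to\infty$ and absorb residual mass) deliver the $\ell^1$-convergence in \eqref{statement}; the $(1-\e)$-covering statement follows immediately since partial sums of $\textup{PD}(\beta_c/\beta)$ exhaust the total mass.

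\smallskip

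The principal obstacle is the Poisson convergence of $\{\Phi_i-m_N\}$ in Step 3. For the 2D GFF this follows from the Gibbs--Markov property and for $*$-scale invariant fields from exact scale invariance; neither tool is available under Assumptions \ref{a1} and \ref{a2} alone. The plan is to substitute them by a multi-point second-moment argument driven by the sharp level set estimate: the first-moment intensity of cluster maxima at centered level $x$ is $\sim c\,e^{-\beta_c x}$, while two-point intensities should factorize because far-apart clusters experience asymptotically independent local fields under log-correlation---a decorrelation that needs to be made quantitative via a Gaussian conditioning/decomposition argument, and that constitutes the principal technical burden. Residual boundary effects coming from the restriction to $V_N^\delta$ in Assumption \ref{a2} would be handled by a soft truncation before sending $\delta\to 0$.
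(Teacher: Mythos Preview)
Your broad architecture---localize via level sets, cluster via mesoscopic separation, identify a Poisson structure for cluster heights decorated by cluster weights, then invoke the $e^{\beta x}$-pushforward identity to land on $\textup{PD}(\beta_c/\beta)$---matches the paper. But Step~3, which you correctly flag as the crux, contains a genuine gap that your proposed second-moment route cannot close as stated.

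The issue is that under Assumptions~\ref{a1} and~\ref{a2} \emph{alone}, the centered extremal process $\{\Phi_i-m_N\}$ does \emph{not} converge in law to a fixed PPP with intensity $c\,e^{-\beta_c x}dx$: the centered maximum itself is only tight, not convergent (the paper stresses this in the remarks following the theorem). Any subsequential limit of the extremal process is a \emph{Cox} process---a PPP with \emph{random} intensity $Z\,e^{-\beta_c x}dx$---and likewise the cluster-weight law is a random measure $\nu$, not a fixed distribution from which the $W_i$ could be drawn i.i.d. Your claim that the $W_i$ become ``asymptotically i.i.d.'' with a deterministic law, independent of the heights, is exactly what fails without extra structure (Gibbs--Markov, $*$-scale invariance). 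A first/second-moment computation cannot manufacture a fixed intensity constant or a fixed decoration law from assumptions that only pin down covariances up to $O(1)$ additive errors; at best it would give tightness, which does not suffice to conclude convergence to a specific limit.

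What the paper does instead is work \emph{subsequentially}: it evolves $\phi_N$ along an Ornstein--Uhlenbeck flow, shows that any subsequential limit $\eta$ of the joint (height, cluster-weight) point process is invariant under the induced drifted-Brownian dynamics, and then invokes Liggett's characterization of such invariant processes to force $\eta=\textup{PPP}(e^{-\beta_c y}dy\otimes\nu(dz))$ with $\nu$ random but \emph{tensorized} away from the height coordinate. The key algebraic point you are missing is that this tensorization makes the normalized weights equal in law to the normalization of $\textup{PPP}(Z\,s^{-1-\beta_c/\beta}ds)$ for a random $Z=\beta^{-1}\int z^{\beta_c/\beta}\nu(dz)\in(0,\infty)$, and the normalization kills $Z$, so \emph{every} subsequential limit yields the same $\textup{PD}(\beta_c/\beta)$. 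This is why full convergence holds despite non-convergence of the unnormalized extremal process. Your proposal has no mechanism for establishing this tensorized Cox structure, and without it the second-moment argument stalls.

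A minor point: your clustering justification (``atypically high profile along a straight interpolation'') is not how mesoscopic separation is proved here; the paper imports it directly from Lemma~3.3 of \cite{drz}, which uses a comparison-based argument, and your interpolation sketch would need substantial work to be made rigorous under mere log-correlation.
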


Theorem \ref{theorem 1.1} thus exhibits the universality of the PD law for the atomic behavior or the glassy phase property of the low temperature GMC for logarithmically correlated Gaussian processes,  throughout the whole super-critical regime, establishing the conjecture from \cite{physics, conjecture, rv}. 
This generalizes considerably the results of \cite{bl2} and \cite{mrv} who established the same phenomenon for 2D GFF and $*$- scale invariant fields respectively.  A point worth reiterating is that the above theorem guarantees PD behavior simply under Assumptions \ref{a1}, \ref{a2} which by themselves do not ensure any fine convergence of the covariance kernel after appropriate centering and scaling.

Further, as already mentioned, techniques from spin-glasses were first applied in \cite{az}  in the context of a specific one-dimensional log-correlated Gaussian field and then eventually extended to the 2D GFF in \cite{az2}. In these articles, the ``overlap'' distribution (using spin-glass terminology) is shown to have a PD limit. The methods seem robust and potentially could be adapted to study more general log-correlated fields in the spirit of this paper.
However, the arguments being somewhat analytical in nature do not provide any further information about the associated GMC. 

In contrast, as elaborated below, our proof in fact allows us to extract quite a sharp understanding of the Gibbs measure itself as well as the geometry of the underlying extremal process, resembling the results of \cite{bl1} for the 2D GFF. 
To accomplish this, a key technical ingredient of the paper is to develop a sharp understanding of the size of level sets. This leads to the following estimate, which we expect to be more widely applicable and state as our second main result. 
 \begin{maintheorem} \label{theorem level} Let Assumptions \ref{a1} and \ref{a2} be in force,
{and $\e,\iota>0$ be any constants. Then, for  any large enough (fixed) $t$, for sufficiently large $N$,}
\begin{align*}
  \mathbb{P}(e^{(\sqrt{2d} -\e) t} \leq  |\Gamma_N(t) | \leq  e^{(\sqrt{2d}+\e) t})  \geq 1-\iota.
\end{align*}
where $\Gamma_{N}(t) := \{v\in V_N: \pv \geq m_N-t\}.$
\end{maintheorem}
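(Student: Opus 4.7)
The key obstacle is that the naive first moment is $\E[|\Gamma_N(t)|]\asymp(\log N)\,e^{\sqrt{2d}\,t}$ — the extra $\log N$ arising from the $-\tfrac{3}{2\sqrt{2d}}\log\log N$ subleading correction built into $m_N$ — while the actual level set is expected to concentrate at the exponential scale $e^{\sqrt{2d}\,t}$, as is known for the GFF and BRW. The plan is therefore a truncated first-moment computation (for the upper bound) and a truncated second-moment computation (for the lower bound), where in both cases the truncation is a multi-scale ``barrier'' event designed to cut off the atypical configurations responsible for the $\log N$ blow-up.

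For the upper bound I would begin from the tightness of $M_N-m_N$ established under Assumptions \ref{a1}--\ref{a2} by Ding--Roy--Zeitouni \cite{drz}: pick $K=K(\iota)$ with $\P(M_N\leq m_N+K)\geq 1-\iota/2$. It then suffices to estimate
\[
\E\!\left[|\Gamma_N(t)|\,\I\{M_N\leq m_N+K\}\right] = \sum_{v\in V_N}\P\!\left(\phi_{N,v}\geq m_N-t,\;M_N\leq m_N+K\right).
\]
For each $v$ I would set up a dyadic multi-scale decomposition, comparing $\phi_{N,v}$ to a Gaussian random walk of length $\log_2 N$ justified by Assumption \ref{a2} via Kahane--Slepian Gaussian comparison. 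The joint event that the terminal value reaches $m_N-t$ while the global maximum stays below $m_N+K$ forces the partial sums to satisfy a ballot-type barrier at every intermediate scale. A classical ballot estimate for Gaussian walks conditioned on a near-maximal endpoint then removes precisely the $\log N$ factor, yielding a per-vertex bound of order $C(K)\,N^{-d}e^{\sqrt{2d}\,t}$. Summing over $v$ and applying Markov gives the upper bound, with the $e^{\e t}$ slack absorbing $C(K)$ once $t$ is large.

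For the lower bound I would run a truncated second moment on a sparsely sampled count $\widetilde\Gamma_N(t)$ consisting of grid points $v\in V_N^\delta$ with spacing $r$ that also satisfy the barrier condition from the upper bound. A matching Gaussian lower bound in the ballot estimate gives $\E\widetilde\Gamma_N(t)\asymp e^{\sqrt{2d}\,t}$. For the second moment, Assumption \ref{a2} yields $\cov(\phi_{N,u},\phi_{N,v})=\log(N/|u-v|)+O(1)$; splitting the pair into common and independent components at scale $\log(N/|u-v|)$ and applying the barrier to the independent tails, the pair probability is of order $N^{-2d}\,e^{2\sqrt{2d}\,t}\,|u-v|^{-d}$, so that the second moment is comparable to $(\E\widetilde\Gamma_N)^2$. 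Paley--Zygmund gives a positive-probability lower bound, and to upgrade to probability $\geq 1-\iota/2$ I would decompose $V_N$ into $O(\log(1/\iota))$ well-separated macroscopic sub-boxes whose local counts are nearly independent under Kahane--Slepian comparison, then take a union.

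The principal technical obstacle is executing these ballot estimates uniformly over fields obeying only Assumptions \ref{a1}--\ref{a2}, with no intrinsic Markov or branching structure. For the GFF one uses Gibbs--Markov to split the field across independent scales, and for $*$-scale invariant fields the scales are built in; in the general log-correlated setting one must run Gaussian comparison against a branching random walk auxiliary at every step, carry out the ballot computation there, and transfer the bound back, ensuring that the constants $\alpha_0,\alpha(\delta)$ from Assumptions \ref{a1}--\ref{a2} enter only as $N$-independent multiplicative prefactors and never in the exponent of $t$.
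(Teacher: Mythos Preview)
Your approach is the natural one for BRW and GFF, but it faces a genuine obstacle in the general log-correlated setting that you identify in your last paragraph without resolving. The barrier/ballot machinery needs a multi-scale \emph{trajectory}: in BRW the partial sums along the branching path, in GFF the conditional increments from the Gibbs--Markov decomposition. For a general field satisfying only Assumptions~\ref{a1}--\ref{a2} there is no such object --- $\phi_{N,v}$ is a single Gaussian, not a terminal value of a walk --- so the sentence ``the joint event \ldots forces the partial sums to satisfy a ballot-type barrier at every intermediate scale'' has no referent. One might try to manufacture a trajectory from local maxima $\max_{B(v,2^j)}\phi_N$ and transfer ballot estimates via Slepian, but the two-sided event $\{\phi_{N,v}\geq m_N-t\}\cap\{M_N\leq m_N+K\}$ is not coordinatewise monotone, so Slepian/Kahane does not apply to it; this is exactly the failure mode the paper flags in Section~\ref{section 1.3}.

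The paper circumvents this entirely. Instead of level sets it works with $S_\ell(\phi_N)$, the sum of the $\ell$ largest values, because $\mathbb{E}S_\ell$ \emph{does} obey a Kahane comparison (Lemma~\ref{lemma 3.3}) even though $S_\ell$ itself does not. The upper bound on $|\Gamma_N(t)|$ is proved by contradiction via a bootstrapping construction: if $\mathbb{P}(|\Gamma_N(t)|\geq e^{(\sqrt{2d}+\e)t})$ were too large, then an auxiliary field built from many i.i.d.\ copies of $\phi_N$ would have $\mathbb{E}S_\ell$ exceeding the sharp upper bound \eqref{363} obtained by comparison with BRW. The lower bound on $|\Gamma_N(t)|$ is then deduced from a matching lower bound on $\mathbb{E}S_\ell$ (itself obtained by bootstrapping MBRW copies and a second-moment/ballot computation \emph{for MBRW}, where a genuine trajectory exists), combined with the already-established upper bound on the level set to control the right tail of $S_\ell$. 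So all ballot estimates live on the MBRW side, and only the robust comparison of $\mathbb{E}S_\ell$ crosses over to $\phi_N$.
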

In fact for the proof of Theorem \ref{theorem 1.1}, we need and establish an exponential bound for the upper tail (see Theorem \ref{theorem 3.0} in Section \ref{section 3}). Note however that our lower tail bound is  considerably weaker. 
While optimal bounds for the sizes of level sets in BBM were established in \cite{oren} who even pinned down the polynomial pre-factor, perhaps somewhat surprisingly, an explicit statement for the particular case of the 2D GFF about the lower bound seems to be missing from the literature, though the upper bound statement was established in \cite{bl1,bl2}.
It is quite plausible that a form of a truncated second moment argument could be employed to get a much sharper version of the theorem in the GFF case and thereby extend the results of \cite{oren}. In any case, achieving sharper extensions of the above theorem in various settings of interest remains an important open problem.

%Argument in \cite{az}, which relies on the spin glass technique, is robust and can be used to study general fields. However, in order establish the PD behavior at the level of Gibbs weights, the weights on the cluster need to be controlled, further requiring a fine examination on the  level set structure such as the upper bound on the cardinality of level sets, which is one of our main contributions. In addition, our argument is inspired by the invariance principle  and provides the joint Poisson behavior for the local extreme values and Gibbs weights on the cluster.

A few further remarks are in order.
\begin{itemize} 
\item [1.] As already indicated, it might, at first glance, seem surprising that the above theorem holds simply under Assumptions \ref{a1} and \ref{a2}, since it can be shown that this is \emph{not enough} for the law of the maximum to converge, a point we deem worth emphasizing. In fact the convergence of the law of the maximum was established in \cite{drz} only under additional finer assumptions ensuring the covariance structure converges both in a local sense around the diagonal as well as in a macroscopic sense off-diagonally. In our setting, the maximum will have sub-sequential limits owing to tightness. However it turns out that the effect of the subsequence affects the entire extremal process of the near maxima in a common way which disappears when one considers the Gibbs measure $\mu^{\beta}_N$, owing to the normalization by the partition function $Z$ leading to the same PD limit regardless of the subsequence.

\item [2.]
The proof of Theorem \ref{theorem 1.1} (see Section \ref{section 1.3} for further elaboration) will further show that the low-temperature Gibbs measure is essentially concentrated on the local extreme points as was previously established for 2D GFF in \cite{bl2}. However in our setting, like the maximum, the extremal process is not guaranteed to converge, but will only do so under further assumptions on the covariance as described in the previous remark wherein the arguments of this paper can be used to establish a Cox process limit of the local extreme points and their values. Remark \ref{spatial} expands on this.
\item [3.] It is also worth remarking that the set up of the paper as well as Assumptions \ref{a1}, \ref{a2} do not cover continuous space processes such as mollifications of the continuous 2D GFF (analyzed in \cite{acosta}), or fields with local defects (treated in \cite{sz1}). While we expect our arguments to be extendable to such cases as well, we don't pursue this in this paper.
\end{itemize}

\subsection{Previous works} \label{section 1.2}
Our aim to establish universality features of low temperature GMC for a large class of log-correlated Gaussian fields is strongly inspired by and often relies crucially on several past works investigating extremal statistics as well as the low temperature glassy behavior for fields possessing certain special structures. In this section we provide a brief review of such models and some of the important advances made in this regard.  
The interested reader is also encouraged to refer to \cite{rv,biskupsurvey,zeitounisurvey} for a comprehensive introduction to the topic. 
 
 \subsubsection{Branching Brownian motion and branching random walk}
 
Branching Brownian motion (BBM) and  branching random walk (BRW) are perhaps the most naturally arising (physically speaking)  log-correlated fields with a clearly embedded tree structure leading to logarithmic behavior. {BBM is an interacting particle system where the particles perform independent Brownian motions and split into two particles independently at rate one (BRW is the discrete time analog).
} 
 
The   distribution function of the right most (maximum) particle of BBM can be described in terms of  the Kolmogorov-Petrovsky-Piscounov or
 Fisher (F-KKP) equation \cite{kol}:
 \begin{align} \label{kpp}
 u_t = \frac{1}{2}u_{xx} + u^2-u.
 \end{align}
In fact, the solution $u(t,x)$ to \eqref{kpp} with  an initial data $u(0,x) = \1_{x\geq 0}$ satisfies that \cite{mc} 
\begin{align*}
u(t,x) = \mathbb{P} (\max_{i\in I(t)} X_i(t)\leq x).
\end{align*}
where $I(t)$ is the set of particles at time $t$ and $X_i(t)$ the location of the $i^{th}$ particle.
Also, denoting 
 \begin{align*}
 m(t):=\sqrt{2}t-\frac{3}{2\sqrt{2}}\log t,
 \end{align*}
the asymptotic distribution of  $\max_{i\in I(t)} X_i(t)-m(t)$ as $t\ri$, can be  expressed in terms of  a random shift of the Gumbel distribution \cite{ls}.  Furthermore, a precise behavior of the extreme points was described in terms of a decorated Poisson process  \cite{abbs,abk3,abk2,abk} analog of which will feature in our analysis as well. 
 
 We refer to Section \ref{section 2.4} for a precise definition of BRW which will play an important role in several of our arguments (As the reader will notice from the definition, BRW does not quite satisfy the log-correlation property of Assumption \ref{a2}  at all points owing to integer effects). Similarly as for BBM, the behavior of the rightmost (maximum) particle for BRW has also been studied extensively (see \cite{ar,a,bk,hs,m2}).  Further, PD statistics for the overlap function was established in \cite{aukosh}, relying on ideas similar to \cite{az} borrowing techniques from spin-glass theory.

 \subsubsection{Two-dimensional discrete Gaussian free field} 
The 2D GFF on $\R^2$ (and its natural discretization on $\Z^2$) is perhaps the most canonical two dimensional Gaussian process expected to exhibit the fluctuation theory for a large class of statistical mechanics models. A characteristic feature exhibited by the  2D GFF is an invariance property under conformal maps making it the natural candidate for the scaling limits of various two dimensional critical models ({see e.g. \cite{ss} where the contour lines of the 2D GFF are studied}).
 
 The law of the discrete 2D GFF $\phi^V = (\phi^V_v)_{v\in V}$  on a domain $V \subset \mathbb{Z}^2$  is given by 
\begin{align*}
 \mathbb{P}(\phi^V \in A) =  \frac{1}{Z_V} \int_A  \exp\Big(-\frac{1}{8} \sum_{u,v\in V, u\sim v} (\phi^V_u - \phi^V_v)^2\Big) \prod_{v\in V} d \phi^V_v \prod_{v\notin V} \delta_0 (d\phi^V_v),
\end{align*} 
where $Z_V$ is a normalizing constant and $u\sim v$ denotes that  $u$ and $v$ are adjacent.

Two important properties have facilitated a refined understanding of this central object. The first is a random walk interpretation for the covariance. That is, denoting by $G_V$ the Green function of the 2D simple random walk on $V\subseteq \mathbb{Z}^2$,
\begin{align*}
\cov(\phi_u^V,\phi_v^V) = G_V(u,v).
\end{align*}
 
The second is a spatial 
 \it{Gibbs-Markov property}, which provides an elegant description of the conditional description of field on a subdomain given its complement.    
This can be viewed as the analog of the tree structure in BRW.  The last two decades has seen remarkable progress, beautiful ideas, many of which we make use of as well, leading to an extremely precise understanding of the extremal statistics, low temperature freezing phenomenon and a version of our main result about PD behavior for the 2D GFF (see \cite{bl1,bl2} and the comprehensive survey \cite{biskupsurvey}). A weaker version of such a result focusing only on the PD structure was established in the prior work \cite{az2}.

 \subsubsection{$*$-scale invariant fields} These are another class of Gaussian processes which admit an exact covariance representation.
The precise description involves a $C^1$ function $k:\R^d \rightarrow \R$  satisfying $k(0)=1$ and $k(x)=0$ for $x\notin [-1,1]^d$. For $t\geq 0$ and $x\in \R^d$, define
 \begin{align} \label{k}
 K_t(x) := \int_1^{e^t}\frac{k(xu)}{u}du,
 \end{align}
and consider a family of centered (space-time) Gaussian process $(X_t(x))_{x\in \R^d,t\geq 0}$ with covariance given by
 \begin{align*}
 \cov (X_t(x),X_s(y)) = K_{t\wedge s}(y-x).
 \end{align*}
Note that for each $t\geq 0$,  
$$
 K_t(x) = \int_1^{1/|x|_\infty} \frac{k(xu)}{u}du   \asymp \log  (1/|x|_\infty), \quad e^{-t} \leq |x|_\infty  \leq 1
$$
(for $x = (x_1,\cdots,x_d)\in \R^d$, $|x|_\infty := \max \{|x_1|,\cdots,|x_d|\}$),
leading to log-correlation. Further, $X_t$ has independent increments, allowing one to perform exact computations. This has permitted a detailed study of the maximum \cite{m} as well as the associated GMC in both the subcritical and supercritical regimes in \cite{mrv}.

 \subsubsection{Random energy model (REM)}
In this simple case, when the Gaussian process simply consists of i.i.d. Gaussians with variance $n,$ indexed by say $\{-1,1\}^n$, it is not difficult to show that for  $\beta>\sqrt{2\log 2}$, the ordered masses of the associated GMC converges  weakly to the PD process with a parameter $\sqrt{2\log 2}/\beta$, i.e., Theorem \ref{theorem 1.1} holds in this setting. A one dimensional model known as the logarithmic REM was analyzed precisely in \cite{az}, whose arguments were shown to work for the 2D GFF in \cite{az2}. Finally such arguments were extended to the case of the Riemann zeta function in \cite{ouimet}. Further, for high dimensional models arising from spin-glass theory, in \cite{sz2}, the extremal process of critical points was shown to behave similar to the REM. \\
 
\subsubsection{Non-Gaussian processes} 
More recently, impressive progress has also been witnessed for random fields, which while not Gaussian do have an embedded tree structure leading to log-correlation. We mention three important examples in this regard. The first and perhaps the most studied one is the log determinant of a random unitary matrix producing a random field on the unit circle often called in literature as the CUE field (and C$\beta$E for general ensembles). A remarkable analogy with random matrices predicts similar behavior in the case of the Riemann zeta function \cite{fk,fhk}. A third related class of models arise from the determinant of random permutation matrices. There has been a series of works investigating universality of the behavior of the maximum of the CUE field (and more generally C$\beta$E field), see e.g., \cite{pz0, cmn}, leading to the recent preprint \cite{pz} where the convergence of the maximum along with a description of the  energy landscape around the extrema  were finally established. For the counterpart progress on the Riemann zeta side, we refer the reader to \cite{arguin1, arguin2, zeta}.
For random permutation matrices, a law of large numbers result was obtained in \cite{cz}. \\
 
We finish this section with the discussion of the central ideas in the proof of Theorem \ref{theorem 1.1} and the other results we obtain along the way. However, we first set up the notational apparatus for the paper. 
 
\subsection{Notations}
 For $u,v\in \mathbb{Z}^d $, $|u-v|:=|u-v|_2$ and $|u-v|_\infty$ denote the  {$\ell^2$ and $\ell^\infty$}-distances, respectively. Note that $|u-v|_\infty \leq   |u-v| \leq \sqrt{d}|u-v|_\infty$. For $D,D'\subseteq \mathbb{Z}^d$, let $d_\infty(D,D') = \min_{u\in D,v\in D'} |u-v|_\infty$. For $v\in \mathbb{Z}^d$ and   $r>0$, we denote by $B(v,r)$ and $B_\infty(v,r)$ the  collections of $u\in \mathbb{Z}^d $ such that $|u-v|\leq r$ and $|u-v|_\infty \leq r$, respectively.    We say that $B$ is a box of size $k$ if $B = z + \{0,1,\cdots,k-1\}^d$ for some $z\in \mathbb{Z}^d$. Throughout the paper, for any $A\subseteq \mathbb{Z}^d$ and a positive integer $k$, we define $kA := \{x \in \mathbb{Z}^d: x/k \in A\}.$

Further,  $\mathcal{N}(\mu, \sigma^2)$ will denote the Gaussian distribution with mean $\mu$ and variance $\sigma^2$. Also, the letter $C$ denotes a positive constant, whose value may change from line to line in the proofs.  {The positive constants which are fixed once and throughout the paper are denoted by the lower case $c$ with a subscript, for instance $c_4$}. Finally, $f \asymp g$ will mean that $\frac{1}{C'} f \leq g \leq C'f$ for {some  universal constant $C'>0$.}

 \subsection{Idea of proof} \label{section 1.3} While many important ideas have appeared in the past works on BRW, and the 2D GFF, the papers \cite{bl2} and \cite{drz} are of particular importance for us. In \cite{bl2} a much stronger version of Theorem \ref{theorem 1.1} was proven for the 2D GFF. Whereas in \cite{drz} a universality result for the maximum was established in the setting of the present paper of general log-correlated fields.  
 
The powerful Gibbs-Markov property is often the central input in many of the arguments in \cite{bl2}, which established an extremely refined understanding of the atomic GMC in the 2D GFF case. However, the key underpinning of our work is driven by the observation that one can get by, somewhat barely, even in the absence of any such Markovian structure, provided there is a sharp control on the size of the super-level sets, i.e., set of points with values bigger than a threshold (for brevity henceforth we will be terming them simply level sets) as stated in Theorem \ref{theorem level}. Thus we seek to accomplish this. This leads us to  \cite{drz} who used  Slepian's lemma to compare the maximum of a general log-correlated $\phi_N$ to that of more tractable BRW type processes. In this vein, the second main observation for us is that, albeit a less direct and significantly more elaborate comparison framework can indeed be set up to obtain the sought control on level sets. It seems worth reiterating at this point the remark following the statement of Theorem \ref{theorem 1.1},  that  our comparison framework does not rely on any further fine information on the covariance and is rather robust and works even in settings where, say, the maximum of the field is not guaranteed to converge.

We now move on to a more detailed sketch of our proof, in particular expanding on how to set up such a comparison framework to prove Theorem \ref{theorem level} and indeed how to then, simply relying on a control on the level sets, carry out a program resembling that in \cite{bl1}, to prove Theorem \ref{theorem 1.1}.

The first key step in proving Theorem \ref{theorem 1.1} is to rigorously establish the already alluded to \emph{freezing phenomenon},  in the entire super-critical regime, i.e.,  the Gibbs measure \eqref{lqg} concentrates on  points with values close to
 \begin{align*}
 m_N:=  \sqrt{2d}\log N -\frac{3}{2\sqrt{2d}}\log \log N.
 \end{align*}
As was proven in \cite{drz}, this is precisely where the maximum of $\phi_N$ (to be denoted $\phi_{N,*}:=\max_{v\in V_N} \pv$) concentrates (see Section \ref{section 2.2} for details). Known apriori upper tail estimate of $\{\phi_{N,*}  - m_N\}_{N\geq 1}$ established in \cite{drz} and recorded in Section \ref{section 2.2} implies that no value $\pv$ for $v\in V_N$ is bigger than $m_N+t$ for some large but fixed $t>0$ with probability going to one as $t\to \infty.$ Therefore it remains to bound the contributions from $v\in V_N$ with  $\pv < m_N-t$ for  large $t>0$.

This naturally leads to  Theorem \ref{theorem level}. 

\noindent
\textbf{Sharp bound on level sets}. Momentarily focusing only on the upper bound since that is the direction of relevance, we establish 
\begin{align} \label{04}
\mathbb{P}( |\Gamma_{N}(t)|> e^{ (\sqrt{2d}+\e) t}  )\leq e^{-\e t/8}
\end{align}
recalling $\Gamma_{N}(t) := \{v\in V_N: \pv \geq m_N-t\}$
(see Theorem \ref{theorem 3.0}).
Note that this quickly implies (by simply summing) the total Gibbs weight of  $\Gamma_{N}(t)^c$, decays as $e^{(-\beta+\sqrt{2d}+\e)t}$ with probability at least $1-e^{-c t},$ for some constant $c=c(\e).$ In particular, that the level set is shown to be no more than essentially $e^{\sqrt{2d}t}$ is crucial for us in being able to analyze the \emph{entire supercritical regime}, i.e., $\beta > \sqrt{2d.}$

A similar statement as in \eqref{04}  was previously  obtained for the 2D GFF in \cite{bl2} with the proof making crucial use of the concentric decomposition of 2D GFF based on the  Gibbs-Markov property. 
The unavailability of any semblance of a Markovian structure in our setting unfortunately renders this approach ineffective. Further, the strategy in \cite{drz} proving tightness, after centering, of the maxima, $\phi_{N,*},$ for general log-correlated fields relying on Slepian's lemma does not extend to cardinality of level sets. In more detail,  in \cite{drz}, Slepian's lemma (see Section \ref{section 2.1} for the precise statement) was used to  compare (in the sense of stochastic domination) $\phi_{N,*},$ both from above and below, with the maxima  of more easily tractable processes such as a BRW or  modified versions of it (explicit descriptions appear in Section \ref{section 2.4}) and use sharp estimates known about the latter. Unfortunately, the  cardinality of the level sets do not enjoy such a comparison principle.

Nonetheless, exploring whether such a comparison strategy still has any promise, one may consider the sum of $\ell$ largest values of $\pv$, denoted by $S_{\ell}(\phi_N)$. Note that an upper bound on  $S_{\ell}(\phi_N)$ is useful to study level sets since  trivially,
$$\Big\vert\{v\in V_N: \pv \geq \frac{S_{\ell}(\phi_N)}{\ell}+1\} \Big\vert \le \ell.$$
Still, the random variable $S_{\ell}(\phi_N)$ cannot be compared across Gaussian processes (an example is provided in Section \ref{section 3.1} for illustration). However, fortunately, it turns out that the {expectation} of $S_{\ell}(\phi_N)$ \emph{can be}.
 
The quantity $S_{\ell}(\phi_N)$ was first introduced in \cite{dz} and then further exploited in \cite{bl1} to establish that for the 2D GFF there exists $c>0$ such that for any large enough $k>0$,
  \begin{align}  \label{05}
  \mathbb{P}( |\Gamma_{N,t}| > e^{kt} ) \leq e^{-ckt}.
  \end{align}
  This was done by first establishing an upper bound on $\mathbb{E}S_\ell$ and then translating it to an upper bound on the cardinality of level sets. Besides the Gibbs-Markov property, an important ingredient was a bootstrapping step which has by now featured in various guises in the study of log-correlated processes with an underlying tree structure (and will be important for our arguments as well). Note however that \eqref{05} is stated to hold only for sufficiently large $k$.  Subsequently, this was extended to an estimate of the form \eqref{04} for all  values of $k>\beta_c$ in \cite{bl2},  exploiting again the Markovian structure.
 
In order to obtain the optimal estimate \eqref{05} for general log-correlated fields, which holds for \it{any} $k > \beta_c$, we first deduce a  sharp upper bound on the expectation of $ S_{N}(\phi_N)$: For any $\e>0$,
 \begin{align}  \label{06}
 \mathbb{E}S_{\ell}(\phi_N) \leq  \ell\Big(m_N  - \Big( \frac{1}{\sqrt{2d}} - \e \Big) \log \ell\Big).
\end{align} While we will eventually prove a matching lower bound as well (which is significantly more complicated), this step is relatively straightforward. It follows from first adapting existing techniques to obtain a similar bound for BRW. 
Then the comparison principle in invoked. In brief, this step employs Kahane's    
convex inequality which can be used to deduce that for two Gaussian processes with covariance structure of one pointwise dominating the other, expectation of $S_{\ell}$ is smaller for the former.

Given \eqref{06} for all large $N$, the proof of the level set bound \eqref{04}, follows by way of contradiction. Namely, if the latter were not true, then constructing an auxiliary field $\psi_{m,N},$ of size $2^{m}N$ for a suitably chosen $m$, using  essentially $2^{m}$ i.i.d. copies of $\phi_N$ (with some further adjustments which we will ignore in this discussion), one can deduce that it is highly likely for $\psi_{m,N}$ to have a large $S_{\ell}.$ This is the crux of the bootstrapping argument which allows one to conclude that if the probability of a large level set is not too low for $\phi_N,$ then the underlying i.i.d structure in $\psi_{m,N}$ makes a large level set extremely likely for the latter and hence forces a large value of $S_{\ell}(\psi_{m,N})$ and thereby its expectation. A final application of the comparison principle shows $\E(S_{\ell}(\psi_{m,N})) \le \E(S_{\ell}(\phi_{2^mN}))$. Thus the obtained lower bound for the LHS also lower bounds the RHS and hence contradicts \eqref{06}.

We now provide a brief commentary on the proof of the matching lower bound for the level set analogous to \eqref{04}, showing that it is typically also at least $e^{(\sqrt{2d}-\e)t},$ albeit with a much weaker failure probability estimate completing the proof of Theorem \ref{theorem level}. Our proof begins with a lower bound on $\E(S_{\ell}(\phi_N)).$ This involves first proving this for MBRW which already is rather non-trivial, followed by yet another application of the comparison principle. 
Refraining from discussing further details, we end by remarking that a key ingredient to achieve the above is a uniform lower bound on the right tail of  $S_\ell$ for $\text{MBRW}_N$: There exist constants $c,c'>0$ such that for any $\ell \geq 1$, for sufficiently large $N$,
\begin{align*}
 \mathbb{P}\Big(S_{\ell}(\text{MBRW}_N)  \geq \ell \Big(m_N-\frac{1}{\sqrt{2d}}\log \ell - c'\Big)\Big) \geq c.
\end{align*}
This is obtained by a second moment argument together with an estimate about the Brownian bridge (this is where the explicit structure of MBRW is used). The remaining steps involve ideas resembling those appearing in the proof of \eqref{04} including the bootstrapping argument. \\

\noindent
\textbf{Structure of the Gibbs measure:} Given the sharp control on $|\Gamma_{N}(t)|,$ we now discuss the remaining ideas in the proof of Theorem \ref{theorem 1.1}. As already outlined, \eqref{04} allows us to essentially ignore $\Gamma_{N}(t)^c,$ for a fixed large $t$ (we will eventually send $t\to \infty$).
At this point \cite[Lemma 3.3]{drz} comes to our aid stating that two near maxima cannot be \emph{mesoscopically separated}, i.e., for any two points $u,v \in \Gamma_{N}(t),$ one must have with high probability $$|u-v| \notin \Big[e^{e^{ct}}, \frac{n}{e^{e^{ct}}}\Big]$$
($c>0$ is a constant). This  implies that $\Gamma_{N}(t)$ is shattered into macroscopically separated clusters of diameter $O(e^{e^{ct}})$ (in the arguments we will take this to be $r_N$ as stated in Theorem \ref{theorem 1.1}). The remainder of this section discusses the steps in proving that the sequence of Gibbs weights of such clusters properly normalized approximately follows the law $\textup{PD}(\beta_c/\beta).$

Towards this, as in \cite{bl1}, we will consider all the points in $\Gamma_{N}(t)$ which are additionally a local maxima in a ball of radius say $O(e^{e^{ct}})$ and view the clusters as centered around them (we will see why this is useful shortly). 
Thus let
  \begin{align}\label{local ext}
C_{N,r}:=\{v\in V_N: \phi_{N,v}=\max_{u\in B(v,r)} \phi_{N,u}\}
\end{align}
and for $R>0$,
{\begin{align} \label{sbar}
\bar{S}_{v,R} := \sum_{u\in B(v,R)} e^{\beta(\pu - \pv)}.
\end{align}
so that
$\mu_N^\beta( B(v,R)) \propto e^{\beta(\pv-m_N)}\cdot \bar{S}_{v,R} .$}

We now define the point process formed by the extreme points decorated by the cluster weights,
 \begin{align} \label{02}
\eta_{N,r_N}: =  \sum_{v\in C_{N,r_N} }    \delta_{\pv - m_N} (dy) \otimes \delta_ {\bar{S}_{v,r_N/2} } (dz).
\end{align}
We show that any subsequential limit $\eta$ of the above point process is distributed as a Cox process, i.e., a Poisson point process (PPP) with a random intensity measure  (see \cite{cox} for a comprehensive treatment of such processes).
In particular we show that it takes the form $\text{PPP}(e^{-\sqrt{2d} y}dy \otimes \nu(dz)) $ for some random ``cluster measure" $\nu$ on $(0,\infty).$ Without going into further details, we simply remark that  the obtained tensorization of the intensity measure is crucial in using the above to prove Theorem \ref{theorem 1.1}. This last step also crucially uses the level set information in obtaining vital information about the random measure $\nu$ needed to carry out the necessary measure theoretic arguments on the space of random measures.

 A result of this type, but significantly stronger was previously obtained for 2D GFF in \cite{bl2} where it was shown that   \begin{align} \label{08}
 \bar{\eta}_{N,r_N}:=   \sum_{v\in C_{N,r_N} }   \delta_{v/N}  (dx) \otimes  \delta_{\pv - m_N} (dy) \otimes \delta_ { \{\phi_{N,v+x}-\pv\}_{x\in \mathbb{Z}^2} } (dz)
 \end{align}
 converges to the Cox process $\text{PPP}(\mathcal{Z}(dx)\otimes e^{-\sqrt{2d}y}dy \otimes {\mathrm{v}}(dz))$ for an explicit random measure $\mathcal{Z}$ (goes by the name of derivative martingale in the literature) and a \emph{deterministic} measure $\mathrm v$ on the space height functions on $\Z^2$ pinned to be $0$ at the origin. 
 Note that unlike \eqref{02}, the above point process keeps track of the location of the extrema points, as well as the entire local height functions around them, and not simply their Gibbs weights.
 
In 2D GFF, the Gibbs-Markov property provides enough control on the conditional covariance to \emph{decouple} the location of a local extrema and the local height function (indeed this was made rigorous in \cite{bl2}) allowing one to work with \eqref{08}. 

 This is  a bit too much to ask for in the general setting of Gaussian fields we are in without any local covariance information.
Nonetheless, fortunately for us, if we forego the spatial location, one can still deduce a tensorization of the intensity measure of the value at the local extrema as well as the Gibbs weight of the local cluster (not the entire local height function) which is enough to imply Theorem \ref{theorem 1.1} (Note that it is only shown that the intensity of the cluster measure is random unlike the deterministic counterpart in \eqref{08}).  Our arguments also in fact allow us to work with the first and the second coordinates in \eqref{08} (while we don't record a formal statement, a more elaborate discussion on this point is presented in Remark \ref{spatial} later).  Thus, to summarize, while the first and third coordinates  might not have a tractable joint description in the generality of our setting for reasons cited above, the level set bound in \eqref{04} is enough to prove such invariance principles for any limit point of \eqref{02}, where the third coordinate only encodes the cluster measure.

Refraining from further commentary on the discrepancy between \eqref{02} and \eqref{08} we end with a brief review of the general idea that allows one to prove such Cox process limits. This goes back to a beautiful argument in \cite{bl1} who showed that  the law of the point process formed by the first two coordinates of \eqref{08} remains invariant as the underlying GFF is evolved along an Ornstein-Uhlenbeck flow which allows them to appeal to a powerful theory built by Liggett essentially characterizing all such invariant point processes as Cox processes. 
A similar argument also appeared in the already mentioned article \cite{sz2} about critical points for a class of  spherical spin glass models.

\subsection{Organization of the paper}
In Section \ref{prelim},  we review Slepian's lemma and other properties of Gaussian random variables, and the result in \cite{drz} about the maximum of log-correlated Gaussian fields. 
Section \ref{section 3} is devoted to level sets where we prove Theorem \ref{theorem level}.  Section \ref{section 4} is focused on the analysis of the point process \eqref{02}, establishing an invariance under Brownian flow. Such invariant processes are characterized  in Section \ref{sec limit}.  Given these ingredients, the proof of Theorem \ref{theorem 1.1}  is carried out in Section \ref{section main theorem}. 
Along the way several properties of the BRW and MBRW are relied upon which are collected in  Section \ref{section 6}. The theory of random measures and their convergence features centrally in this work, and the important facts are quoted in Section \ref{appendix a}. Finally certain technical proofs are furnished in the Appendix.
 \subsection{Acknowledgement} The authors warmly thank  Oren Louidor, R\'emi Rhodes and Ofer Zeitouni for useful discussions and comments on an earlier draft of the paper.
SG is partially supported by NSF grant DMS-1855688, NSF Career grant DMS-1945172, and a Sloan Fellowship.   KN is supported by  Samsung Science and Technology
Foundation under Project Number SSTF-BA2202-02.

\section{Preliminaries}  \label{prelim}

We start by recording key properties of Gaussian fields used throughout the paper.
First, about the conditional distributions of Gaussians.

\begin{lemma} \label{lemma 3.2}
Consider  a centered Gaussian vector $X=(X_1,X_2)$ in $\R^{2}$ with correlation $\rho$, {such that $\textup{Var} X_1 = \sigma_1^2$ and $\textup{Var} X_2 = \sigma_2^2$}.
Then,   given $X_2$,  the   conditional mean of $X_1$  is 
\begin{align*}
\frac{\sigma_1}{\sigma_2}  \rho  X_2,
\end{align*}
and the conditional variance  of $X_1$ is 
\begin{align*}
(1-\rho^2 ) \sigma_1^2.
\end{align*}
Note that the conditional covariance of $X_1$ does not depend on the  value of conditioning $X_2$.
\end{lemma}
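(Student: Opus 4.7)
The plan is to exploit the standard linear-Gaussian decomposition that underlies conditional laws of jointly Gaussian vectors. First I would introduce the scalar $a := \rho\, \sigma_1/\sigma_2$ and write the deterministic decomposition $X_1 = a X_2 + Z$ with $Z := X_1 - a X_2$. The choice of $a$ is motivated by demanding that $Z$ and $X_2$ be uncorrelated: indeed $\Cov(Z, X_2) = \Cov(X_1, X_2) - a\, \Var(X_2) = \rho \sigma_1 \sigma_2 - a \sigma_2^2 = 0$, which pins down $a$ uniquely.

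The crucial (and essentially only) conceptual input is that $(Z, X_2)$ is a linear image of the jointly Gaussian pair $(X_1, X_2)$, hence is itself jointly Gaussian; for jointly Gaussian random variables, vanishing covariance upgrades to outright independence, so $Z \perp X_2$. The remainder of the argument is bookkeeping.

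Indeed, conditional on $X_2$, the identity $X_1 = a X_2 + Z$ exhibits $X_1$ as a deterministic shift $a X_2$ plus an independent Gaussian $Z$. Therefore the conditional mean is $a X_2 = (\sigma_1/\sigma_2)\,\rho\, X_2$, and the conditional variance equals $\Var(Z)$, which I would compute as
\[
\Var(Z) = \Var(X_1) - 2a\,\Cov(X_1, X_2) + a^2 \Var(X_2) = \sigma_1^2 - 2\rho^2 \sigma_1^2 + \rho^2 \sigma_1^2 = (1-\rho^2)\sigma_1^2,
\]
an expression free of the realized value of $X_2$, which gives the final assertion.

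I do not anticipate any real obstacle here since this is a textbook bivariate-Gaussian computation; the only point worth emphasizing is the step where uncorrelatedness of $(Z, X_2)$ is promoted to independence using joint Gaussianity, as this is precisely what renders the conditional variance deterministic.
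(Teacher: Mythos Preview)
Your proof is correct and is the standard textbook argument. The paper does not actually prove this lemma---it is stated without proof as a preliminary fact about bivariate Gaussians---so there is nothing to compare against.
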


\subsection{Comparison principle for Gaussian fields} \label{section 2.1}
We now come to  Kahane's convex inequality \cite{k}, which can be used to compare  the  expectations of convex functions of two Gaussian vectors (see \cite{rv,rv2} for the English statement) whose covariance structure admits a point-wise comparison. As alluded to in Section \ref{section 1.3}, this will be a key input for our proofs as it was in \cite{drz}.

\begin{proposition}  \label{prop 3.3}
Consider two centered Gaussian vectors $X=(X_1,\cdots,X_n)$ and $Y=(Y_1,\cdots,Y_n)$ in $\R^n$. Suppose that $f\in C^2(\R^n)$ is a function, whose second derivative have a sub-Gaussian growth (i.e. for each $\e>0$, there is $C>0$ such {that $|f(x)| \leq Ce^{\e \norm{x}^2}$ for} all $x\in \R^n$). Assume that for any $i,j=1,\cdots,n$,
\begin{align*}
\begin{cases}
\textup{Cov}(X_i,X_j) \geq \textup{Cov}(Y_i,Y_j) \Rightarrow \frac{\partial^2 f}{\partial x_i \partial x_j} \geq 0, \\
\textup{Cov}(X_i,X_j) \leq \textup{Cov}(Y_i,Y_j) \Rightarrow \frac{\partial^2 f}{\partial x_i \partial x_j} \leq 0.
\end{cases} 
\end{align*}
Then,
\begin{align*}
\mathbb{E}f(X)\geq \mathbb{E}f(Y).
\end{align*}
\end{proposition}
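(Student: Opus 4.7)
The plan is to prove this classical Gaussian comparison (Kahane's convex inequality) by smoothly interpolating between $Y$ and $X$ and applying Gaussian integration by parts. First I would realize $X$ and $Y$ as independent centered Gaussian vectors on a common probability space (always possible on a product space, since the conclusion involves only marginals), and introduce the interpolated vector
$$Z(t) := \sqrt{t}\,X + \sqrt{1-t}\,Y, \qquad t \in [0,1],$$
which is centered Gaussian with covariance $t\,\textup{Cov}(X_i,X_j) + (1-t)\,\textup{Cov}(Y_i,Y_j)$ and satisfies $Z(0)=Y$, $Z(1)=X$. Setting $g(t) := \mathbb{E}[f(Z(t))]$, the claim reduces to showing $g'(t)\ge 0$ on $(0,1)$.

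Next I would compute $g'(t)$ explicitly. The sub-Gaussian growth hypothesis on the second derivatives of $f$ (which propagates, after integration, to $f$ and $\nabla f$) justifies both differentiation under the expectation and the Gaussian integration-by-parts step to follow. The chain rule gives
$$g'(t) = \sum_{i=1}^n \mathbb{E}\!\left[\partial_i f(Z(t))\left(\frac{X_i}{2\sqrt{t}} - \frac{Y_i}{2\sqrt{1-t}}\right)\right].$$
Using independence of $X$ and $Y$ and Stein's lemma in the form $\mathbb{E}[W_i\, h(W)] = \sum_j \textup{Cov}(W_i,W_j)\,\mathbb{E}[\partial_j h(W)]$ for a centered Gaussian vector $W$ and sufficiently regular $h$, applied separately to the $X$ and $Y$ coordinates, the $1/\sqrt{t}$ and $1/\sqrt{1-t}$ prefactors cancel against the $\sqrt{t}$ and $\sqrt{1-t}$ produced by differentiating $Z$. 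I obtain
$$g'(t) \;=\; \tfrac{1}{2}\sum_{i,j=1}^{n} \bigl(\textup{Cov}(X_i,X_j) - \textup{Cov}(Y_i,Y_j)\bigr)\,\mathbb{E}\!\left[\partial^2_{ij} f(Z(t))\right].$$

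The sign hypothesis on the mixed partial derivatives of $f$ exactly ensures that each summand above is non-negative: when the covariance difference is positive the corresponding $\partial^2_{ij} f$ is $\ge 0$ everywhere, and conversely. Hence $g'\ge 0$ on $(0,1)$, giving $g(1)\ge g(0)$, i.e.\ $\mathbb{E}f(X)\ge \mathbb{E}f(Y)$.

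The only real technicality to watch is making the differentiation under the integral and the integration-by-parts rigorous, particularly near the endpoints $t=0,1$ where $1/\sqrt{t}$ and $1/\sqrt{1-t}$ are singular (even though their contributions cancel in the final expression); the sub-Gaussian growth assumption on $f''$, together with standard Gaussian tail bounds on $X, Y, Z(t)$, makes the dominated convergence arguments routine. Alternatively, one can sidestep the issue entirely by using the trigonometric interpolation $Z(s) := \cos(s)\,Y + \sin(s)\,X$ on $[0,\pi/2]$, whose derivative $dZ/ds$ remains bounded, and repeating the same computation to obtain $g'(s) = \sin(s)\cos(s)\sum_{i,j}\bigl(\textup{Cov}(X_i,X_j)-\textup{Cov}(Y_i,Y_j)\bigr)\,\mathbb{E}[\partial^2_{ij} f(Z(s))]\ge 0$, yielding the same conclusion.
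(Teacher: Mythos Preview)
Your interpolation argument via Gaussian integration by parts is correct and is the standard proof of Kahane's inequality. Note, however, that the paper does not supply its own proof of this proposition: it is quoted as a known result, attributed to Kahane \cite{k} with the English statement referenced in \cite{rv,rv2}, so there is nothing in the paper to compare your argument against beyond the citation.
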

{
As a consequence of Proposition \ref{prop 3.3} (applied to $f^{(m)}(x_1,\cdots,x_n):= \prod_{i=1}^n g^{(m)}_i(x_i)$ with smooth, non-increasing, non-negative and bounded functions $g^{(m)}_i \downarrow \1_{(-\infty, t_i]}$ as $m\ri$), one can deduce the following lemma.}
\begin{lemma} \label{lemma 3.4}
Let $X=(X_1,\cdots,X_n)$ and $Y=(Y_1,\cdots,Y_n)$ be two centered Gaussian vectors in $\R^n$ such that  
\begin{align*}
\mathbb{E}(X_i^2)=\mathbb{E}(Y_i^2), \quad \forall i=1,\cdots,n
\end{align*}
and  
\begin{align*}
\textup{Cov}(X_i,X_j) \leq \textup{Cov}(Y_i,Y_j), \quad \forall i,j=1,\cdots,n .
\end{align*}
Then, for any $t_1,\cdots,t_n\in \R$,
\begin{align*}
\mathbb{P}(X_i\leq t_i,  i=1,\cdots,n) \leq \mathbb{P}(Y_i\leq t_i, i=1,\cdots,n).
\end{align*}
In particular,  for any $t\in \R$,
\begin{align*}
\mathbb{P}(\max_{i=1,\cdots,n} X_i\leq t) \leq \mathbb{P}(\max_{i=1,\cdots,n} Y_i\leq t) .
\end{align*}
\end{lemma}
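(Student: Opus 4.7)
The plan is to apply Proposition~\ref{prop 3.3} to a smooth approximation of the joint indicator $\prod_i \1_{(-\infty,t_i]}(x_i)$, whose expectation under the joint Gaussian law of $X$ (resp.\ $Y$) is precisely $\P(X_i \le t_i \; \forall i)$ (resp.\ $\P(Y_i \le t_i \; \forall i)$). The author already signals this approach in the sentence preceding the lemma, so the work amounts to verifying that the sign conditions of Proposition~\ref{prop 3.3} go through and that the limit is justified.

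First I would fix, for each $i$ and each $m \ge 1$, a $C^2$, non-increasing, non-negative, bounded function $g_i^{(m)} : \R \to [0,1]$ satisfying $g_i^{(m)} \downarrow \1_{(-\infty,t_i]}$ pointwise as $m \to \infty$ (e.g.\ obtained by mollifying $\1_{(-\infty, t_i+1/m]}$ with a smooth non-negative bump supported in $[-1/m,1/m]$, keeping monotonicity). Set $f^{(m)}(x_1,\dots,x_n) := \prod_{i=1}^n g_i^{(m)}(x_i)$. This is $C^2$ and bounded, so trivially has sub-Gaussian growth. For $i \ne j$,
\[
\frac{\partial^2 f^{(m)}}{\partial x_i \partial x_j}(x) = (g_i^{(m)})'(x_i)\,(g_j^{(m)})'(x_j) \prod_{k \ne i,j} g_k^{(m)}(x_k) \;\ge\; 0,
\]
since each factor $(g_i^{(m)})'$ is non-positive (so the product of two of them is non-negative) and the remaining factors are non-negative.

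Next I would invoke Proposition~\ref{prop 3.3} with the roles of $X$ and $Y$ \emph{swapped}: we have $\Cov(Y_i,Y_j) \ge \Cov(X_i,X_j)$ for every pair $i,j$ (including $i=j$, where equality holds by the matching-variance hypothesis, so both sign conditions are vacuously satisfied), and $\frac{\partial^2 f^{(m)}}{\partial x_i \partial x_j} \ge 0$. The proposition then yields $\E f^{(m)}(Y) \ge \E f^{(m)}(X)$ for every $m$. Finally I would let $m\to \infty$: since $0 \le f^{(m)} \le 1$ and $f^{(m)} \downarrow \prod_i \1_{(-\infty,t_i]}(x_i)$ pointwise, dominated convergence gives
\[
\P(X_i \le t_i, \, i=1,\dots,n) \;\le\; \P(Y_i \le t_i, \, i=1,\dots,n).
\]
The maximum statement is the special case $t_i \equiv t$.

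There is no real obstacle here; the only minor care needed is in constructing the mollifications so that the monotonicity (hence the sign of $(g_i^{(m)})'$) is preserved, and in noting that the diagonal $i=j$ does not spoil the sign condition because the variances of $X$ and $Y$ agree. Once these two easy checks are in place, Proposition~\ref{prop 3.3} plus dominated convergence closes the argument.
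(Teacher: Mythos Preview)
Your proposal is correct and follows exactly the approach indicated in the paper: apply Proposition~\ref{prop 3.3} to the product $f^{(m)}(x)=\prod_i g_i^{(m)}(x_i)$ of smooth, non-increasing, non-negative, bounded approximants $g_i^{(m)}\downarrow \1_{(-\infty,t_i]}$, check that the off-diagonal mixed partials are non-negative, and pass to the limit by dominated convergence. One small wording point: on the diagonal $i=j$ the two implications in Proposition~\ref{prop 3.3} are not ``vacuously satisfied'' but rather irrelevant, since in the interpolation proof of Kahane's inequality the term $(\Cov(Y_i,Y_i)-\Cov(X_i,X_i))\,\E[\partial_{ii}^2 f]$ vanishes when the variances agree; this is the standard reading and is what the equal-variance hypothesis is for.
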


\subsection{Branching random walk and modified branching random walk} \label{section 2.4}

In this subsection, we recall branching random walk (BRW) and a modified version thereof called modified branching random walk (MBRW).  Both of these models are canonical Gaussian processes on an underlying tree like geometry leading to log-correlation.  Their utility in the current context stems from the applications of the comparison principle outlined in Section \ref{section 1.3}.

We first need a notational setup. 
For a non-negative integer $j$, let $\mathcal{B}_j$ and $\mathcal{B}^*_j$ be the collection of boxes of size $2^j$ whose lower left corners are elements of $\mathbb{Z}^d$ and $2^j \mathbb{Z}^d$, respectively.  For $v\in V_N$, define $\mathcal{B}_j(v)$ and $\mathcal{B}^*_j(v)$ to be the collection of boxes in  $\mathcal{B}_j$ and $\mathcal{B}^*_j$ containing $v$, respectively. {Since   $\mathcal{B}^*_j(v)$  only consists of a single element, we will use the same notation and  let $\mathcal{B}^*_j(v)$ also denote that element.} Finally, for a positive integer $N$, let $\mathcal{B}^N_j$ be a collection of boxes in $\mathcal{B}_j$ whose lower left corners lie in $V_N$ (recall that $V_N = \{0,1,\cdots,N-1\}^d$).

Now, we define  BRW.
Let $\{g_{j,B}\}_{j\geq 0, B\in \mathcal{B}^*_j} $ be a collection of i.i.d. centered Gaussians of variance $\log 2$.   Then, for $N=2^n$ with $n\in \mathbb{N} $, BRW to be denoted by $\varphi_N = (\varphi_{N,v})_{v\in V_N}$ is defined by
\begin{align*}
\varphi_{N,v} := \sum_{j=0}^{n-1}   g_{j, \mathcal{B}^*_j(v)}.
\end{align*}

{Note that although BRW possesses a tree structure, owing to some rounding issues, it does not exhibit an exact log-correlation property in the sense of Assumption \ref{a2}. For instance, BRW at two  adjacent points in two disjoint dyadic boxes of  size $2^{n-1}$ have covariance $\log 2$, which is much less than $\log (N/1) = \log N$.   This however can be fixed by the following simple averaging procedure which leads to MBRW.
  Let $\{g_{j,B}\}_{j\geq 0, B\in \mathcal{B}_{j}^N} $ be  a collection of i.i.d. centered Gaussians with $\var g_{j,B} =  2^{-dj} \cdot \log 2$ for $B\in \mathcal{B}_{j}^N$.  For $j\geq 0$ and $B \in \cB_j$, define
  \begin{align*}
  g_{j,B}^N :=   \begin{cases}
    g_{j,B}, &B\in \mathcal{B}_j^N, \\
      g_{j,B'}, &B\notin \mathcal{B}_j^N,  B\sim_N B' \in \mathcal{B}_j^N,
  \end{cases}
\end{align*}}
where we write $B\sim_N B'$ if $B = B'+(i_1N,\cdots,i_dN)$ for some  $i_1,\cdots,i_d\in \mathbb{Z}$.
Then, for $N=2^n$, MBRW $\theta_N  = (\theta_{N,v})_{v\in V_N} $ is defined by
\begin{align} \label{mbrw}
\theta_{N,v} := \sum_{j=0}^{n-1}  \sum_{B\in \mathcal{B}_j(v)}  g_{j,B}^N.
\end{align}

{Since  $|\mathcal{B}_j(v)| = 2^{dj}$ and recalling  $\var g_{j,B} =  2^{-dj} \cdot \log 2$, we have $\var \theta_{N,v}  = \log N  $}.  Further, the following holds.

\begin{lemma}[\cite{drz}, Lemma 2.3] \label{lemma 2.3}
There exists  a constant $c_1>0$ (depending on $d$) such that for any $u,v\in V_N$,
\begin{align}\label{mbrwcov}
|\textup{Cov} ( \theta_{N,u},\theta_{N,v}) - (\log N - \log_+ |u-v|^{(N)}) | \leq c_1,
\end{align}
where $|u-v|^{(N)} = \min_{v'\sim_N v} |u-v'|$. Here, we say $v' \sim_N v$  if $v'-v\in N \mathbb{Z}^d$.
\end{lemma}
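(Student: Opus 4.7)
The strategy is a direct covariance computation, exploiting the independence of the Gaussian generators $g_{j,B}^N$ across levels and across non--equivalent boxes. First I will expand
\[
\Cov(\theta_{N,u},\theta_{N,v}) \;=\; \sum_{j=0}^{n-1}\sum_{\substack{B\in \cB_j(u)\\ B'\in \cB_j(v)}}\Cov\bigl(g_{j,B}^N,g_{j,B'}^N\bigr),
\]
and note that by construction $g_{j,B}^N$ and $g_{j,B'}^N$ are either identical (when $B\sim_N B'$, with common variance $2^{-dj}\log 2$) or independent. So the expression reduces to
\[
\Cov(\theta_{N,u},\theta_{N,v}) \;=\; \log 2\cdot \sum_{j=0}^{n-1} 2^{-dj}\,N_j(u,v),
\qquad N_j(u,v):=\bigl|\{(B,B')\in\cB_j(u)\times\cB_j(v):B\sim_N B'\}\bigr|.
\]

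The first main step is to obtain a clean closed form for $N_j(u,v)$. Writing $B=z+\{0,\ldots,2^j-1\}^d$ and parameterising $B'=B+N\mathbf{k}$, the constraint ``$B$ contains $u$ and $B+N\mathbf{k}$ contains $v$'' decouples across coordinates, so
\[
N_j(u,v) \;=\; \prod_{i=1}^d \sum_{k_i\in\Z}\max\bigl(0,\,2^j-|u_i-v_i+Nk_i|\bigr).
\]
Since $j\le n-1$ implies $2^j\le N/2$, at most one $k_i$ produces a nonzero term in each coordinate, and the minimising $k_i$ realises the one-dimensional periodic distance $L_i:=\min_{k\in\Z}|u_i-v_i+Nk|$. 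Hence
\[
N_j(u,v)=\prod_{i=1}^d \max\bigl(0,\,2^j-L_i\bigr).
\]
In particular $N_j(u,v)=0$ whenever $2^j\le L_\infty:=\max_i L_i=|u-v|_\infty^{(N)}$.

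The second step is to bound the resulting sum. Let $j_0$ be the smallest integer with $2^{j_0}>L_\infty$ (so $j_0=0$ if $L_\infty=0$). For $j\ge j_0$ each factor $2^{-j}(2^j-L_i)$ lies in $[1-L_\infty/2^j,\,1]$, hence $2^{-dj}N_j(u,v)\in[(1-L_\infty/2^j)^d,\,1]$. The upper bound gives
\[
\Cov(\theta_{N,u},\theta_{N,v})\le (\log 2)(n-j_0)\le \log N-\log L_\infty + O(1).
\]
For the lower bound I use $(1-L_\infty/2^j)^d\ge 1-dL_\infty/2^j$ and the geometric series $\sum_{j\ge j_0}2^{-j}\le 2\cdot 2^{-j_0}\le 2/L_\infty$ (when $L_\infty\ge 1$) to obtain
\[
\Cov(\theta_{N,u},\theta_{N,v})\ge (\log 2)\bigl(n-j_0-2d\bigr)\ge \log N-\log L_\infty - O_d(1).
\]
The degenerate case $L_\infty=0$ (i.e.\ $u\sim_N v$) is handled by direct inspection: every level contributes exactly $1$, yielding $\Cov=(\log 2)n=\log N$, which matches the claim since $\log_+|u-v|^{(N)}=0$.

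The final step is purely cosmetic: swap the $\ell^\infty$ periodic distance $L_\infty$ for the $\ell^2$ periodic distance $|u-v|^{(N)}$ appearing in the statement, using $L_\infty\le |u-v|^{(N)}\le \sqrt d\,L_\infty$, which only perturbs the logarithm by $\tfrac12\log d$ and can be absorbed into $c_1$. I do not anticipate a real obstacle; the only mildly delicate point is the coordinate-wise decoupling of $N_j(u,v)$ above and the handling of the shift index $\mathbf{k}$, where one must verify that for $j\le n-1$ only a single translate $\mathbf{k}$ can contribute per coordinate.
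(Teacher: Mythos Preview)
Your proof is correct. The paper does not actually prove this lemma; it is quoted verbatim from \cite{drz} without argument, so there is no ``paper's proof'' to compare against. Your direct covariance computation---reducing to the per-level count $N_j(u,v)$, factoring it coordinatewise via the box-counting identity, and then bounding the geometric-type sum---is the natural approach and matches the standard derivation. The only points worth a second glance are (i) the uniqueness of the contributing shift $k_i$ when $2^j\le N/2$, which you handle correctly (the borderline case $L_i=N/2$ yields zero on both sides), and (ii) the passage from the $\ell^\infty$ periodic distance $L_\infty$ to the $\ell^2$ periodic distance $|u-v|^{(N)}$ in the statement, which indeed costs only $\tfrac12\log d$ and is absorbed into $c_1$.
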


\subsection{Maximum of log-correlated Gaussian fields}
\label{section 2.2}
While the present paper focuses on the Gibbs measure, much of the literature is in fact devoted to the study of the ground state, i.e., the maximum of the underlying Gaussian process. For instance, the maximum of two-dimensional DGFF \cite{bdz,bz,d}, branching random walk \cite{ar,a,as,hs}, and the branching Brownian motion \cite{b}  are well-understood owing to their underlying Markovian properties inherited from the tree geometry.  Further, the maximum of $*$-scale invariant fields was studied in \cite{m}. Note, however that all of the above fields admit special representations and the question of universality had stayed open until the breakthrough paper
 \cite{drz}  which established a sharp fluctuation theory for the maximum of a general log-correlated Gaussian field.  More precisely, recalling that
$
\phi_{N,*} := \max_{v\in V_N} \pv
,$
 and
\begin{align} \label{mN}
m_N := \sqrt{2d}\log N -\frac{3}{2\sqrt{2d}}\log \log N,
\end{align}
  under the Assumptions \ref{a1} and \ref{a2},
\begin{align} \label{max} 
\mathbb{E}  \phi_{N,*} = m_N+O(1).
\end{align}

{In addition, it was shown in \cite{drz} that under some additional conditions on the covariance of the field $\phi_N$ (i.e. convergence of the covariance around the diagonal in a microscopic scale and convergence of the off-diagonal  covariance in a macroscopic scale), the sequence $\{  \phi_{N,*}- \mathbb{E} \phi_{N,*} \}_{N\geq 1}$ converges in distribution and  its  limiting law  can be expressed in terms of the Gumbel distribution with a random shift, thereby establishing a universal tail behavior.}

Next, we record some known tail estimates of  the maximum of  log-correlated Gaussian fields which we will make heavy use of.  

\begin{lemma}[\cite{drz}, Proposition 1.1]  \label{lemma 2.4}
Under Assumption \ref{a1}, there exists a constant $C>0$ such that for any $t\geq 1$,
\begin{align*}
\mathbb{P}(\max_{v\in V_N} \pv \geq m_N+t) \leq Cte^{-\sqrt{2d}t} e^{-C^{-1}t^2/n}.
\end{align*}
In addition,  for any $t\geq 1$, $s\geq 0$ and $A\subseteq V_N$,
\begin{align} \label{241}
\mathbb{P}\big(\max_{v\in A} \pv \geq m_N+t-s\big) \leq C \Big(\frac{|A|}{N^d}\Big)^{1/2} te^{-\sqrt{2d}(t-s)}.
\end{align}
\end{lemma}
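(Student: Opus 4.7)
The goal is a sharp Gaussian upper tail for $\max_{v\in V_N}\pv$ with the $t$-polynomial prefactor characteristic of log-correlated maxima. My plan is to reduce to the modified branching random walk $\theta_N$ of Section \ref{section 2.4} via the Gaussian comparison principle and then exploit the hierarchical structure of MBRW.

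\textbf{Step 1 (Slepian comparison to MBRW).} Assumption \ref{a1} only bounds $\var \pv$ from above, so to invoke Lemma \ref{lemma 3.4} directly I would first normalize variances by adding independent Gaussian ``nuggets,'' setting $\wt\phi_{N,v} := \pv + \xi_v$ and $\wt\theta_{N,v} := \theta_{N,v} + \eta_v$ with $\var \wt\phi_{N,v} = \var \wt\theta_{N,v} = \log N + C_0$ for a large constant $C_0$. Combining \eqref{aa} and Lemma \ref{lemma 2.3} then yields $\cov(\wt\phi_{N,u},\wt\phi_{N,v}) \geq \cov(\wt\theta_{N,u},\wt\theta_{N,v}) - C$, modulo the discrepancy between $|u-v|$ and the periodic $|u-v|^{(N)}$. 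The latter I would handle by either restricting to dyadic sub-boxes where the two distances agree up to a constant, or absorbing the discrepancy into the additive constant after a further Gaussian shift. Lemma \ref{lemma 3.4} then gives
\begin{align*}
\P\bigl(\max_{V_N}\pv \geq m_N + t\bigr) \leq \P\bigl(\max_{V_N}\theta_{N,v} \geq m_N + t - C\bigr) + \P(\max_v \xi_v \geq C'),
\end{align*}
where the nugget tail contributes at most $O(N^d e^{-ct^2/\log N})$, which is consistent with the correction factor $e^{-C^{-1}t^2/n}$ in the statement.

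\textbf{Step 2 (tail for MBRW).} Writing $\theta_{N,v} = \sum_{j=0}^{n-1} Y_{j,v}$ as in \eqref{mbrw}, the increments form a Gaussian ``branching'' structure across $n = \log_2 N$ scales. To obtain $\P(\max_{V_N}\theta_{N,v} \geq m_N + t) \leq Cte^{-\sqrt{2d}t}$ I would run a Bramson/Aid\'ekon-style truncated second-moment argument: restrict attention to vertices whose partial sums $\sum_{k \leq j} Y_{k,v}$ stay below an entropic envelope of the form $\frac{n-j}{n}(m_N+t) - c\log(j \wedge (n-j))$. The linear prefactor $t$ is the classical ballot correction for a Gaussian random walk conditioned to stay below a line. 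The Gaussian factor $e^{-C^{-1}t^2/n}$ takes over once $t \gtrsim \sqrt{n\log n}$ and is recovered directly from the single-point Gaussian tail of $\theta_{N,v}$.

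\textbf{Step 3 (restricted-set estimate).} For \eqref{241}, the $\sqrt{|A|/N^d}$ improvement over the union bound would come from a Cauchy--Schwarz/decoupling argument. Heuristically, the event $\{\max_A \pv \geq m_N+t-s\}$ asks both that the near-maximum is large (probability $\leq Cte^{-\sqrt{2d}(t-s)}$ by Step 2) and that it is achieved in $A$ (probability $\approx |A|/N^d$ by approximate translation invariance of the extremal process, transferred back to $\pv$ via Step 1). Applying Cauchy--Schwarz to combine these two events yields the square-root prefactor. An alternative would be to redo the MBRW comparison for $\max_A\pv$ with the effective volume at the final scale replaced by $|A|$; this requires extra care with the tree's coarse structure but gives the same exponent.

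\textbf{Main obstacle.} The core difficulty is extracting the sharp polynomial prefactor $t$ (respectively $(|A|/N^d)^{1/2}$) rather than the $(\log N)^{3/2}$ (respectively $|A|/N^d$) produced by a naive union bound with the Gaussian tail at the near-maximum level $m_N+t$. This sharpness requires the barrier/ballot entropic truncation at the MBRW level in Step 2, together with a careful treatment of the periodic-versus-Euclidean distance mismatch in the Slepian comparison of Step 1; the combination of these is the bulk of the technical work.
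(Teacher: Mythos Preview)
The paper does not prove this lemma at all: it is quoted verbatim from \cite{drz}, Proposition 1.1, with no argument supplied. So there is no ``paper's own proof'' to compare against; your task was to reconstruct a result the paper imports as a black box.

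Your outline is in the right spirit (Gaussian comparison to a hierarchical field, then a Bramson-type barrier estimate), but two points deserve correction. In Step 1 you compare to MBRW; for an \emph{upper} bound on the tail of the maximum the comparison in \cite{drz} is to a \emph{dilated BRW}, exactly as in the proof of the upper bound \eqref{363} in the present paper (see \eqref{367}--\eqref{368}): one adds a \emph{common} Gaussian $a_v X$ to $\phi_N$ and dilates BRW by $2^\kappa$ so that the covariance inequality goes the correct way. Your proposed fix for the periodic-versus-Euclidean mismatch (restricting to dyadic sub-boxes) would only upper-bound the max over a sub-box, not over all of $V_N$, so it does not close the gap; the dilated-BRW route avoids the periodic distance entirely.

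In Step 3, the Cauchy--Schwarz heuristic you sketch does not produce the factor $(|A|/N^d)^{1/2}$; there is no obvious way to split the event $\{\max_A \pv \geq m_N+t-s\}$ into two pieces whose probabilities multiply to $(|A|/N^d)\cdot t^2 e^{-2\sqrt{2d}(t-s)}$. The square root in \cite{drz} instead comes from running the first-moment barrier computation on the comparison field with the leaf set replaced by $A$: at an intermediate scale $k$ chosen so that $2^{dk}\approx N^d/|A|$, the set $A$ meets at most $|A|$ of the $2^{dk}$ coarse boxes, and the ballot estimate on the remaining $n-k$ levels together with the one-point Gaussian bound on the first $k$ levels combine to give the stated exponent. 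Your ``alternative'' in Step 3 is closer to this, but the mechanism is a scale-splitting first-moment bound, not Cauchy--Schwarz.
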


One important consequence of the above is that 
for any $\lambda>0$,
{\begin{align} \label{410}
\lim_{\delta \downarrow 0} \limsup_{N\rightarrow \infty} \mathbb{P}( \max_{V_N \backslash  V_N^\delta} \pv  \geq  m_N-\lambda)=0.
\end{align}
This is obtained by taking $t=1$, $s=\lambda+1$ and $A=V_N \backslash  V_N^\delta$  in \eqref{410}. }

 The following lemma, which will be proved in  Section \ref{section 3}, provides the upper bound for the left tail of the maximum restricted to a sub-box of $V_N$. 
 
\begin{lemma} \label{lemma 2.8}
{Let $\phi_N$ be a centered Gaussian field satisfying Assumptions \ref{a1} and \ref{a2}.  Then,
there exist constants $C,c_2,c_2',t_0>0$ such that for  any  $t\geq t_0$, $L\geq 2$, $N \geq c_2' L$, and any  box $B\subseteq V_N^{1/10}$ of size $  \left \lfloor{N/L  }\right \rfloor  
$,}
\begin{align} \label{280}
 \mathbb{P}(\max_{v\in B} \phi_{N,v} \leq m_N -2\sqrt{2d}\log L - t ) \leq Ce^{-c_2t}
\end{align}
and
\begin{align} \label{2800}
 \mathbb{P}(\max_{v\in B} \theta_{N,v} \leq m_N -2\sqrt{2d}\log L - t ) \leq Ce^{-c_2t}.
\end{align}
\end{lemma}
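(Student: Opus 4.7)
The plan is to first establish the MBRW bound \eqref{2800} and then deduce \eqref{280} from it via Slepian/Kahane comparison. By Assumption \ref{a2} and Lemma \ref{lemma 2.3}, the covariance kernels of $\phi_N$ and $\theta_N$ on $V_N^{1/10}$ agree up to a uniform $O(1)$ additive constant. After equalizing variances by adding independent Gaussian noise of bounded variance at each site, and adding a common constant Gaussian to $\theta_N$ so as to pointwise dominate the covariances, Lemma \ref{lemma 3.4} yields
\begin{align*}
\mathbb{P}(\max_B \phi_N \leq y) \leq \mathbb{P}(\max_B \theta_N \leq y + C_0)
\end{align*}
for some absolute constant $C_0$, reducing \eqref{280} to \eqref{2800} after absorbing $C_0$ into $t_0$.

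For \eqref{2800}, the key step is an orthogonal Gaussian decomposition of $\theta_N$ restricted to $B$ into a constant part and an independent fluctuation. Let
\begin{align*}
\bar\theta := \frac{1}{|B|}\sum_{u\in B}\theta_{N,u}, \qquad R_v := \theta_{N,v} - \bar\theta, \quad v\in B.
\end{align*}
Then $\bar\theta$ does not depend on $v$ and, being uncorrelated with $R=(R_v)_{v\in B}$, is independent of it. A direct computation using \eqref{mbrwcov} gives $\var(\bar\theta) = \log L + O(1)$ and
\begin{align*}
\cov(R_u, R_v) = \log(N/L) - \log|u-v| + O(1), \qquad u, v \in B,
\end{align*}
so $R$ has the covariance of an MBRW on a box of size $N/L$, up to an $O(1)$ error. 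By Proposition \ref{prop 3.3}, $R$ inherits the Gumbel-type lower tail $\mathbb{P}(\max_B R \leq m_{N/L} - u) \leq Ce^{-cu}$ for $u \geq 1$, a known consequence of the hierarchical structure of MBRW.

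Since $\bar\theta$ is constant on $B$, we have $\max_B \theta_N = \bar\theta + \max_B R$. Writing $y := m_N - 2\sqrt{2d}\log L - t$ and using $m_N - m_{N/L} = \sqrt{2d}\log L + O(1)$, the next step is to decompose
\begin{align*}
\mathbb{P}(\max_B \theta_N \leq y) = \int_{\mathbb{R}} \mathbb{P}(\max_B R \leq y - z)\, f_{\bar\theta}(z)\, dz
\end{align*}
and split at $z_\star := y - m_{N/L} + 1 = -\sqrt{2d}\log L - t + O(1)$. On $\{z > z_\star\}$ the MBRW tail gives $\mathbb{P}(\max_B R \leq y - z) \leq Ce^{-c(m_{N/L}-y+z)}$; integrating against $f_{\bar\theta}$ and using $\mathbb{E}[e^{-c\bar\theta}] = e^{c^2\var(\bar\theta)/2} \leq L^{c^2/2 + O(1)}$ yields a contribution of order $L^{c^2/2 - c\sqrt{2d}}e^{-ct}$, which is $\leq Ce^{-ct}$ once $c \in (0, 2\sqrt{2d})$. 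On $\{z \leq z_\star\}$, the Gaussian tail of $\bar\theta$ gives $\mathbb{P}(\bar\theta \leq z_\star) \leq \exp(-(\sqrt{2d}\log L + t)^2/(2\var(\bar\theta))) \leq CL^{-d}e^{-\sqrt{2d}t}$. Summing gives $Ce^{-c_2 t}$ with $c_2 > 0$ independent of $L$.

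The main obstacle is precisely this uniformity in $L$: the typical deficit of $\max_B \theta_N$ from $m_N$ is only $\sqrt{2d}\log L + O(1)$, yet the lemma demands $2\sqrt{2d}\log L$ of slack. This extra $\sqrt{2d}\log L$ is exactly what is needed to offset the Gaussian MGF factor $L^{c^2/2}$ coming from the common random shift $\bar\theta$ against the MBRW decay $L^{-c\sqrt{2d}}$, so that the heavy-tailed $\bar\theta$ (with tail much worse than Gumbel) can be absorbed while keeping $c_2$ independent of $L$. A minor subtlety is that $L$ need not be dyadic and $B$ need not align with the MBRW's dyadic grid, which is resolved by rounding $L$ to the nearest power of two and adjusting $B$ at the cost of $O(1)$ constants.
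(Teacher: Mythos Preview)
Your decomposition $\theta_{N,v}=\bar\theta+R_v$ with $\bar\theta=\frac{1}{|B|}\sum_{u\in B}\theta_{N,u}$ does not give independent pieces. For a centered Gaussian vector, $\bar\theta\perp R$ is equivalent to the row sums $\sum_{u\in B}\cov(\theta_{N,u},\theta_{N,v})$ being constant in $v\in B$. On a proper sub-box $B$ this fails: using \eqref{mbrwcov}, the row sum equals $|B|\log N-\sum_{u\in B}\log|u-v|^{(N)}+O(|B|)$, and the second term depends on whether $v$ sits in the bulk or near the boundary of $B$ (the function $w\mapsto \int_{[0,1]^d}\log|y-w|\,dy$ is not constant). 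The outcome is $\cov(\bar\theta,R_v)=O(1)$, not zero, so your product formula $\mathbb{P}(\max_B\theta_N\le y)=\int\mathbb{P}(\max_B R\le y-z)f_{\bar\theta}(z)\,dz$ is not valid, and the subsequent splitting at $z_\star$ breaks down. One could try to repair this by conditioning on $\bar\theta$ instead, but then the conditional mean of $R_v$ picks up a $v$-dependent shift of size $O(z/\log L)$, and one must also run a Slepian comparison between the conditional covariance of $R$ and a genuine MBRW of size $N/L$ to import the Gumbel tail \eqref{289}; none of this is in the write-up.

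A second, smaller issue is the reduction of \eqref{280} to \eqref{2800}. To get the inequality in the Slepian direction you want, you need a field $\tilde\theta$ with $\var\tilde\theta_v=\var\phi_{N,v}$ and $\cov(\tilde\theta)\ge\cov(\phi_N)$ pointwise. Adding a common Gaussian to $\theta_N$ boosts covariances, but then matching the (non-constant) variances of $\phi_N$ forces $v$-dependent coefficients $a_v$, and $\max_v(\theta_{N,v}+a_vY)$ is no longer $\max_v\theta_{N,v}$ plus a constant shift. This is handled in the paper, but it needs an argument.

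The paper avoids both issues by \emph{not} decomposing $\theta_N|_B$ at all. Instead it builds an auxiliary field $\bar\theta_N^{N',K}$ (resp.\ $\tilde\theta_N^{N',K}$) on $B$ from many i.i.d.\ copies of a \emph{smaller} MBRW $\theta_{N'}$, glued by a common Gaussian $a_vY$, with $N'=2^{n'}$ chosen so that $\log(N/N')\asymp t'$; see \eqref{aux0}. The covariance domination over $\phi_N$ (resp.\ $\theta_N$) is checked by hand (Lemma~\ref{lemma 3.8}, Lemma~\ref{lemma 3.11}), and then one only needs the right-tail input Lemma~\ref{lemma mbrw} for a \emph{single} block, bootstrapped over the $\asymp L^{-d}e^{\sqrt{d/8}\,t'}$ independent blocks to drive the probability down. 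The common Gaussian $Y$ is handled separately via its one-dimensional tail. This sidesteps any need for an orthogonal splitting of MBRW on a sub-box.
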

The result for BRW can also be obtained by a comparison argument or directly but for our arguments we will only need the above two estimates.
{This lemma is a generalization of \cite[Lemma 2.1]{drz}  and \cite[Lemma 2.8]{drz}, where corresponding results for the \emph{whole} region $V_N$ was obtained: Under Assumption \ref{a2},
\begin{align} \label{289}
 \mathbb{P}(\max_{v\in V_N} \phi_{N,v} \leq m_N  - t ) \leq Ce^{-ct} ,\quad \mathbb{P}(\max_{v\in V_N} \theta_{N,v} \leq m_N  - t ) \leq Ce^{-ct}.
\end{align}
}

{A simple corollary of Lemma \ref{lemma 2.4} and  \eqref{289} is the tightness of the centered maximum for any centered Gaussian fields satisfying Assumptions \ref{a1} and \ref{a2} which we record for the purpose of reference.
\begin{corollary}[\cite{drz}, Theorem 1.2] \label{tight}
Let $\phi_N$ be a centered Gaussian field satisfying Assumptions \ref{a1} and \ref{a2}. Then, the sequence $ \{\phi_{N,*}  - m_N \}_{N\geq 1}$ is tight.
\end{corollary}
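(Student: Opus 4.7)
The plan is to verify tightness of the real-valued sequence $\{\phi_{N,*} - m_N\}_{N\geq 1}$ by establishing uniform upper and lower tail bounds, each of which has already been handed to us by the preceding results. Concretely, I will show that for every $\iota>0$ there exists $T=T(\iota)$, independent of $N$, such that $\P(|\phi_{N,*} - m_N| \geq T) \leq \iota$ for all $N$ large enough.

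For the upper tail, I would directly invoke Lemma \ref{lemma 2.4}, which under Assumption \ref{a1} yields
\[
\P(\phi_{N,*} \geq m_N + t) \;\leq\; C t\, e^{-\sqrt{2d}\, t} e^{-C^{-1} t^2/n}\;\leq\; C t\, e^{-\sqrt{2d}\, t}.
\]
The right-hand side is independent of $N$ and tends to $0$ as $t\to\infty$, so this controls the right tail uniformly.

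For the lower tail I would use the second estimate in \eqref{289} (which is stated under Assumption \ref{a2} in the excerpt), namely
\[
\P(\phi_{N,*} \leq m_N - t) \;\leq\; C e^{-ct},
\]
again a bound independent of $N$ that vanishes as $t\to\infty$.

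Putting these together, given $\iota>0$ I would choose $T$ so that $CT e^{-\sqrt{2d}\,T} + C e^{-cT} \leq \iota$; then a union bound gives $\P(|\phi_{N,*} - m_N| \geq T) \leq \iota$ uniformly in $N$, which is exactly tightness of the sequence of real-valued random variables. Since the entire argument is a direct combination of two previously recorded tail estimates, there is no real obstacle here: the corollary is essentially a bookkeeping consequence of Lemma \ref{lemma 2.4} and \eqref{289}, and its statement is recorded separately only because it will be cited in later sections.
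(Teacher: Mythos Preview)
Your proposal is correct and matches the paper's approach exactly: the paper states that Corollary~\ref{tight} is ``a simple corollary of Lemma~\ref{lemma 2.4} and \eqref{289},'' which is precisely the upper- and lower-tail combination you spell out. One tiny slip: the estimate you want from \eqref{289} is the \emph{first} one (for $\phi_N$), not the second (which is for the MBRW $\theta_N$); the formula you wrote is nonetheless the right one.
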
}

\section{Level sets of log-correlated Gaussian fields} \label{section 3}
This section is devoted to proving Theorem \ref{theorem level}. {Throughout this section, Assumptions \ref{a1} and \ref{a2}
will remain in force without being explicitly stated repeatedly.}
Recall that for $t\in \R$,
{ \begin{align} \label{level set notation}
\Gamma_{N}(t) := \{v\in V_N:\pv \geq  m_N-t\}.
\end{align}}

We start with the upper bound.

 \begin{theorem} \label{theorem 3.0}
There exists a constant $c_0>0$ such that for any constant  $\e> 0 $,  for  large enough $t$ and any $N\geq c_0 e^{(\sqrt{2d}+\e)t/d}$,
 \begin{align} \label{3000}
  \mathbb{P}(|\Gamma_N(t) |  \geq  e^{(\sqrt{2d}+\e) t}) \leq  e^{-  \e t/8}.
 \end{align}
 \end{theorem}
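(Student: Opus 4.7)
The plan is to combine a sharp upper bound on $\E S_\ell(\phi_N)$ (the sum of the $\ell$ largest values of $\phi_N$) with a bootstrapping argument on an auxiliary field of larger scale, and close the level-set estimate by contradiction. The first target is
\[
\E S_\ell(\phi_N) \leq \ell\Big(m_N - \Big(\tfrac{1}{\sqrt{2d}}-\epsilon'\Big)\log \ell\Big)
\]
for any fixed $\epsilon'>0$ and $1\leq \ell\leq N^d$. Since $S_\ell(\phi)=\max_{|I|=\ell}\sum_{i\in I}\phi_i$ is convex in $\phi$, Kahane's inequality (Proposition \ref{prop 3.3}) reduces this to the same bound for a comparable Gaussian field (e.g., a suitable modification of BRW augmented by an independent Gaussian so that variances match and covariances are pointwise dominated by those of $\phi_N$). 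For such a BRW-type field the bound follows from the branching structure via classical right-tail estimates on the tilted maximum of sub-trees, in the spirit of the arguments in \cite{bl1}.

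For the bootstrapping step, suppose toward contradiction that $\mathbb{P}(|\Gamma_N(t)|\geq e^{(\sqrt{2d}+\epsilon)t})>e^{-\epsilon t/8}$ for some large $t$ and $N\geq c_0 e^{(\sqrt{2d}+\epsilon)t/d}$. For a large integer $m$, build an auxiliary field $\psi_{m,N}$ on $V_{2^m N}$ as the sum of (i) the coarse part (scales $\geq N$) of an MBRW on $V_{2^m N}$ and (ii) an independent copy of $\phi_N$ placed on each of the $2^{md}$ sub-boxes of side length $N$. This matches variances with $\phi_{2^m N}$ up to $O(1)$, and the sub-copies are i.i.d.\ across sub-boxes. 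The event $E_m$ that at least one sub-copy has $|\Gamma_N(t)|\geq e^{(\sqrt{2d}+\epsilon)t}$ has probability $\geq 1-(1-e^{-\epsilon t/8})^{2^{md}}\to 1$ as $m\to\infty$. Setting $\ell=e^{(\sqrt{2d}+\epsilon)t}$, on $E_m$ we obtain $S_\ell(\psi_{m,N})\geq \ell(m_{2^m N}-t-C)$ by combining the level-set values in the triggered sub-copy with the typical contribution from the coarse MBRW component; truncation via Lemma \ref{lemma 2.4} converts this into a lower bound on $\E S_\ell(\psi_{m,N})$.

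To close the contradiction, observe that the covariance of $\psi_{m,N}$ dominates that of $\phi_{2^m N}$ pointwise up to $O(1)$: within a sub-box both covariances equal $\log(2^m N)-\log|u-v|+O(1)$, and across sub-boxes MBRW's coarse covariance at least matches the log-correlated one by Lemma \ref{lemma 2.3} (using $|u-v|^{(2^m N)}\leq |u-v|$). Kahane's inequality applied again to $S_\ell$ then yields $\E S_\ell(\psi_{m,N})\leq \E S_\ell(\phi_{2^m N})+O(\ell)$, so the previous lower bound forces $\E S_\ell(\phi_{2^m N})\geq \ell(m_{2^m N}-t-C)$. Combined with the upper bound from the first paragraph applied at scale $2^m N$, this requires $(\tfrac{1}{\sqrt{2d}}-\epsilon')(\sqrt{2d}+\epsilon)\, t\leq t+C$, which fails for large $t$ once $\epsilon'$ is chosen so that $(\tfrac{1}{\sqrt{2d}}-\epsilon')(\sqrt{2d}+\epsilon)>1$.

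The main obstacle is tracking all $O(1)$ additive constants uniformly across the comparison layers (Kahane from $\phi_N$ to the BRW-type comparator, variance matching between $\psi_{m,N}$ and $\phi_{2^m N}$, and Kahane from $\psi_{m,N}$ back to $\phi_{2^m N}$), and converting the high-probability lower bound on $S_\ell(\psi_{m,N})$ into an expectation bound. The latter requires careful truncation so that atypically huge values in a single sub-copy or in the coarse MBRW component do not inflate $\E S_\ell(\psi_{m,N})$ in ways that fail to survive the final Kahane comparison; here the upper tail bound in Lemma \ref{lemma 2.4} is the essential tool.
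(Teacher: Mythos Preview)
Your overall strategy --- upper bound $\E S_\ell(\phi_N)$ via Kahane against a BRW comparator, bootstrap through an auxiliary field built from i.i.d.\ copies of $\phi_N$, then Kahane back to $\phi$ at a larger scale --- is exactly the paper's. But two steps in your execution have real gaps. First, the claim $S_\ell(\psi_{m,N})\geq \ell(m_{2^m N}-t-C)$ on $E_m$ is not justified: on the triggered sub-block the i.i.d.\ copy contributes $\ell(m_N-t)$, while the ``typical'' coarse-MBRW value at a fixed point is $O(\sqrt m)$, not $m_{2^m N}-m_N\approx \sqrt{2d}\,m\log 2$. The paper sidesteps this entirely: it replaces your coarse-MBRW layer by a single common Gaussian $a_vY$ (so the perturbation is trivial to strip off), works only with the unperturbed sum $\tilde S_\ell$ of the i.i.d.\ blocks, proves $\E\tilde S_\ell\geq \ell\bigl(m_N-\tfrac{1-8\e'}{\sqrt{2d}}\log\ell\bigr)$ directly, and then uses the elementary Lemma~\ref{lemma 3.5} (adding an independent centered field can only increase $\E S_\ell$). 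The larger scale is $N'=\lfloor \ell^\eta KN\rfloor$ with $\eta=3\e'/d$ small, so that $m_{N'}-m_N=O(\e'\log\ell)$ does not spoil the final contradiction; your ``$m\to\infty$'' is too coarse for this balance, since the bootstrap needs $2^{md}\gtrsim e^{\e t/8}$ while the contradiction needs $m_{2^mN}-m_N$ to stay below a small multiple of $t$.

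Second, and more seriously, your account of the expectation conversion has the direction reversed. You want $\E S_\ell(\psi_{m,N})$ to be \emph{large}; Kahane then forces $\E S_\ell(\phi_{2^m N})$ to be large, and that is what contradicts the upper bound. ``Atypically huge values'' only help you. The actual danger is that on $E_m^c$ the sum $S_\ell$ could be very \emph{negative} and drag down the expectation, so you need a \emph{lower}-tail bound on $S_\ell$, not the upper-tail bound of Lemma~\ref{lemma 2.4}. The paper supplies this as Lemma~\ref{lemma 3.9}: $\mathbb P\bigl(S_\ell(\phi_N)\leq\ell(m_N-2\sqrt{2/d}\log\ell-s)\bigr)\leq C\ell e^{-c_5 s}$, proved by slicing $V_N$ into $\ell$ disjoint sub-boxes and applying a left-tail estimate on the maximum restricted to each (Lemma~\ref{lemma 2.8}); since each block of the auxiliary field is a copy of $\phi_N$, the same bound controls the lower tail of $\tilde S_\ell$ and the expectation bound then follows from the identity $\E X=M-\int_{-\infty}^M\P(X\leq t)\,dt+\int_M^\infty\P(X\geq t)\,dt$. (A smaller issue: ``covariance dominates up to $O(1)$'' is not enough for Lemma~\ref{lemma 3.3}; the paper introduces a dilation/spacing parameter $K$ so that the $\log K$ slack absorbs all $O(1)$ errors and gives strict pointwise domination with exactly matched variances.)
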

 
 Recall from Section \ref{section 1.3} that the above allows us to essentially ignore the Gibbs mass outside $\Gamma_N(t)$ for a large but fixed $t$ (see Lemma \ref{lemma 7.7} for a precise statement).\\

 The next result states the lower bound.

\begin{theorem} \label{theorem lower}
{Let $\e,\iota>0$ be any constants. Then, for  any large enough (fixed) $t$, for sufficiently large $N$,}
\begin{align*}
\mathbb{P}(|\Gamma_N(t) | \geq  e^{ (\sqrt{2d}-\e )t} )  \geq 1-\iota.
\end{align*}
\end{theorem}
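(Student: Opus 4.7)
The plan is to establish a first-moment lower bound of the form $\mathbb{E} S_\ell(\phi_N) \geq \ell\bigl(m_N - \tfrac{1}{\sqrt{2d}} \log \ell - c\bigr)$, where $S_\ell$ denotes the sum of the top $\ell$ values and $\ell \approx e^{(\sqrt{2d}-\epsilon')t}$ for a slightly smaller $\epsilon' < \epsilon$, and then convert it into the claimed probabilistic lower bound on $|\Gamma_N(t)|$ by combining with the upper tail estimates on the maximum (Lemma \ref{lemma 2.4}) and on the level sets (Theorem \ref{theorem 3.0}). The elementary deterministic step is this: on the event $\{\phi_{N,*} \leq m_N + s\}$ for some fixed $s = s(\iota)$, if $|\Gamma_N(t)| = k < \ell$, then the top $\ell$ values satisfy $S_\ell(\phi_N) \leq k(m_N + s) + (\ell - k)(m_N - t)$, and a small rearrangement shows that this is strictly below $\ell(m_N - \tfrac{1}{\sqrt{2d}}\log \ell - c)$ unless $k \geq c_\epsilon \ell$ for a constant $c_\epsilon > 0$ (valid for $t$ large). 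Thus a constant-probability lower tail for $S_\ell(\phi_N)$ around its expected value already yields a constant-probability lower bound on $|\Gamma_N(t)|$.

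The crucial ingredient is the MBRW statement quoted in Section \ref{section 1.3}: there exist constants $c, c' > 0$ with
\begin{equation*}
\mathbb{P}\Bigl(S_\ell(\theta_N) \geq \ell\bigl(m_N - \tfrac{1}{\sqrt{2d}} \log \ell - c'\bigr)\Bigr) \geq c
\end{equation*}
for all $\ell \geq 1$ and $N$ sufficiently large. I would prove this by a truncated second moment argument on the candidate set
\begin{equation*}
G := \Bigl\{v \in V_N : \theta_{N,v} \in \bigl[m_N - \tfrac{1}{\sqrt{2d}}\log \ell,\ m_N - \tfrac{1}{\sqrt{2d}}\log \ell + 1\bigr] \text{ and partial sums stay under a linear barrier}\Bigr\},
\end{equation*}
where the partial sums are $\sum_{j=0}^{k} \sum_{B \in \mathcal{B}_j(v)} g^N_{j,B}$ for $k = 0, \ldots, n-1$. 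The first moment estimate follows from a Brownian bridge small-deviation estimate for random walks constrained below a line, using the independent increments built into MBRW. The barrier condition decouples pairs of points whose common branching depth is shallow, so that each hierarchical distance class contributes only $O(1)$ to the normalized second moment. A Paley--Zygmund application then gives the positive-probability lower tail.

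The passage from MBRW to a general log-correlated $\phi_N$ is via Kahane's inequality. By Assumption \ref{a2} and Lemma \ref{lemma 2.3}, the covariance kernels of $\phi_N$ and $\theta_N$ both agree with $\log N - \log_+ |u-v|$ up to additive $O(1)$ errors (modulo the harmless replacement $|u-v| \leftrightarrow |u-v|^{(N)}$ in the interior). A small modification of $\theta_N$ — for instance, running MBRW on a slightly enlarged box and restricting, plus an independent coarse Gaussian shift closing the residual covariance gap — produces a field $\widetilde\theta_N$ with $\mathrm{Cov}(\widetilde\theta_{N,u}, \widetilde\theta_{N,v}) \geq \mathrm{Cov}(\phi_{N,u}, \phi_{N,v})$ pointwise and matching variances. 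Applying Proposition \ref{prop 3.3} to a smooth convex approximation of $\mathbf{1}_{S_\ell \geq \ell\tau}$ then transfers the constant-probability lower tail from $\widetilde\theta_N$ to $\phi_N$, yielding via the deterministic reduction a constant-probability lower bound on $|\Gamma_N(t)|$. Finally, a bootstrapping step analogous to the one used for the upper bound \eqref{04} amplifies this to the required $1 - \iota$ statement: for instance, by restricting $\phi_N$ to many well-separated sub-boxes of $V_N$, applying the constant-probability bound in each, and using a Kahane-based decoupling across sub-boxes to drive the failure probability below $\iota$, then reabsorbing the centering shift $m_N - m_{N/L}$ into $t$ at the cost of slightly adjusting $\epsilon$.

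The main obstacle is the second moment computation for MBRW: the barrier slope, additive shift, and window width must be tuned precisely so that the first moment is of order $\ell$ while each hierarchical distance class in the second moment contributes only $O(1)$, a delicate balance that exploits the exact tree structure of MBRW via an explicit Brownian bridge input. The transfer to $\phi_N$ via Kahane and the bootstrapping step are technically routine but rely crucially on this sharp tuning.
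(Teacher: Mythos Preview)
Your opening sentence identifies the right ingredients --- the expectation lower bound $\mathbb{E} S_\ell(\phi_N)\ge \ell(m_N-\tfrac{1}{\sqrt{2d}}\log\ell-c)$ together with the upper-tail control from Lemma~\ref{lemma 2.4} and Theorem~\ref{theorem 3.0} --- and this is exactly how the paper proceeds. But the route you then describe for obtaining the constant-probability lower tail on $S_\ell(\phi_N)$ does not work: you propose applying Proposition~\ref{prop 3.3} to ``a smooth convex approximation of $\mathbf{1}_{S_\ell\ge\ell\tau}$'' to transfer the tail bound from the MBRW-based auxiliary field to $\phi_N$. The paper explicitly cautions in Section~\ref{section 3.1} that $S_\ell$ admits \emph{no} stochastic comparison across Gaussian fields --- only its expectation does (Lemma~\ref{lemma 3.3}). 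For Kahane's inequality one needs each mixed partial $\partial_{ij}^2 f$ to carry a definite sign, but for $f=g\bigl(S_\ell(\cdot)\bigr)$ the off-diagonal second partials of any smoothed top-$\ell$ sum change sign (already for $\ell=1$ the log-sum-exp smoothing has $\partial_{ij}^2<0$ off-diagonal while the diagonal entries are positive), so Proposition~\ref{prop 3.3} is inapplicable. Your final bootstrapping step inherits the same defect: the sub-box restrictions of $\phi_N$ are not independent, and a ``Kahane-based decoupling'' of their level-set events again presupposes a tail comparison that is unavailable.

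The paper sidesteps this by placing the bootstrap \emph{before} the comparison and keeping Kahane strictly at the level of expectations. Independent copies of MBRW are embedded into an auxiliary field $\bar\theta_N^{N',K}$ whose covariance dominates that of $\phi_N$ (Lemma~\ref{lemma 3.8}); the genuine independence of the copies amplifies the constant-probability MBRW tail (your second paragraph, which is correct and is Lemma~\ref{lemma 3.7}) to a high-probability tail on $\hat S_\ell$, and combined with the left-tail bound (Lemma~\ref{lemma 3.9}) this yields the expectation lower bound~\eqref{364} via Lemma~\ref{lemma 3.3}. The passage to a $1-\iota$ lower tail on $S_\ell(\phi_N)$ then happens \emph{without any further comparison}: Theorem~\ref{theorem 3.0} already controls the upper tail of $S_\ell(\phi_N)$ (Step~1 of the paper's proof), and combining this with the expectation lower bound through a decomposition of $\mathbb{E} S_\ell(\phi_N)$ forces $\mathbb{P}\bigl(S_\ell(\phi_N)\le \ell(m_N-(\tfrac{1}{\sqrt{2d}}+\kappa)\log\ell)\bigr)\le\iota$ directly (Step~2). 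The deterministic conversion to $|\Gamma_N(t)|$ (Step~3) is then essentially what you describe.
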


Note that in this case we merely establish a much weaker failure probability estimate; nonetheless together the above imply Theorem \ref{theorem level}. It is worth pointing out that we do establish an exponential lower bound on $|\Gamma_N(t)|$ with a different coefficient in the exponent with exponentially small failure probability (see the discussion after Lemma \ref{lemma 3.9}).

 \subsection{Expectation of sum of maximum values} \label{section 3.1} Recall that our strategy for proving the above theorems rely on obtaining sharp estimates on the sum of the $\ell$ largest values of $\phi_N$. We first do this at the level of expectation by first obtaining the same for an appropriate BRW and appealing to comparison.

 For any centered Gaussian field $X = (X_v)_{v\in V}$ with a   finite set $V$ and a positive integer $\ell$,
 define  $ S_{\ell}(X) $ to be the sum of largest $\ell$ values of  $X$:
 \begin{align} \label{sum}
 S_{\ell}(X) := \max\Big\{\sum_{v\in I}X_v:   I\subseteq V, |I|=\ell\Big\}.
 \end{align}
 We define $S_\ell(X) := - \infty $ when $\ell>|V|$.
For example, if $\ell=1$, then  $S_{\ell}(X)=S_1(X) = \max_{v\in V} X_v$. As mentioned in Section \ref{section 1.3}, while the maximum can be compared (as random variables) across Gaussian processes, this is no longer true for $S_{\ell}$ once $\ell>1$. For example, for two independent standard Gaussian random variables $Z$ and $W$, consider two Gaussian vectors  $X=(Z,W)$ and $Y=(Z,Z)$. Clearly, the covariance of $X$ is dominated by that of $Y,$ with the variances agreeing. However, since, $S_{2}(X)=Z+W$  and  $S_{2}(Y)=2Z$ and $Z+W \overset{\text{law}}{\sim} \mathcal{N}(0,2)$, $2Z \overset{\text{law}}{\sim} \mathcal{N}(0,4)$, for any $t>0$,
\begin{align*}
\mathbb{P}(Z+W > t ) < \mathbb{P}(2Z>t).
\end{align*}
which violates the order expected from Lemma \ref{lemma 3.4}.

{However, fortunately, as was shown in \cite{dz}, a comparison can be established at the level of expectation.} {This involves considering new Gaussian fields formed by the summation of all possible $\ell$ distinct elements in $X$ and $Y$, along with an application of Proposition \ref{prop 3.3}.}
 
 \begin{lemma}[\cite{dz}, Lemma 2.7] \label{lemma 3.3}
 Let $X=(X_1,\cdots,X_n)$ and $Y=(Y_1,\cdots,Y_n)$ be two centered Gaussian vectors in $\R^n$ such that 
\begin{align*}
\mathbb{E}(X_i^2)=\mathbb{E}(Y_i^2),\quad \forall i=1,\cdots,n
\end{align*}
and
\begin{align*}
\textup{Cov}(X_i,X_j) \leq \textup{Cov}(Y_i,Y_j),\quad \forall i,j=1,\cdots,n.
\end{align*}
Then, for any $\ell=1,\cdots,n$,
 \begin{align*}
 \mathbb{E}S_\ell(X)  \geq  \mathbb{E}S_\ell(Y) .
 \end{align*}
 \end{lemma}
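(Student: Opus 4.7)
The plan is to deduce the lemma from the Gaussian interpolation argument underlying Proposition \ref{prop 3.3} (Kahane's convex inequality), applied directly to the pair $(X, Y)$ with the function $f(x_1, \ldots, x_n) := S_\ell(x_1, \ldots, x_n)$. The ``new Gaussian field formed by summations'' point of view from the hint is used to view $S_\ell$ as a maximum of linear forms:
\[
S_\ell(X) = \max_{|I|=\ell}\, \tilde X_I, \qquad \tilde X_I := \sum_{i \in I} X_i.
\]
Applying Proposition \ref{prop 3.3} to the subset-sum processes $(\tilde X_I), (\tilde Y_I)$ directly with $f = \max$ is complicated by a diagonal variance mismatch ($\Var \tilde X_I \le \Var \tilde Y_I$ with strict inequality in general), so we instead work on the original coordinates, where the hypothesis $\E X_i^2 = \E Y_i^2$ forces exact cancellation of the diagonal contributions.

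The key structural input is the \emph{submodularity} of $S_\ell$: for $i \neq j$,
\[
\frac{\partial^2 S_\ell}{\partial x_i\, \partial x_j} \leq 0
\]
in the distributional sense. Heuristically, for generic $x$ the derivative $\partial_j S_\ell(x) = \mathbf{1}\{j \in \text{top-}\ell(x)\}$ is non-increasing in any other coordinate $x_i$, since raising $x_i$ can only push $i$ into the top-$\ell$ set, evicting some other index. A rigorous derivation comes from the variational representation
\[
S_\ell(x) = \inf_{s \in \R} \Bigl\{ \ell s + \sum_i (x_i - s)_+ \Bigr\},
\]
which exhibits $S_\ell$ as an infimum of functions separable in the coordinates (so each has identically zero mixed partials); such infima produce non-positive mixed second distributional derivatives. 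A mollification $f_\e := S_\ell * \rho_\e$ is then smooth and still satisfies $\partial_i \partial_j f_\e \leq 0$ for $i \neq j$ (convolution against a non-negative mollifier preserves the sign), while the diagonal entries $\partial_i^2 f_\e \geq 0$ (by convexity of $S_\ell$) remain unconstrained.

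Running the Gaussian interpolation $C_t := (1-t) C_Y + t C_X$ with $X_t \sim \mathcal{N}(0, C_t)$ underlying Proposition \ref{prop 3.3}, Gaussian integration by parts yields
\[
\frac{d}{dt}\, \E f_\e(X_t) = \frac{1}{2} \sum_{i,j} \bigl(\Cov(X_i, X_j) - \Cov(Y_i, Y_j)\bigr)\, \E\bigl[\partial_i \partial_j f_\e(X_t)\bigr].
\]
The diagonal $i=j$ terms vanish by the equal-variance hypothesis, sidestepping entirely the ``wrong'' sign of $\partial_i^2 f_\e$; every off-diagonal term is a product of two non-positive quantities, hence non-negative. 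Therefore $\E f_\e(X) \geq \E f_\e(Y)$, and letting $\e \downarrow 0$ via dominated convergence (with $|S_\ell(X)| \leq \ell \max_i |X_i|$ providing the integrable dominator through standard Gaussian moment bounds) delivers $\E S_\ell(X) \geq \E S_\ell(Y)$.

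The main obstacle is the rigorous verification of the distributional submodularity of $S_\ell$ and its preservation under mollification; once that is in place, the interpolation is routine. The subtle point that rescues the argument is the exact cancellation of the diagonal from the equal-variance hypothesis, without which the convexity of $S_\ell$ would preclude a direct Kahane-type comparison.
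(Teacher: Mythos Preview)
Your overall strategy --- running the Gaussian interpolation on the original coordinates, killing the diagonal via the equal-variance hypothesis, and handling the off-diagonal terms through the submodularity of $S_\ell$ --- is correct and is essentially what the paper's one-line sketch (forming the subset-sum fields $\tilde X_I=\sum_{i\in I}X_i$ and invoking Proposition~\ref{prop 3.3}) amounts to once unpacked. You are also right that Proposition~\ref{prop 3.3} cannot be applied naively on the subset-sum level: $\Var\tilde X_I\le\Var\tilde Y_I$ has the wrong sign relative to the diagonal second derivatives of $\max$, and it is precisely the equal-variance hypothesis on the original coordinates that rescues the argument.

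There is, however, a real gap in your justification of submodularity. The general principle that an infimum of separable functions has non-positive mixed second derivatives is \emph{false}: $\min(x_1,x_2)$ is the infimum of the two separable (indeed linear) functions $g_1(x)=x_1$ and $g_2(x)=x_2$, yet $\partial_1\partial_2\min=+\delta(x_1-x_2)\ge 0$ distributionally, so $\min$ is supermodular. Thus the Ky Fan representation $S_\ell(x)=\inf_s\{\ell s+\sum_i(x_i-s)_+\}$ does not by itself deliver what you need. The argument you labelled a ``heuristic'' is in fact the correct rigorous proof: the decreasing-differences inequality
\[
S_\ell(x+\delta e_i+\epsilon e_j)+S_\ell(x)\ \le\ S_\ell(x+\delta e_i)+S_\ell(x+\epsilon e_j)\qquad(i\ne j,\ \delta,\epsilon>0)
\]
follows directly from your observation that raising $x_i$ can only evict $j$ from the top-$\ell$ set. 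This inequality is preserved by convolution against $\rho_\e\ge 0$, which yields $\partial_i\partial_j f_\e\le 0$ for the mollified function. Once you promote this from heuristic to proof, the remainder of your argument goes through unchanged.
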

 
We next for the record state the following simple but useful fact.

 \begin{lemma} \label{lemma 3.5}
For a finite set $I$, suppose that $X = (X_i)_{i\in I}$ and $Y =  (Y_i)_{i\in I}$ are independent random vectors such that  $\mathbb{E}X_i=\mathbb{E}Y_i=0$ for all $i\in I$. Then,
\begin{align*}
\mathbb{E} S_\ell(X) \leq \mathbb{E} S_\ell (X+Y).
\end{align*}
  \end{lemma}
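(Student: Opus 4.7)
The plan is very short because the statement is a clean convexity/independence fact. The key observation is that $S_\ell$ has a variational representation: writing $\binom{I}{\ell}$ for the set of $\ell$-element subsets of $I$,
\begin{align*}
S_\ell(X) \;=\; \max_{J \in \binom{I}{\ell}} \sum_{i \in J} X_i,
\end{align*}
so in particular for any fixed realization of $X$ there is a (measurable) maximizing set $J^{*}(X) \in \binom{I}{\ell}$ with $S_\ell(X) = \sum_{i \in J^{*}(X)} X_i$. This reduces the problem to a one-line comparison.

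First, I would lower bound $S_\ell(X+Y)$ by plugging the suboptimal choice $J^{*}(X)$ into its variational characterization:
\begin{align*}
S_\ell(X+Y) \;\geq\; \sum_{i \in J^{*}(X)} (X_i + Y_i) \;=\; S_\ell(X) + \sum_{i \in J^{*}(X)} Y_i.
\end{align*}
Second, I would take expectations of both sides and then condition on $X$. Since $J^{*}(X)$ is a function of $X$ alone and $Y$ is independent of $X$, the tower property gives
\begin{align*}
\mathbb{E}\Big[\sum_{i \in J^{*}(X)} Y_i\Big] \;=\; \mathbb{E}\Big[ \mathbb{E}\Big[\sum_{i \in J^{*}(X)} Y_i \,\Big|\, X\Big]\Big] \;=\; \mathbb{E}\Big[ \sum_{i \in J^{*}(X)} \mathbb{E} Y_i \Big] \;=\; 0,
\end{align*}
using $\mathbb{E} Y_i = 0$. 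Combining the two displays yields $\mathbb{E} S_\ell(X+Y) \geq \mathbb{E} S_\ell(X)$.

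There is no real obstacle here; the only mild technical point is ensuring that $J^{*}(X)$ can be chosen in a measurable way, which is immediate since $\binom{I}{\ell}$ is finite (just break ties by a fixed lexicographic rule on subsets). Integrability of both sides is automatic from $S_\ell(X), S_\ell(X+Y) \leq \sum_{i \in I}(|X_i|+|Y_i|)$ together with finiteness of $I$ and the implicit integrability assumption needed to even state the expectations.
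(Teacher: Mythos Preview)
Your proposal is correct and takes essentially the same approach as the paper: condition on $X$, fix its $\ell$ largest entries (your $J^*(X)$), and then average over the independent mean-zero $Y$. The paper states this in one sentence without spelling out the measurability or integrability points, which you handle cleanly.
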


 \begin{proof} 
 The proof is straightforward by conditioning on $X$ and thereby fixing its $\ell$ largest entries and then averaging over $Y.$

 \end{proof}

\subsection{Expectation bounds}
The main result of this subsection is 
 a sharp estimate  on the quantity $\mathbb{E} S_\ell(\phi_N) $.

 \begin{proposition} \label{prop 3.6}
For any constant $\lambda_1<\frac{1}{\sqrt{2d}}$, for   large enough $\ell$ and $N\geq 1$,
\begin{align} \label{363}
  \mathbb{E} S_\ell(\phi_N)   \leq   \ell(m_N-\lambda_1\log \ell).
\end{align}
In addition, let
 $\lambda_2> \frac{1}{\sqrt{2d}}$ be a constant. Then,  for large enough $\ell$, for sufficiently large $N$,
\begin{align} \label{364}
   \mathbb{E} S_\ell(\phi_N)  \geq \ell(m_N-\lambda_2\log \ell)  .
\end{align}
 \end{proposition}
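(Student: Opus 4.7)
The plan is to prove both bounds first for the MBRW $\theta_N$, and then transfer them to $\phi_N$ by means of Kahane's convex inequality in the form of Lemma~\ref{lemma 3.3}. The transfer step will be carried out by adding independent (per vertex) and common Gaussian noise to each of $\phi_N$ and $\theta_N$, with variances chosen so that (i) the variances of the enlarged fields coincide pointwise, (ii) the covariance inequality required by Lemma~\ref{lemma 3.3} follows by combining Assumption~\ref{a1} (which provides a lower bound on $\mathrm{Cov}(\phi_{N,u},\phi_{N,v})$ in terms of $\log|u-v|$) with Lemma~\ref{lemma 2.3} (the matching upper bound for $\theta_N$), and (iii) by Lemma~\ref{lemma 3.5} the addition of noise only increases $\mathbb E S_\ell$. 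The direction of comparison is reversed between the two halves: for \eqref{363} we arrange the enlarged $\phi_N$-field to dominate the enlarged $\theta_N$-field in covariance, so that $\mathbb E S_\ell(\phi_N)$ is controlled from above by $\mathbb E S_\ell$ of an MBRW-based field, while for \eqref{364} the comparison runs the other way.

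For the upper bound the input on the MBRW side is the sharp first-moment estimate for near-maximal level sets,
\[
\mathbb E\,|\{v\in V_N:\theta_{N,v}\ge m_N-t\}| \;\lesssim\; (1+t)^{O(1)}\,e^{\sqrt{2d}\,t},
\]
which follows from the standard barrier/ballot argument available for BRW-type processes. Combined with Markov's inequality this yields $\mathbb P(\theta_{N,(i)}>m_N-t)\lesssim (1+t)^{O(1)}e^{\sqrt{2d}\,t}/i$, and choosing $t=\tfrac{1}{\sqrt{2d}}\log i-O(\log\log i)$ gives $\mathbb E\,\theta_{N,(i)}\le m_N-\tfrac{1}{\sqrt{2d}}\log i+O(\log\log i)$. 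Summing over $i=1,\dots,\ell$ and using $\sum_{i=1}^\ell \log i=\ell\log\ell+O(\ell)$ produces the MBRW analogue of \eqref{363} for any $\lambda_1<1/\sqrt{2d}$, and the comparison step transfers the bound to $\phi_N$.

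The lower bound \eqref{364} is the technically harder direction, and I expect it to be the main obstacle. Its MBRW analogue is to be extracted from the uniform right-tail estimate
\[
\mathbb{P}\!\left(S_\ell(\theta_N)\ge \ell\bigl(m_N-\tfrac{1}{\sqrt{2d}}\log\ell-c'\bigr)\right)\ge c,
\]
valid for absolute constants $c,c'>0$ uniformly in $\ell\ge 1$ and sufficiently large $N$. To establish this one picks a well-separated candidate set of vertices and runs a truncated second-moment argument on the event that many of them reach height $m_N-\tfrac{1}{\sqrt{2d}}\log\ell$; the second-moment computation is carried out using the explicit description of the covariance of $\theta_N$ along the dyadic hierarchy as a Brownian bridge, and this is the only step where the exact structure of MBRW is essential. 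The constant-probability estimate is then upgraded to an expectation bound with the sharp coefficient $\lambda_2>1/\sqrt{2d}$ by a bootstrapping/amplification argument in the spirit of the one used for \eqref{04}, after which a final application of Lemma~\ref{lemma 3.3} transfers the result to $\phi_N$.
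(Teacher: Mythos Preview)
Your lower-bound sketch is essentially the paper's argument: the constant-probability right-tail estimate for $S_\ell(\theta_N)$ is exactly Lemma~\ref{lemma 3.7}, the bootstrapping is implemented by building an auxiliary field out of i.i.d.\ copies of MBRW on well-separated sub-boxes plus a common Gaussian (the paper's field $\bar\theta_N^{N',K}$), and the left-tail control needed to turn high probability into an expectation bound is supplied separately (Lemma~\ref{lemma 3.9}). One point you should make explicit: for the reversed comparison (Cov of the auxiliary field $\ge$ Cov of $\phi_N$) you need an \emph{upper} bound on $\mathrm{Cov}(\phi_{N,u},\phi_{N,v})$, which comes from Assumption~\ref{a2}, not Assumption~\ref{a1}. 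Also, the paper compares the auxiliary field directly to $\phi_N$ rather than first passing through $\theta_N$; your ordering works too but costs an extra comparison step.

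For the upper bound, however, your plan to compare $\phi_N$ to MBRW has a real gap. The paper compares to a \emph{dilated BRW} $\varphi_{2^\kappa N}|_{2^\kappa V_N}$, not to MBRW, and this choice is not cosmetic. The point is that after adding the common Gaussian $a_vX$ with $a_v^2=\log(2^\kappa N)-\mathrm{Var}\,\phi_{N,v}$, one has (using the specific form of \eqref{a})
\[
\mathbb{E}(\phi_{N,u}+a_uX-\phi_{N,v}-a_vX)^2 \le 2\log_+|u-v|+4\alpha_0,
\]
while the dilated BRW satisfies $\mathbb{E}(\varphi_{2^\kappa N,2^\kappa u}-\varphi_{2^\kappa N,2^\kappa v})^2 \ge 2\kappa\log 2+2\log_+|u-v|-C_0$. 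Choosing $\kappa$ large (but fixed) creates the gap needed for Lemma~\ref{lemma 3.3}. If you replace BRW by $\theta_{N'}|_{V_N}$ (MBRW on a larger box), the analogous difference-variance is only $2\log_+|u-v|\pm O(1)$, and the constants $4\alpha_0$ and $c_1$ from Lemma~\ref{lemma 2.3} do not align: you end up needing $-2\alpha_0\ge c_1$, which is false. The fix is to dilate as well, e.g.\ compare to $\theta_{K^2N}|_{KV_N}$; then the MBRW difference-variance picks up an extra $2\log K$ and the comparison goes through for large $K$. But this is precisely the mechanism you omitted, and it is why the paper just uses BRW, whose tree structure makes the covariance decrease under dilation automatically.
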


The proof of the upper bound is simpler and this is what we begin with. Recall that for any $A\subseteq \mathbb{Z}^d$ and a positive integer $k$, we denote $kA := \{x \in \mathbb{Z}^d: x/k \in A\}.$

\subsubsection{Upper bound in Proposition \ref{prop 3.6}}

The upper bound \eqref{363} is  a direct consequence of   a comparison with a suitably dilated BRW. 
For $\kappa\in \mathbb{N}$, define the dilation map $\Phi_N^\kappa: V_N \rightarrow 2^\kappa V_N \subseteq V_{2^k N}$ by
\begin{align*}
\Phi_N^\kappa (v) = 2^\kappa v.
\end{align*}
Let $X$ be a standard Gaussian random variable independent of everything else. Then, there exists a sufficiently large integer $\kappa>0$ and positive numbers $a_v>0  \ (v\in V_N)$ such that
\begin{align} \label{367}
\var (\pv + a_v X) = \var (\varphi_{2^\kappa N, \Phi_N^\kappa(v)}),\quad \forall v\in V_N
\end{align}
and
\begin{align} \label{368}
\cov (\pu + a_u X,\pv + a_v X) \geq  \cov ( \varphi_{2^\kappa N, \Phi_N^\kappa(u)},\varphi_{2^\kappa N, \Phi_N^\kappa(v)}),\quad \forall u,v\in V_N 
\end{align}
Note that owing to the right hand side in \eqref{367} being the same for any two vertices $u,v$, it follows that $|a_v^2 - a_u^2|=|\var \pu - \var \pv|.$
{Now, while \eqref{368} was shown in  \cite[Lemma 2.5]{drz}, we provide the proof for completeness. Since $ \var (\varphi_{2^\kappa N, \Phi_N^\kappa(v)}) =\log ( 2^\ka N)$, by the condition \eqref{a0}, one can take $a_v>0$ satisfying \eqref{367}  for large enough $\kappa>0$. Next,   since
\begin{align*}
\mathbb{E}(\pu + a_u X - \pv  - a_v X)^2 &= \mathbb{E} (\pu-\pv)^2 + (a_v-a_u)^2 \\
& \leq   \mathbb{E} (\pu-\pv)^2 + |a_v^2 - a_u^2|  \\
&\overset{\eqref{a}}{ \leq}  (2\log_+|u-v| - |\var \pu - \var \pv| + 4\alpha_0  )   +  |\var \pu - \var \pv| \\
&=  2\log_+|u-v| +4\alpha_0 
\end{align*} 
and 
\begin{align*}
\mathbb{E} ( \varphi_{2^\kappa N, \Phi_N^\kappa(u)} - \varphi_{2^\kappa N, \Phi_N^\kappa(v)})^2  \geq  2\log 2^\kappa  +  2\log_+|u-v|  - C_0
\end{align*}
($C_0$ is an absolute constant), \eqref{368} holds for sufficiently large $\kappa>0$.
}

Thus,
by Lemma   \ref{lemma 3.3},
\begin{align} \label{360}
\mathbb{E} S_\ell( (\phi_{N,v} + a_vX )_{v\in V_N} )   \leq    \mathbb{E} S_\ell(\varphi_{2^\kappa N} |_{2^\kappa V_N}),
\end{align}
where  $\varphi_{2^\kappa N} |_{2^\kappa V_N}$ is the field $ \varphi_{2^\kappa N}$  restricted to the lattice  $2^\kappa V_N$. Also, by Lemma \ref{lemma 3.5},
\begin{align} \label{365}
\mathbb{E} S_\ell(\phi_N)  \leq \mathbb{E} S_\ell( (\phi_{N,v} + a_vX )_{v\in V_N} )   .
\end{align}
Since  $S_\ell(\varphi_{2^\kappa N} |_{2^\kappa V_N}) \leq   S_\ell(\varphi_{2^\kappa N})$, by the above two inequalities,
\begin{align} \label{361}
\mathbb{E} S_\ell(\phi_N)    \leq   \mathbb{E} S_\ell(\varphi_{2^\kappa N}).
\end{align}

{Using the tree structure of BRW, one can  show that  (see  {Lemma \ref{lemma 6.2}}  for the precise statement and its proof) for any  constant $\lambda' \in (\lambda_1, \frac{1}{\sqrt{2d}})$, for  large enough $\ell$,}
 \begin{align} \label{366}
\mathbb{E} S_\ell(\varphi_{2^\kappa N}) \leq \ell(m_{2^\kappa N} -\lambda' \log \ell).
\end{align}
Note that by the expression of $m_N$ in \eqref{mN},  for any $N_1\geq N_2\geq 4$,
{\begin{align} \label{462}
m_{N_1}- m_{N_2} = \sqrt{2d}\log (N_1/N_2) - \frac{3}{2\sqrt{2d} }\log (\log N_1 / \log N_2)  \leq \sqrt{2d}\log (N_1/N_2)  .
\end{align}}
 Thus, by \eqref{366} and \eqref{462} {(with $N_1 = 2^\kappa N$ and $N_2  = N$)}, for   large enough $\ell$,
\begin{align} \label{362}
 \mathbb{E} S_\ell(\varphi_{2^\kappa N}) \leq \ell(m_N  - \lambda_1 \log \ell ).
\end{align}
Therefore, \eqref{361} and \eqref{362} conclude the proof of  \eqref{363}.

\qed

We next move on to the proof of the lower bound in Proposition \ref{prop 3.6}, which is significantly more complicated and will already see the bootstrapping argument in action.

\subsubsection{Lower bound in Proposition \ref{prop 3.6}} \label{section aux}

 There will be broadly three steps.
We will employ a comparison by constructing a field whose covariance dominates that of $\phi_N.$ 
\begin{itemize}
\item We will construct a field made of i.i.d copies of MBRW $\theta_N$ with some independent perturbation to ensure Lemma \ref{lemma 3.3} can be invoked. {Note that  this is where MBRW turns out to be more handy than BRW, since the former possesses more correlations than the latter as noted in the  discussion in Section \ref{section 2.4}).}
\item We will use an apriori estimate on the probability that $S_\ell(\theta_N)$ is not too small, and the i.i.d. structure in our construction, to deduce that $S_{\ell}$ for the latter is not too small with overwhelming probability allowing us to lower bound the expectation of the same.
\item Finally a comparison allows us to conclude the same for $\phi_N.$
\end{itemize}

Recalling the definition of MBRW from Section \ref{section 2.4}, let us now proceed to the construction. Let $K \in \mathbb{N}$ be a large constant to be chosen later and further let $N'=2^{n'}$ with  $n'\in \mathbb{N}$ such that $N\geq 6KN'$. 
 Consider the collection $\mathcal{D} = \{D_i\}_{i\in I}$    such that for all $i\in I$,
 \begin{enumerate} \label{condition}
 \item  $D_i \subseteq V_{N}^{1/10}$,
 \item   $D_i  = 2KN' x_i + KV_{N'} $ for some $x_i \in \mathbb{Z}^d$.
\end{enumerate}  
Thus $D_i$s are obtained by dilating $V_{N'}$ by $K$ and translating them by multiples of $2KN'$ ($D_i =  2KN'x_i + \{ (Ky_1,\cdots,Ky_d): y_1,\cdots,y_d  \in \{0,1,\cdots,N'-1\}\}$) and checking whether they are fully contained in $V_{N}^{1/10}.$
{Since the side length of  $V_{N}^{1/10}$ is at least $\frac{8}{10}N-1$, for any large $N$, 
\begin{align} \label{3480}
|I|\geq \left\lfloor{\frac{(8/10) N-1}{2KN'} }\right \rfloor   ^d   \geq  \left \lfloor{\frac{N}{3KN'} }\right \rfloor   ^d .
\end{align}}
Also, by the second condition, $d_\infty (D_i,D_j) \geq KN'$ for any $i\neq j$ and hence in particular they are \emph{disjoint}.     Set $D: = \cup_{i\in I}D_i$  and for $v\in D$, denote by $D(v)$ a unique element $D_i$ in $ \mathcal{D}$ containing $v$.

Noting that $\theta_{N'} =  ( \theta_{N',v'} ) _{v'\in V_{N'}} $ denotes the MBRW  on $V_{N'}$, let $\{\theta_{D_i}\}_{i \in I}$ be i.i.d. Gaussian fields defined on the $D_i$s simply by taking i.i.d. copies of $\theta_{N'}$, one for each $D_i$ and simply dilating and translating the one corresponding to $D_i$ to identify $V_{N'}$ with $2KN'x_i+KV_{N'}$.Thus, formally, $\theta_{D_i}$ is defined on  $D_i$  such that
\begin{align} \label{aux1}
(\theta_{D_i,2KN'x_i+Kv'})_{v' \in V_{N'}} \overset{\text{law}}{\sim}  ( \theta_{N',v'} )_{v' \in V_{N'}}
\end{align} 

{Note that, by the statement right before Lemma \ref{lemma 2.3},  for any $v\in D$, $\var \theta_{D(v),v} =  \log N' $ and further by Assumption \ref{a2} and noting that $v\in  V_{N}^{1/10}$,  $$\var \phi_{N,v}  \geq  \log N - \alpha(1/10) \geq  \log (6KN') - \alpha(1/10).$$
 Hence, for large enough $K$, $ \var \phi_{N,v} >  \var \theta_{D(v),v}$ for all $v\in D$. For any such $K$, define $a_v>0$ by
 \begin{align} \label{av0}
 a_v^2 := \var \phi_{N,v} -  \var \theta_{D(v),v}.
 \end{align}
Finally, we define the field $\bar{\theta}_N^{N',K} =  ( \bar{\theta}_{N,v}^{N',K})_{v\in D}$  by
\begin{align} \label{aux0}
 \bar{\theta}_{N,v}^{N',K} :=  \theta_{D(v),v} + a_vY,\quad  v\in D,
\end{align}
where  $Y$ denotes  a  standard Gaussian random variable  independent of everything else. 
}

The next lemma follows by comparing the covariances and appealing to Lemma \ref{lemma 3.3}.

 \begin{lemma} \label{lemma 3.8}
There exists a constant  $K \in \mathbb{N}$ such that for any  $N,N', \ell \in \mathbb{N}$ with $N \geq 6KN'$,
\begin{align}
\mathbb{E} S_\ell(\bar{\theta}_N^{N',K})  \leq \mathbb{E} S _{\ell}(\phi_N)  .
\end{align}
\end{lemma}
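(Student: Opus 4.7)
The plan is to apply Lemma \ref{lemma 3.3} to $\bar{\theta}_N^{N',K}$ (viewed as a Gaussian field indexed by $D$) and to the restriction $\phi_N|_D$. Since adjoining coordinates can only increase the sum of the $\ell$ largest values, the deterministic bound $S_\ell(\phi_N|_D) \leq S_\ell(\phi_N)$ reduces the task to proving $\mathbb{E} S_\ell(\bar\theta_N^{N',K}) \leq \mathbb{E} S_\ell(\phi_N|_D)$. Invoking Lemma \ref{lemma 3.3} then requires two things: the marginal variances of the two fields agree at every $v \in D$, and the covariance of $\bar\theta$ pointwise dominates that of $\phi_N|_D$ on $D \times D$.

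Variance matching is immediate from the construction: since $Y$ is a standard Gaussian independent of $\{\theta_{D_i}\}_{i \in I}$, one has $\var(\bar\theta_{N,v}^{N',K}) = \var(\theta_{D(v),v}) + a_v^2$, which equals $\var \phi_{N,v}$ by the choice of $a_v$ in \eqref{av0}.

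The crucial geometric input for covariance domination is the spacing forced by $D_i = 2KN'x_i + KV_{N'}$: any two distinct points in the same $D_i$ are at Euclidean distance at least $K$, and points from different $D_i$'s are at Euclidean distance at least $KN'$. Writing
\[
\cov(\bar\theta_{N,u}^{N',K}, \bar\theta_{N,v}^{N',K}) = \cov(\theta_{D(u),u}, \theta_{D(v),v}) + a_u a_v,
\]
the MBRW covariance vanishes when $D(u) \neq D(v)$ by independence, and when $D(u) = D(v)$ it is bounded below by $\log N' - \log_+(|u-v|/K) - c_1$ via Lemma \ref{lemma 2.3} applied to the rescaled copy of MBRW on $D_i$ (the factor-$K$ scaling converts distances in $D_i$ to distances in $V_{N'}$, and the spacing $\geq K$ ensures that no wrap-around subtleties arise in $|\cdot|^{(N')}$). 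Meanwhile, Assumption \ref{a2} applied inside $V_N^{1/10}$ yields $\cov(\phi_{N,u}, \phi_{N,v}) \leq \log N - \log_+|u-v| + \alpha(1/10)$, and combining the lower bound $\var \phi_{N,v} \geq \log N - \alpha(1/10)$ from the same assumption with the definition of $a_v$ gives $a_u a_v \geq \log N - \log N' - \alpha(1/10)$, valid once $K$ is large enough that $\log(6K) \geq \alpha(1/10)$.

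Plugging these estimates into the two cases produces an excess $\cov(\bar\theta) - \cov(\phi)$ of at least $\log K - c_1 - 2\alpha(1/10)$ in the same-box case (using $|u-v| \geq K$ to extract $\log K$ from the difference of $\log_+$ terms) and at least $\log K - 2\alpha(1/10)$ in the different-box case (using $|u-v| \geq KN'$). Fixing $K$ so that $\log K$ exceeds $c_1 + 2\alpha(1/10)$ makes both excesses nonnegative, so Lemma \ref{lemma 3.3} applies and the comparison follows. The one spot that requires care is to rule out the regime ``$u \neq v$ but $|u-v| < K$'': in that regime the log-correlated field could have covariance close to $\log N$ while the MBRW contribution inside $D_i$ would only deliver $\log N'$ and the $a_u a_v$ correction would fall short of closing the gap. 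The rigidity built into $D_i = 2KN'x_i + KV_{N'}$ is precisely what prevents this, and is the main reason the aggregate field $\bar\theta_N^{N',K}$ is constructed on this sparse sublattice rather than on all of $V_N$.
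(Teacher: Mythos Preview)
Your proof is correct and follows essentially the same route as the paper: verify equality of variances via the definition of $a_v$, then establish the covariance domination $\cov(\bar\theta_{N,u}^{N',K},\bar\theta_{N,v}^{N',K}) \geq \cov(\phi_{N,u},\phi_{N,v})$ by splitting into the same-box and different-box cases, and conclude via Lemma \ref{lemma 3.3} together with the trivial monotonicity $S_\ell(\phi_N|_D)\le S_\ell(\phi_N)$. Your numerical bookkeeping (the excesses $\log K - c_1 - 2\alpha(1/10)$ and $\log K - 2\alpha(1/10)$) matches the paper's computations in \eqref{351}--\eqref{352}, and your closing remark about why the $K$-sparse sublattice is essential is a nice piece of commentary that the paper leaves implicit.
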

Postponing the computations involving covariances required for the above proof until later, note that this reduces the proof of the lower bound in Proposition \ref{prop 3.6} to lower bounding the LHS.

We now set on accomplishing this. This involves a bit of preparation. The first step as already indicated at the beginning of the section is a lower bound on the right tail of $S_\ell (\theta_N).$   The simplest case of $\ell=1$, i.e. the lower bound for the right tail of the maximum of MBRW,  was already examined in \cite[Propositon 5.3]{bz} using the exact structure of MBRW and related random walk estimates and {we record this below for expository reasons. } 

\begin{lemma}[\cite{bz}, Propositon 5.3] \label{lemma mbrw}
There exist   constants $c_3,c_4>0$   such that  for any $N\geq 1$,
\begin{align*}
 \mathbb{P} (  \max_{v\in V_N} \theta_{N,v} >  m_N-  c_3 ) \geq c_4.
\end{align*}
\end{lemma}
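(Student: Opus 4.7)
The plan is to prove this by a truncated second moment argument, using the scale decomposition of MBRW to set up a barrier-type restriction that brings the first moment down to $O(1)$ while keeping the second moment manageable.

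Write $N=2^n$ and, for $v\in V_N$ and $0\le j\le n$, let $S_j(v):=\sum_{i=0}^{j-1}\sum_{B\in\cB_i(v)} g^N_{i,B}$, so that $S_n(v)=\theta_{N,v}$ and, for each fixed $v$, the sequence $\{S_j(v)\}_{0\le j\le n}$ is a Gaussian random walk with independent increments of variance $\log 2$ (the increments at different scales use disjoint Gaussians). For a large constant $L>0$ to be chosen, and our target $c_3$, I would introduce the truncated indicator
\[
I_v := \1\bigl\{\theta_{N,v}\in[m_N-c_3,\,m_N-c_3+1]\bigr\}\cdot\1\Bigl\{S_j(v)\le \tfrac{j}{n}m_N+L\ \text{for all}\ 1\le j\le n-1\Bigr\},
\]
and set $X:=\sum_{v\in V_N} I_v$. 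The barrier is the standard device that removes paths whose partial sums overshoot the eventual value $m_N$ at intermediate scales, and is what makes the second moment comparable with the square of the first.

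For the first moment, I would compute $\E I_v$ via a Brownian-bridge/ballot estimate applied to the Gaussian walk $S_\cdot(v)$ conditioned on its terminal value. The unconditional Gaussian density gives the factor $N^{-d}\log N\cdot e^{\sqrt{2d}\,c_3}$ coming from $\P(\theta_{N,v}\in[m_N-c_3,m_N-c_3+1])$; conditionally on the endpoint the walk is close to a Brownian bridge of length $n$ from $0$ to $m_N$, and the probability of staying under the line $\frac{j}{n}m_N+L$ with $L$ fixed is of order $1/n=1/\log N$ (up to a constant depending on $L$). Multiplying by $|V_N|=N^d$ gives $\E X\ge c(L)\cdot e^{\sqrt{2d}\,c_3}$, which is positive uniformly in $N$ provided $L$ is large enough.

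For the second moment, the key input is the tree-like covariance of MBRW (Lemma \ref{lemma 2.3}): $\Cov(\theta_{N,u},\theta_{N,v})=\log N-\log_+|u-v|^{(N)}+O(1)$. Consequently, for $u\ne v$ and $j_{uv}:=n-\lfloor\log_2 |u-v|^{(N)}\rfloor$, the joint law $(S_\cdot(u),S_\cdot(v))$ is, up to $O(1)$ covariance errors absorbable by the fluctuation constant $c_1$, a pair of walks that coincide up to scale $j_{uv}$ and run independently afterward. Partitioning the sum $\sum_{u\ne v}\E[I_u I_v]$ by the common scale $j_{uv}\in\{0,\dots,n\}$ and applying the ballot estimate twice (once for the shared segment of length $j_{uv}$ that must enter the region $\{S_{j_{uv}}\le\frac{j_{uv}}{n}m_N+L\}$, and once for each independent continuation of length $n-j_{uv}$) produces a bound of the form $\E[I_uI_v]\le C N^{-2d}\cdot f(j_{uv})$, whose sum over pairs $(u,v)$ with a given $j_{uv}$ matches the diagonal term $(\E I_v)^2$ times $N^{2d}$. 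Summing over $j_{uv}$ contributes at most an extra constant, yielding $\E X^2\le C\,(\E X)^2$.

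Paley-Zygmund then gives $\P(X>0)\ge (\E X)^2/\E X^2\ge c_4>0$, and on the event $\{X>0\}$ we automatically have $\max_{v\in V_N}\theta_{N,v}\ge m_N-c_3$. The main obstacle is the second moment estimate: one must carefully exploit the barrier restriction on \emph{both} walks $S_\cdot(u),S_\cdot(v)$ past their common segment so that the contribution of pairs at intermediate distances is not overcounted — without the barrier, $\E X^2/(\E X)^2$ would blow up in $N$, precisely because the bulk of pairs of high points sit at mesoscopic distances.
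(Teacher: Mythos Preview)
The paper does not itself prove this lemma (it cites \cite[Proposition~5.3]{bz}), but it proves the generalization Lemma~\ref{lemma 3.7} in Section~\ref{section 6} by exactly the truncated-second-moment/Paley--Zygmund strategy you propose, so the overall plan is right.

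There is, however, a concrete error in your scale ordering. With $S_j(v)=\sum_{i=0}^{j-1}\sum_{B\in\cB_i(v)}g^N_{i,B}$ the $j$th increment involves boxes of size $2^{j-1}$; for $|u-v|^{(N)}\approx 2^{n-r}$ the collections $\cB_i(u)$ and $\cB_i(v)$ are disjoint precisely when $2^i\le |u-v|^{(N)}$, so the walks $S_\cdot(u),S_\cdot(v)$ have \emph{independent} increments for steps $1,\dots,n-r$ and \emph{correlated} increments afterward --- the opposite of ``coincide up to $j_{uv}$ then run independently.'' The paper's proof of Lemma~\ref{lemma 3.7} sums coarse-to-fine via $\theta_{N,v}(n-j,n-1):=\sum_{k=n-j}^{n-1}\sum_{B\in\cB_k(v)}g^N_{k,B}$, and the second-moment decoupling (see \eqref{indep} and \eqref{379}) relies on this ordering. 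Reversing your sum fixes the structural claim.

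A second, more minor divergence: you impose a flat barrier $\tfrac{j}{n}m_N+L$, whereas the paper (for Lemma~\ref{lemma 3.7}) uses the curved barrier $\tfrac{j}{n}(m_N+1)-L_n(j)+1$ with $L_n(j)=\gamma\log((j\wedge(n-j))\vee1)$. The logarithmic dip is what produces the summable factors $(n-r)^{-2}$ and $r^{-\gamma\sqrt{2d}/2}$ in \eqref{389} and \eqref{357}; with a flat barrier the crude bound $\P(E_u\cap E_v)\le\P(E_u)\,\P(\text{continuation of }v\text{ reaches }m_N)$ used in \eqref{384} picks up an entropy factor $n^{\frac32\frac{n-r}{n}}$ that is not summable over $r$. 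A flat barrier can still be made to work, but only by integrating over the value at the branching scale and applying the ballot estimate (Lemma~\ref{lemma 6.3}) to \emph{both} continuations simultaneously --- you gesture at this, but the computation is more delicate than your sketch suggests, and the curved barrier is the cleaner device.
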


The next key ingredient generalizes the above to larger values of $\ell$. Note that the centering is in accordance with the prediction that for any $t>0$, one has $|\Gamma_{N}(t)|\approx e^{\sqrt{2d}t}.$
\begin{lemma} \label{lemma 3.7}
There exist   constants $c_3,c_4>0$   such that for any   $\ell\geq 1$, for sufficiently large $N$,
\begin{align*}
 \mathbb{P}\Big( S_\ell (\theta_N) > \ell\Big(m_N-\frac{1}{\sqrt{2d}}\log \ell - c_3\Big)\Big) \geq c_4.
\end{align*}
\end{lemma}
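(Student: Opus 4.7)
The plan is to show the stronger statement that with probability at least $c_4 > 0$, at least $\ell$ vertices $v \in V_N$ satisfy $\theta_{N,v} \geq m_N - t_\ell$, where $t_\ell := \frac{1}{\sqrt{2d}} \log \ell + c_3$ for a suitable constant $c_3$. On such an event, $S_\ell(\theta_N) \geq \ell (m_N - t_\ell)$ as required. For $\ell = 1$ this is already Lemma \ref{lemma mbrw}, so assume $\ell \geq 2$; we may also assume $\ell \leq N^{d/2}$ by taking $N$ large. The approach is the second moment method with a ballot (entropic-repulsion) truncation, exploiting the exact scale decomposition of MBRW. Write $\theta_{N,v} = \sum_{j=0}^{n-1} Y_j(v)$ where $n = \log_2 N$ and $Y_j(v) := \sum_{B \in \cB_j(v)} g_{j,B}^N$; these increments are independent across $j$ and each has variance $\log 2$. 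Let $S_k(v) := \sum_{j<k} Y_j(v)$, so $(S_k(v))_{k=0}^n$ is a centered Gaussian random walk with $S_n(v) = \theta_{N,v}$ and $\Var S_k(v) = k \log 2$. For a large absolute constant $A$ to be chosen later, define the ballot event
\begin{align*}
E_v := \big\{ S_n(v) \geq m_N - t_\ell\big\} \cap \big\{S_k(v) \leq \sqrt{2d} (\log 2)\, k + A \text{ for all } 1 \leq k \leq n \big\},
\end{align*}
and set $W := \sum_{v \in V_N^{1/10}} \I_{E_v}$, restricting to the bulk to avoid boundary effects in the MBRW averaging.

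For the first moment, the Gaussian density satisfies $f_{S_n}(m_N - t_\ell) \asymp \log N \cdot N^{-d} e^{\sqrt{2d} t_\ell}$ (the term $-\tfrac{3}{2\sqrt{2d}}\log\log N$ built into $m_N$ is precisely what reduces the polynomial prefactor of the Gaussian tail to $\log N$), and conditional on $S_n(v) \approx m_N - t_\ell$, a standard reflection/ballot estimate for the associated Gaussian bridge yields $\mathbb{P}(\text{ballot} \mid S_n(v) \approx m_N - t_\ell) \asymp 1/\log N$. Hence $\mathbb{P}(E_v) \asymp N^{-d} e^{\sqrt{2d} t_\ell}$ uniformly in $v \in V_N^{1/10}$, so $\mathbb{E} W \asymp e^{\sqrt{2d} t_\ell} = e^{\sqrt{2d} c_3} \ell$. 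The main technical step, and the place where the explicit MBRW structure is essential, is the matching second moment bound $\mathbb{E} W^2 \leq C (\mathbb{E} W)^2$. For $u, v$ at $\ell^\infty$-distance $r$, the increments $Y_j(u), Y_j(v)$ are (essentially) independent once $2^j \ll r$ and (essentially) identical once $2^j \gg r$, so up to negligible boundary corrections from the wraparound averaging the two partial-sum paths agree for $k \geq j^* := \lceil \log_2 r \rceil$ and evolve independently for $k < j^*$. Decomposing $\mathbb{P}(E_u \cap E_v)$ along this split produces a single ballot contribution for the shared coarse bridge together with two independent fine-scale ballot contributions, leading via a careful Brownian bridge computation to
\begin{align*}
\mathbb{P}(E_u \cap E_v) \leq C\, N^{-2d}\, r^d\, e^{2\sqrt{2d}\, t_\ell}.
\end{align*}
Since the number of pairs $(u,v)$ at $\ell^\infty$-distance $\asymp r$ is $\lesssim N^d r^d$, summing over dyadic scales $r = 1, 2, \ldots, N$ gives $\mathbb{E} W^2 \leq C e^{2\sqrt{2d} t_\ell}$. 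Arranging the bridge estimates at each scale carefully so that no stray logarithmic factor survives the sum over scales is the most delicate aspect of the argument.

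Given the two moment estimates, the Paley--Zygmund inequality yields $\mathbb{P}\bigl(W \geq \tfrac{1}{2} \mathbb{E} W \bigr) \geq (\mathbb{E} W)^2 / (4\, \mathbb{E} W^2) \geq c_4 > 0$. Choosing $c_3$ large enough that $\tfrac{1}{2} \mathbb{E} W \geq \ell$ uniformly in $\ell \geq 1$ (possible because $\mathbb{E} W \geq c\, e^{\sqrt{2d} c_3} \ell$) and noting $E_v \subseteq \{\theta_{N,v} \geq m_N - t_\ell\}$, we conclude that with probability at least $c_4$ the level set $\{v : \theta_{N,v} \geq m_N - t_\ell\}$ has cardinality at least $\ell$, giving $S_\ell(\theta_N) \geq \ell(m_N - \tfrac{1}{\sqrt{2d}}\log \ell - c_3)$ as claimed.
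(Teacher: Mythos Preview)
Your overall plan—truncated second moment plus Paley–Zygmund, reducing to a level-set lower bound—matches the paper, but the barrier you use is too crude and the two-point estimate fails. A purely linear barrier $S_k\le \sqrt{2d}(\log 2)k+A$ allows the walk to sit right at the line at the splitting scale, so after decoupling, the per-scale contribution to $\mathbb{E}W^2$ carries no decay in the scale index and the sum over $\sim\log N$ scales forces $\mathbb{E}W^2/(\mathbb{E}W)^2\to\infty$; no amount of ``careful arranging'' saves this. Concretely, your claimed bound $\mathbb{P}(E_u\cap E_v)\le CN^{-2d}r^d e^{2\sqrt{2d}t_\ell}$ is already false for $r=O(1)$ (there $\mathbb{P}(E_u\cap E_v)\asymp\mathbb{P}(E_u)\asymp N^{-d}e^{\sqrt{2d}t_\ell}\gg N^{-2d}e^{2\sqrt{2d}t_\ell}$ since $\ell\le N^{d/2}$), and even granting it the dyadic sum $\sum_j N^d 2^{dj}\cdot N^{-2d}2^{dj}$ is $\asymp N^d$, not $O(1)$. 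Your first-moment ballot estimate is also off: conditioned on $S_n\approx m_N-t_\ell$ the right-endpoint gap from the linear barrier is $A+\tfrac{3}{2\sqrt{2d}}\log\log N+t_\ell$, so the bridge probability is $\asymp(t_\ell+\log\log N)/n$, not $1/n$.

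The fix, which the paper implements following Bramson, is to bend the barrier down by $L_n(j)=\gamma\log(j\wedge(n-j))$: the event $E_{v,x}$ in Section~\ref{section 6} requires $\theta_{N,v}(n-j,n-1)\le \tfrac{j}{n}(m_N-\tfrac{1}{\sqrt{2d}}\log x+1)-L_n(j)+1$ and pins $\theta_{N,v}$ to a unit interval. The logarithmic dip forces the walk $\gtrsim\gamma\log r$ below the line at the split, injecting a factor $r^{-c\gamma}$ into the two-point probability and making the scale sum converge (see \eqref{384} and \eqref{357}); meanwhile Lemma~\ref{lemma 6.5} shows the bent barrier still costs only $\asymp 1/n$ in the first moment. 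A smaller point: the paper runs its partial sums from the coarse end, $\theta_{N,v}(n-j,n-1)$, precisely because in MBRW it is the \emph{large}-scale increments that are shared between nearby vertices—your statement that ``the partial-sum paths agree for $k\ge j^*$'' has this bookkeeping reversed.
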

{The proof of this is similar to that of the preceding lemma involving technical random walk estimates and is deferred to Section \ref{section 6}.}

Note next that to prove a lower bound for the expectation, it is not enough to show that a random variable is reasonably large with high probability, one in fact has to control the \emph{entire} lower tail to ensure that large negative numbers, albeit occurring with low probability, does not affect the expectation significantly. While momentarily it is enough to obtain such a result for $\theta_N,$ we record simultaneously a version for $\phi_N$ which will be relied upon later, in the proof of Theorem \ref{theorem 3.0}.

\begin{lemma} \label{lemma 3.9}
{There exist constants $C,c_5,c_5',t_1>0$ such that for any $t\geq t_1$, $\ell \geq 1$ and $N\geq c_5'\ell^{1/d}$,}
\begin{align} \label{3900}
  \mathbb{P}\Big(  S_{\ell} (\phi_N) \leq   \ell \Big( m_N-2\sqrt{\frac{2}{d}}\log \ell -t\Big) \Big) \leq C\ell e^{-c_5t}
\end{align}
and
\begin{align} \label{391}
  \mathbb{P}\Big(  S_{\ell} (\theta_N) \leq   \ell \Big( m_N-2\sqrt{\frac{2}{d}}\log \ell -t\Big) \Big) \leq C\ell e^{-c_5t}.
\end{align}
\end{lemma}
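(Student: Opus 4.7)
The plan is to lower bound $S_\ell(\phi_N)$ by the sum of maxima on $\ell$ disjoint sub-boxes, and then reduce the lower tail for the sum to a union bound over lower tails of these maxima, where Lemma~\ref{lemma 2.8} applies directly. Concretely, set $L := \lceil 2 \ell^{1/d} \rceil$ and $k := \lfloor N/L \rfloor$. Under the hypothesis $N \geq c_5' \ell^{1/d}$ with $c_5'$ a sufficiently large multiple of the constant $c_2'$ from Lemma~\ref{lemma 2.8}, the region $V_N^{1/10}$ has side length at least $\tfrac{8}{10} N - 1$, and so admits $\ell$ pairwise disjoint axis-aligned boxes $B_1,\dots,B_\ell \subseteq V_N^{1/10}$ of side $k$ (the number of such boxes is at least $\lfloor 0.8 N / k \rfloor^d \geq \lfloor 0.6 L \rfloor^d \geq \ell$ once $\ell$ exceeds an absolute constant, which we absorb into $t_1$).

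\textbf{Execution.} Let $M_i := \max_{v \in B_i} \phi_{N,v}$ and pick any $v_i \in B_i$ realising the maximum; since the $B_i$ are disjoint, the $v_i$ are $\ell$ distinct points, so
\begin{align*}
S_\ell(\phi_N) \geq \sum_{i=1}^\ell \phi_{N,v_i} = \sum_{i=1}^\ell M_i.
\end{align*}
Write $x_\ell := m_N - 2\sqrt{2/d}\,\log \ell - t$. By pigeonhole, if $\sum_i M_i \leq \ell\, x_\ell$ then some $M_i \leq x_\ell$, and hence
\begin{align*}
\mathbb{P}\bigl( S_\ell(\phi_N) \leq \ell\, x_\ell \bigr) \leq \mathbb{P}\Bigl( \sum_i M_i \leq \ell\, x_\ell \Bigr) \leq \sum_{i=1}^\ell \mathbb{P}(M_i \leq x_\ell).
\end{align*}
Now $2\sqrt{2d}\,\log L = \tfrac{2\sqrt{2d}}{d}\log L^d = 2\sqrt{2/d}\,\log L^d$, and since $L^d \leq (2\ell^{1/d}+1)^d \leq C\ell$, we get $2\sqrt{2d}\,\log L \leq 2\sqrt{2/d}\,\log \ell + C'$ for an absolute constant $C'$. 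Therefore
\begin{align*}
x_\ell \leq m_N - 2\sqrt{2d}\,\log L - (t - C'),
\end{align*}
and for $t \geq t_1 := t_0 + C'$ (with $t_0$ as in Lemma~\ref{lemma 2.8}) we apply \eqref{280} to each $B_i \subseteq V_N^{1/10}$ of size $k = \lfloor N/L \rfloor$ to obtain
\begin{align*}
\mathbb{P}(M_i \leq x_\ell) \leq C e^{-c_2 (t - C')} \leq C'' e^{-c_2 t}.
\end{align*}
Summing over $i$ yields \eqref{3900} with $c_5 = c_2$. The MBRW estimate \eqref{391} follows by exactly the same argument using \eqref{2800} in place of \eqref{280}.

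\textbf{Main obstacle.} The argument is largely bookkeeping once one has Lemma~\ref{lemma 2.8} and the pigeonhole reduction; the only substantive constraint is that $V_N^{1/10}$ must actually accommodate $\ell$ disjoint boxes of side $\lfloor N/L\rfloor$, which is precisely what the hypothesis $N \geq c_5' \ell^{1/d}$ guarantees. Note that the scaling $L \sim \ell^{1/d}$ is dictated by matching the centering $2\sqrt{2d}\log L$ on each sub-box to the target centering $2\sqrt{2/d}\log \ell$ for $S_\ell$, which is the natural $\ell^{1/d} \times \cdots \times \ell^{1/d}$ tiling scale — exactly the scale where one would heuristically expect to see $\ell$ near-maximal values.
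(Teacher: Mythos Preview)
Your proof is correct and follows essentially the same approach as the paper: lower-bound $S_\ell$ by the sum of maxima over $\ell$ disjoint sub-boxes of side $\sim N/\ell^{1/d}$ in $V_N^{1/10}$, then union bound and apply Lemma~\ref{lemma 2.8}. The paper uses $L=3\ell^{1/d}$ rather than $L=\lceil 2\ell^{1/d}\rceil$, but this is cosmetic; one small quibble is that saying the box-packing constraint for small $\ell$ is ``absorbed into $t_1$'' is not quite the right phrasing (it is a constraint on $\ell$, not on $t$), but the issue is easily handled by enlarging the constant in $L$ or by adjusting $C$ for the finitely many small values of $\ell$.
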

Note that the centering here is larger than that in  Lemma \ref{lemma 3.7}.   

\begin{remark}
{Lemma \ref{lemma 3.9} also provides an exponential bound on the left tail of the cardinality of level sets. In fact, setting
\begin{align}\label{407}
\ell:= \lfloor e^{ 2 \kappa t}  \rfloor
\end{align}
where $\kappa>0$ is a small universal constant which will be chosen later, 
 the event $\{|\Gamma_N(t)| < e^{ \kappa t}\} \cap \{\max_{v\in V_N}  \pv < m_N+t\} $ implies 
\begin{align} \label{408}
S_\ell (\phi_N) &\leq e^{ \kappa t}   (m_N+ t  ) + (\ell - e^{ \kappa t} ) (m_N-t) \nonumber = \ell m_N   -t\ell  +2t e^{ \kappa t} \nonumber \\
& \overset{\eqref{407}}{ \leq}  \ell m_N   - \frac{1}{2\kappa}  \ell \log \ell +\frac{1}{\kappa} \sqrt{\ell+1}\log ( \ell+1) \leq   \ell m_N   - \frac{1}{3\kappa} \ell \log \ell 
\end{align} 
for large enough $\ell$.
 By Lemma \ref{lemma 3.9}, for small enough $\kappa>0$, there exists $c(\kappa)>0$ such that
\begin{align} \label{409}
  \mathbb{P}\Big(  S_{\ell} (\phi_N) \leq  \ell m_N   -  \frac{1}{3\kappa}  \ell \log \ell  \Big) \leq  C \ell  e^{-c_5  ( 1/3\kappa - \sqrt{8/d})\log \ell } \overset{\eqref{407}}{ \leq}  C e^{ -c(\kappa) t}.
\end{align}
Therefore, for such  small constant $\kappa>0$, using  Lemma \ref{lemma 2.4}, for large enough $\ell$,
\begin{align*}
\mathbb{P}(|\Gamma_N(t)| < e^{ \kappa t})& \leq \mathbb{P}\big(\{|\Gamma_N(t)| < e^{ \kappa t}\} \cap \{\max_{v\in V_N}  \pv < m_N+t\}\big) + \mathbb{P}\big( \max_{v\in V_N}  \pv \geq m_N+t\big) \\
& \overset{\eqref{408}}{ \leq}   \mathbb{P}\Big(  S_{\ell} (\phi_N) \leq  \ell m_N   -  \frac{1}{3\kappa}  \ell \log \ell  \Big)+ Cte^{-\sqrt{2d}t} \\
& \overset{\eqref{409}}{ \leq}   C e^{ - c(\kappa) t}+  Cte^{-\sqrt{2d}t} \leq  Ce^{-c'(\kappa) t}
\end{align*}
for some constant $c'(\kappa)>0$.
}
\end{remark}
~

We now proceed to finishing the proof of \eqref{364} using Lemmas \ref{lemma 3.7} and \ref{lemma 3.9}. Let us define
\begin{align} \label{shat}
{\hat{S}_\ell: = {S}_\ell(\{{\theta}_{D(v),v} \}_{v\in D} )} = \max\Big\{\sum_{v\in J}  {\theta_{D(v),v}} : J \subseteq D, |J|=\ell\Big\}
\end{align}
The following simple identity will be useful:
    for any random variable $X \in L^1(\mathbb{P})$ and $M\in \R$,
\begin{align} \label{useful}
\mathbb{E}X= M - \int_{-\infty}^{M} \mathbb{P} (X \leq t)dt  + \int_{M}^\infty \mathbb{P} (X \geq t)dt.
\end{align}

\begin{proof}[Proof of  the lower bound in Proposition \ref{prop 3.6}]

Take any 
\begin{align} \label{eta}
\eta \in \Big(0, \frac{1}{2\sqrt{2d}}\Big( \lambda_2 - \frac{1}{\sqrt{2d}}\Big)\Big)
\end{align}
 and a large constant $K\in \mathbb{N}$  for which Lemma \ref{lemma 3.8} holds. 
{Then, define $N'=2^{n'}$ by
\begin{align}  \label{341}
n '=    \left \lfloor{ \log_2  \Big(\frac{N}{6K \ell^\eta}\Big)  }\right \rfloor . 
\end{align}
The choice of $N'$  (with  a large enough $N/N'$) is such that the number of boxes $D_i$s in $D$ is sufficiently large to employ a bootstrap argument.}
Note that $n'\geq 1$ for sufficiently large $N$. Using the fact $\lfloor x \rfloor \geq x/2$ for $x\geq 1$,
\begin{align} \label{348}
|I| \overset{ \eqref{3480}}{ \geq} \left \lfloor{\frac{N}{3KN'} }\right \rfloor   ^d \overset{\eqref{341}}{ \geq } \ell^{d\eta}.
\end{align}
Note that by the definition of $\hat{S}_\ell$  in \eqref{shat},
\begin{align}
\hat{S}_\ell \geq \max_{i\in I}  S_\ell( \theta_{D_i}).
\end{align}
Recalling  $\theta_{D_i} \overset{\text{law}}{\sim} \theta_{N'}$,  by Lemma \ref{lemma 3.7}, for each $\ell\geq 1$, for sufficiently large $N'$,
\begin{align} \label{340}
\mathbb{P}\Big( S_\ell( \theta_{D_i})  \leq \ell\Big( m_{N'} - \frac{1}{\sqrt{2d}}\log \ell-c_3\Big) \Big) \leq 1-c_4,\quad \forall i\in I.
\end{align}
Thus, using the independent of  the fields $ \theta_{D_i}$s ($i\in I$),
\begin{align} \label{342}
\mathbb{P}\Big(\hat{S}_\ell \leq \ell\Big( m_{N'} - \frac{1}{\sqrt{2d}}\log \ell-c_3\Big)\Big)  \leq  (1-c_4)^{\ell^{d\eta}}.
\end{align}
On the other hand, by the crudest possible bound, since  $\hat{S}_\ell \geq    S_\ell( \theta_{D_i})$  for any $i \in I$ and $ \theta_{D_i} \overset{\text{law}}{\sim}  \theta_{N'}$,   for any $M\in \R$,   
$$
  \mathbb{P}(\hat{S}_{\ell}  \leq M) \leq   \mathbb{P}(  S_{\ell}(\theta_{N'}) \leq   M ).
$$
Thus,  by Lemma \ref{lemma 3.9},  {for  large enough $\ell$} and  $N'\geq c_5' \ell^{1/d}$,
   \begin{align}  \label{343}
     &\int_{-\infty}^{\ell(m_{N'}-(2\sqrt{\frac{2}{d}} + \frac{1}{c_5})\log \ell)} \mathbb{P}(\hat{S}_{\ell}  \leq t)dt \nonumber  \\
&\leq   \ell \int_0^\infty    \mathbb{P}\Big(  S_{\ell} (\theta_{N'}) \leq   \ell \Big( m_{N'}-\Big(2\sqrt{\frac{2}{d}} + \frac{1}{c_5}\Big)\log \ell -s \Big)\Big)  ds   \nonumber \\
     &\leq  C \ell \int_0^\infty \ell e^{-c_5(  \frac{1}{c_5} \log \ell + s)}ds \leq C\ell.
   \end{align}
{Note that a largeness condition on $\ell$ is needed to satisfy the condition $t \geq t_1$ (with $ t:=\frac{1}{c_5}\log \ell + s \geq  \frac{1}{c_5}\log \ell$) in Lemma \ref{lemma 3.9}. }

{Now, we lower bound $\mathbb{E} \hat{S}_{\ell}  $ using the identity   \eqref{useful}, with the aid of  \eqref{342} and \eqref{343}.   To be precise, setting $c' :=  2\sqrt{\frac{2}{d}} + \frac{1}{c_5}$,}
{\begin{align}   \label{345}
\mathbb{E} \hat{S}_{\ell}  &\geq  \ell(m_{N'}-c' \log \ell) - \int_{-\infty}^{ \ell(m_{N'}- c'\log \ell) }  \mathbb{P}(\hat{S}_{\ell}   \leq t)dt +  \int_{\ell(m_{N'}- c'\log \ell)}^{\ell(m_{N'}- \frac{1}{ \sqrt{2d}}\log \ell-c_3)} \mathbb{P}(\hat{S}_{\ell}   \geq t)dt  \nonumber \\
&\geq  \ell(m_{N'}-c' \log \ell) - C \ell +  \Big(c' - \frac{1}{\sqrt{2d}} - \frac{c_3}{\log \ell}\Big) \big(1 - (1-c_4)^{\ell^{d\eta}}  \big) \ell\log \ell  \nonumber \\
&=  \ell \Big (m_{N'}-  \Big(\frac{1}{\sqrt{2d}}  +\frac{c_3}{\log \ell}\Big)  \log \ell \Big) - C \ell -  \Big(c' - \frac{1}{\sqrt{2d}} - \frac{c_3}{\log \ell}\Big) (1-c_4)^{\ell^{d\eta}}  \ell\log \ell  \nonumber \\
&\geq   \ell\Big(m_{N'} -  \Big(\frac{1}{\sqrt{2d}} + \sqrt{2d}\eta\Big) \log \ell\Big) - C\ell - \Big(c' - \frac{1}{\sqrt{2d}}\Big)   (1-c_4)^{\ell^{d\eta}}  \ell\log \ell  .
\end{align} }
Using \eqref{462} and \eqref{341}, for some constant $a>0$,
{\begin{align} \label{344}
m_N \leq  m_{N'}  + \sqrt{2d}\log ( 6K \ell^\eta) + \sqrt{2d}\log 2 \leq  m_{N'}  + \sqrt{2d} \eta \log \ell +a.
\end{align}}
Thus, applying \eqref{344} to \eqref{345}, taking $\iota>0$ such that  $\frac{1}{\sqrt{2d}} +  2 \sqrt{2d} \eta + \iota < \lambda_2$ (see \eqref{eta}), for sufficiently large $\ell$,
 \begin{align} \label{346}
\mathbb{E} \hat{S}_{\ell}  &\geq   \ell\Big(m_N -  \Big(    \frac{1}{\sqrt{2d}} +  2 \sqrt{2d} \eta \Big) \log \ell  - a \Big)  - C\ell - \Big(c' - \frac{1}{\sqrt{2d}}\Big)   (1-c_4)^{\ell^{d\eta}}  \ell\log \ell  \nonumber \\
&\geq   \ell\Big(m_N -  \Big(    \frac{1}{\sqrt{2d}} +  2 \sqrt{2d} \eta \Big) \log \ell  \Big)  -( C+a)\ell - \frac{\iota}{2}  \ell\log \ell   \geq \ell(m_N - \lambda_2 \log \ell).
\end{align}
On the other hand, by Lemmas \ref{lemma 3.5} and \ref{lemma 3.8},
\begin{align}  \label{347}
 \mathbb{E} \hat{S}_{\ell}    \leq \mathbb{E} S_{\ell} (\bar{\theta}_N^{N',K}) \leq  \mathbb{E} S_\ell(\phi_N) .
\end{align}
Thus, by \eqref{346} and \eqref{347}, proof of \eqref{364} is done.

\end{proof}

All that remains is to provide the outstanding proofs of Lemmas \ref{lemma 3.8}, \ref{lemma 3.9} and \ref{lemma 3.7}. Recalling that the last one appears in Section \ref{section 6}, we furnish the first two.

\begin{proof} [Proof of Lemma \ref{lemma 3.8}]
By \eqref{av0}, and the definition of the field $\bar{\theta}_N^{N',K}$,  
\begin{align} \label{3750}
\var  \bar{\theta}_{N,v}^{N',K}  = \var \pv\quad \forall v\in D.
\end{align}
{We  next prove that for a large constant $K\in \mathbb{N}$,}
\begin{align} \label{3760}
\cov  (  \bar{\theta}_{N,u}^{N',K},  \bar{\theta}_{N,v}^{N',K}) \geq \cov (\pu, \pv),\quad \forall u \neq v\in D.
\end{align} By Lemma \ref{lemma 3.3}, this will finish the proof. 
We consider two cases depending on whether $u$ and $v$ belong to the same or different $D_i$s.

\textbf{Case 1: $u\neq v$ belong to the same $D_i$.}
 Assume that $ u = 2KN'x_i+K u'$ and $ v = 2KN' x_i+K v' $ with $x_i \in \mathbb{Z}^d$ and $u'\neq v'\in V_{N'}$. {We lower bound $\cov (\theta_{D_i,u},\theta_{D_i,v})$ using  Lemma \ref{lemma 2.3} and the fact $|u'-v'|^{(N')} \leq |u'-v'  |  = |u-v|/K$. Also, we  lower bound $a_u$ and $a_v$ (defined in \eqref{av0}) using  Assumption \ref{a2}. Indeed,}
\begin{align} \label{351}
\cov  (  \bar{\theta}_{N,u}^{N',K},  \bar{\theta}_{N,v}^{N',K}) & \geq  \Big(\log  \frac{N'}{|u-v|/K}  - c_1  \Big) +( \var \phi_{N,u} -  \var \theta_{D_i,u})^{1/2} ( \var \phi_{N,v} -  \var \theta_{D_i,v})^{1/2} \nonumber \\
&\geq  \Big( \log \frac{N'}{|u-v|/K}  - c_1  \Big)+ {(\log N- \alpha(1/10) -  \log N'  )} \nonumber  \\
& =    \log \frac{N}{|u-v|} +\log K -c_1 - \alpha(1/10) ,
\end{align}
{where the second inequality follows from the fact that   $\var \phi_{N,u} ,\var \phi_{N,v}  \geq \log N  - \alpha(1/10)$ (recall $u,v\in V_N^{1/10}$) and $\var \theta_{D_i,u} =\var \theta_{D_i,v} = \log N'$.   Note that this explains why MBRW instead of BRW is used to construct the auxiliary field, since MBRW possesses more correlations.}
By Assumption \ref{a2} again,
\begin{align*}
\cov (\pu,\pv ) \leq \log \frac{N}{|u-v|}+ \alpha(1/10).
\end{align*}
Thus, we have \eqref{3760} for sufficiently large $K$.

\textbf{Case 2: $u\in D_i$ and $v\in D_j$ with $i\neq j$.}
By the independence of $\theta_{D_i}$ and $\theta_{D_j}$,
\begin{align} \label{352}
\cov  (  \bar{\theta}_{N,u}^{N',K},  \bar{\theta}_{N,v}^{N',K})  &=  ( \var \phi_{N,u} -  \var \theta_{D_i,u})^{1/2} ( \var \phi_{N,v} -  \var \theta_{D_j,v})^{1/2}  \nonumber \\
&\geq  \log N - \alpha(1/10)- \log N' .
\end{align}
Since $|u-v|\geq KN'$, by Assumption \ref{a2},
\begin{align*}
 \cov (\pu, \pv) \leq \log \frac{N}{KN'} + \alpha(1/10) = \log N - \log N' - \log K  + \alpha(1/10).
\end{align*} 
Thus, \eqref{3760} holds for sufficiently large $K$.
%Therefore, by \eqref{375}, \eqref{376} and  , 
%\begin{align*}
%\mathbb{E} S_\ell(\bar{\theta}_N^{N',K})  \leq \mathbb{E} S _\ell((\phi_{N,v})_{v\in D} ) \leq \mathbb{E} S_\ell(\phi_N) ,
%\end{align*}
%which concludes the proof.
\end{proof}

Thus it remains to prove Lemma \ref{lemma 3.9}. A crucial input is Lemma \ref{lemma 2.8} which we will prove first. Since the latter is a statement about the maximum, we will employ comparison. The comparison needed for  \eqref{280} (between $\phi_N$ and the auxiliary construction) in Lemma \ref{lemma 2.8} is already accomplished in the just concluded proof. A very similar argument provides the corresponding comparison required for \eqref{2800} in  Lemma \ref{lemma 2.8} (between $\theta_N$ and an analogously constructed field).

Replacing $\phi_N$ by $\theta_N$ is the only change one needs to perform in the construction of $ \bar{\theta}_{N}^{N',K}$ described in the beginning of  subsection \ref{section aux} .  More precisely, setting
\begin{align}\label{353}
 \tilde{a}_v^2 := \var \theta_{N,v} -  \var \theta_{D(v),v} = \log N - \log N' \geq 0,
\end{align}
define the auxiliary field   $ \tilde{\theta}_{N}^{N',K} = (\tilde{\theta}_{N,v}^{N',K})_{v\in D}$ by
\begin{align*}
\tilde{\theta}_{N,v}^{N',K} := \theta_{D(v),v} + \tilde{a}_v Y.
\end{align*}
(In other words,  the only difference from the previously constructed field $ \bar{\theta}_{N}^{N',K}$ is the amount of perturbation $ \tilde{a}_v$.) 
Note that by construction, 
\begin{align*}
\var  \tilde{\theta}_{N,v}^{N',K}  = \var   \theta_{N,v}, \quad \forall v\in D.
\end{align*}
We now have the statement analogous to \eqref{3760}.

\begin{lemma} \label{lemma 3.11}
There exists a large enough constant $K \in \mathbb{N}$ such that 
\begin{align}  \label{285}
\textup{Cov}  ( \tilde{\theta}_{N,u}^{N',K},  \tilde{\theta}_{N,v}^{N',K}) \geq  \textup{Cov} (\theta_{N,u}, \theta_{N,v}),\quad \forall u \neq v\in D.
\end{align}
\end{lemma}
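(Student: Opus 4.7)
The plan is to establish the covariance comparison \eqref{285} by a direct computation modeled on the proof of \eqref{3760} in Lemma \ref{lemma 3.8}, with $\theta_N$ playing the role of $\phi_N$ and Lemma \ref{lemma 2.3} taking over the role that Assumption \ref{a2} played there. Since the auxiliary standard Gaussian $Y$ is independent of all the MBRW copies $\theta_{D_i}$, the defining formula for $\tilde{\theta}_N^{N',K}$ gives
\[
\textup{Cov}(\tilde{\theta}_{N,u}^{N',K},\tilde{\theta}_{N,v}^{N',K}) \;=\; \textup{Cov}(\theta_{D(u),u},\theta_{D(v),v}) + \tilde{a}_u\tilde{a}_v,
\]
and by \eqref{353}, $\tilde{a}_u\tilde{a}_v = \log N - \log N'$ uniformly in $u,v \in D$. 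I would split into two cases according to whether $u$ and $v$ lie in the same element of $\mathcal{D}$.

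\textbf{Case 1} ($D(u)=D(v)=D_i$). Writing $u = 2KN'x_i + Ku'$ and $v=2KN'x_i + Kv'$ with $u',v' \in V_{N'}$, one has $|u-v|=K|u'-v'|$. Applying Lemma \ref{lemma 2.3} to the MBRW $\theta_{N'}$ (to which $\theta_{D_i}$ is identified via the dilation and translation) and using $|u'-v'|^{(N')} \leq |u'-v'|$ gives
\[
\textup{Cov}(\theta_{D_i,u},\theta_{D_i,v}) \;\geq\; \log N' - \log_+\tfrac{|u-v|}{K} - c_1 \;\geq\; \log\tfrac{KN'}{|u-v|} - c_1,
\]
so combining with $\tilde{a}_u\tilde{a}_v = \log(N/N')$ yields the lower bound $\log(KN/|u-v|) - c_1$ on the left-hand side of \eqref{285}. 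For the right-hand side, Lemma \ref{lemma 2.3} applied directly to $\theta_N$ gives $\textup{Cov}(\theta_{N,u},\theta_{N,v}) \leq \log N - \log_+|u-v|^{(N)} + c_1$. The key auxiliary observation is that since $u,v \in V_N^{1/10}$, a direct coordinate estimate (using $|u_j-v_j| \leq 8N/10 + 1$) shows that every nontrivial periodic shift $v''=v+kN$, $k\in \mathbb{Z}^d\setminus\{0\}$, satisfies $|u-v''|_\infty \geq N/5 - 1$, so $|u-v|^{(N)}$ agrees with $|u-v|$ unless $|u-v|$ is itself of order $N$. In either subcase the right-hand side is at most $\log(N/|u-v|) + c_1$ up to a $d$-dependent additive constant, and LHS $\geq$ RHS reduces to a condition of the form $K \geq C(d)e^{2c_1}$.

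\textbf{Case 2} ($D(u) \neq D(v)$). Independence of the MBRW copies $\theta_{D_i}$ and $\theta_{D_j}$ makes the covariance term vanish, so the left-hand side of \eqref{285} equals $\tilde{a}_u\tilde{a}_v = \log(N/N')$. Since $d_\infty(D_i,D_j) \geq KN'$ we have $|u-v|\geq KN'$, and the same periodic-shift estimate combined with $KN'\leq N/6$ yields $|u-v|^{(N)} \geq KN'$; hence Lemma \ref{lemma 2.3} gives $\textup{Cov}(\theta_{N,u},\theta_{N,v}) \leq \log(N/(KN')) + c_1$, and \eqref{285} reduces to $\log K \geq c_1$. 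Taking $K \in \mathbb{N}$ large enough to handle both cases concludes the proof. The only mildly delicate point, and the one genuinely different from the proof of Lemma \ref{lemma 3.8}, is the management of the torus distance $|\cdot|^{(N)}$ intrinsic to Lemma \ref{lemma 2.3}; this is controlled entirely by the interior restriction $u,v \in V_N^{1/10}$ that was built into the definition of $\mathcal{D}$.
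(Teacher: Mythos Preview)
Your proof is correct and follows essentially the same approach as the paper: the same two-case split, the same use of Lemma \ref{lemma 2.3} in place of Assumption \ref{a2}, and the same handling of the torus distance via the interior condition $u,v\in V_N^{1/10}$. The paper's treatment of $|u-v|^{(N)}$ in Case 1 is slightly more direct---it simply notes that $|u-v|_\infty \leq KN' < N/2$ forces $|u-v|^{(N)} = |u-v|$ outright, so no ``either subcase'' split is needed---but your argument arrives at the same conclusion.
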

{The proof is essentially the same as that of Lemma \ref{lemma 3.8} and is thus moved to the Appendix (we refer to Section \ref{appendix b}).}

 ~
 
We are now in a position to finish the proof of Lemma \ref{lemma 2.8}, and we do so before closing off this section with the proof of Lemma \ref{lemma 3.9}. 
The basic idea in the proof of Lemma \ref{lemma 2.8} involves using  comparison with the constructed auxiliary fields and exploiting the i.i.d structure embedded in the latter. 

\begin{proof}[Proof of Lemma \ref{lemma 2.8}]

Let us first prove  \eqref{280}. 
{Consider the field $\bar{\theta}_N^{N',K}$ restricted to $\bar{D}:=\cup_{i \in \bar{I}} D_i \subseteq D$, where $\bar{I}:=  \{i \in I : D_i \subseteq B\}$.  We assume that $K \in \mathbb{N}$ is a  large constant such that   the covariance comparison \eqref{3760} holds for all $u,v\in \bar{D}$. }

 Assume that $N\geq 8KL$.   For any  $t'\in [ 2\sqrt{2d}\log (4KL),  2\sqrt{2d} \log (N/2)  ]$ (which is a valid interval for $N\geq 8KL$),  take  $N' = 2^{n'}$   with
\begin{align} \label{281}
n '=    \left \lfloor{ \log_2  ( N e^{-\frac{t'}{2\sqrt{2d}} } )  }\right \rfloor  .
\end{align} 
 The condition on $t'$ ensures that $n'\geq 1$ and $\frac{N}{L} \geq 4KN'$.
 Since $B$ is of size  $  \left \lfloor{\frac{N}{L} }\right \rfloor $,    using again the fact $\lfloor x \rfloor \geq x/2$ for $x\geq 1$,
\begin{align} \label{282}
|\bar{I}|\geq    \left \lfloor{\frac{  \lfloor N / L \rfloor}{2KN'} }\right \rfloor   ^d
 \geq  \frac{1}{(8KL)^d} e^{ \sqrt{\frac{d}{2}}\frac{t'}{2}}.
\end{align}  
 Also, by the definition of $a_v$ in \eqref{av0}, for any $v\in \bar{D}$,  
 \begin{align}
 |a_v^2 - (\log N - \log N'  ) |  \leq C.
\end{align} This  together with   \eqref{281} implies 
 \begin{align} \label{295}
 \Big\vert a_v^2 -\frac{t'}{2\sqrt{2d}}  \Big\vert \leq  C.
\end{align} 
{Also,  by \eqref{462} and \eqref{281},
\begin{align}  \label{290}
m_N \leq m_{N'} + \frac{t'}{2} + \sqrt{2d}\log 2 .
\end{align}}
Since each $\theta_{D_i}  \overset{\text{law}}{\sim} \theta_{N'}$,    setting $\bar{c} := c_3 + \sqrt{2d}\log 2 $, by  Lemma  \ref{lemma mbrw}, for each $i\in \bar{I}$,
\begin{align} \label{284}
\mathbb{P}\Big( \max_{v\in D_i } \theta_{D_i,v} \leq  m_{N} - \frac{t'}{2} -\bar{c} \Big) \overset{\eqref{290}}{ \leq } \mathbb{P}( \max _{v\in V_{N'} }  \theta_{N',v} \leq  m_{N'}   - c_3)  \leq  1-c_4.
\end{align}
Thus, using the independence of the fields $\theta_{D_i}$s ($i\in \bar{I}$) and noting that $t'\geq 1$, there exists  $c>0$ such that 
\begin{align*}
\mathbb{P}(\max_{v\in \bar{D}} \bar{\theta}_{N,v}^{N',K} &\leq  m_N -t' -\bar{c})\\
 &\overset{\eqref{295}}{\leq}  \mathbb{P}\Big(\max_{i\in \bar{I}} \max_{v\in  D_i } \theta_{D_i,v} \leq  m_{N} - \frac{t'}{2} -\bar{c} \Big) + \mathbb{P}\Big( \Big( \frac{t'}{2\sqrt{2d}} +C  \Big)^{1/2}Y\leq -\frac{t'}{2} \Big) \\
& \overset{\eqref{284}}{\leq}  (1-c_4)^{|\bar{I}|} +  Ce^{-c t'}  \overset{\eqref{282}}{\leq}  \exp\Big( -\frac{c}{(KL)^d}e^{ \sqrt{\frac{d}{2}}\frac{t'}{2} } \Big)+  Ce^{-ct'}.
\end{align*}
{Setting $t: =t'-2\sqrt{2d}\log L+ \bar{c}$, using the fact $e^x \geq 1+x$, we deduct that  there exist $t_0,c'>0$ such that   for $t\in [t_0, 2\sqrt{2d} (\log (N/2)- \log L)+\bar{c}]  $,
\begin{align} \label{287}
\mathbb{P}(\max_{v\in \bar{D}} \bar{\theta}_{N,v}^{N',K} &\leq  m_N -2\sqrt{2d}\log L - t  ) \leq Ce^{-c't}.
\end{align}}
By \eqref{3750} and \eqref{3760}, and Slepian's lemma (see Lemma \ref{lemma 3.4}), for any $M\in \R$,
\begin{align}  \label{283}
\mathbb{P}(\max_{v\in B} \phi_{N,v} \leq  M )\leq  \mathbb{P}(\max_{v\in \bar{D}} \phi_{N,v} \leq  M )\leq  
\mathbb{P}(\max_{v\in \bar{D}} \bar{\theta}_{N,v}^{N',K} \leq  M )  .
\end{align}  
{Therefore, by the preceding estimates, we get,}
for $t\in [t_0, 2\sqrt{2d} (\log (N/2) - \log L) + \bar{c}]  $,
\begin{align} \label{286}
\mathbb{P}(\max_{v\in B} \phi_{N,v} &\leq  m_N -2\sqrt{2d}\log L - t ) \leq Ce^{-c't}.
\end{align}
{In the case $t\geq 2\sqrt{2d} (\log (N/2)  - \log L)+\bar{c}$, noting that  $m_N -2\sqrt{2d}\log L - t $ is already a large negative quantity, even the one-point bound already provides an exponential upper bound. In fact,  by Assumption \ref{a1},    for any $w\in B$, 
\begin{align} \label{288}
\mathbb{P}(\max_{v\in B} \phi_{N,v} \leq  m_N -2\sqrt{2d}\log L - t ) &\leq \mathbb{P}( \phi_{N,w} \leq  m_N -2\sqrt{2d}\log L - t )  \nonumber \\
&\leq Ce^{-\frac{(2\sqrt{2d}\log L + t - m_N)^2}{2(\log N +\alpha_0)}} \leq Ce^{-c''t},
\end{align}
where  $c''>0$ a constant.}
Therefore, \eqref{286} and \eqref{288} finish the proof of \eqref{280}. The argument above verbatim with $\tilde{\theta}_{N,v}^{N',K}$ (see Lemma \ref{lemma 3.11}) replacing $\bar{\theta}_{N,v}^{N',K}$  yields \eqref{2800} with the required comparison delivered by \eqref{285}.

\end{proof}

We conclude this section by proving  Lemma \ref{lemma 3.9} using a simple union bound.
\begin{proof}[Proof of Lemma \ref{lemma 3.9}]
We first prove \eqref{3900}.  Assume that $N\geq 3c_2'\ell^{1/d}$ ($c_2'>0$ is a constant from Lemma \ref{lemma 2.8}).
 Take disjoint boxes $B_1,\cdots,B_\ell \subseteq V_N^{1/10}$ of size $ \left \lfloor{   N/ 3\ell^{1/d}  }\right \rfloor  
$. Note that it is possible to take such $\ell$ boxes since
$
\lfloor \frac{(8/10)N}{\lfloor N/3\ell^{1/d} \rfloor} \rfloor \geq \frac{1}{2} \cdot \frac{24}{10}\ell^{1/d} \geq  \ell^{1/d}.
$
 For each $i=1,\cdots,\ell$, define
\begin{align*}
M_i = \max_{v\in B_i} \pv.
\end{align*}
Then, $S_\ell(\phi_N)  \geq M_1 + \cdots + M_\ell$. Thus, by a union bound and  Lemma \ref{lemma 2.8},
there exists $t_1 > 0$ such that for any $t\geq t_1$,
\begin{align*}
\mathbb{P}\Big(  S_\ell(\phi_N)  \leq   \ell \Big( m_N-2\sqrt{\frac{2}{d}}\log \ell -t\Big)\Big )& \leq \mathbb{P}\Big(M_1+\cdots+M_\ell\leq   \ell \Big( m_N-2\sqrt{\frac{2}{d}}\log \ell -t\Big)\Big ) \\
&  \leq \sum_{i=1}^\ell \mathbb{P}\Big(M_i\leq  m_N-2\sqrt{\frac{2}{d}}\log \ell -t\Big)  \\
& = \sum_{i=1}^\ell \mathbb{P}\Big(M_i\leq  m_N- 2\sqrt{2d}\log (3\ell^{1/d}) + 2\sqrt{2d}\log 3 -t\Big) \\
&\leq C\ell e^{-c_2t}.
\end{align*}
The inequality  \eqref{391} follows in the same way with the aid of  Lemma \ref{lemma 2.8}.

\end{proof}

 \subsection{Proof of Theorem \ref{theorem 3.0} }

In this subsection, we prove Theorem \ref{theorem 3.0}. The proof bears resemblance to the proof of lower bound for $\mathbb{E}S_\ell$ in Proposition \ref{prop 3.6}. {\begin{enumerate}
\item We construct a field with an i.i.d structure much like our previous constructions, but now with blocks of $\phi_N$ instead of $\theta_N.$ This is done in a way so that the covariance structure of the construction dominates that of $\phi$ of the same size. In particular, this implies that $\mathbb{E}S_\ell$ of the auxiliary field is less than that of the original field.
\item Proof by contradiction: As indicated in Section \ref{section 1.3}, if \eqref{3000} fails, then the bootstrapping implies that the cardinality of level sets is large for the constructed field with overwhelming probability which is enough to force a lower  bound on $\mathbb{E} S_\ell$ for the latter.
\item This together with the upper bound on $\mathbb{E} S_\ell$ for the original field (\eqref{363} in  Proposition \ref{prop 3.6}) produces a contradiction.
\end{enumerate}}

We first proceed with the construction. As indicated, all the steps except one will be exactly the same as before. For  a small constant $\eta>0$ and a  large positive integer $K$ which will be chosen later,  let
\begin{align} \label{nprime}
N' :=\left \lfloor{\ell^\eta K N }\right \rfloor .
\end{align}  
 Consider the collection of boxes $\mathcal{D} = \{D_i\}_{i\in I}$ such that 
  \begin{enumerate}
 \item  $D_i \subseteq V_{N'}^{1/10}$,
 \item   $D_i  = 2KN x_i + KV_{N} $ for some $x_i \in \mathbb{Z}^d$.
\end{enumerate}  
 {As before, $D_i$s are obtained by dilating $V_{N}$ by $K$ and translating them by multiples of $2KN$ and checking whether they are fully contained in $V_{N'}^{1/10}.$  Then,  for large $N'$, } 
\begin{align} \label{350}
|I|\geq  \left \lfloor{\frac{(8/10) N'-1}{2KN} }\right \rfloor   ^d \geq    \left \lfloor{\frac{N'}{3KN} }\right \rfloor   ^d.
\end{align}
Note that by the second condition,  $d_\infty(D_i,D_j) \geq KN$ for $i\neq j$ and hence all the boxes are disjoint.  Set $D:= \cup_{i \in I} D_i$,  and for $v\in D$, denote $D(v)$ by a unique element  in $\mathcal{D}$ containing $v$.

We now come to the distinct step where $\{\phi_{D_i}\}_{D_i\in \mathcal{D}}$ is taken to be i.i.d. Gaussian fields defined by translating and dilating i.i.d. copies of $\phi_N$as follows:  $\phi_{D_i}$ is defined on  $D_i$ such that
\begin{align*}
( \phi_{D_i,2KNx_i+Kv'}  )_{v'\in V_N} \overset{\text{law}}{\sim} (\phi_{N,v'} )_{v'\in V_N}.
\end{align*} 

{Note that by  Assumption \ref{a1}  for any $v\in D$, $  \var \phi_{D(v),v} \leq \log N + \alpha_0$, and further by Assumption \ref{a2} and noting that $v\in  V_{N'}^{1/10}$,  $$\var \phi_{ N',v}  \geq \log N'  - \alpha(1/10)  \overset{\eqref{nprime}}{ \geq} \log N + \log K - \alpha(1/10).$$
 Hence, for large enough $K$, $\var \phi_{ N',v} >\var \phi_{D(v),v} $ for all $v\in D$. For any such $K$, define $a_v>0$ by}
 \begin{align} \label{av}
 a_v^2 := \var \phi_{ N',v} -  \var \phi_{D(v),v},\quad v\in D.
 \end{align}

Not surprisingly, by now, for a standard Gaussian random variable $Y$ independent of everything else,  we finally define the field $ \psi_{N}^{K,\ell,\eta}= ( \psi_{N,v}^{K,\ell,\eta})_{v\in D}$   by
\begin{align} \label{aux}
 \psi_{N,v}^{K,\ell,\eta} :=  \phi_{D(v),v} + a_vY.
\end{align}
Let 
{ \begin{align*}
\tilde{S}_\ell  := {S}_\ell(\{{\phi}_{D(v),v} \}_{v\in D} )  =  \max\Big\{\sum_{v\in J} \phi_{D(v),v}  : J \subseteq D, |J|=\ell \Big\}.
 \end{align*}
} 
 
Similar to Lemma \ref{lemma 3.8}, we first state that the desired comparison indeed holds.   
 \begin{lemma} \label{lemma 4.5} 
There exists a large constant $K\in \mathbb{N}$ such that for any  $\eta>0$ and $N, \ell\geq 1$,
\begin{align}
\mathbb{E}  S_\ell (  \psi_{N}^{K,\ell,\eta} ) \leq \mathbb{E} S_{\ell}(\phi_{N'}).
\end{align}
\end{lemma}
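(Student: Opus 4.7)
The strategy mirrors that of Lemma \ref{lemma 3.8}: we verify the hypotheses of the comparison principle Lemma \ref{lemma 3.3} for the two Gaussian vectors $(\psi_{N,v}^{K,\ell,\eta})_{v\in D}$ and $(\phi_{N',v})_{v\in D}$, namely matching variances and a one-sided covariance inequality. Matching variances is immediate: by construction \eqref{aux} together with the definition \eqref{av} of $a_v$,
\begin{align*}
\var \psi_{N,v}^{K,\ell,\eta} \;=\; \var \phi_{D(v),v} + a_v^2 \;=\; \var \phi_{N',v}, \qquad v\in D.
\end{align*}
The substance of the proof is thus to show that for $K$ large enough,
\begin{align*}
\cov\bigl(\psi_{N,u}^{K,\ell,\eta}, \psi_{N,v}^{K,\ell,\eta}\bigr) \;\ge\; \cov\bigl(\phi_{N',u}, \phi_{N',v}\bigr) \qquad \forall u\neq v \in D,
\end{align*}
after which Lemma \ref{lemma 3.3} gives $\E S_\ell(\psi_N^{K,\ell,\eta}) \le \E S_\ell(\phi_{N'}|_D) \le \E S_\ell(\phi_{N'})$.

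I will split the covariance check into the two cases appearing in the proof of Lemma \ref{lemma 3.8}. Case 1: $u\neq v$ lie in the same block $D_i = 2KNx_i + KV_N$, say $u = 2KNx_i + Ku'$, $v = 2KNx_i + Kv'$. Since $\phi_{D_i}$ is a translated/dilated copy of $\phi_N$, by the equivalent formulation \eqref{aa} of Assumption \ref{a1},
\begin{align*}
\cov(\phi_{D_i,u},\phi_{D_i,v}) \;\ge\; \max\{\var \phi_{D_i,u},\var \phi_{D_i,v}\} - \log_+|u'-v'| - 2\alpha_0,
\end{align*}
and $|u'-v'| = |u-v|/K$. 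Using additionally $a_u a_v \ge \sqrt{(\var\phi_{N',u}-\var\phi_{D_i,u})(\var\phi_{N',v}-\var\phi_{D_i,v})}$ and, since $u,v \in V_{N'}^{1/10}$, Assumption \ref{a2} gives $\var\phi_{N',u},\var\phi_{N',v} \ge \log N' - \alpha(1/10)$, while Assumption \ref{a1} gives $\var\phi_{D_i,u},\var\phi_{D_i,v}\le \log N +\alpha_0$. Putting these together yields a lower bound of the form $\log(N'/|u-v|) + \log K - C$ on $\cov(\psi_{N,u}^{K,\ell,\eta},\psi_{N,v}^{K,\ell,\eta})$, whereas Assumption \ref{a2} gives the upper bound $\cov(\phi_{N',u},\phi_{N',v}) \le \log(N'/|u-v|) + \alpha(1/10)$. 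For $K$ large enough depending only on $\alpha_0$ and $\alpha(1/10)$, the former dominates the latter.

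Case 2: $u \in D_i$, $v \in D_j$ with $i \neq j$. The fields $\phi_{D_i}, \phi_{D_j}$ are independent, so $\cov(\psi_{N,u}^{K,\ell,\eta},\psi_{N,v}^{K,\ell,\eta}) = a_u a_v \ge \log N' - \log N - C$ by the same variance estimates as above. On the other hand $|u-v|_\infty \ge KN$ by the separation condition on the $D_i$'s, so Assumption \ref{a2} gives $\cov(\phi_{N',u},\phi_{N',v}) \le \log N' - \log(KN) + \alpha(1/10)$, and again the inequality holds for $K$ sufficiently large. The only step requiring any care is ensuring that the same choice of $K$ works uniformly in $\eta, N, \ell$; this is clear since all comparisons above depend only on the structural constants $\alpha_0, \alpha(1/10)$ and not on the scale $N'$. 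No real obstacle is anticipated beyond bookkeeping the logarithmic terms.
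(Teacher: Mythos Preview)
Your plan is essentially the paper's proof: match variances by construction, verify the one-sided covariance inequality in two cases, and invoke Lemma~\ref{lemma 3.3}. One point in Case~1 needs care, however. You have from \eqref{aa} the lower bound
\[
\cov(\phi_{D_i,u},\phi_{D_i,v}) \;\ge\; \max\{\var\phi_{D_i,u},\var\phi_{D_i,v}\} - \log|u-v| + \log K - 2\alpha_0,
\]
and you list separately the bounds $\var\phi_{N',\cdot}\ge \log N'-\alpha(1/10)$ and $\var\phi_{D_i,\cdot}\le \log N+\alpha_0$. If you use the latter to bound $a_u a_v \ge \log N' - \log N - C$ and then add, you are left with the term $\max\{\var\phi_{D_i,u},\var\phi_{D_i,v}\}$, for which there is \emph{no} available lower bound: Assumption~\ref{a1} bounds variances only from above, and Assumption~\ref{a2} is unavailable since $u',v'$ range over all of $V_N$, including the boundary. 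The correct combination (which the paper carries out explicitly) keeps the $\var\phi_{D_i,\cdot}$ terms inside the square root and uses the elementary inequality
\[
M + \sqrt{(L-\var\phi_{D_i,u})(L-\var\phi_{D_i,v})} \;\ge\; M + (L-M) \;=\; L,
\]
with $M=\max\{\var\phi_{D_i,u},\var\phi_{D_i,v}\}$ and $L=\log N'-\alpha(1/10)$; the upper bound on $\var\phi_{D_i,\cdot}$ from Assumption~\ref{a1} is only needed to ensure $L>M$ (i.e.\ that $a_v>0$) for $K$ large. With this adjustment your Case~1 goes through, and Case~2 is already fine.
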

Given Lemma  \ref{lemma 4.5} whose proof is again a computational one which we momentarily postpone, one can quickly conclude the  proof of  Theorem \ref{theorem 3.0}.

\begin{proof} [Proof of   Theorem \ref{theorem 3.0}]
Let  $\e'\in (0,\frac{1}{8})$ be any constant.  Suppose that for some $\ell\geq 1$ and $N\geq c_5'\ell^{1/d}$ ($c_5'>0$ is the constant from Lemma \ref{lemma 3.9}),
 \begin{align}  \label{465}
 \mathbb{P}\Big(\Big\vert \Gamma_N \Big(\frac{1-8\e'}{\sqrt{2d}}\log \ell\Big ) \Big\vert   \geq  \ell \Big) > \ell^{-\e'}.
 \end{align}
Thus, we also have
\begin{align} \label{463}
\mathbb{P}\Big(S_\ell(\phi_N)  \geq \ell\Big(m_N - \frac{1-8\e'}{\sqrt{2d}}\log \ell\Big)\Big) >  \ell^{-\e'}.
\end{align}

Consider now the field  $  \psi_{N}^{K,\ell,\eta}$ with
$
\eta := \frac{3\e'}{d}
$ 
 and a   large constant $K\in \mathbb{N}$  for which  Lemma \ref{lemma 4.5} holds.
Then,
\begin{align} \label{306}
|I|  \overset{\eqref{350}}{\geq}    \left \lfloor{\frac{N'}{3KN} }\right \rfloor   ^d
\overset{\eqref{nprime}}{ \geq } \frac{1}{6^d}\ell^{3\e'}.
\end{align}
 Thus,  by the bootstrapping effect, due to the independence of the fields $\phi_{D_i}$s ($i\in I$) and since $ \phi_{D_i} \overset{\text{law}}{\sim} \phi_{N}$, there exists $c>0$ such that
 \begin{align} \label{303}
  \mathbb{P}\Big( \tilde{S}_\ell   < \ell\Big ( m_N-\frac{1-8\e'}{\sqrt{2d}}\log \ell \Big) \Big) &\leq  \mathbb{P}\Big(\max_{i\in I} S_\ell(\phi_{D_i}) <  \ell \Big( m_N-\frac{1-8\e'}{\sqrt{2d}}\log \ell \Big) \Big)   \nonumber\\
  & \overset{\eqref{463}}{\leq}  (1-\ell^{-{\e'}})^{|I|} \overset{\eqref{306}}{ \leq} e^{- c \ell^{2 \e'}}.
\end{align}    

Next, since  $ \tilde{S}_\ell \geq    S_\ell( \phi_{D_i})$  for any $i \in I$ and $ \phi_{D_i} \overset{\text{law}}{\sim} \phi_{N}$, for any $M\in \R$,   
$$
  \mathbb{P}(\tilde{S}_\ell   \leq  M) \leq   \mathbb{P}(  S_\ell(\phi_N)  \leq   M ).
$$ Thus,
by Lemma \ref{lemma 3.9},   for large enough $\ell$,
   \begin{align} \label{304}
     \int_{-\infty}^{\ell(m_N-(2\sqrt{\frac{2}{d}} + \frac{1}{c_5})\log \ell)} \mathbb{P}( \tilde{S}_\ell    \leq  t)dt   &\leq   \ell \int_0^\infty    \mathbb{P}\Big(  S_\ell(\phi_N)  \leq   \ell \Big( m_N-\Big(2\sqrt{\frac{2}{d}} + \frac{1}{c_5}\Big)\log \ell -s \Big)\Big)  ds \nonumber \\
     &\leq  C \ell \int_0^\infty \ell  e^{-c_5 (  \frac{1}{c_5} \log \ell + s)}ds \leq C\ell.
   \end{align}
Define $c' :=  2\sqrt{\frac{2}{d}} + \frac{1}{c_5}$.
Using \eqref{useful} along with the above bounds, we get
\begin{align} \label{302}
\mathbb{E} \tilde{S}_\ell  
&\geq  \ell(m_N-c' \log \ell) - \int_{-\infty}^{ \ell(m_N- c'\log \ell) }  \mathbb{P}(\tilde{S}_\ell   \leq  t)dt +  \int_{\ell(m_N- c'\log \ell)}^{\ell(m_N- \frac{1-8\e'}{ \sqrt{2d}}\log \ell)} \mathbb{P}( \tilde{S}_\ell    \geq t)dt  \nonumber \\
&\geq  \ell(m_N-c' \log \ell) - C \ell +  \Big(c' - \frac{1-8\e'}{\sqrt{2d}}\Big) \ell\log \ell  \cdot  \big(1 - e^{- c \ell^{2 \e'}}  \big) \nonumber \\
&=  \ell\Big(m_N - \frac{1-8\e'}{\sqrt{2d}}\log \ell\Big) - C\ell -  \Big(c' - \frac{1-8\e'}{\sqrt{2d}}\Big) e^{- c \ell^{2 \e'}}   \ell\log \ell   .
\end{align}
 On the other hand, by Proposition \ref{prop 3.6}, recalling $N' =\left \lfloor{\ell^\eta K_1 N }\right \rfloor  $ and $\eta =  \frac{3\e'}{d}$,  for large enough $\ell$,
\begin{align} \label{301}
\mathbb{E} S_{\ell}(\phi_{N'})  &\leq   \ell\Big(m_{N'}- \frac{1-\e'}{\sqrt{2d}} \log \ell\Big) \nonumber \\
&\overset{\eqref{462}}{\leq}  \ell\Big(m_{N} -\Big (\frac{1-\e'}{\sqrt{2d}} -\sqrt{2d}\eta\Big)\log \ell+C\Big) = \ell\Big(m_N - \frac{1-7\e'}{\sqrt{2d}}\log \ell\Big) + C\ell.
\end{align}
In addition, by Lemma \ref{lemma 3.5} and \ref{lemma 4.5},
\begin{align} \label{305}
 \mathbb{E} \tilde{S}_\ell  \leq \mathbb{E} S_\ell (  \psi_{N}^{K,\ell,\eta} )   \leq  \mathbb{E} S_{\ell}(\phi_{N'}).
\end{align}
  \eqref{302} together with \eqref{301} contradict \eqref{305} for large  enough $\ell$.  {We conclude that for any  $\e'\in (0,\frac{1}{8})$, there exists $\ell_0$ such that for  any $\ell \geq \ell_0$ and  $N\geq c_5'\ell^{1/d}$,  \eqref{465} is false, i.e.,}
\begin{align*}
\mathbb{P}\Big(\Big\vert \Gamma_N \Big(\frac{1-8\e'}{\sqrt{2d}}\log \ell\Big ) \Big\vert   \geq  \ell \Big) \leq  \ell^{-\e'}.
\end{align*}
Setting $t:=\frac{1-8\e'}{\sqrt{2d}}\log \ell$ and $\e :=  \sqrt{2d} ( \frac{1}{1-8\e'}-1 )$, one can deduce that for any constant $\e >0$, for sufficiently large $t$ and $N\geq c_5' e^{(\sqrt{2d}+\e)t/d}$,
\begin{align*}
\mathbb{P}( |\Gamma_N(t)| \geq  e^{(\sqrt{2d}+\e)t}) \leq  e^{-\e'  ( \sqrt{2d}+\e) t}   = e^{-\e t/8}.
\end{align*}

\end{proof}

We now conclude this section by providing the outstanding proof of Lemma \ref{lemma 4.5} which again simply involves comparing covariances.

\begin{proof}   [Proof of Lemma \ref{lemma 4.5}]
By \eqref{av}, for $v\in D$,
\begin{align} \label{910}
\var \psi_{N,v}^{K,\ell,\eta} =  \var \phi_{ N',v}.
\end{align}
By Lemma \ref{lemma 3.3}, it remains to show that there exists a  large constant $K\in \mathbb{N}$ such that  for $u\neq v\in D$,
\begin{align} \label{911}
\cov ( \psi_{N,u}^{K,\ell,\eta}, \psi_{N,v}^{K,\ell,\eta})  \geq \cov ( \phi_{N',u}, \phi_{N',v}).
\end{align}

\textbf{Case 1: $u\neq v \in D$ belong to the same $D_i$.} Assume that $ u = 2KNx_i+K u' $ and $ v = 2KNx_i+K v' $ with $x_i \in \mathbb{Z}^d$ and $ u'\neq v' \in V_N$. Since $|u'-v' |  = |u-v|/K$, by Assumptions \ref{a1} (\eqref{aa} applied to $\phi_N$) and \ref{a2} and \eqref{av},
\begin{align} \label{912}
\cov ( \psi_{N,u}^{K,\ell,\eta}, \psi_{N,v}^{K,\ell,\eta})  &= \cov(\phi_{N,u'} ,\phi_{N,v'} ) + a_ua_v \nonumber \\
&\geq   \max\{\var \phi_{N,u'},\var \phi_{N,v'}\} - \log |u-v  |  + \log K - 2\alpha_0\nonumber \\
&+ (\log N' - \alpha(1/10) - \var \phi_{N,u'})^{1/2} (\log N'  - \alpha(1/10) - \var \phi_{N,v'})^{1/2} \nonumber \\
&\geq  - \log |u-v  |  + \log K - 2\alpha_0 + \log N' - \alpha(1/10)  .
\end{align}
On the other hand,
since $u,v\in V_{N'}^{1/10}$, by Assumption \ref{a2},
\begin{align} \label{913}
 \cov ( \phi_{N',u}, \phi_{N',v}) \leq  \log  \frac{N'}{|u-v|} + \alpha(1/10).
\end{align}
Thus  \eqref{911} holds  for sufficiently large $K$.

\textbf{Case 2: $u\in D_i$ and $v\in D_j$ with $i\neq j$.}\
By Assumptions \ref{a1} and \ref{a2},
\begin{align} \label{915}
\cov ( \psi_{N,u}^{K,\ell,\eta}, \psi_{N,v}^{K,\ell,\eta})   
&= ( \var \phi_{N',u} -  \var \phi_{D_i,u})^{1/2} ( \var \phi_{N',v} -  \var \phi_{D_j,v})^{1/2} \nonumber \\
&\geq \log   N'  -  \alpha(1/10)  - \log N - \alpha_0.
\end{align}
Since $|u-v| \geq KN$, by Assumption \ref{a2},
\begin{align} \label{916}
\cov(  \phi_{N',u}, \phi_{N',v})  \leq \log N' - \log (KN) + \alpha(1/10).
\end{align}
Thus, again, we have \eqref{911} for sufficiently large $K$.
Hence, $
\mathbb{E}S_\ell ( \psi_{N}^{K,\ell,\eta}) \leq  \mathbb{E}S_\ell (\phi_{N'} |_D) \leq   \mathbb{E}S_\ell (\phi_{N'} ).
$
\end{proof}

\subsection{Lower bound on the level set} \label{section 3.3}
In this section, we prove Theorem \ref{theorem lower}.

\begin{proof}[Proof of Theorem \ref{theorem lower}]
{The proof consists of the following broad steps.
\begin{enumerate}
\item Using the upper bound (Theorem \ref{theorem 3.0}), we upper bound the right tail of  $S_\ell(\phi_N)$. This together with a control on the maximum offered by  Lemma \ref{lemma 2.4}, allow to  upper bound the  contribution $\mathbb{E}[ S_\ell (\phi_N): S_\ell (\phi_N) > \ell(m_N - (\frac{1}{\sqrt{2d}} - \kappa) \log \ell) ]$  ($\kappa>0$).

\item  The lower bound on  $\mathbb{E}S_\ell (\phi_N)$ (\eqref{364} in  Proposition \ref{prop 3.6}) together with (1) provide a lower bound  for the probability $ \mathbb{P}(  S_\ell(\phi_N)  \geq  \ell(m_N - (\frac{1}{\sqrt{2d}} + \kappa) \log \ell) ) $  ($\kappa>0$).

\item The lower bound on the probability of $S_{\ell}(\phi_N)$ being large along with again using the control on the maximum is enough to transfer this to a lower bound on $\Gamma_{N}(t)$ being large with $t:= (\frac{1}{\sqrt{2d}} + \kappa) \log \ell$. 
\end{enumerate}
}

\textbf{Step 1.} We first show that for any small enough constant $\e>0$, for large enough $\ell$, for sufficiently large $N$,
\begin{align} \label{397}
\mathbb{P}\Big(S_\ell (\phi_N) > \ell m_N - \frac{1}{\sqrt{2d}+4\e}\ell \log \ell\Big)\leq  2  \ell^{-0.1 \e / \sqrt{2d}}.
\end{align}
Setting 
\begin{align}\label{k0}
s:=  \frac{1}{\sqrt{2d}+2\e} \log \ell,
\end{align} 
 define  the events
\begin{align*}
A:= \{ |\Gamma_N( s) | \leq   e^{ (\sqrt{2d}+\e )s} \}
\end{align*}
and
\begin{align} \label{402}
B:= \Big\{\max_{v\in V_N} \phi_{N,v} <  m_N +  \e  \log \ell  \Big\}.
\end{align}
{For any small enough $\e>0$, by Theorem \ref{theorem 3.0}, for large enough $s$ (hence for large enough $\ell$)},
\begin{align} \label{395}
\mathbb{P}(A^c) \leq  e^{- \e s/8}  \leq   \ell^{-0.1 \e / \sqrt{2d}}.
\end{align} 
Also, by  Lemma \ref{lemma 2.4}, for large enough $\ell$,
\begin{align} \label{396}
\mathbb{P}(B^c) \leq  C \e \log \ell \cdot   e^{- \sqrt{2d} \e \log \ell} \leq \ell^{-0.1 \e / \sqrt{2d}}.
\end{align}
In addition, under the event $A\cap B$, 
\begin{align*}
S_\ell (\phi_N) & \leq    e^{ (\sqrt{2d}+\e )s}  (m_N + \e \log \ell) + (\ell-   e^{ (\sqrt{2d}+\e )s} ) (m_N -s)  \\
&\overset{\eqref{k0}}{=}   \ell^{1-\frac{\e}{\sqrt{2d}+2\e} } (m_N + \e \log \ell) + (\ell-  \ell^{1-\frac{\e}{\sqrt{2d}+2\e} }) \Big (m_N - \frac{1}{\sqrt{2d}+2\e}\log \ell  \Big)   \\
&= \ell m_N  -  \frac{1}{\sqrt{2d}+2\e}\ell \log \ell +  \Big ( \e +  \frac{1}{\sqrt{2d}+2\e} \Big)  \ell^{1-\frac{\e}{\sqrt{2d}+2\e}} \log \ell \\
&\leq \ell m_N - \frac{1}{\sqrt{2d}+4\e}\ell \log \ell
\end{align*}
for large enough $\ell$.
Thus, by  \eqref{395} and \eqref{396}, we obtain \eqref{397}.

\textbf{Step 2.} We establish that  for any $\kappa,\iota>0$, for large enough $\ell$,
\begin{align} \label{405}
 \mathbb{P}\Big( S_\ell (\phi_N) \leq  \ell \Big (m_N - \Big(\frac{1}{\sqrt{2d}} +\kappa \Big)\log \ell\Big ) \Big )  \leq  \iota.
\end{align}
For a small enough $\gamma\in (0,1)$ which will be chosen later, let
\begin{align}
p&:= \mathbb{P}\Big( S_\ell (\phi_N) \leq  \ell \Big (m_N - \Big(\frac{1}{\sqrt{2d}} +\kappa \Big)\log \ell\Big ) \Big ),  \label{406} \\
q_1&:= \mathbb{P} \Big(  \ell \Big (m_N - \frac{1}{\sqrt{2d}+4\gamma} \log \ell\Big)  < S_\ell (\phi_N) \leq   \ell (m_N + \gamma \log \ell) \Big), \nonumber \\ 
q_2&:= \mathbb{P}(S_\ell (\phi_N)  > \ell (m_N + \gamma \log \ell)) \nonumber.
\end{align}
Then,
\begin{align*}
\mathbb{P} \Big( \ell \Big (m_N - \Big(\frac{1}{\sqrt{2d}} +\kappa \Big)\log \ell\Big) < S_\ell (\phi_N)  \leq  \ell \Big (m_N - \frac{1}{\sqrt{2d}+4\gamma}\log \ell\Big)\Big) = 1-p-q_1-q_2.
\end{align*}
Also, by \eqref{397},
\begin{align} \label{399}
q_1+q_2\leq   2\ell^{-0.1 \gamma / \sqrt{2d}}.
\end{align}
We write
\begin{align} \label{398}
\mathbb{E} S_\ell (\phi_N) & = \mathbb{E}    \Big[ S_\ell (\phi_N)  :   S_\ell (\phi_N) \leq  \ell \Big (m_N - \Big(\frac{1}{\sqrt{2d}} +\kappa \Big)\log \ell \Big)  \Big] \nonumber \\
& +   \mathbb{E}   \Big [ S_\ell (\phi_N)  :  \ell \Big (m_N - \Big(\frac{1}{\sqrt{2d}} +\kappa \Big)\log \ell\Big) < S_\ell (\phi_N)  \leq  \ell \Big (m_N - \frac{1}{\sqrt{2d}+4\gamma}\log \ell\Big)  \Big ]  \nonumber \\
&+ \mathbb{E}  \Big  [ S_\ell (\phi_N)  :  \ell \Big (m_N - \frac{1}{\sqrt{2d}+4\gamma} \log \ell\Big)  <  S_\ell (\phi_N) \leq   \ell (m_N + \gamma  \log \ell)  \Big]  \nonumber \\
&+  \mathbb{E}   [ S_\ell (\phi_N)  : \ell (m_N +\gamma  \log \ell) < S_\ell (\phi_N)    ] .
\end{align}
We bound each term above. The first term is bounded by
\begin{align} \label{3990}
p \ell \Big (m_N - \Big(\frac{1}{\sqrt{2d}} +\kappa \Big)\log \ell\Big).
\end{align}
The second term is bounded by
\begin{align}  
(1-p-q_1-q_2)  \ell \Big (m_N - \frac{1}{\sqrt{2d}+4\gamma}\log \ell\Big) .
\end{align}
The third term is bounded by
\begin{align}
q_1\ell (m_N + \gamma \log \ell) .
\end{align}
Note that  $S_\ell (\phi_N) \geq \ell(m_N+x)$ implies $\max_{v\in V_N} \pv \geq m_N+x$. Thus, using Lemma \ref{lemma 2.4}, for large enough $\ell$,
the fourth term is bounded by
\begin{align} \label{3992}
& \ell (m_N + \gamma \log \ell)  \mathbb{P}(S_\ell(\phi_N) > \ell (m_N + \gamma   \log \ell) )  +  \int_{\ell (m_N + \gamma   \log \ell)}^\infty \mathbb{P}( S_\ell (\phi_N) \geq t)dt \nonumber \\
% &= q_2 \ell (m_N + \gamma   \log \ell)  + \ell \int_0^\infty  \mathbb{P}( S_\ell (\phi_N)\geq\ell (m_N + \gamma \log \ell +s))ds \nonumber  \\
&\leq q_2 \ell (m_N + \gamma  \log \ell)  +\ell \int_0^\infty  \mathbb{P} (\max_{v\in V_N} \pv \geq m_N + \gamma \log \ell+s)ds \nonumber \\
&\leq  q_2 \ell (m_N + \gamma \log \ell)  + \ell \int_0^\infty C(\gamma \log \ell + s)e^{- \sqrt{2d} (\gamma \log \ell + s)}ds \nonumber \\
& \leq q_2 \ell (m_N + \gamma \log \ell)  + C   ( \log \ell ) \ell^{1-\sqrt{2d}\gamma} \leq    q_2 \ell (m_N + \gamma \log \ell)  +  \ell^{1-\gamma}.
\end{align}
Therefore, applying \eqref{3990}-\eqref{3992} to \eqref{398},
\begin{align} \label{385}
\mathbb{E} S_\ell (\phi_N) &\leq p \ell \Big (m_N -\Big (\frac{1}{\sqrt{2d}} +\kappa \Big)\log \ell \Big )  +(1-p- q_1-q_2) \ell \Big(m_N - \frac{1}{\sqrt{2d}+4\gamma} \log \ell \Big)\nonumber   \\
&+q_1 \ell (m_N + \gamma  \log \ell)  + q_2 \ell (m_N + \gamma  \log \ell)  + \ell^{1-\gamma} \nonumber  \\ 
&=  \ell m_N - \ell \log \ell \Big( p \Big(\frac{1}{\sqrt{2d}} +\kappa  - \frac{1}{\sqrt{2d}+4\gamma}   \Big)  + \frac{1}{\sqrt{2d}+4\gamma} - (q_1 +q_2)\Big( \frac{1}{\sqrt{2d}+4\gamma} +\gamma \Big)\Big) \nonumber \\
&+ \ell^{1-\gamma}.
\end{align}
By Proposition \ref{prop 3.6}, for large enough $\ell$, for sufficiently large $N$,
\begin{align} \label{386}
\mathbb{E}S_\ell (\phi_N) \geq  \ell\Big(m_N - \Big (\frac{1}{\sqrt{2d}} + \gamma\Big) \log \ell\Big).
\end{align}
Rearranging, {using the fact $  \frac{1}{\sqrt{2d}+4\gamma} \geq \frac{1}{\sqrt{2d}}-\frac{2\gamma}{d} $,} for large enough $\ell$,
\begin{align*}
p\Big( \frac{1}{\sqrt{2d}} +\kappa  -  \frac{1}{\sqrt{2d}+4\gamma} \Big)& \leq \frac{1}{\sqrt{2d}} + \gamma  + (q_1+q_2) \Big ( \frac{1}{\sqrt{2d}+4\gamma} +\gamma  \Big) - \frac{1}{\sqrt{2d}+4\gamma}  + \frac{1}{\ell^\gamma \log \ell} \\
&{\overset{\eqref{399}}{\leq}   \frac{1}{\sqrt{2d}} +\gamma  + 2\ell^{-0.1 \gamma / \sqrt{2d}}    \Big(  \frac{1}{\sqrt{2d}+4\gamma} +\gamma   \Big)-  \Big(\frac{1}{\sqrt{2d}} - \frac{2\gamma}{d} \Big)} + \gamma\\
&\leq  C(\ell^{-0.1 \gamma / \sqrt{2d}} +\gamma).
\end{align*}
Therefore,
\begin{align} \label{404}
p \leq \frac{C}{\kappa}( \ell^{-0.1 \gamma / \sqrt{2d}} + \gamma).
\end{align}
Taking $\gamma>0$ sufficiently small, we obtain \eqref{405}.

\textbf{Step 3.}
We show that if the events $B$ (defined in \eqref{402}) and
\begin{align} \label{403}
  S_\ell (\phi_N) \geq \ell \Big (m_N -  \frac{1}{\sqrt{2d}-\e}\log \ell\Big )  
\end{align}
hold, then  for large enough $\ell$,
\begin{align} \label{401}
\Big\vert   \Big\{v\in V_N: \phi_{N,v} \geq m_N - \frac{1}{\sqrt{2d}-2\e}\log \ell \Big\}\Big\vert \geq  \ell^{\frac{\sqrt{2d}-\e}{\sqrt{2d}-0.5\e}}.
\end{align}
Indeed, assuming the complement of the above event, then under the event $B$,
\begin{align*}
S_\ell (\phi_N) &\leq  \ell^{\frac{\sqrt{2d}-\e}{\sqrt{2d}-0.5\e}} (m_N + \e \log \ell) + (\ell - \ell^{\frac{\sqrt{2d}-\e}{\sqrt{2d}-0.5\e}})  \Big(m_N - \frac{1}{\sqrt{2d}-2\e}\log \ell \Big) \\
&= \ell m_N -  \frac{1}{\sqrt{2d}-2\e}\ell \log \ell + \frac{2}{\sqrt{2d}} \ell^{\frac{\sqrt{2d}-\e}{\sqrt{2d}-0.5\e}} \log \ell  <   \ell m_N -  \frac{1}{\sqrt{2d}-\e}\ell \log \ell  ,
\end{align*}
which contradicts \eqref{403}.

Therefore,  by \eqref{396} and \eqref{405}, for any $\iota>0$,  for large enough $\ell$,
\begin{align*}
\mathbb{P} \Big(\Big\vert   \Big\{v\in V_N: \phi_{N,v} \geq m_N - \frac{1}{\sqrt{2d}-2\e}\log \ell \Big\}\Big\vert \geq  \ell^{\frac{\sqrt{2d}-\e}{\sqrt{2d}-0.5\e}}\Big) \geq  1 - \iota.
\end{align*}
Setting $t:= \frac{1}{\sqrt{2d}-2\e}\log \ell$, 
\begin{align*}
\mathbb{P}(|\Gamma_N(t) | \geq  e^{ (\sqrt{2d}-3\e )t} )  \geq   1-\iota .
\end{align*}
By the arbitrariness of $\e>0$, we conclude the proof.

\end{proof}

 {We conclude this section by quoting the result from \cite[Lemma 3.3]{drz}, which states that no two points in $\Gamma_N(t)$ are mesoscopically separated.}

\begin{lemma} \label{lemma 4.4}
For any $\delta>0$, there exists a constant $c = c(\delta)>0$
 such that for any centered Gaussian field $\phi_N$ satisfying Assumption \ref{a2},
\begin{align}
\lim_{r \ri} \limsup_{N\ri} \mathbb{P}\Big(\exists u,v\in V_N^\delta \ \textup{such that} \  \phi_{N,u},\phi_{N,v} > m_N- c\log \log r,  |u-v| \in \Big(r,\frac{N}{r}\Big)\Big) = 0.
\end{align}
\end{lemma}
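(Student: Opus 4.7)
The plan is to prove this via a two-box first-moment estimate, using a scale decomposition of the field together with comparison to a more tractable auxiliary Gaussian. The reduction step is routine: covering $V_N^\delta$ by a disjoint grid of boxes $\{B_\alpha\}$ of side length $r/10$, any pair $u,v \in V_N^\delta$ at distance in $(r, N/r)$ must lie in two distinct boxes $B_\alpha, B_\beta$ at $\ell^\infty$-distance $R \in [r/20, 2N/r]$. Setting $T := m_N - c \log \log r$ for a small $c = c(\delta) > 0$ to be chosen, it suffices (by a union bound over ordered pairs of boxes) to show that
\begin{align*}
\sum_{(B_\alpha, B_\beta)} \mathbb{P}\bigl(\max_{B_\alpha} \phi_N > T,\ \max_{B_\beta} \phi_N > T\bigr) \longrightarrow 0
\end{align*}
as $r \to \infty$ after $N \to \infty$.

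The two-box probability I would attack through a scale decomposition. Under Assumption \ref{a2} the covariance of $\phi_N$ between points of $V_N^\delta$ equals $\log(N/|u-v|) + O(\alpha(\delta))$, which suggests decomposing $\phi_{N,w} \approx \zeta_\alpha + \eta^{(\alpha)}_w$ for $w \in B_\alpha$, with $\zeta_\alpha$ an approximately-constant-on-$B_\alpha$ macroscopic Gaussian (variance $\log(N/r)$ and cross-covariance $\log(N/R)$ between distinct boxes at distance $R$) and $\eta^{(\alpha)}$ a nearly independent log-correlated field at scale $r$. To make this rigorous under only Assumptions \ref{a1} and \ref{a2}, I would pass via Kahane's inequality (Proposition \ref{prop 3.3}, applied to smoothed two-corner indicators whose mixed partials are nonnegative) to an auxiliary field whose covariance dominates that of $\phi_N$ and which does admit an exact block decomposition --- for example a field built from disjoint i.i.d.\ copies of MBRW on $r$-boxes topped with an independent Gaussian shift, analogous to $\tilde\theta_N^{N',K}$ and $\psi_N^{K,\ell,\eta}$ in Section \ref{section 3}. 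Conditioning on $(\zeta_\alpha, \zeta_\beta)$ and applying the exponential upper tail on $\max \eta^{(\alpha)}$ from Lemma \ref{lemma 2.4}, the conditional joint probability factorises as
\begin{align*}
C^2 \prod_{i \in \{\alpha,\beta\}} \bigl(1 + (T - m_r - \zeta_i)_+\bigr)\, e^{-\sqrt{2d}(T - m_r - \zeta_i)_+},
\end{align*}
and a Laplace-type integration against the bivariate Gaussian density of $(\zeta_\alpha, \zeta_\beta)$ should deliver the desired two-box bound.

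The main obstacle is that a naive execution of the above is \emph{just barely insufficient}: pair-counting at mesoscopic distance produces configurational entropy of the same order as the Gaussian joint-tail decay, so summing over pairs at the worst scale $R$ actually grows polynomially in $N$. Overcoming this requires genuinely exploiting the $c \log\log r$ slack in $T$. The way I would try is to combine the scale decomposition with an auxiliary truncation coming from the level-set upper bound Theorem \ref{theorem level} (applied at an intermediate threshold $T' \in (T, m_N)$ to exclude configurations where $(\zeta_\alpha, \zeta_\beta)$ is atypically large), and to use the fact that $\max \eta^{(\alpha)} - m_r$ is not just exponentially-tailed but tight in distribution via Corollary \ref{tight}, which sharpens the inner conditional bound beyond the crude exponential one. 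The constant $c(\delta)$ should then emerge as a small quantity depending explicitly on $\alpha(\delta)$ and on the multiplicative losses from the Kahane comparison step.
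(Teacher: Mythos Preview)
The paper does not prove Lemma~\ref{lemma 4.4}; it is quoted verbatim from \cite[Lemma~3.3]{drz}, so there is no in-paper argument to compare against. Your sketch correctly identifies the natural two-scale decomposition and, more importantly, the genuine obstacle: a first-moment union bound over box pairs at the worst mesoscopic scale produces configurational entropy that matches (and in fact exceeds) the Gaussian two-box decay, so the naive estimate by itself cannot close.

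However, the two remedies you propose do not address this. First, the level-set bound of Theorem~\ref{theorem 3.0} controls only the \emph{cardinality} of $\Gamma_N(c\log\log r)$ --- it says there are at most $(\log r)^{O(c)}$ high points --- but carries no information about their pairwise distances, which is exactly the content of the lemma. Truncating on $\{|\Gamma_N(\cdot)|\text{ small}\}$ does not constrain the locations of $(u,v)$ or the values of your macroscopic Gaussians $(\zeta_\alpha,\zeta_\beta)$, so it does not reduce the entropy in the union bound. Second, invoking Corollary~\ref{tight} to say $\max\eta^{(\alpha)}-m_r$ is tight gives only a two-sided $O_p(1)$ statement; it does not sharpen the right-tail input $\mathbb{P}(\max\eta^{(\alpha)}>m_r+t)\le Cte^{-\sqrt{2d}t}$ from Lemma~\ref{lemma 2.4} that you are already feeding into the conditional factor. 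With both patches removed, your argument collapses back to the union bound you yourself flagged as insufficient --- and a direct Laplace computation shows that even with box scale matched to the separation $R$, the per-scale contribution at $R\sim r$ carries a residual factor that still grows with $\log N$. The proof in \cite{drz} is genuinely more delicate than a first-moment estimate over pairs; you should consult that argument directly rather than try to rescue the union bound with the tools you list.
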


Combining this lemma with  \eqref{410},  under Assumptions \ref{a1} and \ref{a2},  for any $\lambda>0$,
\begin{align} \label{meso}
 \lim_{r\ri} \limsup_{N\ri} \mathbb{P}\Big(\exists u,v\in V_N \ \textup{such that} \  \phi_{N,u},\phi_{N,v}  \geq  m_N - \lambda,  |u-v| \in \Big[ r,\frac{N}{r}\Big]\Big) = 0.
\end{align}

\section{Point process convergence} \label{section 4}
 
Recalling from Section \ref{section 1.3} that the strategy for the proof of Theorem \ref{theorem 1.1} has several steps, we develop them in this section. { As indicated earlier, and is presented later in Lemma \ref{lemma 7.7}, we upper bound the Gibbs weight of $\Gamma^c_{N}(t)$.}  
For the remaining steps, let us begin by recalling some of the key notations already introduced in Section \ref{section 1.3}.
 $C_{N,r}$ denotes the local extreme points
  \begin{align}\label{local ext}
C_{N,r}:=\{v\in V_N: \phi_{N,v}=\max_{u\in B(v,r)} \phi_{N,u}\}
\end{align}

We next define the local cluster weight.
For $R>0$, and $v \in V_N,$
{\begin{align} \label{sbar}
\bar{S}_{v,R} := \sum_{u\in B(v,R)} e^{\beta(\pu - \pv)}.
\end{align}
Note that 
$
\mu_N^\beta( B(v,R)) \propto e^{\beta(\pv-m_N)}\cdot \bar{S}_{v,R}.$
We next define the central point process (or a random measure) that will feature in our analysis.
 \begin{align} \label{point process}
\eta_{N,r}: = \sum_{v\in C_{N,r} }  \delta_{\pv - m_N} (dy) \otimes \delta_ {\bar{S}_{v,r/2} } (dz),
\end{align}
Note that above we only consider the cluster weight of balls of radius $r/2$ to ensure that the balls $B(v,r/2)$ are disjoint for $v\in C_{N,r}$.   }

~

The main result towards the proof of Theorem \ref{theorem 1.1} is the convergence of the point process $\{\eta_{N,r_N}\}_{N\geq 1}$ (with a suitable sequence  $\{r_N\}_{N\geq 1}$) to a Cox process where the  random intensity measure exhibits a certain key tensorization. {To talk about such convergences, we endow the space of measures on $ \R\times [0,\infty)$ with the topology of vague convergence where a sequence of measures $\{\mu_N\}_{N\geq 1}$ on the Polish space $X$ converges vaguely to the measure $\mu$ if $\int_X f d\mu_N \rightarrow \int_X f d\mu$ as $N\ri$ for all continuous  and compactly supported real valued functions $f$ on $X$.} 

Throughout this section, we assume that the centered
 Gaussian field $\phi_N$ satisfies  Assumptions \ref{a1} and \ref{a2}. Also, for a sigma-finite  measure $\mu$,  denote by $\text{PPP}(\mu)$ the Poisson point process with  the intensity measure $\mu$. Given the above preparation, the following is the main convergence result of this section.

\begin{theorem} \label{theorem 4.1}
  Let $\{r_N\}_{N\geq 1}$ be any sequence such that $r_N\ri$ and $r_N/N\rightarrow 0$ as $N\rightarrow \infty$. Then, any subsequential limit of  $\{\eta_{N,r_N}\}_{N\geq 1}$ can be written as
\begin{align}
  \textup{PPP}( e^{-\sqrt{2d}y}dy \otimes \nu (dz))
\end{align}
for some random Borel measure $\nu$ on $[0,\infty)$ such that $\nu((0,\infty)) \in (0,\infty)$ almost surely.
 
\end{theorem}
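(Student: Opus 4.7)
The plan is to prove Theorem~\ref{theorem 4.1} in three stages: tightness together with regularity of subsequential limits, an invariance property of any such limit under an Ornstein--Uhlenbeck (OU) flow acting on the underlying field, and finally a Liggett-type characterization of point processes whose first-coordinate marginal is invariant under Brownian translations. This broadly follows the template established by Biskup--Louidor~\cite{bl1} for the 2D GFF, but the absence of a Gibbs--Markov structure in our general setting forces us to base every step on covariance comparisons and on the level-set estimates of Section~\ref{section 3}.

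\textbf{Step 1: Tightness and regularity of limit points.} I would first show that $\{\eta_{N,r_N}\}_{N\ge 1}$ is tight in the vague topology on $\mathbb{R}\times[0,\infty)$. Tightness amounts to controlling $\eta_{N,r_N}([-\lambda,\lambda]\times[0,M])$ for every $\lambda,M>0$. The first-coordinate count is bounded by $|\Gamma_N(\lambda)|$, which by Theorem~\ref{theorem 3.0} is $O(e^{(\sqrt{2d}+\varepsilon)\lambda})$ with probability $1-o(1)$; because the balls $B(v,r_N/2)$ over $v\in C_{N,r_N}$ are disjoint, this also caps the number of local extrema in the relevant height window. To control the cluster weights, I would use the same level-set bound inside each ball $B(v,r_N)$ together with the fact that a local extremum $v$ forces every $u\in B(v,r_N)$ to satisfy $\phi_{N,u}\le \phi_{N,v}$, which translates the upper tail estimate into a uniform $L^p$-bound on $\bar S_{v,r_N/2}$. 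For the regularity assertions on any subsequential limit $\eta$, I would read off $\nu((0,\infty))<\infty$ from the mesoscopic separation Lemma~\ref{lemma 4.4}, which shattens $\Gamma_N(\lambda)$ into clusters of diameter $o(r_N)$ lying entirely inside the balls $B(v,r_N/2)$, and I would read off $\nu((0,\infty))>0$ from the lower bound Theorem~\ref{theorem lower}, which guarantees the existence of nontrivial cluster weight at heights $m_N-O(1)$.

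\textbf{Step 2: Invariance under a Brownian/OU flow.} For a fixed $s>0$ I would construct a coupling of $\phi_N$ with a field $\phi_N^{(s)}$ of the same covariance structure obtained by an OU-type perturbation: informally, $\phi_N^{(s)}=e^{-s/(2\log N)}\phi_N+\sqrt{1-e^{-s/\log N}}\,\tilde\phi_N$ for an independent copy $\tilde\phi_N$, so that the Gaussian increment at each site is asymptotically $\mathcal{N}(0,s)$ and the centered maximum is shifted by an almost-deterministic scalar. The goal is to show that
\begin{align*}
\eta_{N,r_N}^{(s)} := \sum_{v\in C_{N,r_N}^{(s)}}\delta_{\phi_{N,v}^{(s)}-m_N}(dy)\otimes \delta_{\bar S_{v,r_N/2}^{(s)}}(dz)
\end{align*}
has, in the $N\to\infty$ limit, the same law as the image of $\eta$ under the shift $(y,z)\mapsto(y+G_s,z)$, where $G_s$ is a Gaussian of variance $s$. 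The content here is twofold: (i)~the identity and location of the local extrema are \emph{stable} under the perturbation, because by mesoscopic separation (Lemma~\ref{lemma 4.4}) any such $v\in C_{N,r_N}$ is surrounded by an annulus of much smaller values, and an $\mathcal{N}(0,s)$ perturbation cannot bridge that gap on events of probability $1-o(1)$; (ii)~the cluster weights $\bar S_{v,r_N/2}$ are essentially preserved, which follows by freezing the perturbation to be constant across each cluster (to leading order) using the nearly-constant covariance of $\phi_N$ on scales $\ll r_N$ guaranteed by Assumption~\ref{a2} together with the level-set bound that controls the contribution of outliers.

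\textbf{Step 3: Liggett characterization.} Given the invariance from Step~2, I would invoke Liggett's theorem (in the form used in \cite{bl1}) to conclude that every subsequential limit $\eta$ on $\mathbb{R}\times[0,\infty)$ with locally finitely many atoms in the first coordinate, whose law is invariant under the Brownian shift of the first coordinate with an appropriate drift, must be a Cox process of the form $\textup{PPP}(e^{-\alpha y}dy\otimes \nu(dz))$ for some $\alpha>0$ and a random Borel measure $\nu$. The exponent $\alpha=\sqrt{2d}$ is then pinned down by matching the right-tail decay of $\phi_{N,*}-m_N$ encoded in Lemma~\ref{lemma 2.4}: the expected number of atoms above height $\lambda$ is $\mathbb{E}[\nu((0,\infty))]\cdot\int_\lambda^\infty e^{-\alpha y}dy$, and Lemma~\ref{lemma 2.4} forces this to decay as $e^{-\sqrt{2d}\lambda}$. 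The finiteness and positivity of $\nu((0,\infty))$ were already secured in Step~1.

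The main obstacle is Step~2. In the 2D GFF case of~\cite{bl1,bl2}, the Gibbs--Markov property cleanly decouples the location of a local maximum from the local profile of the field around it, which trivializes the stability of the cluster weights under OU perturbations. In the present generality no such decoupling is available, so the stability of $(\phi_{N,v}^{(s)}-\phi_{N,v},\bar S_{v,r_N/2}^{(s)}-\bar S_{v,r_N/2})$ has to be extracted entirely from covariance bounds and the sharp control on $|\Gamma_N(t)|$ developed in Section~\ref{section 3}. This is where the uniformity in $t$ in Theorem~\ref{theorem level} will be crucial: it allows us to simultaneously control the contribution of all sites to $\bar S_{v,r_N/2}$ whose heights lie in any dyadic window below $m_N$, and hence to argue that the perturbation rigidly translates the cluster weight rather than reshuffling it.
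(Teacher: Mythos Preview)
Your three-step outline matches the paper's approach (Theorem~\ref{theorem 7.1} plus the Liggett-type characterization in Section~\ref{sec limit}), but there is a genuine gap in how you propose to identify the exponent $\alpha=\sqrt{2d}$.

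In Step~3 you say Liggett's theorem yields $\textup{PPP}(e^{-\alpha y}dy\otimes\nu(dz))$ for \emph{some} $\alpha>0$, and then you plan to pin down $\alpha$ by matching the right-tail decay via Lemma~\ref{lemma 2.4}. This does not work: the computation ``expected number of atoms above $\lambda$ equals $\mathbb{E}[\nu((0,\infty))]\cdot\int_\lambda^\infty e^{-\alpha y}dy$'' requires $\mathbb{E}[\nu((0,\infty))]<\infty$, which is never established (and there is no reason it should hold under mere Assumptions~\ref{a1}--\ref{a2}). Moreover, Lemma~\ref{lemma 2.4} is a probability bound for the maximum, not an expectation bound for a level-set count, so the matching itself is not immediate. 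The paper avoids this entirely: the drift in the Markov kernel \eqref{markov kernel} is $-\sqrt{d/2}\,t$, and this drift \emph{already} determines $\alpha=\sqrt{2d}$ through \cite[Lemma~3.3]{bl1}, which says that a measure $\mu$ on $\mathbb{R}$ with $\mu([0,\infty))<\infty$ and $\mu Q_t=\mu$ for the kernel with drift $-\alpha/2$ must be a multiple of $e^{-\alpha y}dy$. No tail-matching is needed.

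Relatedly, in Step~2 you describe the invariance as ``the image of $\eta$ under the shift $(y,z)\mapsto(y+G_s,z)$'' with $G_s\sim\mathcal N(0,s)$, without drift. The drift is not optional: it arises from the recentering, since $\sqrt{1-t/\log N}\,m_N = m_N - \tfrac{t}{2}\cdot\tfrac{m_N}{\log N}+o(1) = m_N-\sqrt{d/2}\,t+o(1)$ (this is exactly \eqref{4700} in the paper). Without this drift the Liggett characterization gives nothing useful. You mention ``an appropriate drift'' in Step~3, but never connect it to the OU computation, and then abandon it in favor of the flawed tail-matching.

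Two smaller points. The finiteness $\eta([-k,\infty)\times[0,\infty))<\infty$ (hence $\nu((0,\infty))<\infty$) comes directly from the level-set bound Theorem~\ref{theorem 3.0}, not from mesoscopic separation. The positivity $\nu((0,\infty))>0$ comes from tightness of the centered maximum (Corollary~\ref{tight}) together with a uniform upper bound on $\bar S_{v,r_N/2}$ (as in \eqref{489}); Theorem~\ref{theorem lower} is not used here.
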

 
As in Section \ref{section 1.3}, it is worth drawing the reader's attention again to the fact that in the specific  case of 2D GFF,  Biskup-Louidor \cite{bl2} established the significantly stronger result about the convergence of  
 \begin{align} \label{augment}
\bar{\eta}_{N,r_N}:=    \sum_{v\in C_{N,r_N} }  \delta_{v/N}  (dx) \otimes  \delta_{\pv - m_N} (dy) \otimes \delta_ { \{\phi_{N,v+x}-\pv\}_{x\in \mathbb{Z}^2} } (dz)
\end{align} 
 to the Cox process $\text{PPP}(\mathcal{Z}(dx)\otimes e^{-\sqrt{2d}y}dy \otimes {\mathrm{v}}(dz))$ for an explicit random measure $\mathcal{Z}$  and a \emph{deterministic} measure $\mathrm v$ on the space height functions on $\Z^2$ pinned to be $0$ at the origin. 
In particular, the Gibbs-Markov property of 2D GFF is heavily relied upon to prove a tensorization of the cluster from the first two coordinates.
However, in the setting of general log-correlated fields $\phi_N$, we have to be content with one of two things. We can either forego the spatial location (the first coordinate), which leads to Theorem \ref{theorem 4.1}, which though weaker, is fortunately \emph{enough} to deduce our main result, Theorem \ref{theorem 1.1}. The other possibility is to project on the first two coordinates and disregard the third coordinate which will encode the spatial locations of the local extrema  and their values but not the local cluster weights around them. A similar Cox process limit for this can also be established and  Remark \ref{spatial} expands on this.

The proof of Theorem \ref{theorem 4.1} follows the same program as in \cite{bl1} \emph{but often our key input is the apriori control on the level set obtained in Theorem \ref{theorem 3.0}, whereas for the 2D GFF, it was the Gibbs-Markov property.}

The program has two broad parts.
\begin{enumerate}
\item {Establishing the invariance of the limit points of \eqref{point process} under a certain Brownian flow.}
\item Using the abstract theory of Liggett in \cite{liggett}, characterize the limit points as  Cox processes, as well as establishing the desirable tensorization properties of the intensity measure. 
\end{enumerate}   

We first elaborate on (1). Let $B_t$ be a standard one-dimensional Brownian motion. For any  measurable function  $f: \R \times [0,\infty) \rightarrow [0,\infty)$   and $t\geq 0$, define
\begin{align} \label{700}
f_t(y,z) :=  -\log \mathbb{E}(e^{-f(y+B_t-  \sqrt{d/2} t  ,z ) }),
\end{align}
In other words, for the Markov kernel $P_t$, defined as
\begin{align}\label{markov kernel}
P_t((y,z), A) := \mathbb{P}((y+B_t - \sqrt{d/2}  \ t, z) \in A) ,\quad y\in \R,  \ z\in [0,\infty),  \  A\subseteq \R\times  [0,\infty) \  \text{Borel},
\end{align}
$f_t$ above, also can be written as
\begin{align*}
f_t(y,z) =  - \log [(P_t e^{-f})(y,z)].
\end{align*}

Given the above notational setup, we are now is a position to state the main invariance principle we prove for general log-correlated Gaussian fields.

\begin{theorem} \label{theorem 7.1}
Suppose  that $\eta$ is any subsequential limit of $\{\eta_{N,r_N}\}_{N\geq 1}$  for $\{r_N\}_{N\geq 1}$ with $r_N\rightarrow \infty$ and $r_N/N \rightarrow 0$ as $N\ri$. Then, for any continuous function $f: \R \times [0,\infty) \rightarrow [0,\infty)$ with compact support and $t\geq  0$,
\begin{align}\label{710}
\mathbb{E}(e^{-<\eta,f>}) = \mathbb{E}(e^{-<\eta,f_t>}).
\end{align}
\end{theorem}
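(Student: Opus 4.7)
The plan is to exploit Gaussianity by applying a law-preserving Ornstein--Uhlenbeck (OU) flow to $\phi_N$ with an $N$-dependent time, chosen so that the induced action on the extremal process realises precisely the Brownian shift encoded in the kernel $P_t$ of \eqref{markov kernel}. Concretely, let $\phi_N'$ be an independent copy of $\phi_N$ and, for $s\ge 0$, set
\[
\phi_N^{(s)}(v):=e^{-s/2}\phi_N(v)+\sqrt{1-e^{-s}}\,\phi_N'(v), \qquad v\in V_N.
\]
Since $\phi_N$ and $\phi_N'$ are independent centred Gaussian fields with identical covariance, $\phi_N^{(s)}\stackrel{d}{=}\phi_N$. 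Denoting by $\eta_{N,r_N}^{(s)}$ the point process \eqref{point process} built from $\phi_N^{(s)}$ in place of $\phi_N$, this gives $\eta_{N,r_N}^{(s)}\stackrel{d}{=}\eta_{N,r_N}$ for every $s$, so both families share identical subsequential limits.

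Taking $s=s_N:=t/\log N\to 0$, I expand at each local maximum $v\in C_{N,r_N}$ contributing to $f$ (hence $\phi_N(v)=m_N+O(1)$, by Corollary~\ref{tight} combined with the compact support of $f$):
\[
\phi_N^{(s_N)}(v)-\phi_N(v)=(e^{-s_N/2}-1)\phi_N(v)+\sqrt{1-e^{-s_N}}\,\phi_N'(v)=-\tfrac{s_N}{2}m_N+\sqrt{s_N}\,\phi_N'(v)+o(1).
\]
Using $m_N/\log N=\sqrt{2d}+o(1)$, the deterministic piece equals $-\sqrt{d/2}\,t+o(1)$, while $\sqrt{s_N}\phi_N'(v)$ is a centred Gaussian of variance $s_N\var\phi_N'(v)=t+o(1)$. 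Two further ingredients complete the picture. \textbf{(i)} For distinct $v_i\ne v_j\in C_{N,r_N}$ with height $\ge m_N-\lambda$, Lemma~\ref{lemma 4.4} combined with \eqref{410} forces $|v_i-v_j|\ge N/r_0$ with probability $\to 1$ for large $r_0=r_0(\lambda)$; hence $\cov(\sqrt{s_N}\phi_N'(v_i),\sqrt{s_N}\phi_N'(v_j))\le s_N(\log r_0+O(1))\to 0$, and jointly the perturbations at distinct extrema behave as independent $\mathcal N(0,t)$ variables. \textbf{(ii)} For $u\in B(v,r_N/2)$ the perturbation \emph{difference} $\sqrt{s_N}(\phi_N'(u)-\phi_N'(v))$ has variance $\asymp s_N\log|u-v|=o(1)$, so after truncating the cluster sum $\bar S_{v,r_N/2}$ to a fixed-radius ball---bounding the excluded tail via Theorem~\ref{theorem 3.0}---one obtains $\bar S_{v,r_N/2}^{(s_N)}=(1+o_{\mathbb{P}}(1))\bar S_{v,r_N/2}$.

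Putting it together, conditioning on $\phi_N$ and using (i) to factorise across macroscopically separated clusters and (ii) to preserve cluster weights,
\[
\mathbb{E}\bigl[e^{-\langle \eta_{N,r_N}^{(s_N)},f\rangle}\bigr]=\mathbb{E}\!\left[\prod_{v\in C_{N,r_N}}\mathbb{E}\bigl[e^{-f(\phi_N(v)-m_N+B_t^v-\sqrt{d/2}t,\,\bar S_{v,r_N/2})}\,\bigm|\,\phi_N\bigr]\right]+o(1)=\mathbb{E}\bigl[e^{-\langle\eta_{N,r_N},f_t\rangle}\bigr]+o(1),
\]
where the last equality is the very definition of $f_t$. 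Since the left-hand side equals $\mathbb{E}[e^{-\langle\eta_{N,r_N},f\rangle}]$ by the identity $\eta_{N,r_N}^{(s_N)}\stackrel{d}{=}\eta_{N,r_N}$, passing to the limit along the convergent subsequence yields \eqref{710}.

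The main obstacle lies in step (ii): a \emph{uniform} approximation of $\bar S_{v,r_N/2}^{(s_N)}$ by $\bar S_{v,r_N/2}$ despite $r_N\to\infty$. The remedy is to truncate the defining sum to a fixed-radius ball around $v$---as was done for the 2D GFF in \cite{bl2}---where the OU perturbation oscillates by only $O(\sqrt{s_N})=o(1)$, and to bound the excluded mass using the level-set bound of Theorem~\ref{theorem 3.0}. Should the given $r_N$ fail to satisfy $\log r_N=o(\log N)$, one first replaces it by a slower sequence (say $r_N':=N^{1/\log\log N}$) and verifies via the mesoscopic separation of Lemma~\ref{lemma 4.4} that $\eta_{N,r_N}$ and $\eta_{N,r_N'}$ share the same subsequential limits.
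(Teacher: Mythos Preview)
Your approach is essentially the paper's: an OU perturbation of $\phi_N$ at time $s_N=t/\log N$, producing the drift $-\sqrt{d/2}\,t$ and an asymptotically independent $\mathcal N(0,t)$ shift at each macroscopically separated extremum, with cluster weights preserved via small local oscillation and the level-set bound of Theorem~\ref{theorem 3.0}. The paper's decomposition \eqref{ou0}--\eqref{ou} is your flow to first order in $s_N$, and your ingredients (i)--(ii) correspond to Lemmas~\ref{lemma 7.8} and~\ref{tensor} and to Proposition~\ref{lemma 7.6} respectively; your proposed reduction to a fixed radius is exactly Lemma~\ref{lemma 7.2}.

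The one point you gloss over is the passage from $\langle\eta_{N,r_N}^{(s_N)},f\rangle$ (a sum over $C_{N,r_N}^{(s_N)}$, the $r_N$-local maxima of the \emph{perturbed} field) to a sum over $C_{N,r_N}$ (the local maxima of $\phi_N$). These sets are not equal; one needs a bijection, up to negligible error, between the relevant extrema of the two fields, together with control on the displacement of the argmax and the resulting change in both the height and the cluster weight. The paper handles this explicitly via Proposition~\ref{lemma 7.6}, which constructs maps $\Pi,\Pi'$ between $C_{N,r}$ and $C_{N,r}'$ with $|\Pi'(v)-v|\le r/2$, $|\phi_{N,v}-\phi_{N,\Pi'(v)}|\le\varepsilon$, and $|\bar S_{v,2r}-\bar S'_{\Pi'(v),2r}|\le 10\varepsilon$ (this is where Lemma~\ref{lemma 7.7} enters to bound the symmetric difference of the two balls). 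Your statement (ii) contains the seed of this argument---small oscillation forces the argmax to stay in the same cluster---but you never actually carry out the matching, and your final display silently replaces $C_{N,r_N}^{(s_N)}$ by $C_{N,r_N}$.
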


The Brownian flow in Theorem \ref{theorem 7.1} is obtained by perturbing the underlying field $\phi_N$ along an Ornstein-Uhlenbeck flow.
More precisely, let $\phi'_N$ and $\phi''_N$ be independent copies of $\phi_{N}$.  For $t>0$, define
\begin{align} \label{ou0}
\hat{\phi}'_N: = \sqrt{1-\frac{t}{\log N}}\phi'_N,\quad \hat{\phi}''_N := \sqrt{\frac{t}{\log N}}\phi''_N.
\end{align}
Then, the field $\hat{\phi}'_N + \hat{\phi}''_N$ has the same law as $\phi_N$ and for notational brevity, we we will denote the former by latter, i.e.,
\begin{align} \label{ou}
\phi_N   :=  \hat{\phi}'_N + \hat{\phi}''_N.
\end{align}

Define $C_{N,r}$ and $C_{N,r}'$  to be the sets the $r$-local extrema of $\phi_N$ and $\phi_N'$, respectively:
\begin{align*}
C_{N,r}:=\{v\in V_N: \phi_{N,v}=\max_{u\in B(v,r)} \phi_{N,u}\},\\
C'_{N,r}:=\{v\in V_N: \phi_{N,v}'=\max_{u\in B(v,r)} \phi_{N,u}' \}.
\end{align*}
Finally, define the level sets for $\phi_N$ and $\phi_N'$:
\begin{align*}
\Gamma_N(\lambda):=\{v\in V_N: \phi_{N,v}\geq m_N-\lambda\},\\
\Gamma_N'(\lambda):=\{v\in V_N: \phi'_{N,v}\geq m_N-\lambda\}.
\end{align*}

The following lemma, analogous to \cite[Lemma 4.6]{bl1}, is a perturbative result stating that the level sets of $\phi_N$ and $\phi'_N$ are close to each other.

\begin{lemma}  \label{lemma 7.4}
For any fixed $t>0,$ we have
\begin{align} \label{451}
\lim_{\lambda \ri} \liminf_{N\ri } \mathbb{P}(\Gamma'_N(\lambda)\subseteq \Gamma_N(2\lambda)) = 1
\end{align}
and
\begin{align} \label{452}
\lim_{\lambda \ri} \liminf_{N\ri } \mathbb{P}(\Gamma_N(\lambda)\subseteq \Gamma'_N(2\lambda)) = 1.
\end{align}
\end{lemma}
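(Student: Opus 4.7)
The plan is to prove \eqref{451} in detail and deduce \eqref{452} by a symmetry argument. The proof will rest on three inputs: the upper tail bound on the maximum from Lemma \ref{lemma 2.4}; the level set upper bound of Theorem \ref{theorem 3.0}, which provides the crucial $N$-uniform control; and a Gaussian tail estimate for $\hat\phi''_N$ combined with a union bound over the (random) level set.

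Fix $t > 0$ and a small $\e > 0$. I would work on the intersection of the events $A_{N,\lambda} := \{|\Gamma'_N(\lambda)| \leq e^{(\sqrt{2d}+\e)\lambda}\}$ (whose complement has probability at most $e^{-\e\lambda/8}$ by Theorem \ref{theorem 3.0} applied to $\phi'_N$, for $\lambda$ and $N$ large) and $B_{N,T} := \{\max_v \phi'_{N,v} \leq m_N + T\}$ (whose complement has probability at most $CTe^{-\sqrt{2d}T}$ by Lemma \ref{lemma 2.4}, uniformly in $N$). On $A_{N,\lambda}\cap B_{N,T}$, any $v \in \Gamma'_N(\lambda)$ has $\phi'_{N,v} \in [m_N - \lambda, m_N + T]$, and combining $\sqrt{1-x}\geq 1-x$ for $x\in[0,1]$ with $\phi'_{N,v}/\log N \leq \sqrt{2d} + o(1)$ yields
\begin{align*}
\sqrt{1-t/\log N}\,\phi'_{N,v} \geq \phi'_{N,v} - C_t
\end{align*}
for a constant $C_t$ depending only on $t$ and for all $N$ large. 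Since $\phi_{N,v} = \sqrt{1-t/\log N}\,\phi'_{N,v} + \hat\phi''_{N,v}$, failure of $\Gamma'_N(\lambda) \subseteq \Gamma_N(2\lambda)$ on $A_{N,\lambda}\cap B_{N,T}$ forces some $v \in \Gamma'_N(\lambda)$ with $\hat\phi''_{N,v} < -\lambda + C_t$.

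To rule this out, I would condition on $\phi'_N$ (which determines $\Gamma'_N(\lambda)$) and use the independence of $\hat\phi''_N$ from $\phi'_N$ together with the fact that each $\hat\phi''_{N,v}$ is centered Gaussian of variance at most $(t/\log N)(\log N + \alpha_0) \leq 2t$ for $N$ large. A union bound then gives, on $A_{N,\lambda}$,
\begin{align*}
\P\Big(\exists\,v\in \Gamma'_N(\lambda):\,\hat\phi''_{N,v} < -(\lambda-C_t)\,\Big|\,\phi'_N\Big) \leq e^{(\sqrt{2d}+\e)\lambda}\,e^{-(\lambda-C_t)^2/(4t)},
\end{align*}
which vanishes as $\lambda\to\infty$ for $t$ fixed since the quadratic term in the exponent eventually dominates the linear one. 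Assembling the three bounds and letting $N\to\infty$, then $T\to\infty$, then $\lambda\to\infty$ (in that order, taking e.g.\ $T = \sqrt{\lambda}$) would prove \eqref{451}.

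For \eqref{452} I would invoke symmetry: $(\phi_N,\phi'_N)$ is a centered Gaussian pair with identical marginals and cross-covariance $\cov(\phi_{N,u},\phi'_{N,v}) = \sqrt{1-t/\log N}\,\cov(\phi_{N,u},\phi_{N,v})$, hence exchangeable, so \eqref{452} becomes \eqref{451} with the roles of $\phi_N$ and $\phi'_N$ swapped and the same proof applies verbatim. The hard part of this argument is really the need to control the cardinality of the random set $\Gamma'_N(\lambda)$ uniformly in $N$ so that the conditional union bound remains effective; this is exactly what Theorem \ref{theorem 3.0} delivers, and without it the argument would break down at the union bound step, which is a good illustration of why the sharp level set estimate is the workhorse of this paper.
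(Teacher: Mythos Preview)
Your proof is correct and follows essentially the same approach as the paper's: control $|\Gamma'_N(\lambda)|$ via Theorem \ref{theorem 3.0}, then apply a conditional union bound using the Gaussian tail of $\hat\phi''_{N,v}$, and deduce \eqref{452} from exchangeability of $(\phi_N,\phi'_N)$. The only difference is that the paper dispenses with your auxiliary event $B_{N,T}$ by observing directly that $\sqrt{1-t/\log N}\,(m_N-\lambda) > m_N-\lambda-c$ for a constant $c=c(t)$, so that only the \emph{lower} bound $\phi'_{N,v}\geq m_N-\lambda$ is needed (not an upper bound on $\phi'_{N,v}$) to force $\hat\phi''_{N,v}<c-\lambda$.
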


\begin{proof}
Since the pair $(\phi_N,\phi_N')$ is exchangeable, and the two statements are clearly symmetric, we will only prove the first one.
{Recalling $\pv =  \sqrt{1-\frac{t}{\log N}}\phi'_{N,v}  + \hat{\phi}''_{N,v}$ and noting that $ \sqrt{1-\frac{t}{\log N}} (m_N-\lambda) > m_N-\lambda-c$ for some constant $c>0$ (depending on $t$),
\begin{align*}
\Gamma_N'(\lambda) \setminus \Gamma_N(2\lambda) \subseteq \{v\in V_N: {\phi}_{N,v}' \geq m_N-\lambda,  \hat{\phi}_{N,v}''<c-\lambda\}.
\end{align*}}
Thus, using the independence of ${\phi}_{N}' $ and $ \hat{\phi}_{N}''$,  for large enough $\lambda$,
{\begin{align*}
\mathbb{P}(\Gamma_N'(\lambda) \setminus \Gamma_N(2\lambda)\neq \emptyset)& \leq   \mathbb{P}(|\Gamma_N'(\lambda)| > e^{2\sqrt{2d} \lambda} )+  \mathbb{E}  [ \mathbb{P} (\exists v\in \Gamma_N'(\lambda), \hat{\phi}_{N,v}''<c-\lambda | \phi_N') \mathbf{1}( |\Gamma_N'(\lambda)| \leq  e^{2\sqrt{2d} \lambda})] \\
& \leq e^{-\sqrt{2d}\lambda/8}  + e^{2\sqrt{2d} \lambda} e^{-(\lambda-c)^2/(4t)},
\end{align*}
where we used  Theorem \ref{theorem 3.0} and $\var( \hat{\phi}_{N,v}'') <2t$ (for large $N$) in the last inequality. Thus,  we obtain \eqref{451}.}
\end{proof}

Now, consider the a.s. well-defined mappings
\begin{align}
\Pi(v) :=\argmax_{u\in B(v,2r)} \phi_{N,u},\quad \Pi'(v) :=\argmax_{u\in B(v,2r)} \phi'_{N,u}.
\end{align}
The following proposition claims that the image $\Pi(C_{N,r})$ lands inside $C'_{N,r}$ and establishes the closeness of the cluster weights of $\phi_N$ around points in $C_{N,r}$ to that of $\phi'_N$ around their images in $\Pi(C_{N,r})$.

\begin{proposition} \label{lemma 7.6}
For any $\e , \lambda>0$,
the following implications hold with probability tending to one as $N\ri$ followed by $r\ri$:
\begin{align} \label{431}
\bs
v&\in C_{N,r}\cap \Gamma_N(\lambda) \cap \Gamma_N'(\lambda) \\
\Rightarrow
& \ \Pi'(v)\in C_{N,r}', \quad |\Pi'(v)-v|\leq \frac{r}{2},\quad      0\leq \pv - \phi_{N,\Pi'(v)}   \leq  \e, \\
 &\Big\vert \sum_{u\in B(v,2r)} e^{\beta (\pu-\pv)}  -   \sum_{u'\in B(\Pi'(v),2r)} e^{\beta (\phi'_{N,u'}-\phi'_{N,\Pi'(v)})}  \Big\vert < 10\e
\es.
\end{align}
{In addition,}
\begin{align} \label{432}
\bs
v&\in C_{N,r}'\cap \Gamma_N(\lambda) \cap \Gamma_N'(\lambda) \\
\Rightarrow
& \ \Pi(v)\in C_{N,r}, \quad |\Pi(v)-v|\leq \frac{r}{2},\quad      0\leq  \phi_{N,\Pi(v)}  -  \pv  \leq \e, \\
 &\Big\vert \sum_{u\in B(v,2r)} e^{\beta (\pu'-\pv')}  -   \sum_{u'\in B(\Pi(v),2r)} e^{\beta (\phi_{N,u'}-\phi_{N,\Pi(v)})}  \Big\vert < 10\e.
\es
\end{align}
\end{proposition}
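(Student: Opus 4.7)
The plan is to exploit the Ornstein--Uhlenbeck decomposition
$\phi_N = \sqrt{1 - t/\log N}\,\phi'_N + \sqrt{t/\log N}\,\phi''_N$
to show that, for points whose $\phi'_N$-value lies near $m_N$, the increment $\phi_{N,u}-\phi'_{N,u}$ varies only by $o_N(1)$ as $u$ ranges over a ball of bounded radius. Combined with the mesoscopic-separation statement \eqref{meso}, which confines near-maxima to clusters of some fixed radius $r_0=r_0(\lambda)$, this will let me deduce that $\Pi'(v)$ sits on the same micro-cluster as $v$ and that the two cluster sums almost coincide. Only the first implication \eqref{431} is discussed explicitly; the second \eqref{432} follows by interchanging $\phi_N$ and $\phi'_N$.

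First I would apply \eqref{meso} to both $\phi_N$ and $\phi'_N$ at level $\lambda$: there exists $r_0=r_0(\lambda)$ so that, with probability tending to one as $N\to\infty$ then $r\to\infty$, no two points of $\Gamma_N(\lambda)\cup\Gamma'_N(\lambda)$ sit at distance in $[r_0,N/r_0]$. Since $\phi'_{N,\Pi'(v)}\ge\phi'_{N,v}\ge m_N-\lambda$, both $v$ and $\Pi'(v)$ lie in $\Gamma'_N(\lambda)$, and $|\Pi'(v)-v|\le 2r$ forces $|\Pi'(v)-v|<r_0\le r/2$ for $r\ge 2r_0$. Then $B(\Pi'(v),r)\subseteq B(v,3r/2)\subseteq B(v,2r)$, so $\Pi'(v)$ is the maximizer of $\phi'_N$ on $B(\Pi'(v),r)$ as well, giving $\Pi'(v)\in C'_{N,r}$.

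On the high-probability event $\{\phi_{N,*},\phi'_{N,*}\le m_N+K\}$ from Lemma~\ref{lemma 2.4}, for $u,w\in(\Gamma_N(\lambda')\cup\Gamma'_N(\lambda'))\cap B(v,2r)$ one has $\phi'_{N,u},\phi'_{N,w}\in[m_N-\lambda',m_N+K]$, so the multiplicative piece $|(\sqrt{1-t/\log N}-1)(\phi'_{N,u}-\phi'_{N,w})|=O(\lambda'/\log N)=o_N(1)$. The additive piece $\hat\phi''_N=\sqrt{t/\log N}\phi''_N$ satisfies $\Var(\hat\phi''_{N,u}-\hat\phi''_{N,w})\le(2t/\log N)(\log_+|u-w|+O(1))$, giving a Gaussian tail $\exp(-\Omega(\delta^2\log N/(t\log r_0)))$; a union bound over the at most $|\Gamma_N(\lambda')|^2\le e^{C\lambda'}$ pairs within distance $r_0$ (using Theorem~\ref{theorem 3.0} to cap $|\Gamma_N(\lambda')|$) makes this $o_N(1)$ for any fixed $\delta$. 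Hence $|(\phi_{N,u}-\phi_{N,w})-(\phi'_{N,u}-\phi'_{N,w})|<\e/100$ uniformly over all such $(u,w)$. Specializing to $(u,w)=(v,\Pi'(v))$ together with $\phi'_{N,v}\le\phi'_{N,\Pi'(v)}$ yields $\phi_{N,v}-\phi_{N,\Pi'(v)}\le\e$, while $\phi_{N,v}\ge\phi_{N,\Pi'(v)}$ is immediate from $v\in C_{N,r}$ and $\Pi'(v)\in B(v,r)$.

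For the cluster-weight comparison I would fix a large truncation level $\lambda'$ and split each sum into a main part over $\Gamma_N(\lambda')$ (resp.\ $\Gamma'_N(\lambda')$) and a tail. The tail is bounded by layer-cake summation using Theorem~\ref{theorem 3.0} and $\beta>\sqrt{2d}$: for example
\begin{align*}
\sum_{k\ge\lambda'}|\Gamma_N(k)|e^{-\beta k+\beta\lambda}\le Ce^{\beta\lambda}\sum_{k\ge\lambda'}e^{(\sqrt{2d}+\e-\beta)k}<\e
\end{align*}
for $\lambda'$ large. By the localization step both main sums live inside $B(v,r_0(\lambda'))$, and by the uniform OU comparison each summand satisfies $e^{\beta(\phi_{N,u}-\phi_{N,v})}=e^{\beta(\phi'_{N,u}-\phi'_{N,\Pi'(v)})}(1+O(\e))$; capping the main sum itself by the same level-set bound yields $\big|\sum_{u\in B(v,2r)}e^{\beta(\pu-\pv)}-\sum_{u\in B(\Pi'(v),2r)}e^{\beta(\phi'_{N,u}-\phi'_{N,\Pi'(v)})}\big|<10\e$. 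The hard part will be making the fluctuation estimate uniform over the \emph{random} collection of pairs $(u,w)$ arising from $C_{N,r}\cap\Gamma_N(\lambda)\cap\Gamma'_N(\lambda)$: this set depends on the coupled fields, so the sharp apriori cap on $|\Gamma_N(\lambda')|$ from Theorem~\ref{theorem 3.0} is essential in keeping the number of relevant pairs sub-polynomial in $N$ and thus absorbable by the exponentially small (in $\log N$) Gaussian-tail bounds coming from the $\sqrt{t/\log N}$ scaling.
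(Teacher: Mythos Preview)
Your overall strategy is the paper's: localize via \eqref{meso}, compare $\phi_N$- and $\phi'_N$-increments through the OU decomposition, and kill tails with Theorem~\ref{theorem 3.0}. Two steps need repair.

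The dependence issue you flag is real, and your proposed resolution (cap $|\Gamma_N(\lambda')|$ by Theorem~\ref{theorem 3.0}) does not by itself fix it: since $\Gamma_N(\lambda')$ is measurable with respect to $\phi_N=\hat\phi'_N+\hat\phi''_N$, you cannot multiply its (random) cardinality by the \emph{unconditional} Gaussian tail of $\hat\phi''_{N,u}-\hat\phi''_{N,w}$. The paper's fix (Lemma~\ref{lemma 7.8}) is to index the union bound by $\Gamma'_N(\lambda)$, which is $\phi'_N$-measurable and hence independent of $\hat\phi''_N$; the inclusion $\Gamma_N(\lambda')\subseteq\Gamma'_N(2\lambda')$ from Lemma~\ref{lemma 7.4} lets you pass from one to the other. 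Your multiplicative bound $|(\sqrt{1-t/\log N}-1)(\phi'_{N,u}-\phi'_{N,w})|=O(\lambda'/\log N)$ also silently assumes $u,w\in\Gamma'_N(O(\lambda'))$, which again is Lemma~\ref{lemma 7.4}.

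Second, once each summand is $(1+O(\delta))$ times its $\phi'_N$-counterpart and you cap the main sum by $e^{C\lambda'}$, you need $\delta\,e^{C\lambda'}<\e$, not $\delta=\e/100$; since $\delta=o_N(1)$ this is achievable, but your bookkeeping obscures it. The paper avoids this entirely: its Lemma~\ref{lemma 7.8} establishes the OU comparison uniformly over \emph{all} $u\in B(v,2r)$ (via a two-point estimate for $\phi'_N$, not just on the level set), and then bounds the common sum $K=\sum_{u\in D}e^{\beta(\phi'_{N,u}-\phi'_{N,\Pi'(v)})}$ trivially by $|D|\le Cr^d$, since every exponential is $\le 1$ by definition of $\Pi'(v)$. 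The resulting error $O(r^d/(\log N)^{1/4})$ vanishes with no level-set bookkeeping on the main part.
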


\begin{remark}
A previous version of this result for the 2D GFF appearing as \cite[Lemma 4.8]{bl1} did not claim anything about the cluster weights. This is where we crucially use the control on the cardinality of level sets (Theorem \ref{theorem 3.0}).
\end{remark}

The proof of this involves a bit of preparation. The first result we need is the already alluded to negligibility of the Gibbs weight of $\Gamma_N(\lambda)^c$ for large $\lambda>0$.
\begin{lemma}\label{lemma 7.7}
 Suppose that $\beta>\sqrt{2d}$. Then, there exist constants $C_\beta,\tau_\beta>0$ such that the event
 \begin{align} \label{event e1}
E^\lambda_1 := \Big\{   \sum_{v\in V_N \cap {\Gamma_N(\lambda)^c} } e^{\beta(\pv - m_N)}   < C_\beta (e^{-\tau_\beta \lfloor \lambda \rfloor} + N^{-\tau_\beta }  ) \Big\}
\end{align} 
satisfies
\begin{align*}
\lim_{\lambda\ri} \liminf_{N\ri} \mathbb{P} (E^\lambda_1) = 1.
\end{align*}

\end{lemma}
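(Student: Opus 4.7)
The plan is to decompose the Gibbs sum over $\Gamma_N(\lambda)^c$ according to level, then apply the sharp upper bound on level set cardinalities from Theorem \ref{theorem 3.0} uniformly over the relevant range of levels. Concretely, I would partition the complement into the shells $A_k := \Gamma_N(k+1)\setminus\Gamma_N(k)$ for integers $k\geq \lfloor\lambda\rfloor$. On each shell $e^{\beta(\phi_{N,v}-m_N)}\leq e^{-\beta k}$, and trivially $|A_k|\leq |\Gamma_N(k+1)|$, so the target sum is bounded by
\[
\sum_{k\geq \lfloor\lambda\rfloor} e^{-\beta k}\, |\Gamma_N(k+1)|.
\]

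Next, fix $\epsilon := (\beta-\sqrt{2d})/2 > 0$, which is strictly positive by the super-critical assumption. Theorem \ref{theorem 3.0} guarantees that $|\Gamma_N(k)| \leq e^{(\sqrt{2d}+\epsilon)k}$ with probability at least $1 - e^{-\epsilon k/8}$, provided $N \geq c_0 e^{(\sqrt{2d}+\epsilon)k/d}$. I would take a union bound of these events over $k\in[\lfloor\lambda\rfloor, K+1]$ where $K := \lfloor \tfrac{d}{\sqrt{2d}+\epsilon}\log(N/c_0)\rfloor$ is the largest integer $k$ for which Theorem \ref{theorem 3.0} applies. The failure probability is bounded by $\sum_{k\geq \lfloor\lambda\rfloor} e^{-\epsilon k/8}$, which is $O(e^{-\epsilon \lfloor\lambda\rfloor/8})$ and tends to zero as $\lambda\to\infty$, uniformly in $N$.

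On this favorable event, the portion of the sum with $k\leq K$ is geometric with ratio $e^{-(\beta-\sqrt{2d}-\epsilon)} = e^{-\epsilon}$ and is therefore bounded by $Ce^{-\epsilon\lfloor\lambda\rfloor}$. For $k>K$, the deterministic bound $|\Gamma_N(k+1)|\leq N^d$ gives a tail contribution of at most
\[
\sum_{k>K} e^{-\beta k} N^d \leq C e^{-\beta K} N^d \leq C N^{d - \beta d/(\sqrt{2d}+\epsilon)},
\]
and the exponent equals $-d\epsilon/(\sqrt{2d}+\epsilon) < 0$. Choosing $\tau_\beta$ strictly smaller than both $\epsilon$ and $d\epsilon/(\sqrt{2d}+\epsilon)$, and $C_\beta$ large enough to absorb constants, yields the sum bound defining $E^\lambda_1$.

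The argument is essentially routine bookkeeping once Theorem \ref{theorem 3.0} is in hand. The one point requiring care is the split between the regime where the sharp level set bound is effective (producing geometric decay in $\lambda$) and the complementary regime where only the trivial cardinality bound is available (producing polynomial decay in $N$); it is precisely the super-critical condition $\beta > \sqrt{2d}$ that makes both exponents strictly positive, so the two regimes together yield the desired form $C_\beta(e^{-\tau_\beta\lfloor\lambda\rfloor}+N^{-\tau_\beta})$.
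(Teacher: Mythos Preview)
Your proposal is correct and follows essentially the same route as the paper's proof: decompose the sum by integer levels, control $|\Gamma_N(k)|$ via Theorem~\ref{theorem 3.0} for $k$ up to the threshold $K=\lfloor \tfrac{d}{\sqrt{2d}+\epsilon}\log(N/c_0)\rfloor$ (the paper calls this $\ell_0$), and use the trivial bound $N^d$ beyond, yielding the two terms $e^{-\tau_\beta\lfloor\lambda\rfloor}$ and $N^{-\tau_\beta}$. The only cosmetic difference is that the paper parametrizes with a small $\kappa$ satisfying $\sqrt{2d}+2\kappa<\beta$ rather than fixing $\epsilon=(\beta-\sqrt{2d})/2$, and it also implicitly uses that Theorem~\ref{theorem 3.0} requires $t$ large enough, which in your argument is absorbed into taking $\lambda$ large.
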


Next, for  any random field $\psi_N = (\psi_{N,v})_{v\in V_N}$ and any subset $A\subseteq V_N$, define
the osscillation on $A$ as {\begin{align*}
\text{osc}_A (\psi_N) := \max_{v\in A} \psi_{N,v} - \min_{v\in A} \psi_{N,v}.
\end{align*}}
The following lemma bounds the oscillations of the fields $\phi_N'$ and  $ \hat{\phi}''_N$ near the  points whose $\phi_N'$-value is close to $m_N$.
\begin{lemma} \label{lemma 7.8}
Let $\lambda>0$ and $r\geq 1$. The event
\begin{align} \label{437}
E_2 := \Big\{\max_{v\in \Gamma_N'(\lambda)} \textup{osc}_{B(v,2r)} (\hat{\phi}''_N) <  \frac{1}{(\log N)^{1/4}} \Big\},
\end{align} 
satisfies
\begin{align} \label{461}
\lim_{N \ri } \mathbb{P}(E_2)=1.
\end{align}
In addition,  the event
\begin{align} \label{466}
E_2' := \{ \max_{v\in \Gamma_N'(\lambda)} \textup{osc}_{B(v,2r)}  ({\phi}_N') < 2\sqrt{\log N}  \}. 
\end{align}
satisfies
\begin{align}   \label{460}
\lim_{N \ri } \mathbb{P}(E_2' )=1.
\end{align}
{Furthermore,}
\begin{align} \label{464}
\lim_{N \ri } \mathbb{P}( \max_{v\in \Gamma_N'(\lambda)}  \max_{u\in B(v,2r)}   |\pu'  - \sqrt{2d}\log N |< 3 \sqrt{\log N}  )=1.
\end{align}
\end{lemma}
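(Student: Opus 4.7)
The three assertions all concern control of $\phi'_N$ and $\hat{\phi}''_N$ on small neighborhoods of points in $\Gamma'_N(\lambda)$. I would dispatch them in sequence, relying on (i) the independence of $\hat{\phi}''_N$ and $\phi'_N$ for \eqref{461}, (ii) the conditional Gaussian structure from Lemma \ref{lemma 3.2} together with the log-correlation (Assumption \ref{a2}) for \eqref{460} and \eqref{464}, and (iii) the upper tail bound from Lemma \ref{lemma 2.4}. A common workhorse will be the first moment bound
$$\mathbb{E}|\Gamma'_N(\lambda)| \;\leq\; C(\log N)^{3/2} e^{\sqrt{2d}\lambda},$$
obtained by a direct one-point Gaussian tail estimate based on Assumption \ref{a1} and the explicit form of $m_N$ in \eqref{mN}.

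For \eqref{461}, since $\hat{\phi}''_N=\sqrt{t/\log N}\,\phi''_N$ is independent of $\phi'_N$ (and hence of $\Gamma'_N(\lambda)$), I would condition on $\phi'_N$ and union bound. Assumption \ref{a1} gives $\mathbb{E}(\hat{\phi}''_{N,u_1}-\hat{\phi}''_{N,u_2})^2 \leq \tfrac{t}{\log N}(2\log_+(4r)+4\alpha_0)$, so the Gaussian tail yields
$$\mathbb{P}\big(|\hat{\phi}''_{N,u_1}-\hat{\phi}''_{N,u_2}|>(\log N)^{-1/4}\big)\;\leq\; 2\exp(-\sqrt{\log N}/C_{t,r}).$$
Multiplying by the $O(r^{2d})$ pairs inside $B(v,2r)$ and by the first-moment bound on $|\Gamma'_N(\lambda)|$ gives failure probability $O(r^{2d}(\log N)^{3/2} e^{\sqrt{2d}\lambda})\cdot e^{-\sqrt{\log N}/C_{t,r}}\to 0$ as $N\to\infty$.

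For \eqref{460} and \eqref{464}, I would bound the joint probability $\mathbb{P}(\phi'_{N,v}\geq m_N-\lambda,\,|\phi'_{N,u_1}-\phi'_{N,u_2}|>2\sqrt{\log N})$ directly. Writing $D:=\phi'_{N,u_1}-\phi'_{N,u_2}$, Assumption \ref{a2} gives $\mathrm{Cov}(\phi'_{N,v},D)=O(\log r)$ and $\mathrm{Var}\,D\leq 2\log_+(4r)+4\alpha_0=O(\log r)$, so the correlation between $\phi'_{N,v}$ and $D$ has magnitude $O(\sqrt{\log r/\log N})$. Lemma \ref{lemma 3.2} then yields that conditionally on $\phi'_{N,v}=y$ with $y=O(\log N)$, the variable $D$ has mean $O(\log r)$ and variance $O(\log r)$, whence
$$\mathbb{P}(|D|>2\sqrt{\log N}\,|\,\phi'_{N,v}=y)\;\leq\; \exp(-c\log N/\log r).$$
Integrating against $\mathbb{P}(\phi'_{N,v}\geq m_N-\lambda)\leq CN^{-d}(\log N)^{3/2}e^{\sqrt{2d}\lambda}$ and summing over the $O(N^d r^{2d})$ relevant triples $(v,u_1,u_2)$ gives failure probability $O(r^{2d}(\log N)^{3/2}e^{\sqrt{2d}\lambda})\cdot e^{-c\log N/\log r}\to 0$, establishing \eqref{460}. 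Combining with $\max_u\phi'_{N,u}\leq m_N+\log\log N\leq \sqrt{2d}\log N+3\sqrt{\log N}$ (from Lemma \ref{lemma 2.4}) and the lower bound $\phi'_{N,u}\geq \phi'_{N,v}-\mathrm{osc}_{B(v,2r)}(\phi'_N)\geq m_N-\lambda-2\sqrt{\log N}\geq \sqrt{2d}\log N-3\sqrt{\log N}$ (valid for large $N$ and fixed $\lambda$) then gives \eqref{464}.

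One technicality to address: Assumption \ref{a2} is formulated only for interior points $v\in V_N^\delta$, so I would first use \eqref{410} to restrict attention to $v\in V_N^\delta$ for a suitable $\delta>0$, incurring an error vanishing uniformly in $N$ as $\delta\downarrow 0$. No substantial obstacle arises because the target scales ($(\log N)^{-1/4}$ and $2\sqrt{\log N}$) are both extremely loose relative to the true typical scales ($O(\sqrt{\log r/\log N})$ for $\hat{\phi}''_N$, and $O(\log r+\lambda)$ for the oscillation of $\phi'_N$ on near-maxima), so only crude Gaussian tail estimates are required.
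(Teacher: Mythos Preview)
Your proposal is correct and follows essentially the same outline as the paper: union bound plus independence of $\hat\phi''_N$ and $\phi'_N$ for \eqref{461}; a two-point conditional Gaussian computation (Lemma~\ref{lemma 3.2}) plus the one-point tail $\mathbb{P}(\phi'_{N,v}\ge m_N-\lambda)\le C N^{-d}\log N$ for \eqref{460}; and the upper tail of the maximum for \eqref{464}. The restriction to $V_N^\delta$ via \eqref{410} that you flag is exactly how the paper handles the boundary.

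There is one genuine, if minor, difference worth noting. To control the number of centers $|\Gamma'_N(\lambda)|$ in the union bound for \eqref{461}, you use the elementary first-moment estimate $\mathbb{E}|\Gamma'_N(\lambda)|=O(\mathrm{poly}(\log N))$, whereas the paper invokes Theorem~\ref{theorem 3.0} to work on the event $\{|\Gamma'_N(\lambda)|<M\}$ for fixed $M$. Your route is more self-contained (it does not rely on the paper's main level-set theorem), but in exchange you must use the Gaussian tail rather than Chebyshev for the oscillation of $\hat\phi''_N$, since the first moment grows polylogarithmically; the paper's constant bound on $|\Gamma'_N(\lambda)|$ lets Chebyshev suffice. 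Both trades are cheap. A small cosmetic point: your exponent $(\log N)^{3/2}$ in $\mathbb{E}|\Gamma'_N(\lambda)|$ should be $\log N$ (the Gaussian prefactor cancels a $\sqrt{\log N}$), but this is immaterial.
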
 
Note that  this lemma implies that for any $\lambda>0$ and $r\geq 1$, the event
\begin{align} \label{4700}
E_0:= \Big\{\max_{ v\in \Gamma_N'(\lambda)} \max_{u\in B(v,2r)}   | \pu  -( \pu'  -   \sqrt{d/2} t  +\hat{\phi}''_{N,v} )  |   \leq  \frac{2}{(\log N)^{1/4}}\Big\}
\end{align}
satisfies  
\begin{align} \label{4711}
\lim_{N\ri}\mathbb{P}(E_0) = 1.
\end{align}
To see this,
under the events listed in Lemma \ref{lemma 7.8}, 
for any $v\in  \Gamma_N'(\lambda)$ and $u\in B(v,2r)$,
\begin{align*}
\pu= \sqrt{1-\frac{t}{\log N}}\pu' +  \hat{\phi}''_{N,u} & \overset{ \eqref{437}, \eqref{464}}{=} \Big(1-\frac{t}{2\log N} \Big)   \pu' + \hat{\phi}''_{N,v} + \tau \\
 &\overset{\eqref{464}}{=}  \pu'  -     \sqrt{d/2} t  + \hat{\phi}''_{N,v}+ \tau,
\end{align*}
{where in the above $\tau$ is a random {quantity} which can vary line to line such that $|\tau| \leq  \frac{10}{(\log N)^{1/4}}$.}

Postponing the proofs of Lemmas \ref{lemma 7.7} and \ref{lemma 7.8}, we first prove Proposition \ref{lemma 7.6} assuming them.

\begin{proof}[Proof of Proposition  \ref{lemma 7.6}]
{Let $\rho \in (\lambda,\infty)$ be a   constant. We consider the following series of events appearing in the preceding lemmas.
\begin{itemize}
\item  Event $E^{\rho}_1$  defined in Lemma \ref{lemma 7.7} (from now on, we suppress the parameter $\rho$ and simply write $E_1$):
\begin{align*}
E_1 := \Big\{   \sum_{v\in V_N\cap \Gamma_N(\rho)^c} e^{\beta(\pv - m_N)}   < C_\beta (e^{-\tau_\beta \lfloor \rho \rfloor} + N^{-\tau_\beta }  ) \Big\}.
\end{align*}  
\item  Event $E_1'$, the version of $E_1$  for the field $\phi_N'$:
 \begin{align*}
E_1' := \Big\{   \sum_{v\in V_N\cap \Gamma_N(\rho)^c } e^{\beta(\pv' - m_N)}   < C_\beta (e^{-\tau_\beta \lfloor \rho \rfloor} + N^{-\tau_\beta }  ) \Big\}.
\end{align*} 
\item   Event $E_2$ defined in  \eqref{437} in  Lemma \ref{lemma 7.8}:
\begin{align*}
E_2 := \Big\{\max_{v\in \Gamma_N'(\lambda)} \textup{osc}_{B(v,2r)} (\hat{\phi}''_N) <  \frac{1}{(\log N)^{1/4}} \Big\}.
\end{align*}
\item Event $E_2'$ defined in \eqref{466} in   Lemma \ref{lemma 7.8}:
\begin{align*}
E_2' := \{ \max_{v\in \Gamma_N'(\lambda)} \textup{osc}_{B(v,2r)} ({\phi}_N') < 2\sqrt{\log N}  \}. 
\end{align*}
\item  Event $E_0$ defined  in \eqref{4700}:
\begin{align*}
E_0:= \Big\{\max_{ v\in \Gamma_N'(\lambda)} \max_{u\in B(v,2r)}   | \pu  -( \pu'  -   \sqrt{d/2} t  +\hat{\phi}''_{N,v} )  |   \leq  \frac{2}{(\log N)^{1/4}}\Big\}.
\end{align*}
\end{itemize}
In addition, define the additional events
\begin{align}  
E_3:= \{  |u-v| \notin [r/2,N/r], \  \forall u,v\in \Gamma_N(\rho)\}, \label{435}\\
E_3':= \{  |u-v| \notin [r/2,N/r], \  \forall u,v\in \Gamma'_N(\rho)\} \label{4355}.
\end{align}
Now, let us consider the event}
\begin{align}\label{434}
E:=
E_1\cap E_1'\cap E_2\cap E_2'  \cap E_3\cap E_3' \cap E_0.
\end{align}
By  Lemmas \ref{lemma 7.7}, \ref{lemma 7.8}, \eqref{meso} and \eqref{4711}, 
\begin{align} \label{433}
\lim_{\rho \ri}  \liminf_{r\ri}    \liminf_{N\ri}  \mathbb{P}(E) = 1.
\end{align}  

We now arrive at the key claim.

\noindent
\textbf{Claim.}
There exists $\rho_0=\rho_0(\lambda)>\lambda$ such that for any $\rho > \rho_0$ and $r\geq 1$,  the implication \eqref{431} holds
under the  event $E$ for  sufficiently large $N$. 

~

Let  $v\in C_{N,r}\cap \Gamma_N(\lambda)\cap \Gamma'_N(\lambda)$. Then,   $ \phi_{N, \Pi'(v)}' \geq \pv' \geq  m_N-\lambda$. Since $v, \Pi'(v) \in  \Gamma'_N(\lambda)\subseteq \Gamma'_N(\rho)$ (recall $\rho>\lambda$) and $|v-\Pi'(v)| \leq 2r<N/r$, owing to the event  $E_3'$ we have  $ |\Pi'(v)-v|\leq r/2$. Combining this with  $\Pi'(v) =\argmax_{u\in B(v,2r)} \phi'_{N,u}$, we obtain $\Pi'(v)\in C_{N,r}'$. Also,
 since $v\in C_{N,r}\cap \Gamma_N'(\lambda)$ and $\Pi'(v)\in C_{N,r}'$, 
\begin{align*}
0\leq  \pv - \phi_{N,\Pi'(v)} \leq  \hat{\phi}_{N,v}''-\hat{\phi}_{N,\Pi'(v)}'' \leq  \textup{osc}_{B(v,2r)} ( \hat{\phi}''_N) < (\log N)^{-1/4}.
\end{align*}
The first inequality uses $v\in C_{N,r}$ and similarly the second inequality uses the expression \eqref{ou}, and that $\Pi'(v)\in C_{N,r}'$ and the last inequality uses the event $E_2.$
{By a similar reasoning, for sufficiently large  $N$,
\begin{align} \label{430}
0  \leq  \phi'_{N,\Pi'(v)} - \pv' \leq   2 ( \hat{\phi}_{N,v}'' - \hat{\phi}_{N,\Pi'(v)}''  )  \leq   2\textup{osc}_{B(v,2r)}  (\hat{\phi}''_N )<  2(\log N)^{-1/4} ,
\end{align}
where the second inequality follows from the expression \eqref{ou0} and \eqref{ou}:
$
\phi_{N} = \sqrt{1-\frac{t}{\log N}}\phi'_N + \hat{\phi}_N''
$
together with  the fact $\pv \geq   \phi_{N,\Pi'(v)}$ and  $   \phi'_{N,\Pi'(v)} \geq \pv'$.}

{Finally, we verify the last implication of \eqref{431}. Let $D:= B(v,2r)\cap B(\Pi'(v),2r)$. Since $|\Pi'(v)-v| \leq r/2$, for sufficiently large $N$, 
\begin{align*}
u\in  ( B(v,2r) \cup B(\Pi'(v),2r) )      \setminus D \Rightarrow  r<|u-v| <3r  \Rightarrow \pu' <  m_N-\rho ,
\end{align*}
where the last implication holds by the event $E_3'$. Hence, under the event $E'_1 \cap E'_3$,  for  any $v\in \Gamma'_N(\lambda)$ (recall $ \Pi'(v) \in   \Gamma'_N(\lambda)$),}
\begin{align} \label{439}
\sum_{u\in ( B(v,2r) \cup B(\Pi'(v),2r) )   \setminus D}  e^{\beta (\pu'- \phi_{N, \Pi'(v)}')} & \leq   e^{\beta \lambda}   \sum_{u\in ( B(v,2r) \cup B(\Pi'(v),2r) )   \setminus D}  e^{\beta (\pu'- m_N )}  \nonumber \\
&\leq e^{\beta \lambda} \sum_{u\in V_N, \pu' < m_N-\rho}  e^{\beta (\pu'-m_N)}   \nonumber \\
&\overset{\eqref{event e1}}{\leq} C_\beta   e^{\beta \lambda} (e^{-\tau_\beta \lfloor \rho \rfloor} + N^{-\tau_\beta }  )  < \e
\end{align}
for large enough $\rho$ and $N$.
In addition,  under the event $E$, for any $v\in C_{N,r}\cap \Gamma_N(\lambda)\cap \Gamma'_N(\lambda)$,
\begin{align}  \label{436}
\sum_{u\in B(v,2r)} e^{\beta (\pu-\pv)} &\overset{\eqref{4700}}{=}  \Big(1+ O\Big( \frac{1}{(\log N)^{1/4}}\Big)\Big)  \sum_{u\in B(v,2r)} e^{\beta (\pu' -  \pv')}  \nonumber \\
&\overset{\eqref{430}}{=}  \Big(1+ O\Big( \frac{1}{(\log N)^{1/4}}\Big)\Big)  \sum_{u\in B(v,2r)} e^{\beta (\pu' -   \phi_{N, \Pi'(v)}')} .
\end{align}
Therefore, setting $K: = \sum_{u\in D} e^{\beta (\pu'-  \phi_{N, \Pi'(v)}')}  $,
\begin{align} \label{438}
\Big\vert\sum_{u\in B(v,2r)} & e^{\beta (\pu-\pv)} -  \sum_{u'\in B(\Pi'(v),2r)} e^{\beta (\phi_{N,u'}' -   \phi_{N, \Pi'(v)}')}  \Big\vert  \nonumber \\
&\overset{\eqref{436}}{=}\Big\vert\Big(1+ O\Big( \frac{1}{(\log N)^{1/4}}\Big)\Big)  \sum_{u\in B(v,2r)} e^{\beta (\pu' -   \phi_{N, \Pi'(v)}')}  -   \sum_{u\in B(\Pi'(v),2r)} e^{\beta (\phi_{N,u}' -   \phi_{N, \Pi'(v)}')}  \Big\vert  \nonumber  \\
&\overset{\eqref{439}}{=}\Big\vert  \Big(1+ O\Big( \frac{1}{(\log N)^{1/4}}\Big)\Big)   (K+\zeta_1) - (K+\zeta_2)\Big\vert
\end{align}
for some $\zeta_1,\zeta_2 \in (0,\e)$.
{In order to show the smallness of this quantity, it suffices to control the quantity $K$ uniformly in $N$. Recalling   $\Pi'(v) =\argmax_{u\in B(v,2r)} \phi'_{N,u}$,  we have $ \phi_{N, \Pi'(v)}' \geq \pu' $ for $u\in  B(v,2r)$ and thus for $u\in D$. Hence, using the fact that $|D|\leq Cr^d$,
\begin{align*} 
K = \sum_{u\in D} e^{\beta (\pu'-  \phi_{N, \Pi'(v)}')}  \leq Cr^d.
\end{align*}}
Thus, the quantity  \eqref{438} is bounded by $10\e$ for large enough $N$.  This verifies the claim.

Therefore, by \eqref{433}, the implication \eqref{431}  holds with probability tending to one.
{The implication \eqref{432} follows similarly, noting that $(\phi_N,\phi_N')$ is a  exchangeable pair  and the statements are symmetric, except the third statement $0\leq \phi_{N,\Pi(v)} - \pv \leq \e$. This is obtained from the fact that for $v\in C'_{N,r}$,
\begin{align*}
0\leq \phi_{N,\Pi(v)} - \pv   \leq  \hat{\phi}_{N,\Pi(v)}''-\hat{\phi}_{N,v}'' \leq  \textup{osc}_{B(v,2r)} ( \hat{\phi}''_N) < (\log N)^{-1/4}.
\end{align*}
}
\end{proof}
We now prove Lemmas \ref{lemma 7.7} and \ref{lemma 7.8}.
\begin{proof}[Proof of Lemma \ref{lemma 7.7}]
{Let $\kappa>0$ be a small constant such that $\sqrt{2d}+2\kappa<\beta$. Define
\begin{align} \label{4300}
\ell_0:= \Big\lfloor  \frac{d}{\sqrt{2d}+\kappa}\log \Big(\frac{N}{c_0}\Big)\Big \rfloor,
\end{align}
 where $c_0>0$ is a constant from Theorem \ref{theorem 3.0}. The definition of $\ell_0$ is dictated by the fact that the contribution of Gibbs weights from  $\pv \leq  m_N - \ell_0$ can be controlled via a trivial bound. To be precise, under    the  event  
\begin{align} \label{111}
 \cap_{\ell =[\lambda] }^{\ell_0}   \{|\Gamma_N(\ell)| \leq  e^{(\sqrt{2d}+\kappa)\ell}\},
\end{align}
we have
\begin{align} \label{110}
\sum_{v\in V_N, \pv \leq m_N-\lambda} e^{\beta(\pv - m_N)} &\leq \sum_{\ell=[\lambda]}^{  \ell_0  } e^{- \beta  (\ell-1)} |\Gamma_N(\ell)|   +N^d e^{-\beta  \ell_0 }  \nonumber \\
&\leq Ce^{-(\beta - (  \sqrt{2d} + \kappa))[\lambda]} + CN^{-d(\frac{\beta}{\sqrt{2d}+\kappa}-1)}  .
\end{align}
On the other hand, by Theorem \ref{theorem 3.0}, for large enough  $\ell$ and  $N \geq c_0 e^{(\sqrt{2d}+\kappa) \ell/d}$,
\begin{align*}
 \mathbb{P} (  |\Gamma_N(\ell)|  \geq  e^{(\sqrt{2d}+\kappa)\ell} ) \leq e^{-\kappa \ell/8}.
\end{align*} 
By a union bound (recall  \eqref{4300} which implies  $N \geq c_0 e^{(\sqrt{2d}+\kappa) \ell_0/d}$),
\begin{align*}
\lim_{\lambda\ri} \liminf_{N\ri} \mathbb{P}\big( \cap_{\ell =[\lambda] }^{\ell_0}   \{|\Gamma_N(\ell)| \leq  e^{(\sqrt{2d}+\kappa)\ell}\}\big) =1.
\end{align*}}
This finishes the proof.
\end{proof}

\begin{proof}[Proof of Lemma \ref{lemma 7.8}]
First,  we prove  \eqref{461}.
Note that the complement of the  event in  \eqref{461} is contained in
\begin{align*}
\cup_{v\in \Gamma_N'(\lambda)} \Big\{\max_{u\in B(v,2r)} |\hat{\phi}''_{N,u} -\hat{\phi}''_{N,v}| \geq  \frac{1}{2(\log N)^{1/4}} \Big\}.
\end{align*}
Thus, by Chebyshev's inequality and a union bound, noting that  ${\phi}_N'$ and $\hat{\phi}_N''$ are independent,  by conditioning on   ${\phi}_N'$, for  any $M,\delta>0$,
\begin{align} \label{457}
\mathbb{P}&\Big( \Big\{  \max_{v\in \Gamma'(\lambda)} \textup{osc}_{B(v,2r)} ( \hat{\phi}_N'' ) \geq  \frac{1}{(\log N)^{1/4}} \Big\} \cap \{|\Gamma_N'(\lambda) |<M\}  \cap  \{\Gamma_N'(\lambda)\subseteq V_N^\delta\} \Big) \nonumber \\
& \leq CMr^d (\log N)^{1/2} \sup_{u,v\in V_N^{\delta/2}, |u-v|\leq 2r} \mathbb{E}(\hat{\phi}''_{N,u} -\hat{\phi}''_{N,v})^2  .
\end{align}
By Assumption \ref{a2}, there exists $C=C(\delta,r)>0$ such that
\begin{align} \label{458}
\sup_{u,v\in V_N^{\delta/2},|u-v|\leq 2r} \mathbb{E}(\hat{\phi}''_{N,u} -\hat{\phi}''_{N,v})^2   = \frac{t}{\log N} \sup_{u,v\in V_N^{\delta/2},|u-v|\leq 2r} \mathbb{E}(\phi''_{N,u} -\phi''_{N,v})^2\leq  C\frac{t}{\log N}.
\end{align}
On the other hand, by Theorem \ref{theorem 3.0} and \eqref{410},
\begin{align} \label{459}
\lim_{M\ri} \liminf_{N\ri} \mathbb{P}( \{|\Gamma_N'(\lambda) |<M\}) =1,\quad \lim_{\delta \downarrow 0} \liminf_{N\ri} \mathbb{P}( \{\Gamma_N'(\lambda)\subseteq V_N^\delta\}) =1.
\end{align}
Thus, by \eqref{457}-\eqref{459}, we conclude the proof of \eqref{461}.

~

{Next, we establish  \eqref{460}  via a union bound together with the upper bound on  the two-point estimate
\begin{align*}
\mathbb{P}(  \pv' \geq m_N-\lambda,  | {\phi}_{N,u}' - {\phi}_{N,v}'| \geq \sqrt{\log N})
\end{align*}  
for $v\in V_N$ and $u\in B(v,2r)$}.
{We bound this probability by conditioning on the value of  $\pv'$. To do this, we  analyze the conditional mean and variance of $\pu'$ given $\pv'$, by computing the correlation between $\pu'$ and $\pv'$. 
First, we observe that for any $\delta,r>0$, there exists a constant $C_1 = C_1 (\delta,r)>0$ such that the following holds:}
For any $u,v \in V_N^{\delta/2}$ with  $|u-v| \leq 2r$,  the correlation coefficient between $\pu'$ and $\pv'$ is  $$\rho:= \frac{\cov (\pu',\pv')}{\sqrt{\text{Var} \pu'}\sqrt{\text{Var} \pv'}} = 1-\frac{c_{u,v}}{\log N}$$
for $|c_{u,v}| \leq C_1(\delta,r)$. {This follows from the fact $$| \cov (\pu',\pv') - \log N|  \leq \log_+ |u-v| + \alpha(\delta/2)  \leq \log (2r) + \alpha(\delta/2)$$ and $$|\var \pu'- \log N|< \alpha(\delta/2),\quad   |\var \pv'- \log N| < \alpha(\delta/2),$$    obtained by  Assumption \ref{a2}. Thus,  the conditional mean and variance of $\pu'$, given the value of $\pv'$ such that} $|\pv'| \leq 10\sqrt{2d}\log N$, are
\begin{align*}
\text{Conditional mean} &= \rho \pv' = \pv'+ \e_1 , \\
\text{Conditional variance} &= (1- \rho^2)\text{Var} \pu' = \e_2
\end{align*}
for $|\e_1|,|\e_2 |  \leq C_2 = C_2(\delta,r)$.
This implies that for some constant $\gamma=\gamma(\delta,r)>0$,
\begin{align*}
\mathbb{P}( |\pu'-\pv'| \geq \sqrt{\log N} | \pv')\1_{|\pv'| \leq 10\sqrt{2d}\log N } \leq \frac{1}{N^\gamma}.
\end{align*}
{Since $\var \pv'  \leq  \log N + \alpha_0$ by Assumption \ref{a1}, using the standard Gaussian tail estimate,
\begin{align*}
\mathbb{P}(\pv' \geq m_N-\lambda) \leq C  \frac{1}{\sqrt{\log N}}e^{-\frac{(m_N - \lambda)^2}{2 (\log N +\alpha_0 )} } \leq C \frac{\log N}{N^d} .
\end{align*}}
Hence, for any $v\in V_N^\delta$ and $u\in B(v,2r)$, 
\begin{align} \label{450}
\mathbb{P}(\pv' \geq m_N-\lambda, |\pu'-\pv'| \geq \sqrt{\log N})& \leq    C \frac{\log N}{N^d} \frac{1}{N^\gamma} + \mathbb{P}(|\pv' | > 10\sqrt{2d}\log N) \nonumber\\
&
 \leq    C \frac{\log N}{N^d} \frac{1}{N^\gamma} + \frac{1}{N^{10d}}.
\end{align}
Note that we have the implication
\begin{align*}
&\max_{v\in \Gamma_N'(\lambda) \cap V_N^\delta}  \textup{osc}_{B(v,2r)}  ({\phi}_N' ) \geq 2\sqrt{\log N}  \\
& \Rightarrow \exists v\in V_N^\delta, \exists u\in B(v,2r)  \ \text{such that} \ \pv' \geq m_N-\lambda,  | {\phi}_{N,u}' - {\phi}_{N,v}'| \geq \sqrt{\log N}.
\end{align*} 
  Hence, by \eqref{450} and the union bound over all possible $v\in V_N^\delta$ and $|u-v| \leq 2r$, 
\begin{align*}
\lim_{N \ri } \mathbb{P}(\max_{v\in \Gamma_N'(\lambda) \cap V_N^\delta} \textup{osc}_{B(v,2r)} ({\phi}_N')  \geq 2\sqrt{\log N} )=0.
\end{align*} 
Combining this with \eqref{410}, we obtain
\eqref{460}.

{Finally, this implies  \eqref{464}  in a  straightforward way. Indeed, since $v\in \Gamma_N'(\lambda) \Rightarrow \pv' \geq  \sqrt{2d}  \log N- 0.1 \sqrt{\log N}$ for large  enough $N$,  by \eqref{460} and Corollary \ref{tight},  we establish \eqref{464}.}
\end{proof}

We are now ready to prove Theorem \ref{theorem 7.1}. 
Throughout the proof, we will use the following fact: Let $X$ and $Y$ be any non-negative random variables. If $|X-Y| \leq a$ on the event $A$, then
\begin{align} \label{key}
|\mathbb{E}e^{-X} - \mathbb{E}e^{-Y} | \leq a + 2\mathbb{P}(A^c).
\end{align}
This follows by noting
{\begin{align*}
|\mathbb{E}e^{-X} - \mathbb{E}e^{-Y} | \leq \mathbb{E}  ( |e^{-X} - e^{-Y} |  \1_A + |e^{-X} - e^{-Y} |  \1_{A^c} )  \leq   \mathbb{E}  ( |X-Y|  \1_A ) + \mathbb{E} (2 \1_{A^c}) \leq  a + 2\mathbb{P}(A^c),
\end{align*}
where we used $|e^{-x} - e^{-y}| \leq |x-y|$ for $x,y \geq 0$ in the second inequality, a consequence of the mean value theorem.
}
\begin{proof}[Proof of Theorem \ref{theorem 7.1}]
For $v\in V_N$ and $r  \geq 1$,  {define the version of \eqref{sbar} for the field $\phi_N'$ as well:}
\begin{align*}
  \bar{S}_{v,r}' := \sum_{u\in B(v,r)} e^{\beta(\pu' - \pv')} . 
\end{align*}
We first  prove that 
for any  constant $\iota>0$ and a continuous function  $f: \R \times [0,\infty) \rightarrow [0,\infty)$  of compact support,
\begin{align}  \label{711}
\lim_{r \ri } \limsup_{N \ri } \mathbb{P}\Big( \Big\vert\sum_{v\in C_{N,r}} f(\pv-m_N ,  \bar{S}_{v,2r}  )-  \sum_{v\in C'_{N,r}} f(\pv-m_N,  \bar{S}'_{v,2r}  ) \Big\vert  >\iota\Big)=0.
\end{align} 
 Since $f$ is continuous and compactly supported, for any $\lambda>0$,
{there exists $\e >0$} such that 
\begin{align} \label{731}
|y-y'|,|z-z'|\leq 10\e \Rightarrow |  f(y, z) - f(y',z') | < \iota e^{-2\sqrt{2d}\lambda}.
\end{align}
{By Proposition \ref{lemma 7.6}, there exists an event  $E$, with probability tending to one as $N\ri$ followed by $r\ri$, under which   the following implications  hold:}
\begin{align*} 
v&\in C_{N,r}\cap \Gamma_N(\lambda) \cap \Gamma_N'(\lambda) \\
\Rightarrow
& \ \Pi'(v)\in C_{N,r}', \quad |\Pi'(v)-v|\leq \frac{r}{2},\quad      0\leq \pv - \phi_{N,\Pi'(v)}   \leq  \e,  \quad | \bar{S}_{v,2r} - \bar{S}'_{\Pi'(v),2r}  | < 10\e , \\
v&\in C_{N,r}'\cap \Gamma_N(\lambda) \cap \Gamma_N'(\lambda) \\
\Rightarrow
& \ \Pi(v)\in C_{N,r}, \quad |\Pi(v)-v|\leq \frac{r}{2},\quad      0\leq  \phi_{N,\Pi(v)}  -  \pv  \leq \e,  \quad  | \bar{S}'_{v,2r} - \bar{S}_{\Pi(v),2r}  | < 10\e .
\end{align*}
Then, by \eqref{731}, under the event $E$, for any  $v\in C_{N,r} \cap \Gamma_N(\lambda) \cap \Gamma'_N(\lambda) $,
\begin{align} \label{732}
|  f(\pv- m_N, \bar{S}_{v,2r})   - f(\phi_{N,\Pi'(v)}-m_N,   \bar{S}'_{\Pi'(v),2r} ) | &< \iota e^{-2\sqrt{2d}\lambda}.
\end{align}
Define the event
\begin{align*}
A:=E \cap \{\Gamma_N(\lambda/2)\subseteq \Gamma'_N(\lambda)\}\cap \{|\Gamma'_N(\lambda)|\leq  e^{2\sqrt{2d}\lambda} \}.
\end{align*}
Then,  by  Theorem \ref{theorem 3.0},  Lemma \ref{lemma 7.4} and  Proposition \ref{lemma 7.6},
 \begin{align} \label{712}
 \lim_{\lambda \ri} \limsup_{r\ri}  \limsup_{N\ri} \mathbb{P}(A^c) = 0.
 \end{align}
{Since $f$ is compactly supported,   for sufficiently large $\lambda>0$,  $ f(\pv-m_N  , \bar{S}_{v,2r}) = 0$ for any  $v\notin \Gamma_N(\lambda/2)$. Hence, for large enough $\lambda$, under the event $A$,}
\begin{align} \label{714}
\sum_{v\in C_{N,r}} f(\pv-m_N  , \bar{S}_{v,2r}) 
&= \sum_{v\in C_{N,r}\cap  \Gamma_N(\lambda) \cap    \Gamma_N'(\lambda) }  f(\pv-m_N ,  \bar{S}_{v,2r})  \nonumber \\
&\overset{\eqref{732}}{ \leq }  \iota e^{-2\sqrt{2d}\lambda} |\Gamma_N'(\lambda)| + \sum_{v\in C_{N,r}\cap  \Gamma_N(\lambda) \cap    \Gamma_N'(\lambda) } f (\phi_{N,\Pi'(v)}-m_N,   \bar{S}'_{\Pi'(v),2r} ) \nonumber \\
&\leq \iota +   \sum_{v\in C'_{N,r}} f(\pv-m_N , \bar{S}'_{v,2r}  ) .
\end{align}
Thus, by \eqref{712} and \eqref{714},
\begin{align*}
\lim_{r \ri } \limsup_{N \ri } \mathbb{P}&\Big(  \sum_{v\in C_{N,r}}  f(\pv-m_N ,\bar{S}_{v,2r}  ) >\iota+ \sum_{v\in C'_{N,r}} f(\pv-m_N ,\bar{S}'_{v,2r}  ) \Big)=0.
\end{align*}
{Now, we deduce the other side of the above inequality using the similar argument as above.  Under the event $A$,
for any  $v\in C'_{N,r} \cap \Gamma_N(\lambda) \cap \Gamma'_N(\lambda) $,
\begin{align*} 
|   f(\phi_{N,\Pi(v)}-m_N,   \bar{S}_{\Pi(v),2r} ) - f(\pv- m_N, \bar{S}'_{v,2r})  | &< \iota e^{-2\sqrt{2d}\lambda},
\end{align*}
which implies that for large enough $\lambda$,
\begin{align*}
 \sum_{v\in C'_{N,r}} f(\pv-m_N , \bar{S}'_{v,2r}  ) & = \sum_{v\in C'_{N,r}\cap  \Gamma_N(\lambda) \cap    \Gamma_N'(\lambda) }  f(\pv-m_N ,  \bar{S}'_{v,2r})  \\
& \leq   \iota e^{-2\sqrt{2d}\lambda} |\Gamma_N'(\lambda)| + \sum_{v\in C'_{N,r}\cap  \Gamma_N(\lambda) \cap    \Gamma_N'(\lambda) } f (\phi_{N,\Pi(v)}-m_N,   \bar{S}_{\Pi(v),2r} )  \\
&\leq \iota +   \sum_{v\in C_{N,r}} f(\pv-m_N , \bar{S}_{v,2r}  ) .
\end{align*} Thus, we obtain \eqref{711}.}

Next, we  show that
\begin{align} \label{722}
\lim_{\lambda \ri}   \limsup_{\delta \downarrow 0} \limsup_{N\ri} &\mathbb{P}\Big( \Big\vert \sum_{v\in C'_{N,r}}  f(\pv-m_N ,\bar{S}'_{v,2r}  )  \nonumber \\
&- \sum_{v\in C'_{N,r}\cap \Gamma'_N(\lambda)\cap V_N^\delta}   f (\pv' - m_N -  \sqrt{d/2} t + \hat{\phi}''_{N,v} , \bar{S}'_{v,2r} ) \Big\vert  >2  \iota\Big) = 0 .
\end{align}
Since $f$ is compactly supported, by   \eqref{410},   {Corollary \ref{tight}} and Lemma \ref{lemma 7.4} 
\begin{align} \label{713}
\lim_{\lambda \ri}   \limsup_{\delta \downarrow 0} \limsup_{N\ri} \mathbb{P}\Big(\sum_{v\in  (\Gamma'_N(\lambda)\cap V_N^\delta)^c}  f(\pv-m_N ,\bar{S}'_{v,2r}  )  > \iota\Big) = 0 .
\end{align}
{In addition,  by \eqref{4700} and  \eqref{4711}, we obtain
\begin{align*} 
\lim_{\lambda\ri}\limsup_{N\ri} \mathbb{P}&\Big( \sum_{ v\in C'_{N,r}\cap \Gamma'_N(\lambda)\cap V_N^\delta} \Big\vert  f(\pv-m_N , \bar{S}_{v,2r}')  -   f (\pv' - m_N -  \sqrt{d/2} t + \hat{\phi}''_{N,v} , \bar{S}'_{v,2r})\Big\vert  > \iota\Big)  = 0,
\end{align*}
since the number of terms in the summation is bounded by $ |\Gamma_N'(\lambda)| \leq   e^{2\sqrt{2d}\lambda}$  with probability tending to 1 as $\lambda\ri$ (see Theorem \ref{theorem 3.0}).}
This together with  \eqref{713} implies \eqref{722}.
 Therefore, by
 \eqref{711} and  \eqref{722},
\begin{align} 
\lim_{\lambda \ri}& \limsup_{r\ri} \limsup_{\delta \downarrow 0} \limsup_{N\ri} \mathbb{P}\Big(\Big\vert  \sum_{v\in C_{N,r}}  f(\pv-m_N ,\bar{S}_{v,2r}  ) \nonumber \\
&- \sum_{v\in C'_{N,r}\cap \Gamma'_N(\lambda)\cap V_N^\delta}  f (\pv' - m_N -  \sqrt{d/2} t + \hat{\phi}''_{N,v} , \bar{S}'_{v,2r} )\ \Big\vert > 3 \iota\Big) = 0.
\end{align}
By the arbitrariness of $\iota>0$, this together with \eqref{key} imply that
\begin{align} \label{717}
\lim_{\lambda \ri}& \limsup_{r\ri} \limsup_{\delta \downarrow 0} \limsup_{N\ri}   \Big\vert  \mathbb{E}\exp \Big(  -\sum_{v\in C_{N,r}}  f(\pv-m_N ,\bar{S}_{v,2r}  )\Big) \nonumber \\
&-\mathbb{E}\exp \Big( - \sum_{v\in C'_{N,r}\cap \Gamma'_N(\lambda)\cap V_N^\delta}  f (\pv' - m_N -  \sqrt{d/2} t + \hat{\phi}''_{N,v} , \bar{S}'_{v,2r} ) \Big )  \Big\vert  = 0.
\end{align}

{Define the event $E_3'= \{  |u-v| \notin [r/2,N/r], \  \forall u,v\in \Gamma'_N(\lambda)\}$ as in \eqref{4355} (although the parameter in \eqref{4355} is $\rho$ instead of $\lambda$, we continue using the same notation to avoid introducing new notation, since the parameter $\rho$ will no longer be used).} Under the event $E_3'$, 
 any $u,v\in C_{N,r}' \cap \Gamma'_N(\lambda)\cap V_N^\delta$ with $u\neq v$ satisfies $|u-v| > N/r$. Thus, by Assumption \ref{a2}, for any such $u,v$,
\begin{align*}
\var (\hat{\phi}''_{N,u} ) = t+o(1),\quad   \cov ( \hat{\phi}''_{N,u},\hat{\phi}''_{N,v} ) = o(1).
\end{align*}
The above shows that the variables $\{\hat{\phi}''_{N,v}\}_{v\in C_{N,r}' \cap \Gamma'_N(\lambda)\cap V_N^\delta}$  form a multivariate Gaussian whose coordinates are ``almost'' independent. We introduce a useful fact about such vectors next. This is a slightly generalized version of \cite[Lemma 4.9]{bl1} {and the detailed proof will be presented in Appendix (see Section \ref{appendix b}).} 
\begin{lemma} \label{tensor}
Let   $m\in \mathbb{N}$, $\sigma>0$ be  constants and $g : \R^2 \rightarrow \R$ be a  continuous and compactly supported  function. Then, for any $\e > 0$, there is a constant $\delta > 0$ such that the following holds: For any $n\in \mathbb{N}$ with $n \leq m$, $z_1,\cdots,z_n,w_1,\cdots,w_n\in  \R$ and a  centered multivariate Gaussian vector $X = (X_1,..., X_n)$ satisfying
\begin{align} \label{gaussian condition}
\max_{1\leq i,j\leq n} |\textup{Cov}(X_i,X_j) - \sigma^2 \delta_{ij} | <\delta,
\end{align}
it holds that 
\begin{align}
\Big\vert \log \frac{\mathbb{E} \exp(\sum_{i=1}^n g(z_i+X_i,w_i))}{ \mathbb{E}\exp(  \sum_{i=1}^n g(z_i+Y_i,w_i))} \Big\vert  < \e,
\end{align}
where  $Y = (Y_1,\cdots,Y_n)$  denotes  the multivariate   Gaussian vector consisting of  i.i.d.  $\mathcal{N}(0,\sigma^2)$.
\end{lemma}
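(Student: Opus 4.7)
The plan is to reduce the statement to a uniform continuity assertion about the map $\Sigma \mapsto \mathbb{E} \exp(\sum_{i=1}^{n} g(z_i + X_i, w_i))$ where $X$ is centered Gaussian with covariance $\Sigma$, uniformly over the shifts $(z_i,w_i)$ and over $n \leq m$. Since there are only finitely many values of $n \leq m$, it suffices to prove the statement for each fixed $n$ and take the minimum of the resulting finitely many $\delta$'s. The key structural fact we will exploit is that although the shifts $z_i, w_i$ range over all of $\R$, the function $g$ is uniformly continuous on $\R^2$ (continuous with compact support), so its modulus of continuity $\omega$ is a single function independent of the shifts.

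To leverage this, I would use a coupling. Let $W = (W_1,\ldots,W_n)$ be i.i.d.\ $\mathcal N(0,1)$, and set $X := \Sigma^{1/2} W$ and $Y := \sigma W$; then $X$ has covariance $\Sigma$ and $Y$ is distributed as the target i.i.d.\ vector. Write $F(x) := \exp(\sum_{i=1}^n g(z_i + x_i, w_i))$ and $M := \sup|g|$, so that $e^{-nM} \le F \le e^{nM}$. Using $|e^{a}-e^{b}| \leq e^{\max(a,b)} |a-b|$ and the uniform continuity of $g$,
\begin{align*}
|F(X)-F(Y)| \;\le\; e^{nM} \sum_{i=1}^{n} \omega(|X_i - Y_i|).
\end{align*}
Standard matrix perturbation (on the open set of positive-definite matrices near $\sigma^2 I$) gives $\|\Sigma^{1/2}-\sigma I\|_{\mathrm{op}} \le C(n,\sigma)\,\delta$ whenever $\|\Sigma - \sigma^2 I\|_\infty < \delta$, so $|X_i - Y_i| \le C(n,\sigma)\,\delta\,\|W\|$.

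Given $\e > 0$, the proof would then proceed by choosing $L>0$ small enough so that $\omega(L) < \e\,e^{-2nM}/(4n)$, and then $\delta$ small enough so that $\mathbb{P}(C(n,\sigma)\delta\|W\| > L) < \e\,e^{-2nM}/(8nM)$, using the Gaussian concentration of $\|W\|$. Splitting the expectation according to whether $C(n,\sigma)\delta\|W\|$ exceeds $L$ and using $\omega \le 2M$ on the complement yields
\begin{align*}
|\mathbb E F(X) - \mathbb E F(Y)| \;\le\; e^{-nM}\e/2,
\end{align*}
uniformly in $(z_i,w_i)$. Finally, since $\mathbb E F(X), \mathbb E F(Y) \ge e^{-nM}$, the mean value theorem applied to $\log$ gives
\begin{align*}
\Bigl|\log \mathbb E F(X) - \log \mathbb E F(Y)\Bigr| \;\le\; e^{nM} \,|\mathbb E F(X)-\mathbb E F(Y)| \;\le\; \e,
\end{align*}
as required.

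The only real obstacle is the uniformity in the shifts $(z_i, w_i)$, which are unbounded: naively one might worry that continuity of $g$ only gives pointwise-in-parameter continuity. This is resolved precisely by the compact support hypothesis, which promotes continuity to uniform continuity on all of $\R^2$, so that the single modulus $\omega$ controls $|h_i(x)-h_i(y)|$ for $h_i(x) := g(z_i+x, w_i)$ regardless of the shift. Everything else (the matrix perturbation, the Gaussian tail of $\|W\|$, and the passage from closeness of expectations to closeness of logarithms via the uniform lower bound $e^{-nM}$) is routine; no smoothness of $g$ or Gaussian integration by parts is needed.
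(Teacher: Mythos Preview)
Your proof is correct and follows essentially the same route as the paper's: couple $X$ and $Y$ through a common Gaussian vector, use closeness of the covariance to $\sigma^2 I$ to make $|X_i-Y_i|$ small on a high-probability event, invoke uniform continuity of $g$ there, and handle the complement via the uniform bound $|g|\le M$. The only cosmetic differences are that the paper couples via $X=AY$ with $A=\Sigma/\sigma^2$ and bounds $|X_i-Y_i|\le\delta\|Y\|_1$ directly from the entry-wise hypothesis, whereas you pass through $\Sigma^{1/2}$ and an operator-norm perturbation estimate; both lead to the same conclusion.
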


Using this fact and by the independence of $\phi'_N$ and $\hat{\phi}''_N$, under the event $$F:=E_3' \cap \{|\Gamma_N'(\lambda) | \leq e^{2\sqrt{2d}\lambda} \} ,$$
we have
\begin{align}   \label{716}
\mathbb{E}&\Big[\exp\Big(-  \sum_{v\in C'_{N,r}\cap \Gamma'_N(\lambda)\cap V_N^\delta}   f (\pv' - m_N -  \sqrt{d/2} t + \hat{\phi}''_{N,v} , \bar{S}'_{v,2r}  )\ \Big) \Big\vert \phi_N'\Big] \nonumber \\
&= e^{o(1)} \mathbb{E}\exp\Big(-\sum_{v\in C'_{N,r}\cap \Gamma'_N(\lambda)\cap V_N^\delta} f(\pv' - m_N -   \sqrt{d/2} t + B_t^v , \bar{S}'_{v,2r}) \Big) \nonumber \\
&=  e^{o(1)} \exp\Big( - \sum_{v\in C'_{N,r}\cap \Gamma'_N(\lambda)\cap V_N^\delta} f_t( \pv'- m_N, \bar{S}'_{v,2r}) \Big),
\end{align}
{where   $\{B_t^v: v\in C'_{N,r}\cap \Gamma'_N(\lambda)\cap V_N^\delta\}$ denote i.i.d. centered Gaussians with variance $t$ independent of everything else (in the above expressions, $\E$ is used to denote the conditional expectation given $\phi_N'$), and $o(1)$ is a random quantity which tends to 0 uniformly in $\phi_N'$. Noting that  $$ \lim_{\lambda\ri} \liminf_{r\ri} \liminf_{N\ri} \mathbb{P}(F )= 1,$$ using the fact $f,f_t\geq 0$,
\begin{align*}
\mathbb{E}&\exp\Big(-  \sum_{v\in C'_{N,r}\cap \Gamma'_N(\lambda)\cap V_N^\delta}   f (\pv' - m_N -  \sqrt{d/2} t + \hat{\phi}''_{N,v} , \bar{S}'_{v,2r}  )\ \Big) \\
&=  \mathbb{E} \Big[ \mathbb{E}\Big[\exp\Big(-  \sum_{v\in C'_{N,r}\cap \Gamma'_N(\lambda)\cap V_N^\delta}   f (\pv' - m_N -  \sqrt{d/2} t + \hat{\phi}''_{N,v} , \bar{S}'_{v,2r}  )\ \Big) \Big\vert \phi_N'\Big] \1_{F}  \Big]  + \e_1 \\
&\overset {\eqref{716}}{=} \mathbb{E} \Big[  e^{o(1)} \exp\Big( - \sum_{v\in C'_{N,r}\cap \Gamma'_N(\lambda)\cap V_N^\delta} f_t( \pv'- m_N, \bar{S}'_{v,2r}) \Big) \1_{F}  \Big] +  \e_1\\
&=\mathbb{E} \exp\Big( - \sum_{v\in C'_{N,r}\cap \Gamma'_N(\lambda)\cap V_N^\delta} f_t( \pv'- m_N, \bar{S}'_{v,2r}) \Big) +  \e_2
\end{align*}
where  $\e_i$s denote the error terms such that $\lim_{\lambda\ri} \liminf_{r\ri} \liminf_{N\ri}  |\e_i| =0$.
}

Combining this  with  \eqref{717},
\begin{align} \label{718}
\lim_{\lambda \ri} \limsup_{r\ri} \limsup_{\delta \downarrow 0} \limsup_{N\ri} \Big\vert & \mathbb{E} \exp\Big(-\sum_{v\in C_{N,r}}  f(\pv-m_N ,\bar{S}_{v,2r}  )\Big) \nonumber \\
&- \mathbb{E}  \exp\Big( - \sum_{v\in C'_{N,r}\cap \Gamma'_N(\lambda)\cap V_N^\delta} f_t (\pv'- m_N, \bar{S}'_{v,2r}) \Big) \Big\vert= 0.
\end{align}
{In order to conclude the proof, we verify that the second summation above is close to the summation over all $v\in C'_{N,r}$, by showing that for any $\iota>0$,}
\begin{align}   \label{719}
\lim_{\lambda \ri}   \limsup_{\delta \downarrow 0} \limsup_{N\ri} \mathbb{P}\Big(\sum_{v\in  (\Gamma'_N(\lambda)\cap V_N^\delta)^c}  f_t(\pv'-m_N ,\bar{S}'_{v,2r}  )  > \iota\Big) = 0 .
\end{align}
{Although $f_t$ is no longer compactly supported in the first coordinate (but still compact in the second coordinate), by the expression \eqref{700}, for each $t$,  one can deduce that there exist $C,c>0$ (depending on $t$) such that $f_t(y,z)\leq Ce^{-cy^2}$.}  Thus, under the event 
\begin{align} \label{744}
 \cap_{\ell =[\lambda] }^{\ell_0}   \{|\Gamma'_N(\ell)| \leq  e^{2\sqrt{2d} \ell}\}
\end{align} 
($\ell_0:=  [ \frac{d}{\sqrt{2d}+\kappa}\log (\frac{N}{c_0}) ]$, where $c_0>0$ is a constant  from Theorem \ref{theorem 3.0})
whose probability tends to 1 as $N\ri$ followed by $\lambda\ri$,
\begin{align} \label{745}
\sum_{v\in V_N: \pv' \leq m_N - \lambda}  f_t(\pv'-m_N ,\bar{S}'_{v,2r} )  \leq C \Big (  \sum_{\ell=[\lambda]}^{  \ell_0  } e^{-c ( \ell-1)^2} |\Gamma_N(\ell)|   +N^d e^{-c  \ell_0^2 } \Big ) \overset{\lambda,N \ri}{\rightarrow} 0.
\end{align}
 Therefore,    by   \eqref{410} and \eqref{745},  we obtain \eqref{719}. Since $\phi_N$ and $\phi'_N$ have the same law,
\begin{align} \label{715}
\mathbb{E} \exp\Big( - \sum_{v\in C'_{N,r} } f_t (\pv'- m_N, \bar{S}'_{v,2r}) \Big) =  \mathbb{E} \exp\Big( - \sum_{v\in C_{N,r} } f_t (\pv- m_N, \bar{S}_{v,2r}) \Big) .
\end{align}
{Hence, this together with \eqref{718} and \eqref{719}  imply}
\begin{align*}
\lim_{r\ri} \limsup_{N\ri}&  \Big\vert \mathbb{E}\exp\Big(-\sum_{v\in C_{N,r}}  f(\pv-m_N ,\bar{S}_{v,2r}  )\Big) - \mathbb{E} \exp\Big(-\sum_{v\in C_{N,r}}  f_t(\pv-m_N ,\bar{S}_{v,2r}  )\Big)  \Big\vert =0. 
\end{align*}
The last remaining piece is the following lemma which allows us to go from $r$ balls to $r_N$ balls.

\end{proof}

\begin{lemma}\label{lemma 7.2}
{Let $f: \R\times [0,\infty) \rightarrow [0,\infty)$ be a continuous function of compact support, or generally $f(y,z) \leq Ce^{-cy^2}$ for some $C,c>0$ and $f$ is compactly support in the second coordinate.} Then, 
for any $\e>0$ and $\{r_N\}_{N\geq 1}$ with $r_N\ri$ and $r_N/N\rightarrow 0$ as $N\ri$,  
\begin{align} \label{720}
\lim_{r\ri} \limsup_{N\ri} \mathbb{P}\Big( \Big\vert \sum_{v\in C_{N,r}}  f(\pv-m_N ,\bar{S}_{v,2r}  ) - \sum_{v\in C_{N,r_N}}  f(\pv-m_N ,\bar{S}_{v,r_N/2}   ) \Big\vert >\e \Big) = 0.
\end{align}
\end{lemma}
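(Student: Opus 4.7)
Fix $\e>0$. The proof will reduce both sums to a common, essentially $O(1)$-sized set of cluster centers contained in $\Gamma_N(\lambda_0)$ for a large but fixed $\lambda_0$, and then show that the cluster weights at the two radii $2r$ and $r_N/2$ agree up to uniformly small error. The three inputs are (i) the decay of $f$ in its first coordinate, (ii) the mesoscopic separation estimate \eqref{meso}, and (iii) the Gibbs-negligibility of $\Gamma_N(\lambda)^c$ from Lemma \ref{lemma 7.7}. Parameters will be selected in the nested order $\lambda_0\ll\lambda\ll r\ll N$.

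\textbf{Truncation and identification of index sets.} In the compactly supported case the truncation to $\{v:\phi_{N,v}\geq m_N-\lambda_0\}$ is exact once $\lambda_0$ exceeds the diameter of the first-coordinate support of $f$; in the Gaussian-decay case, the tail $\sum_{v\notin\Gamma_N(\lambda_0)} f(\phi_{N,v}-m_N,\bar{S}_{v,\cdot})$ is handled by dyadic level splitting, applying Theorem \ref{theorem 3.0} to $|\Gamma_N(\ell)|$ for $\ell\leq O(\log N)$ and the trivial bound $N^d$ beyond, which yields $o(1)$ as $\lambda_0\to\infty$ exactly as in \eqref{745}. Next, invoking \eqref{meso} at level $\lambda_0$ with mesoscopic scale $r$, with probability tending to one no two points of $\Gamma_N(\lambda_0)$ will lie at distance in $[r,N/r]$. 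Since $r_N\to\infty$ and $r_N/N\to 0$ imply $r\leq r_N\leq N/r$ for $N$ large, for any $v\in\Gamma_N(\lambda_0)$ the annulus $B(v,r_N)\setminus B(v,r)$ will contain no vertex $u$ with $\phi_{N,u}\geq\phi_{N,v}$ (such a $u$ would lie in $\Gamma_N(\lambda_0)$ at a forbidden distance from $v$). Hence the equality $C_{N,r}\cap\Gamma_N(\lambda_0)=C_{N,r_N}\cap\Gamma_N(\lambda_0)$ will hold with high probability.

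\textbf{Cluster weight perturbation.} For $v$ in the common set above, the difference decomposes as $\bar{S}_{v,r_N/2}-\bar{S}_{v,2r}=\sum_{u\in B(v,r_N/2)\setminus B(v,2r)} e^{\beta(\phi_{N,u}-\phi_{N,v})}$. A second application of \eqref{meso} at a larger level $\lambda\geq\lambda_0$, with the associated mesoscopic scale at most $2r$, will force every $u$ in this annulus to satisfy $\phi_{N,u}<m_N-\lambda$. Using $\phi_{N,v}-m_N\geq-\lambda_0$, this gives the uniform estimate
\[
|\bar{S}_{v,r_N/2}-\bar{S}_{v,2r}|\leq e^{\beta\lambda_0}\sum_{u\in\Gamma_N(\lambda)^c} e^{\beta(\phi_{N,u}-m_N)}\leq C_\beta e^{\beta\lambda_0}\bigl(e^{-\tau_\beta\lambda}+N^{-\tau_\beta}\bigr),
\]
by Lemma \ref{lemma 7.7}, which is made arbitrarily small by choosing $\lambda$ sufficiently large after $\lambda_0$ is fixed.

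\textbf{Conclusion and main obstacle.} In both cases $f$ is uniformly continuous on $\R\times[0,\infty)$: this is immediate in the compactly supported case, and follows in the decay case from $|f(y,z)|\leq Ce^{-cy^2}$ together with compact support in the second coordinate, which reduces uniform continuity to that on a compact rectangle. Combining uniform continuity with the high-probability bound $|C_{N,r}\cap\Gamma_N(\lambda_0)|\leq|\Gamma_N(\lambda_0)|\leq e^{(\sqrt{2d}+\e')\lambda_0}$ from Theorem \ref{theorem 3.0}, and choosing the cluster-weight tolerance small enough relative to this count, the total sum perturbation will drop below $\e$, establishing \eqref{720}. The principal technical point is that the $e^{\beta\lambda_0}$ blow-up in the cluster-weight estimate must be absorbed by the Gibbs-decay factor $e^{-\tau_\beta\lambda}$ from Lemma \ref{lemma 7.7}; this is viable precisely because $\lambda$ may be chosen arbitrarily large \emph{after} $\lambda_0$ is fixed, which in turn dictates the order of limits $\lambda_0\to\infty$, then $\lambda\to\infty$, then $r\to\infty$, then $N\to\infty$.
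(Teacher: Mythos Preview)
Your proposal is correct and follows essentially the same strategy as the paper's proof: truncate to $\Gamma_N(\lambda_0)$, use mesoscopic separation to identify $C_{N,r}\cap\Gamma_N(\lambda_0)=C_{N,r_N}\cap\Gamma_N(\lambda_0)$, then bound the cluster-weight discrepancy via Lemma \ref{lemma 7.7} and conclude by uniform continuity and the level-set count.

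The one organizational difference worth noting: where you introduce a second fixed parameter $\lambda\ge\lambda_0$ and run the nested limits $\lambda_0\to\infty$, $\lambda\to\infty$, $r\to\infty$, $N\to\infty$, the paper instead couples the second level to $r$ by setting $g(r)=\log\log\log r$ and invoking Lemma \ref{lemma 4.4} directly (which permits the level to grow like $c(\delta)\log\log r$). This absorbs your $\lambda\to\infty$ into the $r\to\infty$ limit: the cluster-weight error becomes $C_\beta e^{\beta\lambda_0}e^{-\tau_\beta\lfloor g(r)\rfloor}\to 0$ as $r\to\infty$ with $\lambda_0$ fixed. Your two-parameter version is equally valid and perhaps conceptually more transparent; the paper's $g(r)$ device is a slicker packaging that keeps the limit structure in \eqref{720} literally as stated. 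Both handle the general (Gaussian-decay) case by the same truncation argument \eqref{745}.
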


\begin{proof}
Let us first assume that $f(y,z)$ has a compact support.
{Let $\lambda>0$ be  a  large  number such that
 $f(y,z) = 0$ for $y\notin [-\lambda,\lambda]$.
Since $f$ is uniformly continuous, there exists $\kappa>0$ such that 
 \begin{align}\label{746}
y\in \R, |z-z'| \leq \kappa \Rightarrow |f(y,z) - f(y,z')| \leq e^{-2\sqrt{2d}\lambda} \e.
 \end{align}
Let $\delta>0$ and  $g(r):= \log \log \log r$, intended to be a slowly growing function. Similarly as in \eqref{435}, define the event
$$\hat{E}_3:= \{  |u-v| \notin [r,r_N], \  \forall u,v\in  V_N^\delta\cap \Gamma_N(g(r))  \}.$$
Then, for large enough $r$ and $N$, under the event $\{\Gamma_N(\lambda) \subseteq V_N^{2\delta}\}\cap \hat{E}_3$,   
\begin{align} \label{743}
 \forall v\in \Gamma_N(\lambda), \  \forall u\in B(v,r_N/2) \setminus B(v,2r) \Rightarrow \pu < m_N - g(r).
\end{align} 
To see this, first note that $v\in \Gamma_N(\lambda) \subseteq V_N^{2\delta} $ and  $|u-v| \leq r_N/2 < \delta N$ (for large $N$)  imply   $u\in V_N^\delta$. Since $g(r) > \lambda$ for large $r$, $v\in V_N^\delta \cap  \Gamma_N(g(r))$. By the definition of the event $\hat{E}_3$, we obtain \eqref{743}.}
{In addition, since  $r_N/N\rightarrow 0$ as $N\ri$, by Lemma  \ref{lemma 4.4} (note that  $g(r) <  c(\delta) \log \log r$ for large $r$), 
\begin{align} \label{740}
  \lim_{r\ri} \liminf_{N\ri} \mathbb{P}(\hat{E}_3) = 1.
\end{align}

Now, we verify the following implication which says that under $\{\Gamma_N(\lambda) \subseteq V_N^{2\delta}\}\cap \hat{E}_3$,  $r$-local maximizers are also $r_n$-local maximizers, i.e.,  for large enough $r$ and $N$,
\begin{align} \label{741}
\{\Gamma_N(\lambda) \subseteq V_N^{2\delta}\}\cap  \hat{E}_3 \Rightarrow  \{C_{N,r} \cap \Gamma_N(\lambda) =  C_{N,r_N} \cap \Gamma_N(\lambda)\}.
\end{align}
Suppose that there exists $v\in \Gamma_N(\lambda)\subseteq V_N^{2\delta} $  such that $v\in C_{N,r}$ but $v\notin  C_{N,r_N}$. Then,  there exists $u$ with $r< |u-v| \leq r_N$ such that  $\pu \geq m_N-\lambda$. Since $u,v\in  V_N^\delta \cap \Gamma_N(\lambda)$ and $g(r) > \lambda$ for large enough $r$,  this contradicts the definition of  the event $\hat{E}_3$. Thus, we establish \eqref{741}.

In addition,   under the event $$G:=\{\Gamma_N(\lambda) \subseteq V_N^{2\delta}\}\cap \hat{E}_3 \cap E_1^{g(r)}$$ (recall that the event $E_1^{g(r)}$ is defined in Lemma \ref{lemma 7.7}), for any $v\in  C_{N,r} \cap   \Gamma_N(\lambda)$,
\begin{align*}
0\leq   \bar{S}_{v,r_N/2}-\bar{S}_{v,2r} &\leq  e^{\beta \lambda}    \sum_{u\in B(v,r_N/2) \setminus B(v,2r)} e^{\beta(\pu  - m_N)} \nonumber  \\
  &\overset{\eqref{743}}{\leq}   e^{\beta \lambda}    \sum_{u\in V_N: \pu < m_N-g(r)} e^{\beta(\pu - m_N)}  \overset{\eqref{event e1}}{ \leq}  C_\beta e^{\beta \lambda}  (e^{-\tau_\beta \lfloor g(r) \rfloor} + N^{-\tau_\beta } ).
\end{align*}
If $r$  is large enough so that $C_\beta e^{\beta \lambda}  \cdot e^{-\tau_\beta \lfloor g(r) \rfloor}  < \kappa/2$, then for sufficiently large $N$,
\begin{align} \label{747}
0\leq   \bar{S}_{v,r_N/2}-\bar{S}_{v,2r} &\leq \kappa.
\end{align}
This implies that  under the event  $G \cap \{|\Gamma_N(\lambda)| \leq e^{2\sqrt{2d}\lambda}\},$
\begin{align*}
 \Big\vert &\sum_{v\in C_{N,r} \cap \Gamma_N(\lambda)}  f(\pv-m_N ,\bar{S}_{v,2r}  ) - \sum_{v\in C_{N,r} \cap \Gamma_N(\lambda) }  f(\pv-m_N ,\bar{S}_{v,r_N/2}   ) \Big\vert   \\
 & \overset{\eqref{746},\eqref{747}}{\leq} |C_{N,r} \cap \Gamma_N(\lambda)| \cdot e^{-2\sqrt{2d}\lambda} \e \leq  e^{2\sqrt{2d}\lambda}   \cdot e^{-2\sqrt{2d}\lambda} \e = \e.
\end{align*}
Note that  by \eqref{410}, Theorem \ref{theorem 3.0}, Lemma \ref{lemma 7.7} and \eqref{740},
 $$\lim_{\lambda \rightarrow \infty } \liminf_{\delta \downarrow 0}  \liminf_{r\ri} \liminf_{N\ri} \mathbb{P}(G \cap \{|\Gamma_N(\lambda)| \leq e^{2\sqrt{2d}\lambda}\}) =1.$$
  Since $f$ is supported on $[-\lambda,\lambda]$ in the first variable, together  with  \eqref{741},  we conclude the proof.
}

~

Now,  let us consider the general case $f(y,z) \leq Ce^{-cy^2}$ for some $C,c>0$ and $f$ is compactly support in the second coordinate. By the argument in  \eqref{745} (note that $\phi_N$ and $\phi'_N$ have the same law),  
\begin{align*}
&\lim_{\lambda \ri} \limsup_{N\ri}  \mathbb{P}\Big( \Big\vert \sum_{v\in C_{N,r}}  f(\pv-m_N ,\bar{S}_{v,2r}  )  -   \sum_{v\in C_{N,r}}  (f(y,z) \1_{|y| \leq \lambda} ) (\pv-m_N ,\bar{S}_{v,2r}  )  \Big\vert >  \e \Big) = 0, \\
&\lim_{\lambda \ri} \limsup_{N\ri}  \mathbb{P}\Big( \Big\vert \sum_{v\in C_{N,r_N}}  f(\pv-m_N ,\bar{S}_{v,r_N/2}  )  -   \sum_{v\in C_{N,r_N}}  (f(y,z) \1_{|y| \leq \lambda} ) (\pv-m_N ,\bar{S}_{v,r_N/2}  )  \Big\vert >  \e \Big) = 0.
\end{align*}
Therefore, by \eqref{720} applied to $f(y,z)\1_{|y| \leq \lambda} $ with large enough $\lambda$,  we conclude the proof.
 
\end{proof}

\section{Characterizing limit points} \label{sec limit}
Given the invariance principle for the point process (Theorem \ref{theorem 7.1}), we furnish the proof of Theorem \ref{theorem 4.1} in this section appealing to the general theory of Liggett \cite{liggett}.

\begin{proof}[Proof of Theorem \ref{theorem 4.1}]
Let $\eta$  be any subsequential limit of  $\{\eta_{N,r_N}\}_{N\geq 1}$. 

\begin{enumerate}
\item We will first establish a non-degeneracy and a local finiteness condition satisfied by $\eta$ needed to be able to appeal to Liggett's theory. This is done by relying on the already established control on the level sets (Theorem \ref{theorem 3.0}).
\item Then using the above, we deduce that $\eta$ is a Cox process.
\item Finally, we characterize the law of the intensity measure by computing appropriate Laplace transforms.
\end{enumerate}

{Assume that  
\begin{align} \label{sub converge}
 \lim_{k\ri}  \eta_{n_k,r_{n_k}} = \eta \quad \text{in law}
\end{align}  (w.r.t. vague topology) for some subsequence $\{n_k\} _{k\in \mathbb{N}}$ in $\mathbb{N}$.}

\textbf{Step 1.}
We first show that  for any $k>0$,  almost surely,
\begin{align} \label{481}
\eta(   [-k,\infty) \times [0,\infty)) < \infty,\quad \eta(\R \times (0,\infty))>0.
\end{align}
For the first statement,  note that by  Theorem \ref{theorem 3.0}, for large enough $\lambda$,
\begin{align} \label{488}
\limsup_{N\ri} \mathbb{P}(\eta_{N,r_N}(  [-\lambda,\infty) \times [0,\infty))  >  e^{ 1.5\sqrt{2d} \lambda})  \leq e^{-\sqrt{2d} \lambda/16}.
\end{align}
To show that the same estimate holds for the limit point $\eta,$  for $M>0$, let  $f_M : \R \times [0,\infty) \rightarrow [0,1]$ be a continuous function such that $f_M=1$ on   $[-\lambda,M] \times [0,M]$ and $f_M=0$ outside  $ [-\lambda-1,M+1] \times [0,M+1] $. Then, for large enough $\lambda$,
\begin{align*}
\mathbb{P}(\eta(  [-\lambda,M] \times  [0,M])  >  e^{2\sqrt{2d}    \lambda}) & \leq \mathbb{P}(  \langle \eta,f_M\rangle >  e^{ 2\sqrt{2d} \lambda})
 \leq \liminf_{ k\ri}  \mathbb{P}(  \langle  \eta_{n_k,r_{n_k}},f_M\rangle >  e^{ 2\sqrt{2d} \lambda})\\ 
&\leq   \limsup_{ k\ri}  \mathbb{P}( \eta_{n_k,r_{n_k}} (  [-\lambda-1,\infty) \times [0,\infty))  >  e^{ 2\sqrt{2d} \lambda}) \overset{\eqref{488}}{  \leq  } e^{-\sqrt{2d} \lambda/16},
\end{align*}
{where  we used the fact $ \langle  \eta_{n_k,r_{n_k}},f_M\rangle \rightarrow  \langle  \eta,f_M\rangle$ in law (which is indeed equivalent to \eqref{sub converge}, see e.g., Section \ref{appendix a} for details), together with Portmanteau theorem, in the second inequality above.}
Since this holds uniformly in $M>0$, for large enough $\lambda > k$,
\begin{align*}
\mathbb{P}(\eta(  [-k,\infty) \times [0,\infty)) >  e^{ 2\sqrt{2d}    \lambda}) \leq  e^{-\sqrt{2d}    \lambda/16}.
\end{align*}
{This  implies the first part of \eqref{481}.}

For the second part, {similarly as in \eqref{435}, define the event 
\begin{align} \label{499}
\bar{E}_3:= \{  |u-v| \notin [r/2,2r_N], \  \forall u,v\in \Gamma_N(\lambda^2)\}.
\end{align} We claim that for any   $\lambda,r>1$,   for sufficiently large $N$, under the event  $\bar{E}_3 \cap E_1^{\lambda^2}$,}
any $v\in \Gamma_N(\lambda)$ satisfies that
\begin{align} \label{489}
1\leq \bar{S}_{v,r_N/2} \leq    (2r)^d + C_\beta  e^{\beta \lambda -\tau_\beta \lfloor  \lambda^2 \rfloor } +1
\end{align}
($C_\beta>0$ is a constant from Lemma \ref{lemma 7.7}).
To see this, note that since $v\in \Gamma_N(\lambda)\subseteq \Gamma_N(\lambda^2)$, under the event $ \bar{E}_3$,  any $u\in  B(v,r_N/2) \setminus B(v,r)$ satisfies $\pu < m_N-\lambda^2$. Thus, for sufficiently large $N$,  for  any $v\in \Gamma_N(\lambda)$,
\begin{align*}
\bar{S}_{v,r_N/2} 
&=  \sum_{u\in B(v,r)} e^{\beta (\pu - \phi_{N,v})} + \sum_{u\in B(v,r_N/2) \setminus B(v,r) } e^{\beta (\pu - \phi_{N,v})}  \\
&\leq (2r)^d + e^{\beta \lambda} \sum_{u\in B(v,r_N/2) \setminus B(v,r) } e^{\beta (\pu - m_N)}  \\
&\leq (2r)^d + e^{\beta \lambda} \sum_{\pu < m_N-\lambda^2 } e^{\beta (\pu - m_N)}  \\
&\overset{\eqref{event e1}}{\leq}   (2r)^d +  C_\beta  e^{\beta \lambda} (e^{-\tau_\beta  \lfloor  \lambda^2 \rfloor  } + N^{-\tau_\beta } )\leq  (2r)^d +C_\beta    e^{\beta \lambda -\tau_\beta   \lfloor  \lambda^2 \rfloor  } +1.
\end{align*}
 In addition,
\begin{align*}
\bar{S}_{v,r_N/2}  =  \sum_{u\in B(v,r_N/2)} e^{\beta (\pu - \phi_{N,v})}  \geq  e^{\beta (\phi_{N,v} - \phi_{N,v})} = 1.
\end{align*}

Note that  by  \eqref{meso},  {Corollary \ref{tight}} and  Lemma \ref{lemma 7.7}, for any $\e>0$,  one can take   large enough  constants $\lambda,r>1$ such that
{\begin{align} \label{480}
 \liminf_{N\ri}  \mathbb{P}( \{  |\max_{v\in V_N} \pv - m_N| \leq \lambda \} \cap \bar{E}_3 \cap E_1^{\lambda^2}  )    > 1-\e.
\end{align}}
Again to pass to $\eta,$ for such $\lambda,r>1$, set $M:=(2r)^d + C_\beta e^{\beta \lambda -\tau_\beta [\lambda^2]}+1 $, and let   $f_M : \R \times [0,\infty) \rightarrow [0,1]$ be a continuous function such that $f_M=1$ on   $[-\lambda,\lambda] \times [1,M]$ and $f_M=0$ outside  $ [-\lambda-1,\lambda+1] \times [1/2,M+1] $.  Then,  
{\begin{align*}
\mathbb{P}(\eta(  [-\lambda-1,\infty) \times  [1/2,M+1])  \geq 1 ) & \geq \mathbb{P}(  \langle \eta,f_M\rangle \geq 1) \\
& \geq \limsup_{ k\ri}  \mathbb{P}(  \langle   \eta_{n_k,r_{n_k}}  ,f_M\rangle  \geq 1) \\
&\geq   \limsup_{ k\ri}  \mathbb{P}(  \eta_{n_k,r_{n_k}}  (  [-\lambda, \lambda] \times  [1,M]) \geq 1)  \\
&\overset{\eqref{489}}{ \geq}   \limsup_{k\ri}  \mathbb{P}( \{ |\max_{v\in V_{n_k}} \pv - m_{n_k}| \leq \lambda \}\cap \bar{E}_3 \cap E_1^{\lambda^2}  ) \overset{\eqref{480}}{ \geq} 1-\e ,
\end{align*}}
{where  we used Portmanteau theorem again in the second inequality above and the event $\bar{E}_3 \cap E_1^{\lambda^2} $ is considered with $N = n_k$.}
Thus,
\begin{align*}
 \mathbb{P}(\eta(  \R \times  ( 0, \infty))  \geq 1 )  \geq 1-\e.
\end{align*}
Since $\e>0$ is arbitrary, this concludes the proof of the second statement of  \eqref{481}.\\

We will now use the invariance principle in Theorem \ref{theorem 7.1} to deduce that $\eta$ is a Cox process, {following the approach in \cite{bl1}}.\\

\textbf{Step 2. Characterization in terms of a Cox process.}
{Recall the Markov kernel $P_t$ on $\R \times [0,\infty)$  defined in \eqref{markov kernel}:} For $t>0$,
\begin{align} \label{kernel}
P_t((y,z), A) := \mathbb{P}((y+B_t - \sqrt{d/2}  t, z) \in A) ,\quad  A\subseteq \R\times [0,\infty) \  \text{Borel}.
\end{align}
For any non-negative measurable function $g$  and a (non-negative) measure $\mu$  on $\R \times [0,\infty)$, define the push-forwarded function $P_tg$ and measure $\mu P_t$ by 
\begin{align*} 
P_t g (y,z):= \mathbb{E} g(y+B_t - \sqrt{d/2}  \ t,z),\quad (\mu  P_t)(A):= \int P_t( (y,z),A) \mu( dy \  dz) .
\end{align*}
Then, it is straightforward to check that (by first verifying it for indicators, then for simple functions and appealing to Monotone convergence theorem)
\begin{align} \label{491}
\int (P_t g) (y,z)  \mu(dy \ dz)    = \int  g (y,z)  (\mu P_t)(dy \ dz).
\end{align} 
By the definition of $f_t$ in \eqref{700}, we have $1-e^{-f_t}= P_t(1-e^{-f})$. Thus, by the preceding display,
\begin{align} \label{487}
 \int (1-e^{-f_t(y,z)}) \mu(dy \ dz) =\int (1-e^{-f(y,z)})  (\mu P_t)(dy \ dz) .
\end{align}
In addition, due to the  negative linear drift $-\sqrt{d/2}t$ in \eqref{kernel} and the diffusivity of $B_t$, for any compact $C \subseteq \R\times [0,\infty)$,
\begin{align*}
\lim_{t\ri} \sup_{(y,z) \in \R \times [0,\infty)} P_t ((y,z),C)  = 0.
\end{align*}
This implies that for any $f: \R \times [0,\infty) \rightarrow [0,\infty)$ of compact support,
\begin{align*}
\lim_{t\ri} \sup_{(y,z) \in \R \times [0,\infty)} | 1  -  e^{-f_t(y,z)}|  = \lim_{t\ri} \sup_{(y,z) \in \R \times [0,\infty)} | 1  - \mathbb{E} e^{-f(y+B_t -  \sqrt{d/2} t ,z) }| = 0.
\end{align*}
{Using the fact that $\forall \e>0$, $\exists  \ \delta>0$ such that $1-e^{-x} \geq (1-\e)x$ for $x\in [0,\delta]$,
there exist constants $\kappa_t>0$ with $\lim_{t\ri} \kappa_t = 0$ such that for any $y\in \R$ and $z\in [0,\infty)$,
\begin{align*}
 1  -  e^{-f_t(y,z)} \leq   f_t(y,z) \leq (1+\kappa_t)( 1  -  e^{-f_t(y,z)}).
\end{align*}
Since $\eta \geq 0$, this implies the existence of (random)  $\tilde{\kappa}_t$ with $ 0 \leq \tilde{\kappa}_t \leq \kappa_t$ such that
\begin{align*}
\langle \eta, f_t \rangle   = (1+\tilde{\kappa}_t) \langle \eta, 1-e^{-f_t}  \rangle   .
\end{align*}}
Thus, for any  continuous $f: \R \times [0,\infty) \rightarrow [0,\infty)$ of compact support, 
{\begin{align} \label{484}
\mathbb{E} e^{-\langle \eta , f\rangle} \overset{\eqref{710}}{ =} \mathbb{E} e^{-\langle \eta , f_t\rangle}  &=  \mathbb{E}\exp \Big( - (1+\tilde{\kappa}_t ) \int  (  1-e^{-f_t (y,z)}  ) \eta (dy  \ dz) \Big) \nonumber \\
&\overset{\eqref{487}}{ =} \mathbb{E}\exp \Big( -(1+\tilde{\kappa}_t )\int (1-e^{-f(y,z)}) (\eta P_t) (dy  \ dz) \Big).
\end{align}}
We now show that $\{\eta P_t \}_{t\geq 0}$ is tight. 
We will apply a, by now, standard analysis argument. For any continuous $g:\R \times [0,\infty) \rightarrow [0,1)$ with a compact support, $f: = -  \log (1-g)  \geq 0$ is also continuous function of compact support. Taking $f = -  \log (1-g)$  in \eqref{484},
\begin{align*}
\mathbb{E} e^{ \langle \eta , \log (1-g) \rangle} = \mathbb{E} e^{- (1+\tilde{\kappa}_t)  \langle  \eta P_t, g \rangle}.
\end{align*}
{Replacing $g$ by $\chi g$ with $\chi  \in  [0,1]$,
\begin{align} \label{495}
\mathbb{E} e^{ \langle \eta , \log (1-\chi g) \rangle} = \mathbb{E} e^{- (1+\tilde{\kappa}_t) \chi  \langle  \eta P_t, g \rangle}.
\end{align}
Recalling $g\geq 0$, by monotone convergence theorem,
\begin{align} \label{493}
\lim_{\chi \downarrow 0} \mathbb{E} e^{ \langle \eta , \log (1-\chi g) \rangle}=1
\end{align}
For any $\e>0$, take a sufficiently small $\chi \in [0,1]$ such that  LHS of \eqref{495} is at least $1-\e$. For such $\chi$, for any $K>0$,
\begin{align} \label{494}
1-\e \leq \mathbb{E} e^{ \langle \eta , \log (1-\chi g) \rangle}  \overset{\eqref{495}}{=}  \mathbb{E} e^{- (1+\tilde{\kappa}_t) \chi  \langle  \eta P_t, g \rangle} \leq  \mathbb{P} ( \langle  \eta P_t, g \rangle < K  ) + e^{-\chi K}, 
\end{align}
where we used the fact $\tilde{\kappa}_t \geq 0$ in the last inequality.
Hence, there exists a large enough constant $K>0$ such that  $\mathbb{P} ( \langle  \eta P_t, g \rangle < K  ) \geq 1-2\e$ for  $t\geq 0$.
Since $\langle \eta P_t,g \rangle\geq 0$, this shows the tightness of $\{\langle \eta P_t,g \rangle \}_{t \geq 0}$ for  an  arbitrary continuous  and compactly supported function $g:\R \times [0,\infty) \rightarrow [0,1)$.
This ensures the existence of a subsequence $\{t_n\}_{n\in \mathbb{N}}$ in $\R$  with $\lim_{n\ri} t_n=\infty$ and a random Borel measure $M$ on $\R \times [0,\infty)$ such that $\lim_{n\ri} \eta P_{t_n}  = M$ in law  w.r.t. vague topology (see Lemma \ref{tight converge}  for a criteria for the relative compactness of random measures).}
{Let us take a limit $t\ri$ in \eqref{484} along the sequence  $\{t_n\}_{n\in \mathbb{N}}$.  Since $\lim_{n\ri} \eta P_{t_n}  = M$ w.r.t. vague topology and  noting that $1-e^{-f}$ is continuous and compactly supported, $\langle  \eta P_{t_n}, 1-e^{-f} \rangle  \rightarrow \langle  M, 1-e^{-f} \rangle$ in law as $n\ri$. Thus, using the fact that  $ 0 \leq \tilde{\kappa}_t \leq \kappa_t$ with  $\lim_{t\ri} \kappa_t = 0$,  
\begin{align*}
\mathbb{E} e^{-\langle \eta , f\rangle}  = \mathbb{E}\exp \Big( -\int (1-e^{-f(y,z)}) M (dy  \ dz) \Big).
\end{align*}}
{Note that RHS above is the Laplace functional of  $ \text{PPP}(M)$ (see  \eqref{902} in  Section \ref{appendix a} for a further elaboration). Since the Laplace functional tested against all  continuous  and compactly supported  functions uniquely determines the law of random measures (see \eqref{900} in  Section \ref{appendix a} for the precise statement), we deduce that $\eta \overset{\text{law}}{\sim}   \text{PPP}(M)$.}  {Note that  by \eqref{481}, for any $k>0$, almost surely,
\begin{align} \label{482}
M([-k,\infty) \times [0,\infty)) < \infty,\quad  M(\R \times (0,\infty)) > 0.
\end{align}
}

Now, we claim that for any $t>0$,
\begin{align} \label{485}
M P_t =  M \quad \text{in law}.
\end{align} 
{Since  $\eta \overset{\text{law}}{\sim}   \text{PPP}(M)$,  for any continuous and compactly supported function $f:\R \times  [0,\infty) \rightarrow [0,\infty)$,
\begin{align*}
\mathbb{E} e^{-\langle \eta , f\rangle} \overset{\eqref{710}}{ =}  \mathbb{E}e^{-\langle \eta,f_t \rangle}& =\mathbb{E} \exp\Big(-\int (1-e^{-f_t(y,z)}) M(dy \ dz)\Big) \\
& \overset{\eqref{487}}{ =} \mathbb{E} \exp\Big(-\int (1-e^{-f(y,z)})  (MP_t)(dy \ dz)\Big) . 
\end{align*}
Since the RHS above is the Laplace functional of $ \text{PPP}(MP_t)$, by the injectivity of the Laplace functional mentioned above, it follows that  $\eta \overset{\text{law}}{\sim}  \text{PPP}(MP_t)$. Recalling $\eta \overset{\text{law}}{\sim}  \text{PPP}(M)$ and the fact that the law of Poisson processes uniquely determines the law of intensity measures (see Lemma \ref{poisson unique} for the proof), we establish \eqref{485}.}

~

\textbf{Step 3. Characterization of the intensity measure.}
Finally, we characterize the law of $M$. Since the second coordinate in $\R \times [0,\infty)$ is not moved by  $P_t$ (see \eqref{kernel}), we consider the projected Markov kernel on the first coordinate as follows.
For any Borel set $A_2\subseteq [0,\infty)$, define a random measure $\bar{M}_{A_2}$  on $\R$ by
\begin{align*}
\bar{M}_{A_2}(A_1) := M(A_1\times A_2),\quad A_1\subseteq \R \  \text{Borel}.
\end{align*}
In addition, define the  Markov kernel $Q_t$ on $\R$ as
\begin{align} \label{q}
Q_t(y,A_1) = \mathbb{P}(y+ B_t -\sqrt{d/2}   t  \in A_1),\quad y\in \R, \  A_1\subseteq \R \  \text{Borel}.
\end{align}
Then, since $MP_t = M$ in law (see \eqref{485}) and $P_t$ does not change the second coordinate, for  any Borel set  $A_2\subseteq [0,\infty)$,
\begin{align} \label{470}
\bar{M}_{A_2} Q_t  = \bar{M}_{A_2}\quad \text{ in law}.
\end{align} In order to deduce that  $\bar{M}_{A_2} Q_t  = \bar{M}_{A_2}$ almost surely, we invoke \cite[Corollary 3.8]{liggett}: Let $\mu$ be a random measure and $P$ be a Markov kernel   on a non-compact Abelian group $S$ satisfying $P(x,E) = P(0,E-x)$ for any $x\in S$ {and measurable $E\subseteq S$} such that the kernel $P$ has no proper closed invariant subgroups. Then, 
$\mu P \overset{\text{law}}{\sim} \mu$ implies that $\mu P = \mu$ almost surely.

We apply this fact to the Markov kernel $Q_t$ (on the non-compact Abelian group $\mathbb{R}$) for each $t>0$. By \eqref{q}, $Q_t$ is translation invariant. {To see that $Q_t$ does not have proper closed invariant subgroups, we note that any subgroup of $\R$ is either dense in $\R$ or given by $a\mathbb{Z}$ for some $a\in \R$. The former case of subgroup is excluded since its closure is an entire set $\R$, and the latter case of subgroup cannot be  invariant under $Q_t$ {due to the absolute continuity of the law of $Q_t (y,\cdot)$}.} Therefore,    \cite[Corollary 3.8]{liggett} is applicable and by \eqref{470}, we deduce that 
\begin{align} \label{471}
 \bar{M}_{A_2} Q_t  = \bar{M}_{A_2}\quad \text{ almost surely.} 
\end{align}

Next, we characterize the distribution of $\bar{M}_{A_2} $ by applying \cite[Lemma 3.3]{bl1}: Let $\tilde{Q}_t$ be the Markov kernel defined similarly as in \eqref{q} (with the drift coefficient changed)
\begin{align*}
\tilde{Q}_t(y,A_1) = \mathbb{P}\Big(y+ B_t -  \frac{\alpha}{2} t  \in A_1\Big),\quad y\in \R, \  A_1\subseteq \R \  \text{Borel}.
\end{align*} 
Then, any Borel measure $\mu$ on $\R$ with $\mu([0,\infty))<\infty$  satisfying $\mu \tilde{Q}_t = \mu$  for some $t>0$  is given by $\mu(dy) = \alpha\mu ( [0,\infty))e^{- \alpha y}dy$. 
In our case, \cite[Lemma 3.3]{bl1} (with $\alpha = \sqrt{2d}$ and $\mu = \bar{M}_{A_2}$)  is applicable since $\bar{M}_{A_2}([0,\infty)) < \infty$ by  \eqref{482}.
Hence,  by \eqref{471}, we deduce that 
 almost surely, $$\bar{M}_{A_2} (dy) = \sqrt{2d} \bar{M}_{A_2}([0,\infty)) e^{-\sqrt{2d}y}dy.$$

 Now, define a (random) Borel measure $\nu$ on $[0,\infty)$ by
\begin{align*}
\nu(A_2) :=\sqrt{2d} \bar{M}_{A_2}( [0,\infty)) = \sqrt{2d} M( [0,\infty) \times A_2),\quad \forall A_2\subseteq [0,\infty) \  \text{Borel}.
\end{align*}
Note that $\nu ( [0,\infty) )<\infty$  by \eqref{482}.
Then, almost surely (whose exceptional zero-measure set may depend on $A_2$),
\begin{align} \label{483}
M(A_1 \times A_2) =\bar{M}_{A_2}(A_1)= \nu(A_2) \int_{A_1} e^{-\sqrt{2d}y}dy,\quad \forall A_1\subseteq \R \  \text{Borel}.
\end{align}
{Let $\mathcal{S} = \{ [0,q) : q\in  \mathbb{Q}, q>0\} $ be  the countable collection of intervals generating the Borel $\sigma$-algebra on $[0,\infty)$.} Then, by \eqref{483}, almost surely,  for any fixed bounded Borel $A_1$ in $\R$, the  two measures
\begin{align} \label{492}
A_2 \mapsto M(A_1 \times A_2)\ \text{and} \ A_2 \mapsto  \nu(A_2) \int_{A_1} e^{-\sqrt{2d}y}dy ,\quad  A_2\subseteq [0,\infty) \  \text{Borel}.
\end{align}
agree for $A_2 \in \mathcal{S}$.
%$A_1 \times A_2$ ($A_1 \subseteq \R, A_2\subseteq [0,\infty)$ Borel), 
These two measures are  finite (due to boundedness of $A_1$ and \eqref{482}) and  agree on $\mathcal{S}$ which is a $\pi$-system generating the Borel $\sigma$-algebra  in $[0,\infty)$. Hence, as an application of the $\pi-\lambda$ theorem {(see \cite[Theorem A.1.5]{durrett}, which says that if $\mu_1$ and  $\mu_2$ are measures that agree on the $\pi$-system $\mathcal{P}$ and there is a sequence $\{A_n\}_{n\in \mathbb{N}}$ such that  $A_n\in \mathcal{P}$ with $A_n \uparrow \Omega$ and $\mu_i (A_n)<\infty$ ($i=1,2$), then  $\mu_1$ and  $\mu_2$ agree on $\sigma(\mathcal{P})$)}, they agree on all Borel subsets of $[0,\infty)$.
Therefore, almost surely, \eqref{483} holds for all Borel $A_2 \subseteq [0,\infty)$ and  \emph{all} bounded Borel $A_1\subseteq \R$ ({since \eqref{483} holds for \emph{all} Borel sets $A_1$).}

Now, let us remove the bounded condition on $A_1$. Consider the collection of intervals $\mathcal{S}' = \{ (q_1,q_2) : -\infty < q_1<q_2<\infty, q_1,q_2\in  \R\} $ which is a $\pi$-system  and generates the Borel $\sigma$-algebra on $\R$.  It has already been shown that for each Borel $A_2\subseteq [0,\infty)$, the two    measures 
\begin{align*}
A_1 \mapsto M(A_1 \times A_2)\ \text{and} \ A_1 \mapsto  \nu(A_2) \int_{A_1} e^{-\sqrt{2d}y}dy,   \quad A_1\subseteq \R \  \text{Borel}.
\end{align*}
agree for $A_1\in \mathcal{S}'$. Hence, by the $\pi-\lambda$ theorem again, noting that these measures are finite on each $(-m,m)\in \mathcal{S}'$   for $m\in \mathbb{N}$, they agree on all Borel $A_1\subseteq \R$.

Therefore, we establish that  almost surely, for any Borel $A_1\subseteq \R$  and $A_2 \subseteq [0,\infty)$, the relation \eqref{483} holds. This implies that
\begin{align*}
M(dy  \ dz) = e^{-\sqrt{2d}y}dy \otimes \nu(dz).
\end{align*} 
Finally, by \eqref{482},  $\nu((0,\infty)) \in (0,\infty)$ almost surely.

\end{proof}

{
\begin{remark}\label{spatial}The reader might notice that a similar style of argument may be used to analyze the first and the second coordinates in the point process \eqref{08} and conclude that any subsequential limit (w.r.t. vague topology) of the point process
\begin{align*}
\sum_{v\in C_{N,r_N} }   \delta_{v/N}  (dx) \otimes  \delta_{\pv - m_N} (dy)
\end{align*}
has the law $\text{PPP}(\mathcal{Z}(dx) \otimes e^{-\sqrt{2d}y}dy )$ for some random Borel measure $\mathcal{Z}(dx)$ on $[0,1]^d$.
%  \begin{align} \label{spatial1}
%\lim_{N\ri}  \sum_{v\in C_{N,r_N} }   \delta_{v/N}  (dx) \otimes  \delta_{\pv - m_N} (dy) = \text{PPP}(\mathcal{Z}(dx) \otimes e^{-\sqrt{2d}y}dy )\quad \text{in law}
%  \end{align} 
%   there exists a random Borel measure $\mathcal{Z}(dx)$ on $[0,1]^d$ such that
  Indeed, the argument in Section \ref{section 4} goes through to prove a corresponding invariance principle and further, the argument in this section can be used to deduce such characterization in terms of  the Cox process. Moreover, as in \cite{drz}, under additional conditions on the covariance structure (ensuring convergence of the covariance structure microscopically near the diagonal and macroscopically off-diagonally) which ensures a precise characterization of the limiting law of the centered maximum, one could prove that the random measure $\mathcal{Z}(dx)$ is uniquely characterized in law, which together with tightness implies the full convergence
    \begin{align} \label{spatial1}
\lim_{N\ri}  \sum_{v\in C_{N,r_N} }   \delta_{v/N}  (dx) \otimes  \delta_{\pv - m_N} (dy) = \text{PPP}(\mathcal{Z}(dx) \otimes e^{-\sqrt{2d}y}dy )\quad \text{in law}.
  \end{align} 
In addition, the random measure $\mathcal{Z}(dx)$ coincides with the derivative martingale, exactly as in the case of the maximum.
\end{remark}
}

\section{Proof of the main theorem} \label{section main theorem}
Having Theorem \ref{theorem 4.1} at hand, we first prove  the convergence result in Theorem \ref{theorem 1.1} for the sequence of cluster weights 
around $r_N$-local extrema points, i.e.   
\begin{align*}
S_{v,r_N/2}:= \sum_{u\in B(v,r_N/2)} e^{\beta(\pu-m_N)}
\end{align*} 
normalized by their sum, say $S.$ (Note the distinction between the above notation and $\bar{S}_{v,r}$ from \eqref{sbar}). The advantage of normalizing by $S$ is that it is a continuous function of the cluster weights and hence Theorem \ref{theorem 4.1} along with a continuous mapping theorem application yields the convergence. To upgrade to the full theorem, it then remains to show that $S$ is close to the full partition function $\sum_{u\in V_N } e^{\beta(\pu-m_N)}$ which is achieved by an application of  Lemma \ref{lemma 7.7}.

\begin{proof}[Proof of Theorem \ref{theorem 1.1}]
$\empty$\\
\textbf{Step 1. Normalized cluster weights converges to PD.} 
We need to set up some further notations. First, let $\{X_i\}_{i\in \mathbb{N}}$ and $\{Y_{N,v}\}_{v\in V_N}$ (for each $N$) denote i.i.d. $\text{Unif}[0,1]$ (uniform distribution on $[0,1]$) independent of everything else. Next, let $\{q_i\}_{i\in \mathbb{N}}$ denote, in decreasing order, the sample points of the Poisson point process on $[0,\infty)$ with intensity measure $x^{-1-\beta_c/\beta}dx$  independent of everything else (Observe that the intensity measure is integrable on $[\e,\infty)$ for any positive $\e$ and consequently, almost surely, the  point process has only finitely many points in $[\e, \infty)$).  We next proceed to showing that as $N\ri$, we have the following convergence of the normalized weights.
\begin{align} \label{101}
 \sum_{v\in C_{N,r_N}} \Big( \frac{S_{v,r_N/2}}{\sum_{u\in C_{N,r_N}} S_{u,r_N/2} } \Big) \delta_{Y_{N,v}} \rightarrow \sum_{i\in \mathbb{N}} \Big(\frac{q_i}{ \sum_{j\in \mathbb{N}} q_j}\Big) \delta_{X_i} \quad \text{in law}
\end{align}
(w.r.t. weak topology on probability measures on $[0,1]$).

We first claim that for any subsequence $\{a_k\}_{k\geq 1}$  in $\mathbb{N}$, there exists a further subsequence $\{b_k\}_{k\geq 1}$ along which \eqref{101} holds. To accomplish this, we analyze the un-normalized weights
\begin{align} \label{120}
 \sum_{v\in C_{N,r_N}}  S_{v,r_N/2}\delta_{Y_{N,v}} .
\end{align}
Recalling $\eta_{a_k,r_{a_k}}$ from
\eqref{point process}, by Theorem \ref{theorem 3.0}, for any continuous and compactly supported function $f: \R \times [0,\infty) \rightarrow  \R,$ the sequence  of random variables $\{ \langle \eta_{a_k,r_{a_k}},f\rangle\}_{k\geq 1}$ is tight. {To see this, assume that the first coordinate of $f$ is supported on $[-K,K]$.     For any $\delta>0$, take $t_0>K$ large enough so that $e^{- \sqrt{2d} t_0/8} < \delta$. Theorem \ref{theorem 3.0} says that  for  large enough $t\geq t_0$, for  sufficiently large $N$,
$
  \mathbb{P}(|\Gamma_N(t) |  \geq  e^{2\sqrt{2d} t}) \leq  e^{- \sqrt{2d} t/8}<\delta.
$ This implies that for any such $t$, $\mathbb{P}(|\langle \eta_{a_k,r_{a_k}},f\rangle |   \geq  \norm{f}_\infty\cdot   e^{2\sqrt{2d} t} )  < \delta$ for large  $k$, which verifies the tightness. }

{This implies the existence of  a subsequence $\{b_k\}_{k\geq 1}$  of  $\{a_k\}_{k\geq 1}$   and a random Borel measure $\eta$  on $\R \times [0,\infty)$ such that  
$\eta_{b_k,r_{b_k}} \rightarrow \eta$ in law as $k\ri$  w.r.t.  vague topology  (see Lemma \ref{tight converge}).}
By Theorem \ref{theorem 4.1}, $\eta =  \textup{PPP}( e^{-\sqrt{2d}y}dy \otimes \nu (dz))$ 
 for some random  Borel measure $\nu$ on $[0,\infty)$ such that $\nu((0,\infty)) \in (0,\infty)$ a.s (note that $\nu$ may depend on the subsequence $\{b_k\}_{k\geq 1}$).   Then, the independence of  $\{Y_{N,v}\}_{v\in V_N}$ and the field $\phi_N$ can be used to show that  
{ \begin{align} \label{121}
\lim_{k\ri}  \sum_{v\in C_{b_k,r_{b_k}}}  \delta_{Y_{N,v}} (dx) \otimes   \delta_{\pv - m_{b_k}} (dy) \otimes \delta_ {\bar{S}_{v,r_{b_k}/2} } (dz) =   \textup{PPP}( \text{Leb} (dx) \otimes e^{-\sqrt{2d}y}dy \otimes \nu (dz))
\end{align}
in law
(w.r.t vague topology on   measures on $[0,1] \times \R \times [0, \infty)$). This is a consequence of the fact that if $\{x_i \}_{i\in \mathbb{N}}$ form the sample points in $\text{PPP}(\mu_1 )$ and $\{y_i\}_{i\in \mathbb{N}}$ are i.i.d. distributed as a probability measure $\mu_2$ independent of $\{x_i \}_{i\in \mathbb{N}}$, then $\{(x_i,y_i)\}_{i\in \mathbb{N}}$ has the same law as the  enumeration of the sample points of $\text{PPP}(\mu_1 \otimes \mu_2)$ (see Lemma \ref{poisson joint}    for details)}.

{The convergence result \eqref{121} can be used to find the limit of \eqref{120}, by applying a suitable test function  which we describe now. 
For any $M \in \mathbb{N}$, let $g_M: \R \rightarrow [0,1] $ and $h_M:[0,\infty) \rightarrow [0,1]$ be continuous functions of compact support satisfying $g_M \uparrow 1$, $h_M \uparrow 1$ as $M\to \infty$ such that}
\begin{align}
g_M (y) = 1 \  \text{for}  \ y\in  [-M,M],& \quad g_M (y) = 0   \  \text{for}  \  y\notin [ -M-1,M+1], \label{g}  \\
h_M (z) = 1 \  \text{for}  \ z\in [0,M], &\quad h_M (z) = 0   \  \text{for}  \  z\notin  [0,M+1]. \label{h}
\end{align}
Let $ f:[0,1]\rightarrow [0,\infty)$ be  any continuous function.  We aim to consider the  Laplace functional of point processes in the  convergence \eqref{121} against a continuous and compactly supported function $F_M : [0,1] \times \R \times [0,\infty) \rightarrow [0,\infty)$ defined as
\begin{align*}
F_M(x,y,z) :=f(x) \cdot e^{\beta y} g_M(y) \cdot  z h_M(z).
\end{align*} 
 Since the convergence of random measures in law  (w.r.t. vague topology) is equivalent to the convergence of the corresponding Laplace functional tested against  continuous and compactly supported functions (a formal statement appears as \cite[Theorem 4.11]{random}  {which is recorded later in Section \ref{appendix a}}),  
{ \begin{align}  \label{104}
 \lim_{k\ri}& \mathbb{E}\exp\Big(- \sum_{v\in C_{b_k,r_{b_k}}} S_{v,r_{b_k}/2}  f(Y_{b_k,v}) g_M(\phi_{b_k,v}- m_{b_k})  h_M (\bar{S}_{v,r_{b_k}/2}) \Big) \nonumber\\
 &=  \mathbb{E} \exp \Big(-\int (1-e^{-f(x)\cdot e^{\beta y} g_M(y) \cdot  z h_M(z) } )dx \otimes e^{-\sqrt{2d}y}dy \otimes \nu (dz)\Big).
 \end{align}}
 In order to take $M\ri$ above and then interchange  two limits $k\ri$ and $M\ri$,  we verify that {for any $\e>0$, there exists $M_0$ and $N_0$ such that for any $N\geq N_0$ and $M\geq M_0$,
 \begin{align} \label{100}
\Big\vert  \mathbb{E}&\exp\Big(- \sum_{v\in C_{N,r_N}} S_{v,r_N/2} f(Y_{N,v})\Big)  \nonumber  \\
&-   \mathbb{E}\exp\Big(- \sum_{v\in C_{N,r_N}} S_{v,r_N/2}   f(Y_{N,v})  g_M(\pv- m_N)  h_M (\bar{S}_{v,r_N/2}) \Big)  \Big\vert <\e.
 \end{align}}
 This can be obtained by the tightness of the centered maximum (Corollary \ref{tight}), and a uniform bound on the quantity $\bar{S}_{v,r_N/2}$ deduced in   \eqref{489}, whose detailed proof is deferred to the end of proof of this theorem (see Step 3). Assuming \eqref{100},  taking $M\ri$ in  \eqref{104}  and then interchanging two limits $k\ri$ and $M\ri$,
\begin{align*}
 \lim_{k\ri} \lim_{M\ri} & \mathbb{E}\exp\Big(- \sum_{v\in C_{b_k,r_{b_k}}} S_{v,r_{b_k}/2}  f(Y_{b_k,v}) g_M(\phi_{b_k,v}- m_{b_k})  h_M (\bar{S}_{v,r_{b_k}/2})\Big) \nonumber\\
 &= \lim_{M\ri}  \mathbb{E} \exp \Big(-\int (1-e^{-f(x) \cdot e^{\beta y}  g_M(y) \cdot  z h_M(z) } )dx \otimes e^{-\sqrt{2d}y}dy \otimes \nu (dz)\Big).
\end{align*}
Applying the monotone convergence theorem (recall that $0\leq g_M \uparrow 1$ and $0\leq h_M \uparrow 1$ as $M\ri$),  
  \begin{align} \label{1020}
 \lim_{k\ri}  & \mathbb{E}\exp\Big(- \sum_{v\in C_{b_k,r_{b_k}}} S_{v,r_{b_k}/2} f(Y_{b_k,v})\Big) \nonumber\\
 &=  \mathbb{E} \exp \Big(-\int (1-e^{-f(x)e^{\beta y}  z} )dx \otimes e^{-\sqrt{2d}y}dy \otimes \nu (dz)\Big) \nonumber \\
  &=    \mathbb{E} \exp \Big(-\beta^{-1}\int (1-e^{-f(x)tz } )dx \otimes t^{-1-\beta_c/\beta} dt \otimes  \nu (dz) \Big)  \nonumber  \\ 
&=  \mathbb{E} \exp \Big(-\beta^{-1} \int z^{\beta_c/\beta} \nu (dz)  \cdot   \int (1-e^{-f(x)s}  ) dx\otimes  s^{-1-\beta_c/\beta} ds\Big).
 \end{align}

Set $Z:= \beta^{-1} \int  z ^{\beta_c/\beta} \nu (dz)$.  We verify that $Z\in (0,\infty)$ almost surely, which essentially follows from the uniform bound (from below and above) on the quantity ${S}_{v,r_{N}/2}$. First,  the strict positivity follows from the fact that  $\nu((0,\infty)) >0$. Next, we show the finiteness of $Z$ using a similar argument as in \eqref{495}-\eqref{494}. Taking $f \equiv a$ ($a>0$ is a constant) in \eqref{1020}, after a change of variable in the second integral in RHS of \eqref{1020},
\begin{align}  \label{126}
 \lim_{k\ri}  & \mathbb{E}\exp\Big(- a\sum_{v\in C_{b_k,r_{b_k}}} S_{v,r_{b_k}/2}   \Big) =  \mathbb{E} \exp(- a^{\beta_c/\beta}cZ),
\end{align}  
where $c :=    \int_0^\infty  (1-e^{-t}  )  t^{-1-\beta_c/\beta} dt    \in (0,\infty)$. Note that for any $a>0$,
\begin{align} \label{128}
\mathbb{E}   \exp(- a^{\beta_c/\beta}cZ) \leq   \mathbb{P}(Z<\infty).
\end{align} 
We aim to lower bound LHS of \eqref{126} by upper bounding the quantity   $\sum_{v\in C_{N,r_{N}}} S_{v,r_{N}/2} $, uniformly in $N$. Under the event  $$ \{\max_{v\in V_N} \pv \leq  m_N+\lambda\}\cap \{ |\Gamma_N(\lambda)| \leq e^{2\sqrt{2d}\lambda} \} \cap E_1^\lambda$$
(whose probability tends to 1 as $N\ri$ followed by  $\lambda\ri$, due to Corollary \ref{tight}, Theorem \ref{theorem 3.0}  and Lemma \ref{lemma 7.7}), 
\begin{align*}
&\sum_{v\in C_{N,r_{N}}} S_{v,r_{N}/2}  \leq \sum_{v\in V_N} e^{\beta(\pv - m_N)} \\
&= \sum_{v\in V_N: \pv < m_N-\lambda} e^{\beta(\pv - m_N)} +  \sum_{v\in V_N: |\pv-m_N| \leq \lambda } e^{\beta(\pv - m_N)}+ \sum_{v\in V_N: \pv > m_N + \lambda } e^{\beta(\pv - m_N)} \\
&\leq   C_\beta (e^{-\tau_\beta \lfloor \lambda \rfloor} + N^{-\tau_\beta }  ) + e^{\beta \lambda} \cdot e^{2\sqrt{2d}\lambda} .
\end{align*}
This implies that for any $\e>0$, there is $K>0$ such that for sufficiently large $N$,
\begin{align*}
\mathbb{P}\Big(\sum_{v\in C_{N,r_{N}}} S_{v,r_{N}/2} >K\Big)\leq \e,
\end{align*}
which implies
\begin{align}  \label{127}
 \mathbb{E}\exp\Big(- a\sum_{v\in C_{N,r_{N}}} S_{v,r_{N}/2}   \Big) \geq e^{-aK}(1-\e).
\end{align}
By taking a small enough $a>0$ in \eqref{126}, using \eqref{128} and \eqref{127} (applied to the sequence $\{b_k\}_{k\geq 1}$), we obtain   $\mathbb{P}( Z<\infty) \geq 1-2\e.$
Since $\e>0$ is arbitrary, we deduce that $\mathbb{P}( Z<\infty)=1.$

~

Now, from \eqref{1020}, one can compute the limit of \eqref{120}. The quantity \eqref{1020} is the Laplace functional of the (weighted) point process
\begin{align*}
   Z^{\beta/\beta_c} \sum_{i\in \mathbb{N}} q_i \delta_{X_i}  
\end{align*}
against a test function $f$ (this is recorded precisely as Lemma \ref{poisson}). 
Since the convergence of the Laplace functional against continuous and compactly supported functions  is equivalent to the  convergence of the corresponding random measures in law w.r.t. vague topology,
\begin{align} \label{123}
\lim_{k\ri} \sum_{v\in C_{b_k,r_{b_k}}} S_{v,r_{b_k}/2} \delta_{Y_{N,v}} =  Z^{\beta/\beta_c} \sum_{i\in \mathbb{N}} q_i \delta_{X_i}   \quad \text{in law}
\end{align}
{(w.r.t.  vague  topology on measures on $[0,1]$).
Note that by the definition of the vague topology on the  space of measures on $[0,1]$, the map $\mu \mapsto \int_{[0,1]} \1 d\mu =  \mu ([0,1])$ is continuous. Thus, applying the continuous mapping theorem to the continuous function $\mu \mapsto \mu / \mu([0,1])$ together with  \eqref{123} (note that  the denominator, the total mass of the measure, is non-zero since $S_{v,r} >0$ ($ \forall v,r$), $Z>0$ and $\sum_{i\in \mathbb{N}}q_i>0$  almost surely),} it follows that
%recalling that  $\{p_i\}$ denotes PD($\beta_c / \beta$), 
\begin{align} \label{125}
\lim_{k\ri}  \sum_{v\in C_{b_k,r_{b_k}}} \Big( \frac{S_{v,r_{b_k}/2} }{\sum_{u\in  C_{b_k,r_{b_k}}} S_{u,r_{b_k}/2} } \Big) \delta_{Y_{N,v}}   = \sum_{i\in \mathbb{N}}  \Big(\frac{q_i}{ \sum_{j\in \mathbb{N}} q_j}\Big) \delta_{X_i} \quad \text{in law}.
\end{align}

Since every subsequence $\{a_k\}_{k\geq 1}$ in $\mathbb{N}$ is shown to have a further subsequence $\{b_k\}_{k\geq 1}$ along which \eqref{125} holds, we have the full convergence \eqref{101} (this convergence of random measures in law can be metrized by  L\'evy-Prokhorov  metric on probability measures on the set of measures equipped with vague topology which is a Polish space.)

Given the above, it remains to show that \eqref{101} can be translated to the convergence of the ordered sequence of weights:
\begin{align} \label{155}
\lim_{N\ri} \mathsf{Ord} \Big(\Big(   \frac{S_{v,r_N/2}}{\sum_{u\in C_{N,r_N}} S_{u,r_N/2} }  \Big)_{v\in C_{N,r_N}} \Big)  =  \mathsf{Ord}\Big(  \Big( \frac{q_i}{ \sum_{j\in \mathbb{N}} q_j} \Big)_{i\in \mathbb{N}} \Big) \quad \text{ in law}
\end{align}
w.r.t.   $\ell^1$-distance between sequences. This can be obtained by choosing suitable functions tested against the point measures, whose detailed proof is deferred to the end of proof (see Step 4).\\

~
The next step seeks to replace the denominator on the LHS above by the full partition function.\\

\textbf{Step 2. The normalized and Gibbs weights are close.}  
Let $S$ be the whole sum of the exponential weights
\begin{align*}
S:= \sum_{v\in V_N}e^{\beta(\pv-m_N)}.
\end{align*}
We show that for any $\iota>0$,
\begin{align} \label{103}
 \lim_{N\ri} \mathbb{P}\Big(  \ell^1 \Big( \Big(\frac{S_{v,r_N/2}}{\sum_{u\in C_{N,r_N}} S_{u,r_N/2} } \Big)_{v\in C_{N,r_N}}, \Big(\frac{S_{v,r_N/2}}{S} \Big)_{v\in C_{N,r_N}} \Big) > \iota \Big)  = 0.
\end{align}
Note that the  $\ell^1$-distance between the above two sequences is given by 
\begin{align} \label{108}
\Big (\frac{1}{\sum_{u\in C_{N,r_N}} S_{u,r_N/2}}- \frac{1}{S}\Big) \sum_{v\in C_{N,r_N}} S_{v,r_N/2} =  \frac{1}{S}\sum_{u\in B(C_{N,r_N},r_N/2)^c}e^{\beta(\pu-m_N)} ,
\end{align}
where  
\begin{align*}
B(C_{N,r_N},r_N/2) := \{u\in V_N: u\in B(v,r_N/2) \ \text{for some} \ v\in C_{N,r_N}\}.
\end{align*}
We first claim that   for any $\iota'>0$,
\begin{align} \label{105}
 \lim_{N\ri} \P\Big( \sum_{u\in B(C_{N,r_N},r_N/2)^c}e^{\beta(\pu-m_N)} > \iota'\Big) = 0.
\end{align}
To see this, note that   for any $\lambda$,
\begin{align}  \label{107}
\sum_{u\in B(C_{N,r_N},r_N/2)^c}e^{\beta(\pu-m_N)} &\leq   \sum_{u\in B(C_{N,r_N},r_N/2)^c\cap \Gamma_N(\lambda)}e^{\beta(\pu-m_N)}+\sum_{u \notin \Gamma_N(\lambda) }e^{\beta(\pu-m_N)}.
\end{align}
In order to control the first term above, similarly as in  \eqref{435}, define the event
\begin{align} \label{e2} 
 \tilde{E}_3 &:= \{|u-v| \notin [r_N/2, 4r_N],  \  \forall u,v\in \Gamma_N(\lambda)  \} .  
%E_3&:=  \{\max_{v\in V_N \backslash  V_N^\delta} \pv  <  m_N-\lambda \},\label{e3} \\
%E_3&:= \{ \max_{v\in V_N} \phi_{N,v} < m_N+\lambda\}
\end{align}
Then,  under the event $ \tilde{E}_3 $, 
\begin{align}
 \Gamma_N(\lambda)  &\subseteq B(C_{N,r_N} ,r_N/2). \label{122}
\end{align}
In fact, for any $v\in \Gamma_N(\lambda)$,  $w : = \argmax_{u\in B(v,2r_N)} \pu $ satisfies $\phi_{N,w} \geq m_N-\lambda$. Since $|w-v| \leq 2r_N$ and $v,w\in \Gamma_N(\lambda)$, under the event $ \tilde{E}_3 $,  $|w-v| \leq r_N/2$. This shows that $w$ is $r_N$-local extrema and $v\in B(w,r_N/2)$, which verifies \eqref{122}.

{Therefore, no terms are included in the first summation in \eqref{107} under the event $ \tilde{E}_3 $.  Also,  under the event $E_1^\lambda$  (defined in \eqref{event e1}), the second summation in \eqref{107} converges to 0 as $N,\lambda\ri $. Since $$  \lim_{\lambda\ri} \liminf_{N\ri} \mathbb{P}( \tilde{E}_3 \cap  E_1^\lambda ) = 1$$ 
by \eqref{meso} and Lemma  \ref{lemma 7.7}, we obtain \eqref{105}.}

Next, note that since $S \geq \sum_{u\in C_{N,r_N}} S_{u,r_N/2}  \geq e^{\beta (\max_{v\in V_N} \pv - m_N)} $, by Corollary \ref{tight} it follows that for any $\e>0$, there exists $\kappa>0$ such that
\begin{align}   \label{1050}
 \limsup_{N\ri} \P(S< \kappa )\leq  \limsup_{N\ri} \P\Big( \sum_{u\in C_{N,r_N}} S_{u,r_N/2} < \kappa \Big)< \e.
\end{align}Since
\begin{align*}
\Big\{ \frac{1}{S}\sum_{u\in B(C_{N,r_N},r_N/2)^c}e^{\beta(\pu-m_N)} >\iota \Big  \} \subseteq   \Big\{\sum_{u\in B(C_{N,r_N},r_N/2)^c}e^{\beta(\pu-m_N)} >\iota
 \kappa \Big\} \cup \{S< \kappa\},
\end{align*}
by \eqref{108}, \eqref{105} and \eqref{1050}, we obtain \eqref{103}.

Since the two sequences  $(\frac{S_{v,r_N/2}}{\sum_{u\in C_{N,r_N}} S_{u,r_N/2} } )_{v\in C_{N,r_N}}$ and $( \frac{S_{v,r_N/2}}{S} )_{v\in C_{N,r_N}} $  have the same  non-increasing ordering indexed by $v\in C_{N,r_N}$  \eqref{103} implies that
\begin{align} \label{111}
  \ell^1 \Big( \mathsf{Ord} \Big(\frac{S_{v,r_N/2}}{\sum_{u\in C_{N,r_N}} S_{u,r_N/2} } \Big)_{v\in C_{N,r_N}}, \mathsf{Ord} \Big(\frac{S_{v,r_N/2}}{S} \Big)_{v\in C_{N,r_N}} \Big)  \rightarrow 0  \quad \text{ in probability}.
\end{align}
Therefore by \eqref{155},
{\begin{align*}
\mathsf{Ord} \Big( \Big(   \frac{S_{v,r_N/2}}{S}  \Big)_{v\in C_{N,r_N}} \Big) \rightarrow  \mathsf{Ord}\Big( \Big( \frac{q_i}{ \sum_{j\in \mathbb{N}} q_j} \Big)_{i\in \mathbb{N}} \Big) \quad \text{ in law}.
\end{align*}}
Noting that
$
 \mu_N^\beta( B(v,r_N/2)) =  \frac{S_{v,r_N/2}}{S}
$
and 
$
\mathsf{Ord}\Big( \Big( \frac{q_i}{ \sum_{j\in \mathbb{N}} q_j} \Big)_{i\in \mathbb{N}} \Big)  \overset{\text{law}}{\sim}     \textup{PD}(\beta_c/\beta),
$
we establish Theorem \ref{theorem 1.1} by setting the collection of $B_i$s to be $\{ B(v,r_N/2) \}_{v\in C_{N,r_N}} $.\\

\noindent
\textbf{Step 3. Proof of \eqref{100}.} 
By the inequality \eqref{key}, it suffices to verify that 
 for any $\e>0$, there exists $N_0$ and $M_0$ such that for any $N\geq N_0$ and $M\geq M_0$,
\begin{align}
  \mathbb{P} \Big(   \sum_{v\in C_{N,r_N}} S_{v,r_N/2} f(Y_{N,v}) (1- g_M(\pv - m_N) h_M(\bar{S}_{v,r_N/2} ))  > \e \Big)   <\e.
\end{align}
By the boundedness of $f$ and  the conditions \eqref{g} and \eqref{h}, it suffices to show that for any $\e>0$, for large enough $N$ and $M$,
\begin{align} \label{119}
  \mathbb{P} \Big( \sum_{v\in C_{N,r_N}}   S_{v,r_N/2} (1- \1_{|\pv - m_N| \leq M} \1_{\bar{S}_{v,r_N/2} \leq  M}  )> \e \Big)  \leq \e.
\end{align}

First, note that for $\lambda>0$, under the event $E_1^\lambda$,
\begin{align} \label{113}
 \sum_{v\in C_{N,r_N}}   S_{v,r_N/2} \1_{v\notin \Gamma_N(\lambda)} \leq  \sum_{u\notin \Gamma_N(\lambda)} e^{\beta(\pu-m_N)} \overset{\eqref{event e1}}{\leq}  C_\beta (e^{-\tau_\beta \lfloor \lambda \rfloor} + N^{-\tau_\beta}),
\end{align}
where   in the first inequality, we used the fact that any $u\in B(v,r_N/2)$ with $ v\in C_{N,r_N} \cap  \Gamma_N(\lambda)^c$ satisfies $u\notin \Gamma_N(\lambda)$.

{Next, in order to uniformly bound  $\bar{S}_{v,r_N/2}$ from above, we recall the event $\bar{E}_3$ defined in \eqref{499}: 
\begin{align*}
\bar{E}_3:= \{  |u-v| \notin [r/2,2r_N], \  \forall u,v\in \Gamma_N(\lambda^2)\}.
\end{align*} }
 By  \eqref{480}, for any $\e>0$, there exist   $\lambda,r,N_0>0$  satisfying
 \begin{align} \label{112}
 C_\beta e^{-\tau_\beta \lfloor \lambda \rfloor}  < \e/2
\end{align}  such that   for any $N\geq N_0$, 
\begin{align}   \label{116} 
 \mathbb{P}( \{\max_{v\in V_N} \pv < m_N+\lambda\} \cap \bar{E}_3 \cap E_1^{\lambda^2} ) &\geq  1-\e .
\end{align} 
 Also,
under the event  $\bar{E}_3 \cap E_1^{\lambda^2}$,
 for any $v\in \Gamma_N(\lambda)$,
 \begin{align} \label{114}
 \bar{S}_{v,r_N/2} \overset{\eqref{489}}{ \leq}  (2r)^d + C_\beta  e^{\beta \lambda -\tau_\beta \lfloor  \lambda^2 \rfloor } +1 \leq (2r)^d + C_0
 \end{align}
for some constant $C_0>0$ depending only on $\beta$ (since the quantity $\beta \lambda -\tau_\beta \lfloor  \lambda^2 \rfloor$ is bounded from above, uniformly in $\lambda$).

Now, note that for any $M\geq \max\{\lambda, (2r)^d + C_0\}$,
 \begin{align*}
1- \1_{|\pv - m_N| \leq M} \1_{\bar{S}_{v,r_N/2} \leq  M}   \leq  \1_{\pv > m_N +\lambda}  + \1_{\pv < m_N -\lambda}  + \1_{|\pv - m_N| \leq \lambda} \1_{\bar{S}_{v,r_N/2}> (2r)^d + C_0}  .
\end{align*}
Hence,   by  \eqref{113} and \eqref{114},  {under the event  $ E_1^{\lambda}\cap  \big( \{\max_{v\in V_N} \pv < m_N+\lambda\} \cap \bar{E}_3 \cap E_1^{\lambda^2} \big)  $,} for any   $M\geq \max\{\lambda, (2r)^d + C_0\}$ and sufficiently large $N$,
\begin{align} \label{117}
 \sum_{v\in C_{N,r_N}}   S_{v,r_N/2} (1- \1_{|\pv - m_N| \leq  M} \1_{\bar{S}_{v,r_N/2} \leq  M}  ) \leq   C_\beta (e^{-\tau_\beta \lfloor \lambda \rfloor} + N^{-\tau_\beta})  \overset{\eqref{112}}{\leq} \e.
\end{align}
Combining this with Lemma \ref{lemma 7.7} and  \eqref{116}, we deduce \eqref{119}.\\

\noindent
\textbf{Step 4. Proof of \eqref{155}.} 
For $v\in C_{N,r_N}$, let 
\begin{align*}
\nu_N(v):= \frac{S_{v,r_N/2}}{\sum_{u\in C_{N,r_N}} S_{u,r_N/2} } .
\end{align*}
{Since every subspace of a separable metric space is separable, support of  the limiting point measure in \eqref{101}, a subspace of the space of probability measures on $[0,1]$ equipped with weak topology,  is also separable.}
Thus, by Skorokhod's representation theorem (see Lemma \ref{represent} for a precise statement), there is a coupling on a common probability space $(\Omega, \mathcal{F},\mathbb{P}) $ under which  the convergence \eqref{101}  holds $\mathbb{P}$-almost surely. By ordering the weights in a non-increasing order, we assume that almost surely,
\begin{align} \label{130}
\sum_{i\in \mathbb{N}} A_{N,i} \delta_{Y_{N,i}} \rightarrow \sum_{i\in \mathbb{N}} B_{i} \delta_{X_{i}}  
\end{align}
(w.r.t.   weak topology on probability measures on $[0,1]$),
where  $$(A_{N,i})_{i\in \mathbb{N}} \overset{\text{law}}{\sim}    \mathsf{Ord}((  \nu_N(v) )_{v\in C_{N,r_N}}),\quad (B_{i})_{i\in \mathbb{N}} \overset{\text{law}}{\sim}  \mathsf{Ord}\Big( \Big(\frac{q_i}{ \sum_{j\in \mathbb{N}} q_j}\Big)_{i\in \mathbb{N}}\Big) ,$$ and $\{X_i\}_{i\in \mathbb{N}}$ and $ \{Y_{N,i}\}_{i\in \mathbb{N}}$ (for each $N$) denote i.i.d.  $\text{Unif}[0,1]$.

We will then be done if we can show that $(A_{N,i})_{i\in \mathbb{N}}$ converges to $(B_{i})_{i\in \mathbb{N}}$ in $\ell_1.$

~

Towards this, we first verify that the quantity $\nu_N(v)$ for $v\in C_{N,r_N}\cap \Gamma_N(\lambda)^c$ is negligible.
Under the event $E_1^\lambda$ whose probability tends to 1 as $N\ri$ followed by $\lambda\ri$ (Lemma \ref{lemma 7.7}),
\begin{align*}
\sum _{v\in C_{N,r_N}\cap \Gamma_N(\lambda)^c}  S_{v,r_N/2}\leq \sum_{u\in \Gamma_N(\lambda)^c} e^{\beta(\pu- m_N)} \overset{\eqref{event e1}}{ \leq}  C_\beta (e^{-\tau_\beta \lfloor \lambda \rfloor} + N^{-\tau_\beta }  )  . 
\end{align*}
Thus,  combining with  \eqref{1050}, for any $\iota,\e>0$,  for large enough $\lambda$ and $N$,  
\begin{align} \label{131}
\mathbb{P}\Big( \sum _{v\in C_{N,r_N}\cap \Gamma_N(\lambda)^c}  \nu_N(v)  < \iota\Big) \geq 1-\e.
\end{align}
{Note that for any positive integer $k$, the event
 \begin{align*}
  \{ | C_{N,r_N} \cap \Gamma_N(\lambda) | \leq k\}\cap  \Big \{\sum _{v\in C_{N,r_N} \cap \Gamma_N(\lambda)^c}   \nu_N(v) < \iota\Big\} 
\end{align*}
implies that the $\ell^1$-norm of the non-increasingly ordered sequence $ \mathsf{Ord}((  \nu_N(v) )_{v\in C_{N,r_N}})$, except the first $k$ coordinates, is less than $\iota$.}
Thus, by Theorem \ref{theorem 3.0} and \eqref{131},   there exists a function $L:(0,\infty)^2 \rightarrow \mathbb{N}$ such that for sufficiently large $N$,
\begin{align} \label{137}
\mathbb{P}\Big(\sum_{i=L(\iota,\e)+1}^\infty A_{N,i} < \iota\Big)  \geq  1-2\e.
\end{align}
Let $ K(\iota,\e)\in \mathbb{N}$ be a positive integer satisfying  
\begin{align} \label{144}
\mathbb{P}\Big(\sum_{i =K(\iota,\e)+1}^\infty B_i < \iota\Big) \geq 1-\e
\end{align}
(the existence  of such $K(\iota,\e)$  is proved in Lemma \ref{ppp} later).
From now on,  we suppress the notations $\iota$ and $\e$ in the expression $L(\iota,\e)$ and  $K(\iota,\e)$.

 Recalling that $ \{Y_{N,i}\}_{i\in \mathbb{N}}$ are i.i.d. distributed as Unif[0,1],  by a union bound,
for any  $k\in \mathbb{N}$,  
\begin{align*} 
\mathbb{P}  (  | Y_{N,i}  - Y_{N,j}|\leq  \e k^{-2} , \  \exists i\neq j\in \{1,\cdots,k\} )  \leq {k \choose 2} \cdot 2\e k^{-2} \leq \e.
\end{align*}
Thus,
\begin{align} \label{133}
\mathbb{P}(  | Y_{N,i}  - Y_{N,j}| \geq \e   \max\{ L , K \}^{-2} , \  \forall i\neq j  \in \{1,\cdots,  \max\{L  , K\} \} ) \geq 1-\e,
\end{align} 
and by the same reason
\begin{align}\label{134}
\mathbb{P}(  | X_{i}  - X_{j}| \geq \e  \max\{ L , K \}^{-2} , \  \forall i\neq j  \in \{1,\cdots,    \max\{ L , K \} \} ) \geq 1-\e.
\end{align}
Therefore, {by the above discussions}, setting the event
\begin{align} \label{147}
G_{N,\iota,\e}:&= \Big\{\sum_{i=L+1}^\infty A_{N,i} < \iota\Big\}  \cap \Big\{\sum_{i =K+1}^\infty B_i < \iota\Big\}  \nonumber \\
&\cap  \{ | X_{i}  - X_{j}| \geq \e \max\{ L , K \}^{-2} , \  \forall i\neq j  \in \{1,\cdots,   \max\{ L , K \} \} \} \nonumber\\ 
&\cap   \{ | Y_{N,i}  - Y_{N,j}| \geq \e   \max\{ L , K \}^{-2} , \  \forall i\neq j  \in \{1,\cdots,  \max\{L  , K\} \} \} ,
\end{align}
for sufficiently large $N$,
\begin{align} \label{150}
\mathbb{P}(G_{N,\iota,\e}) \geq 1-5\e.
\end{align}
In addition, by Egorov's theorem, there exist an event $ \Omega_\e \subseteq \Omega$  and  $N(\iota,\e)\in \mathbb{N}$   with
\begin{align} \label{143}
\mathbb{P}(\Omega_\e) \geq  1-\e
\end{align} 
 such that
under the event $\Omega_\e$, for $N\geq N(\iota,\e)$,
\begin{align} \label{146}
d\Big(  \sum_{i\in \mathbb{N}} A_{N,i} \delta_{Y_{N,i}}    ,  \sum_{i\in \mathbb{N}} B_{i} \delta_{X_{i}} \Big) <  \min \Big\{\frac{1}{10} \e \max \{L,K\}^{-2} , \iota \Big\}=: \kappa,
\end{align}
where   $d$ denotes the L\'evy–Prokhorov metric which induces the weak topology on the space of probability measures on $[0,1]$.

Next, we show that under the event $G_{N,\iota,\e} \cap \Omega_\e $,  for  sufficiently large $N$ (depending on $\iota$ and $\e$) and  $k=1,\cdots, L$, 
\begin{align}  \label{135}
\Big\vert  \sum_{i=1}^k  A_{N, i} - \sum_{i=1}^k   B_i  \Big\vert  \leq 2\iota.
\end{align} 
Let  $\xi \in (0, \e \max\{ L , K \}^{-2}/10) $ be a constant. Such $\xi$ is chosen so that
\begin{align}\label{xi}
2(\xi+\kappa) < \e \max\{ L , K \}^{-2}.
\end{align} By \eqref{146} and the definition of   L\'evy–Prokhorov metric, under the event $G_{N,\iota,\e} \cap \Omega_\e $, for  $N\geq N(\iota,\e) $ and  $k=1,\cdots,L$, 
\begin{align*}
 \sum_{i=1}^\infty   A_{N,i} \delta_{Y_{N,i}}  (\cup_{j=1}^k (X_j - \xi-\kappa, X_j + \xi+\kappa)) \geq  \sum_{i=1}^k   B_i -  \kappa.
\end{align*}
By \eqref{xi},   intervals $(X_j - \xi-\kappa, X_j+ \xi+\kappa)$ for $j=1,\cdots,L$ are  disjoint under the event  $G_{N,\iota,\e}$ ($X_j$s are well-separated by  the definition of  $G_{N,\iota,\e}$ in \eqref{147}). Since $\sum_{i=L + 1}^\infty A_{N,i}  < \iota$, {this event along with $\Omega_\e $} implies
 \begin{align} \label{139}
 \sum_{i=1}^{ L} A_{N,i} \delta_{Y_{N,i}}  (\cup_{j=1}^k (X_j - \xi-\kappa, X_j + \xi+\kappa)) \geq  \sum_{i=1}^k   B_i -  \iota - \kappa.
\end{align}
Under the event $G_{N,\iota,\e}  $, for each $j=1,\cdots,L$, the interval $(X_j - \xi-\kappa, X_j + \xi+\kappa)$ contains at most one point  among $Y_{N,1},\cdots,Y_{N,L}$ {(this is due to the $ \e \max\{ L , K \}^{-2}$-separation property of $Y_{N,j}$s in \eqref{147}, and the condition \eqref{xi}).}
Thus, the LHS of \eqref{139} is at most the sum of $k$ (distinct) elements in $\{A_{N, i}\}_{i\in \mathbb{N}}$.  Recalling that the sequence $(A_{N, i})_{i\in \mathbb{N}}$ is non-increasing, for each $k=1,\cdots,L$, 
{\begin{align} \label{140}
 \sum_{i=1}^k  A_{N, i}  \geq  \sum_{i=1}^k   B_i - \iota-\kappa.
\end{align}}
We now derive the opposite inequality using a similar argument as above. Under the event  $G_{N,\iota,\e}$,
 intervals $(Y_{N,j} -  \xi-\kappa, Y_{N,j} + \xi+\kappa)$ for $j=1,\cdots,L$ are  mutually disjoint. Thus, by \eqref{146} again, {this event together with $\Omega_\e $ imply that}, for  $N\geq N(\iota,\e) $ and $k=1,\cdots,L$, 
\begin{align} \label{141}  
 \sum_{i=1}^k  A_{N, i} & \leq   \sum_{i=1}^\infty  B_{i} \delta_{X_{i}}   ( \cup_{j=1}^k  (Y_{N,j} - \xi-\kappa, Y_{N,j}+ \xi+\kappa))  +\kappa \nonumber \\
&\overset{ \eqref{147}}{\leq } \sum_{i=1}^{ K}  B_{i} \delta_{X_{i}}   ( \cup_{j=1}^k  (Y_{N,j} - \xi-\kappa, Y_{N,j}+ \xi+\kappa))  +\kappa+\iota.
\end{align}
Under the event $G_{N,\iota,\e}  $, for each $j=1,\cdots,L$, the interval $(Y_{N,j} - \xi-\kappa, Y_{N,j} + \xi+\kappa)$ contains at most one point  among $X_1,\cdots,X_{K}$. 
Thus, recalling that the sequence $(B_{i})_{i\in \mathbb{N}}$ is non-increasing,  for each $k=1,\cdots,L$, 
{\begin{align} \label{142}
 \sum_{i=1}^k  A_{N, i}  \leq  \sum_{i=1}^k   B_i + \kappa+ \iota.
\end{align}}
Thus,  recalling $\kappa \leq \iota$ (see \eqref{146}), \eqref{140} and \eqref{142} conclude the proof of  \eqref{135}. In particular,  by \eqref{135} with $k=L$, under the event $G_{N,\iota,\e} \cap \Omega_\e $,    recalling $\sum_{i=L + 1}^\infty A_{N,i}  < \iota$,
\begin{align} \label{149}
 \sum_{i=L+1}^\infty   B_i  \leq 3\iota.
\end{align}

 Now, we are ready to verify that for any $\delta>0$, 
\begin{align} \label{148}
\lim_{N\ri} \mathbb{P}\Big(\sum_{i=1}^\infty  |A_{N, i}  - B_i|>\delta\Big) = 0.
\end{align}
For any  $\e>0$, take  any $\iota_1 \in (0,\frac{\delta}{10})$ and then consider the corresponding $L (\iota_1,\e)$ (see its definition in \eqref{137}). Next, take any $\iota_2 \in (0,  \min\{ \iota_1, \frac{\delta}{10 L (\iota_1,\e)}\})$. Since $\iota_2 \leq \iota_1$, we can assume that  $L (\iota_1,\e) \leq L (\iota_2,\e)$.
By \eqref{135},  for sufficiently large $N$, under the event $G_{N,\iota_2,\e} \cap \Omega_\e $,  for  $k=1,\cdots, L(\iota_2,\e)$, 
 \begin{align*}
 \Big\vert  \sum_{i=1}^k  A_{N, i} - \sum_{i=1}^k   B_i  \Big\vert  \leq 2\iota_2.
 \end{align*}
 This in particular implies that  for  $k=1,\cdots, L(\iota_2,\e)$,  
  \begin{align*}
  | A_{N, k} - B_k| \leq  \Big\vert  \sum_{i=1}^k  A_{N, i} - \sum_{i=1}^k   B_i  \Big\vert  + \Big\vert  \sum_{i=1}^{k-1}  A_{N, i} - \sum_{i=1}^{k-1}   B_i  \Big\vert  \leq 4\iota_2.
 \end{align*}
Recalling $L (\iota_1,\e) \leq L (\iota_2,\e)$,  under the event $G_{N,\iota_2,\e} \cap \Omega_\e $,  
{ \begin{align} \label{151}
  \sum_{i=1}^{ L (\iota_1,\e) }  |A_{N, i}  - B_i|  \leq  4 L (\iota_1,\e) \iota  _2 .
 \end{align}}
 In addition, under the event  $G_{N,\iota_1,\e} \cap \Omega_\e$,   
 \begin{align}  \label{152}
  \sum_{i=L (\iota_1,\e)+1}^\infty  A_{N, i} \leq \iota_1,
 \end{align}
 and by \eqref{149},
 \begin{align} \label{153}
  \sum_{i=L (\iota_1,\e)+1}^\infty  B_{i} \leq 3\iota_1.
 \end{align}
Hence,  by \eqref{151}-\eqref{153}, under the event $ G_{N,\iota_1,\e} \cap G_{N,\iota_2,\e}\cap \Omega_\e$,  for sufficiently large $N$,
\begin{align*}
\sum_{i=1}^\infty  |A_{N, i}  - B_i| \leq  \sum_{i=1}^{ L (\iota_1,\e) }  |A_{N, i}  - B_i|   +    \sum_{i=L (\iota_1,\e)+1}^\infty  A_{N, i} +   \sum_{i=L (\iota_1,\e)+1}^\infty  B_{i} \leq   4 L (\iota_1,\e) \iota  _2 + 4\iota_1.
\end{align*}
By the condition on $\iota_1$ and $\iota_2$, the above quantity is at most $\delta$.  Also, by \eqref{150} and \eqref{143}, for sufficiently large $N$,
\begin{align*}
 \mathbb{P}( G_{N,\iota_1,\e} \cap G_{N,\iota_2,\e}\cap \Omega_\e) \geq 1-11\e.
\end{align*}
Hence, for sufficiently large $N$,
\begin{align*}
  \mathbb{P}\Big(\sum_{i=1}^\infty  |A_{N, i}  - B_i|>\delta\Big)  \leq 11 \e.
\end{align*}
This concludes the proof of \eqref{148}, which implies \eqref{155}.

\end{proof}

\section{Random walk estimates} \label{section 6}
In this section, we include some estimates about  the random walk bridge, branching random walk and modified branching random walk, which have featured in many of our arguments. While the section is somewhat long, all of the results are quoted either from the literature or proven by simple adaptations of known arguments.

\subsection{Random walk bridge}
Let $X_1,\cdots,X_n$ be i.i.d. standard Gaussians, and define the random walk  $S_0:=0$ and  $S_k:=X_1+\cdots+X_k$ for $k=1,\cdots,n$.  The random walk bridge is defined to be the random walk $\{S_k\}_{k=0,1,\cdots,n}$, conditioned on $S_n=0$. Note that the  law of Gaussian random walk bridge is same as that of Brownian bridge at  integer times.
For $n\in \mathbb{N}$, let $W^n=(W^n_t)_{0\leq t\leq n}$ be a   Brownian bridge, a standard Brownian motion $(B_t)_{t\geq 0}$ conditioned on $B_n=0$. 
  The following lemma, obtained in \cite[Proposition 2]{b}, lower bounds the probability that the Brownian bridge stays below a curve which is asymptotically a logarithmic function.
  
\begin{lemma}[\cite{b}, Proposition 2] \label{lemma 6.5}
For a constant $\gamma>0$, define $L_n : \{0,1,\cdots,n\} \rightarrow [0,\infty)$ to be $L_n(j) = \gamma \log ( (j \wedge (n-j)) \vee 1)$, i.e., $L_n(0)= L_n(n)=0$ and 
 \begin{align}\label{function l}
 L_n(j) = \begin{cases}
 \gamma\log j, &j=1,\cdots,\lfloor n/2 \rfloor, \\
 \gamma\log  (n-j), &j= \lfloor n/2 \rfloor+1,\cdots,n-1.
 \end{cases}
 \end{align}
Then,   there exists  a constant $c_{6}>0$  (depending on $\gamma$) such that for any $n\in \mathbb{N}$,
\begin{align*}
  \mathbb{P}(W^n_j \leq 1- L_n(j) \ \textup{for} \   j=0,1,\cdots,n  ) \geq   \mathbb{P}(W^n_t \leq 1- L_n(t) \ \textup{for} \  t\in [0,n])  \geq  \frac{c_{6}}{n}.
\end{align*}
\end{lemma}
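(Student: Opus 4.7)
The first inequality in the statement is trivial, so the task is to establish the lower bound $\mathbb{P}(W^n_t \leq 1 - L_n(t) \text{ for all } t \in [0, n]) \geq c_6 / n$. My starting point is the classical ballot identity for the Brownian bridge, $\mathbb{P}(\max_{t \in [0, n]} W^n_t \leq 1) = 1 - e^{-2/n} \asymp 2/n$; the task is to show that inserting the logarithmic dip $-L_n(t)$ into the barrier costs only a constant factor.

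The first step is to exploit the symmetry $L_n(t) = L_n(n - t)$ and decompose the bridge at the midpoint. Conditional on $W^n_{n/2} = v$, the two halves are independent Brownian bridges of length $n/2$, from $0$ to $v$ and from $v$ to $0$, and $W^n_{n/2} \sim \mathcal{N}(0, n/4)$. I will restrict to $v$ in a window of the form $[-2\sqrt{n}, -\sqrt{n}]$; the Gaussian density integrates over this window to an $\Theta(1)$ probability, and this choice of $v$ effectively steers the bridge to have negative values in the middle, which is what is required to stay below the logarithmically dipping barrier.

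For each half, say the left one, I will write the conditioned bridge as $W^n_t = \tilde B_t + (2t/n) v$, where $\tilde B$ is a centered Brownian bridge of length $n/2$ from $0$ to $0$. The barrier constraint then becomes $\tilde B_t \leq g(t) := 1 - L_n(t) - (2t/n) v$. Since $v \leq -\sqrt{n}$, the shift $-(2t/n) v \geq 2t/\sqrt{n}$ grows linearly in $t$, so $g$ dominates the constant value $1$ except in a small region near $t \sim \sqrt{n}$ where $g$ attains its minimum of order $-\log n$. The probability that $\tilde B$ stays below $g$ on $[0, n/2]$ will be estimated by a dyadic decomposition: on each dyadic block $[2^k, 2^{k+1}]$, $g$ varies by at most $\gamma \log 2$, while the local bridge fluctuations are of order $2^{k/2}$, so conditioning on $\tilde B_{2^k}$ to lie in suitable target windows makes each block stay below $g$ with probability bounded below by an absolute constant. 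A classical ballot-type estimate on the longest (innermost) dyadic block, which corresponds to a bridge of length comparable to $n$ staying below a constant-order barrier, yields the factor $1/\sqrt{n}$.

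Combining the two halves, which are independent given $v$, produces a lower bound of order $(1/\sqrt{n}) \cdot (1/\sqrt{n}) = 1/n$; integrating over $v$ in the chosen window against the $\Theta(1)$-probability Gaussian density then gives the desired $c_6 / n$ bound. \textbf{The main obstacle} is the joint Gaussian estimate showing that the dyadic values $\tilde B_{2^k}$ simultaneously land in their target windows with probability bounded below by a constant independent of $n$; this requires exploiting the near-independence of the bridge at widely separated dyadic times and carefully matching the windows to the conditional Gaussian laws induced by the pinned endpoints of $\tilde B$. The short outer intervals $[0, 1]$ and $[n-1, n]$, where $L_n \equiv 0$, need only a routine short-bridge barrier estimate and contribute an overall constant factor.
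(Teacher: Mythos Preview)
The paper does not prove Lemma~\ref{lemma 6.5}; it is quoted verbatim from Bramson \cite{b}, Proposition~2, with no argument supplied. So there is no in-paper proof to compare against, and your proposal should be judged on its own.

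Your overall architecture---split at the midpoint, condition on $W^n_{n/2}=v$ with $v\in[-2\sqrt n,-\sqrt n]$ (a $\Theta(1)$-probability window), and show each half stays below its barrier with probability $\gtrsim 1/\sqrt n$---is the right one and is essentially how Bramson's argument proceeds. The first inequality is indeed trivial.

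However, you have mislocated the source of the $1/\sqrt n$ factor. You write that it comes from ``the longest (innermost) dyadic block, which corresponds to a bridge of length comparable to $n$ staying below a constant-order barrier.'' This is not correct on two counts. First, on the innermost block $[m/2,m]$ (with $m=n/2$) the barrier $g(t)=1-L_n(t)-(2t/n)v$ is not $O(1)$: since $-(2t/n)v\ge t/\sqrt n$ and $t\ge m/2$, one has $g(t)\gtrsim\sqrt n$ there. Second, a bridge of length $\sim n$ with both endpoints $O(1)$ staying below an $O(\sqrt n)$ barrier has probability $1-e^{-\Theta(1)}$, a constant---not $1/\sqrt n$. (And if the barrier \emph{were} $O(1)$ with both endpoints $O(1)$, the ballot formula would give $O(1/n)$, not $O(1/\sqrt n)$.)

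The $1/\sqrt n$ per half actually arises from the global ballot heuristic: the half-bridge runs from $0$ (distance $1$ below the barrier) to $v\approx-\sqrt n$ (distance $\approx\sqrt n$ below the barrier $1-\gamma\log m$) over length $m=n/2$, and the ballot-type product $(1)\cdot(\sqrt n)/m\asymp 1/\sqrt n$ is what you should be aiming for. In your $\tilde B$-coordinates this is the same thing: $g(0)=1$, $g(m)\asymp\sqrt m$, and the bridge from $0$ to $0$ of length $m$ stays below $g$ with probability $\asymp g(0)g(m)/m\asymp 1/\sqrt m$. The dyadic decomposition is a legitimate way to \emph{prove} this, but the $1/\sqrt n$ does not live on any single block; it is the cumulative cost of steering $\tilde B_{2^k}$ into windows that track the dipping barrier (roughly $\tilde B_{2^k}\in[g(2^k)-c\,2^{k/2},\,g(2^k)-c'\,2^{k/2}]$) together with the final long block. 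You should also note that your claim ``$g$ varies by at most $\gamma\log 2$ on $[2^k,2^{k+1}]$'' only holds while $2^k\lesssim\sqrt n$; for larger $k$ the linear term dominates and the increment of $g$ is $\asymp 2^k/\sqrt n$, which is precisely why the inner blocks impose no real constraint.
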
 

Next, we state a lemma which upper bounds the probability that the Brownian bridge stays below a straight line at integer times.
 \begin{lemma} \label{lemma 6.3}
For any constant $\kappa>0$, there exists $C = C(\kappa)>0$ such that the following holds.
For any $a,b,y > 0$  and $n \in \mathbb{N}$ such that $a+bn > y > \kappa n$,
 \begin{align*}
 \mathbb{P}&(B_{n} \in [y,y+1], B_j \leq a+bj \ \textup{for} \   j =0, 1,\cdots,n)  \leq C   ( \max \{a, a+bn-y \} +1)^2 n^{-3/2} e^{-y^2 / 2n}.
 \end{align*}
 \end{lemma}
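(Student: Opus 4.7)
The plan is to condition on the endpoint $B_n$ and then invoke a ballot-type bound for the Gaussian bridge staying below a line. First I would decompose
\begin{align*}
\mathbb{P}(B_n \in [y,y+1],\ B_j \leq a+bj \ \forall j) = \int_y^{y+1} \frac{e^{-z^2/(2n)}}{\sqrt{2\pi n}}\, \mathbb{P}(B_j \leq a+bj \ \forall j \mid B_n = z)\, dz,
\end{align*}
and observe that for $z \in [y, y+1]$ the Gaussian density is bounded above by $(2\pi n)^{-1/2} e^{-y^2/(2n)}$, since $z \geq y > 0$ forces $z^2 \geq y^2$.

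Next, conditional on $B_n = z$, the process $(B_j)_{j=0}^n$ is a Gaussian bridge from $0$ to $z$. Setting $\tilde B_j := B_j - (j/n)\, z$ yields a bridge from $0$ to $0$, independent of $z$, and the barrier condition $B_j \leq a+bj$ translates into $\tilde B_j \leq \alpha + \gamma j$ with $\alpha := a$ and $\gamma := b - z/n$, so that $\alpha + \gamma n = a + bn - z$. On the sub-range of $z$ where this endpoint is non-positive the conditional probability vanishes, so I may restrict to $z < a+bn$, where both $\alpha > 0$ and $\alpha + \gamma n > 0$. At this point I would apply the classical ballot-type estimate for the discrete Gaussian bridge: for any $\alpha, \alpha + \gamma n > 0$,
\begin{align*}
\mathbb{P}\bigl(\tilde B_j \leq \alpha + \gamma j \text{ for all } j=0, \ldots, n\bigr) \leq \frac{C(\alpha+1)(\alpha+\gamma n+1)}{n}.
\end{align*}
The analogous continuous-time statement follows from the exact formula $\mathbb{P}(W^n_t \leq \alpha + \gamma t \ \forall t \in [0,n]) = 1 - \exp(-2\alpha(\alpha+\gamma n)/n)$, obtained via Girsanov to tilt out the slope and then reflecting across the horizontal barrier $\alpha$; the $+1$ corrections arise in passing from continuous to integer times via a standard local-CLT/discretization argument.

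Combining these two ingredients and integrating $z$ over $[y, \min\{y+1, a+bn\}]$ produces
\begin{align*}
\mathbb{P}(B_n \in [y,y+1],\ B_j \leq a+bj \ \forall j) \leq \frac{C(a+1)(a+bn-y+1)}{n^{3/2}}\, e^{-y^2/(2n)}.
\end{align*}
Finally, the elementary inequality $(u+1)(v+1) \leq (\max\{u,v\}+1)^2$ for $u, v > 0$ (a consequence of $uv \leq \max\{u,v\}^2$ together with $u+v \leq 2\max\{u,v\}$), applied with $u = a$ and $v = a+bn-y$, delivers the stated bound. The hypothesis $y > \kappa n$ plays a technical role in keeping the constants in the ballot estimate uniform by bounding the admissible slope $\gamma = b - z/n$ away from a degenerate regime, so that the final constant $C$ is allowed to depend on $\kappa$.

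The main obstacle is the ballot-type estimate above: while its continuous-time counterpart is an immediate consequence of reflection, the discrete-time version with a slanted barrier requires either a direct combinatorial/probabilistic argument (cycle lemma or a Tanaka-type path decomposition) or a careful coupling between the integer-time Gaussian bridge and its continuous interpolation paired with local-limit input. Both routes are by now standard in the branching random walk literature, but both must be set up with some care to produce constants that are uniform over the full parameter regime $(a, b, z, n)$ allowed by the hypothesis $a+bn > y > \kappa n$.
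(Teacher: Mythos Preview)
Your approach is correct and essentially identical to the paper's: condition on the endpoint, reduce to the Gaussian bridge staying below a slanted line, and apply the ballot-type estimate (stated in the paper as Lemma~6.4 and proved there via Lemmas~6.7 and~6.8). One small remark: the paper invokes $y > \kappa n$ when bounding $\mathbb{P}(B_n \in [y,y+1])$ via a Gaussian \emph{tail} estimate, whereas your direct density bound does not require it---so your attribution of $\kappa$'s role to the ballot estimate is slightly off, though this does not affect correctness.
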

In \cite[Lemma 11]{b}, the upper bound for the probability of a continuous version of the event in  Lemma \ref{lemma 6.3} (i.e.  Brownian motion stays below a straight line for all real $t\in  [t_0,t_1]$) is obtained. Though not difficult to obtain, it is also not  trivial to extend this upper  bound effectively to the discrete-time version event considered above.  We need to resort to the following related estimate which upper bounds the probability that the random walk bridge is below a straight line.
 
 \begin{lemma} \label{lemma 6.4}
There exists $C>0$ such that for any $n\in \mathbb{N}$ and $x_1,x_2>0$,
\begin{align*}
\mathbb{P}\Big(W^n_j \leq \frac{j}{n}x_1+  \frac{n-j}{n}x_2  \ \textup{for} \    j=0,1,\cdots,n\Big) \leq \frac{C (\max\{x_1,x_2\}+1)^2}{n}.
\end{align*}
 
 \end{lemma}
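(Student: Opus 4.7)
The plan is to translate the event into a Gaussian random walk entrance problem and use a reflection-plus-overshoot argument. First, by the time-reversal symmetry $W^n_{n-\cdot} \stackrel{d}{=} W^n_\cdot$, the probability is symmetric in $(x_1, x_2)$, so we may assume $x_1 \geq x_2$; moreover, if $x_1 \geq \sqrt{n}$ the bound $(\max+1)^2/n \geq 1$ is trivial, so we reduce to $0 < x_2 \leq x_1 < \sqrt{n}$. Next, I would introduce $U_j := \frac{j}{n}x_1 + \frac{n-j}{n}x_2 - W^n_j$, so that $(U_j)_{j=0}^n$ is a Gaussian random walk bridge with $U_0 = x_2$, $U_n = x_1$, and the desired probability equals $\mathbb{P}(U_j \geq 0, \ j = 0, 1, \ldots, n)$.

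Writing the bridge probability as a ratio of Gaussian random walk quantities, one obtains
\[
\mathbb{P}(U_j \geq 0 \ \forall j) \;=\; \frac{q_n(x_2, x_1)}{p_n(x_2, x_1)},
\]
where $p_n(a,b) = (2\pi n)^{-1/2} e^{-(b-a)^2/(2n)}$ is the Gaussian heat kernel and $q_n(a,b)\,db := \mathbb{P}(S_j \geq 0, \ j = 0, \ldots, n,\ S_n \in db \mid S_0 = a)$ for a Gaussian random walk $S$ with i.i.d.\ $\mathcal{N}(0,1)$ increments. Since $|x_1 - x_2| < \sqrt{n}$, the denominator satisfies $p_n(x_2, x_1) \geq e^{-1/2}/\sqrt{2\pi n}$.

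The main step is the Gaussian random walk entrance estimate $q_n(a,b) \leq C(a+1)(b+1)/n^{3/2}$ for $a, b > 0$. To prove it, I would apply the strong Markov property at $\tau = \min\{j \geq 1 : S_j \leq 0\}$ together with the symmetry of Gaussian increments to obtain
\[
q_n(a, b) \;=\; \phi_n(b - a) \;-\; \mathbb{E}\bigl[\mathbf{1}_{\tau \leq n}\, \phi_{n-\tau}(b - S_\tau)\bigr],
\]
and then compare to the continuous Brownian analogue. For continuous Brownian motion one has $S_\tau = 0$ exactly, so the corresponding difference equals $\phi_n(b-a) - \phi_n(b+a) = \phi_n(b-a)(1 - e^{-2ab/n}) \leq C\,ab/n^{3/2}$. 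For the discrete walk, the overshoot $R := -S_\tau \geq 0$ has uniformly bounded mean (classical renewal-type control for Gaussian increments), and a Taylor expansion of $\phi_{n-\tau}(b + R)$ around $R = 0$ yields a correction of order $C(a+b+1)/n^{3/2}$, giving the full bound $q_n(a,b) \leq C(a+1)(b+1)/n^{3/2}$. Combining with the denominator estimate yields $\mathbb{P}(\text{event}) \leq C(x_1+1)(x_2+1)/n \leq C(\max\{x_1,x_2\}+1)^2/n$.

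The principal obstacle is the discrete reflection step: the overshoot $R$ prevents the clean identity available for Brownian motion. Quantifying its contribution — which explains the $+1$ corrections in the stated bound — is the technical heart of the argument, and requires a careful split of $\tau$ (say into $\tau \leq n/2$ and $\tau > n/2$, exploiting time-reversal for the latter) to ensure the renewal bound on $R$ is applied with enough remaining time $n - \tau$ to keep the Gaussian densities well-controlled.
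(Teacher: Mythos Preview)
Your approach is sound and takes a genuinely different route from the paper. The paper does not attack the discrete ballot estimate directly: it majorizes the linear barrier by the constant $x=\max\{x_1,x_2\}$, then inserts a logarithmic margin $L_n(j)$ so that Lemma~\ref{lemma 6.7} (a discrete-to-continuous comparison quoted from \cite{bl2}) applies with a \emph{bounded} multiplicative correction $\prod_j(1-e^{-2\zeta(j)^2})^{-2}$, and finally invokes Bramson's continuous-time bound (Lemma~\ref{lemma 6.8}) to conclude $C(x+1)^2/n$. You instead recast the event as a Gaussian-walk bridge staying positive and aim for the entrance density bound $q_n(a,b)\le C(a+1)(b+1)n^{-3/2}$. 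Your route is self-contained and even delivers the slightly sharper product $(x_1+1)(x_2+1)$; the paper's route is shorter because it only assembles two quoted lemmas. One caveat on your sketch: the step you flag as the principal obstacle is indeed where all the work lies, and the ``Taylor expansion of $\phi_{n-\tau}(b+R)$'' formulation is vague---comparing to the Brownian identity $\phi_n(b-a)-\phi_n(b+a)$ requires controlling the random time $n-\tau$ jointly with the overshoot, not just $\mathbb{E}R$. A cleaner way to obtain $q_n(a,b)\le C(a+1)(b+1)n^{-3/2}$ is to split at $\lfloor n/2\rfloor$ via Chapman--Kolmogorov for the killed walk, use the one-sided survival bound $\mathbb{P}(\min_{j\le m}S_j>0\mid S_0=a)\le C(a+1)m^{-1/2}$ on one half, and the crude density bound $q_m(c,b)\le \phi_m(b-c)\le Cm^{-1/2}$ together with time-reversal on the other.
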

 
This then allows one to prove Lemma \ref{lemma 6.3} by affine transforming $B_n$ to get $W^n.$

 \begin{proof}[Proof of  Lemma \ref{lemma 6.3}]
The probability we aim to bound is at most
 \begin{align} \label{801}
  \mathbb{P}\Big(&B_{n} \in [y,y+1], B_j   - \frac{j}{n}B_n \leq a+bj- \frac{j}{n}y  \ \textup{for} \   j=0, 1,\cdots,n \Big) .
 \end{align}
 Note that $(B_t  - \frac{t}{n}B_n)_{t\in [0,n]}$ has the same law as the  Brownian bridge $(W_t^{n})_{t\in [0,n]}$ independent of $B_n$. Thus, the above probability can be written as
 \begin{align*}
 \mathbb{P}&(B_{n} \in [y,y+1]) \mathbb{P} \Big (W^{n}_j  \leq  a+bj- \frac{j}{n}y  \ \textup{for} \  j=0,1,\cdots,n \Big).
 \end{align*}
By the Gaussian tail estimate $\mathbb{P}(X>x)\leq \frac{C}{x}e^{-x^2/2}$ for $x>0$ ($X$ is a standard normal), recalling  the condition $y>\kappa n$, the first term above is bounded by $Cn^{-1/2} e^{-y^2/2n}$ for some $C = C(\kappa)>0$. 
Together with Lemma \ref{lemma 6.4} (with $x_1 = a+bn-y >0$ and $x_2=a>0$),  we conclude the proof.
 
 \end{proof}

We next prove Lemma \ref{lemma 6.4} by bounding the relevant probability in terms of its continuous version (i.e. Brownian bridge stays below a curve for all real $t\in [0,n]$), up to some multiplicative factor. Such an estimate was previously obtained in \cite[Lemma 4.15]{bl2}, at the price of pushing the curve that the Brownian bridge is required to stay below, upwards by a factor of two.

\begin{lemma}[\cite{bl2}, Lemma 4.15] \label{lemma 6.7}
For any non-decreasing concave function  $\zeta:[0,\infty)\rightarrow [0,\infty)$,  
\begin{align} \label{807}
\mathbb{P}&(W^n_j \leq \zeta(j \wedge (n-j)) \ \textup{for} \   j=0,1,\cdots,n) \nonumber \\
&\leq \mathbb{P}(W^n_t \leq 2\zeta(t \wedge (n-t)) \ \textup{for} \  t\in [0,n] ) \prod_{j=0}^{n}(1-e^{-2\zeta(j)^2})^{-2}.
\end{align}
 
\end{lemma}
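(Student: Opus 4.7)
The plan is to compare the discrete event with the continuous one by conditioning the Brownian bridge on its skeleton $(W^n_0, W^n_1, \ldots, W^n_n)$ at integer times. First I would introduce the tent function $\bar\zeta(t) := \zeta(t \wedge (n-t))$ and verify its concavity on $[0, n]$; this is inherited from $\zeta$ being concave and non-decreasing on $[0, \infty)$, the only nontrivial check being at the seam $t = n/2$, where the left-slope $\zeta'(n/2)$ transitions to $-\zeta'(n/2)$, giving a non-increasing jump since $\zeta'(n/2) \geq 0$. Concavity then implies that on each subinterval $[j, j+1]$, the graph of $2\bar\zeta$ lies above the chord joining its endpoints, which reduces the continuous barrier to a linear one.

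Next, conditionally on the skeleton the restriction of $W^n$ to $[j, j+1]$ is an independent Brownian bridge from $W^n_j$ to $W^n_{j+1}$, to which I would apply the classical reflection-principle (chord-barrier) formula: the conditional probability of staying below the chord equals $1 - \exp(-2(2\bar\zeta(j) - W^n_j)(2\bar\zeta(j+1) - W^n_{j+1}))$ provided both endpoints lie below the chord values. Since the discrete event forces $W^n_k \leq \bar\zeta(k)$, each gap satisfies $2\bar\zeta(k) - W^n_k \geq \bar\zeta(k)$, so the conditional probability $Q_j$ of staying below $2\bar\zeta$ on $[j,j+1]$ is at least $1 - e^{-2\bar\zeta(j)\bar\zeta(j+1)}$ on the discrete event. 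Integrating yields
\[
\mathbb{P}\bigl(W^n_t \leq 2\bar\zeta(t)\ \forall t \in [0,n]\bigr) \geq \mathbb{P}\bigl(W^n_j \leq \bar\zeta(j)\ \forall j\bigr) \cdot \prod_{j=0}^{n-1}\bigl(1 - e^{-2\bar\zeta(j)\bar\zeta(j+1)}\bigr).
\]

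The final ingredient is the elementary two-variable inequality $1 - e^{-2ab} \geq (1 - e^{-2a^2})(1 - e^{-2b^2})$ for $a,b \geq 0$, which I would prove by taking WLOG $a \leq b$, using $a^2 \leq ab$ to get $1 - e^{-2a^2} \leq 1 - e^{-2ab}$, and then multiplying by the factor $1 - e^{-2b^2} \leq 1$. Applying this with $(a,b) = (\bar\zeta(j), \bar\zeta(j+1))$, telescoping the resulting product (each interior vertex appears twice and the endpoints once), and invoking the monotonicity $\bar\zeta(j) \leq \zeta(j)$ to pass from $\bar\zeta$ to $\zeta$ in the correction factor produces the claimed bound upon dividing through.

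The hard part will be the bookkeeping that turns the pairwise product $\prod (1 - e^{-2\bar\zeta(j)\bar\zeta(j+1)})$ into the vertex-based product $\prod_{j=0}^n (1 - e^{-2\zeta(j)^2})^2$: the naive telescoping yields exponent $2$ at interior vertices but only $1$ at $j = 0, n$, and the identification $\bar\zeta(j) = \zeta(j)$ only holds for $j \leq n/2$, so one must either refine the elementary inequality near the endpoints or peel off boundary contributions separately. All other steps—concavity of $\bar\zeta$, the reflection formula, and the elementary inequality—are short and self-contained.
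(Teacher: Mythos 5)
Your strategy is exactly the one the paper credits to the original source: condition on the skeleton $(W^n_0,\ldots,W^n_n)$, analyze the independent Brownian bridges on the intervals $[j,j+1]$, use concavity of the tent function $\bar\zeta(t)=\zeta(t\wedge(n-t))$ to replace the curved continuous barrier by the chord, apply the reflection-principle formula $1-e^{-2(a-x)(b-y)}$ for the bridge-below-chord probability, and close with the elementary inequality $1-e^{-2ab}\ge(1-e^{-2a^2})(1-e^{-2b^2})$. Those pieces are all sound and yield
\[
\mathbb{P}\bigl(W^n_t\le 2\bar\zeta(t)\ \forall t\in[0,n]\bigr)\ \ge\ \mathbb{P}\bigl(W^n_j\le \bar\zeta(j)\ \forall j\bigr)\cdot\prod_{j=0}^{n}\bigl(1-e^{-2\bar\zeta(j)^2}\bigr)^{2},
\]
where the endpoint exponents ($1$ rather than $2$ at $j=0,n$) are handled by discarding a factor $\le1$, as you indicate.

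The genuine gap is the last step. Your plan to pass from $\bar\zeta$ to $\zeta$ by ``invoking the monotonicity $\bar\zeta(j)\le\zeta(j)$'' cannot work, because that inequality gives $1-e^{-2\bar\zeta(j)^2}\le 1-e^{-2\zeta(j)^2}$ and therefore $\prod_j(1-e^{-2\bar\zeta(j)^2})^2\le\prod_j(1-e^{-2\zeta(j)^2})^2$ --- the \emph{opposite} of what is needed to land on the stated correction factor $\prod_j(1-e^{-2\zeta(j)^2})^{-2}$. A concrete check confirms the needed intermediate inequality can actually fail: take $\zeta(t)=t$ and $n=4$; then $\prod_{j=1}^{n-1}(1-e^{-2\bar\zeta(j)^2})^2=(1-e^{-2})^4(1-e^{-8})^2$ is strictly smaller than $(1-e^{-2\zeta(n)^2})^2\prod_{j=1}^{n-1}(1-e^{-2\zeta(j)^2})^2$, so no amount of peeling off boundary terms rescues the argument. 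What your proof naturally produces is the correction factor $\prod_{j=0}^n\bigl(1-e^{-2\zeta(j\wedge(n-j))^2}\bigr)^{-2}$, and this is in fact what the paper uses downstream: in the proof of Lemma \ref{lemma 6.4}, the product on the right-hand side is written with $(j\wedge(n-j))\vee 1$ inside the logarithm, not $j$. So the displayed statement of Lemma \ref{lemma 6.7} contains a slip ($\zeta(j)$ should read $\zeta(j\wedge(n-j))$), and your argument --- minus the sentence invoking monotonicity --- proves the version that is actually correct and used.
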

The idea is simple. A Brownian bridge is made of a random walk bridge and unit order Brownian bridges between the integer points. Thus if  the random walk bridge stays below a curve and the unit order Brownian bridges do not oscillate too much, then the Brownian bridge stays below twice the curve.
Lemma \ref{lemma 6.7} was obtained by conditioning on  the Brownian bridge at integer times and  then analyzing  independent Brownian bridges on the intervals between two consecutive integers.

By Lemma \ref{lemma 6.7}, in order to deduce  Lemma  \ref{lemma 6.4}, it suffices to upper bound  the probability that the Brownian bridge stays below a barrier  for all real $t\in [0,n]$.  In the case when the barrier is asymptotically logarithmic, the upper bound is  obtained in the classical work of Bramson \cite[Proposition 1']{b}.

\begin{lemma}[\cite{b}, Proposition 1'] \label{lemma 6.8}
There exists    $C>0$ such that for any $x>0$ and $ n\in \mathbb{N}$,
\begin{align*}
\mathbb{P}(W^n_t <  3\cdot 2^{-3/2}  \log ( (t\wedge (n-t)   ) \vee 1) + x \ \textup{for} \   t\in [0,n] )\leq C  \frac{x^2}{n}.
\end{align*}

\end{lemma}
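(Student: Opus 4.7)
The plan is to follow Bramson's original entropic-repulsion argument. First I would exploit the symmetry of the barrier $\zeta(t):=3\cdot 2^{-3/2}\log((t\wedge(n-t))\vee 1)+x$ about $t=n/2$ together with the time-reversibility of the Brownian bridge. Conditioning on $W^n_{n/2}=y$, which has Gaussian density of variance $n/4$, decomposes the trajectory into two independent Brownian bridges, one from $0$ to $y$ on $[0,n/2]$ and one from $y$ to $0$ on $[n/2,n]$, which by reflection are identically distributed. Writing $p(y)$ for the probability that a Brownian bridge from $0$ to $y$ on $[0,T]$, where $T=n/2$, stays below $c\log(t\vee 1)+x$ throughout with $c:=3\cdot 2^{-3/2}$, the target probability reduces to
\[
\int_{\R} p(y)^2 \cdot \frac{C}{\sqrt n}\, e^{-2y^2/n}\,dy.
\]

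Next I would estimate $p(y)$ by a change-of-measure argument. Writing the half-bridge as $W_t=B_t-(t/T)(B_T-y)$ with $B$ a standard Brownian motion, a Girsanov tilt centered on the curve $c\log(t\vee 1)$ converts the logarithmic-barrier event into the event that a suitably drift-modified bridge stays below a constant of order $x+O(1)$, against a Radon-Nikodym factor that contributes a polynomial-in-$x$ constant. The critical point is that the coefficient $c=3\cdot 2^{-3/2}$ is exactly the Bramson threshold: it is the unique value for which the tilted process is essentially a Bessel-3 bridge, whose entropic repulsion from the barrier has unit order so that the classical reflection-principle bound for a Brownian bridge from $0$ to $y$ crossing a linear barrier at constant height $x$ gives $p(y)\leq C(x+1)\bigl((x-y)_{+}+O(1)\bigr)/\sqrt n$ uniformly in $y$. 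Squaring and integrating against the midpoint Gaussian density, elementary calculus then yields the stated $Cx^2/n$ bound, with the power $x^2$ coming from the product of two half-bridges each contributing an $x$-factor from the pinning at $0$.

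The main obstacle is tracking the \emph{exact} coefficient $3\cdot 2^{-3/2}$ through the tilting step. Any strictly larger coefficient would render the logarithmic barrier inactive on diffusive scales, yielding a weaker linear-in-$x$ estimate, while any strictly smaller coefficient would make the barrier binding at every dyadic scale $t_k=2^k$ and destroy the $n^{-1}$ saving (one would get $n^{-\alpha}$ with $\alpha<1$). At the critical coefficient the logarithmic slope exactly cancels against the sum of the $\log t_k$ contributions across dyadic scales $0\leq k\leq \log_2(n/2)$, which is the substantive content of Bramson's argument and the place where care is required. Once this critical estimate for $p(y)$ is in hand, the final integration against the midpoint density is straightforward.
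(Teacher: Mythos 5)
The paper does not supply a proof of this lemma; it is quoted from Bramson's memoir as Proposition~1', so there is no in-paper argument to compare against. Your midpoint-decomposition strategy is a plausible way to reconstruct such a bound, but it does not close as written because the half-bridge estimate you assert has the wrong power of $n$. For a Brownian bridge on $[0,T]$ from $0$ to $y$, the reflection principle gives $\mathbb{P}(\max_{[0,T]}W_t < a)=1-e^{-2a(a-y)/T}\leq 2a(a-y)/T$ for $a>\max(0,y)$; with $T=n/2$ and an effective constant barrier of height $a$ of order $x+O(1)$, this yields $p(y)\leq C(x+1)\bigl((x-y)_{+}+O(1)\bigr)/n$, not $/\sqrt n$. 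The $1/\sqrt n$ scaling you wrote is the one-endpoint (free Brownian motion) crossing estimate, not the two-endpoint bridge estimate. The discrepancy is fatal to the conclusion: with your $/\sqrt n$ bound, squaring and integrating against the midpoint density gives at best $\frac{C(x+1)^2}{n}\,\mathbb{E}\bigl[\bigl((x-Y)_{+}+1\bigr)^2\bigr]$ with $Y\sim\mathcal N(0,n/4)$, and that expectation is of order $n$ when $x\leq\sqrt n$, so the output is $O((x+1)^2)$ rather than $Cx^2/n$. With the corrected $p(y)\leq C(x+1)\bigl((x-y)_{+}+O(1)\bigr)/n$ the arithmetic does produce $Cx^2/n$ --- the factor of $n$ coming from $\mathbb{E}[(x-Y)_{+}^2]$ cancels exactly one of the two powers of $n$ --- so your overall plan is salvageable, but the key quantitative input was stated incorrectly, and the heuristic you attach to it (``each half contributes an $x$-factor from the pinning'') rests on that incorrect scaling.

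Two secondary points. First, the Girsanov/Bessel-$3$ step is only sketched, and it is precisely here that the substance of Bramson's argument lies: one must check that absorbing the logarithmic barrier into a constant one of height $x+O(1)$ costs only a bounded multiplicative factor, uniformly in $x$ and $n$. Second, your commentary on the role of the coefficient is inverted. Making the coefficient strictly smaller lowers the barrier, shrinks the event, and therefore can only decrease the probability, so the $Cx^2/n$ bound is preserved (indeed improved) for smaller coefficients; the delicate direction is the large-coefficient side, where the rising barrier gives the bridge genuine extra room and the uniform $Cx^2/n$ bound can fail.
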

We finish the proof of Lemma \ref{lemma 6.4} with the aid of Lemmas \ref{lemma 6.7} and \ref{lemma 6.8}.
 \begin{proof}[Proof of  Lemma \ref{lemma 6.4}]
Setting $x:= \max\{x_1,x_2\}>0$,
  \begin{align} \label{501}
 \mathbb{P}\Big(W^n_j \leq \frac{j}{n}x_1+  \frac{n-j}{n}x_2  \ \textup{for} \    j=0,1,\cdots,n\Big) & \leq   \mathbb{P} (W^n_j \leq x\ \textup{for} \    j=0,1,\cdots,n  )  \nonumber \\
 &\leq   \mathbb{P} (W^n_j \leq   L_n(j) + x +1 \  \textup{for} \    j=0,1,\cdots,n  ),
 \end{align}
 where  $L_n$ is a function  defined in \eqref{function l} with $\gamma  =   3\cdot 2^{-5/2}$. The reason of adding an additional curve $L_n$ for a barrier of the Brownian bridge is to effectively control the multiplicative factor in \eqref{807}.
By Lemma \ref{lemma 6.7} with $$\zeta(t) :=  3\cdot 2^{-5/2} \log (  t \vee 1)+x+1,$$  the probability \eqref{501} is bounded by 
\begin{align} \label{502}
  \mathbb{P}& (W^n_t \leq  3\cdot 2^{-3/2} \log ( ( t \wedge (n-t)) \vee 1)+2x+2  \  \textup{for} \   t\in  [0,n])  \prod_{j=0}^{n}(1-e^{-2 (3\cdot 2^{-5/2} \log ( (j \wedge (n-j))\vee 1 )+x+1)^2})^{-2}  \nonumber \\
  &\leq C  \mathbb{P} (W^n_t \leq  3\cdot 2^{-3/2} \log ( ( t \wedge (n-t)) \vee 1)+2x+2  \  \textup{for} \   t\in  [0,n]) .
\end{align} 
Here, the inequality follows from
\begin{align*}
 \prod_{j=0}^{n}(1-e^{-2 (3\cdot 2^{-5/2} \log ( (j \wedge (n-j))\vee 1 )+x+1)^2})^{-2} &\leq C \prod_{j=2}^{ \lfloor n/2 \rfloor }(1-e^{-2 (3\cdot 2^{-5/2} \log  j )^2})^{-2}  \\
 &\leq  C \exp\Big(c'\sum_{j=2}^{ \lfloor n/2 \rfloor }    e^{-2 (3\cdot 2^{-5/2} \log  j )^2} \Big) \leq C,
\end{align*}  
where we used the fact that there is a constant $c'>0$ such that  $(1-r)^{-2} \leq 1+c'r \leq e^{c'r}$ for any $r \in (0,0.99)$.
 By Lemma \ref{lemma 6.8}, the probability in \eqref{502} is bounded by 
$C\frac{(x+1)^2}{n}$.
 This concludes the proof.

 \end{proof}

We conclude this subsection by stating, for the record, the invariance of Brownian bridge  under a  linear shift.
Denote by
 $p(t_1,x_1 ; \cdots ; t_k,x_k )$ the joint density  of the Brownian motion at points $x_1,\cdots,x_k$ and times $t_1,\cdots,t_k$, and denote by $p(t_1,x_1 ; \cdots ; t_k,x_k \mid s,y)$ the conditional joint density, conditioned on $B_s=y$. Then, 
\begin{align}\label{bridge}
p(t_1,x_1 ; \cdots ; t_k,x_k  \mid n,0) = p\Big(t_1,x_1+\frac{t_1}{n}y ; \cdots ; t_k,x_k+\frac{t_k}{n}y   \mid  n,y\Big). 
\end{align}

\subsection{Branching random walk}

Recall that $ \varphi_N = (\varphi_{N,v})_{v\in V_N}$ denotes the  $d$-dimensional BRW after $n$ generations, where $N=2^n$.
In this subsection, we  establish a sharp upper bound on the expectation of the sum of  $\ell$ largest values of the BRW, {which is crucially used in the proof of  Proposition \ref{prop 3.6}}.
{Similarly as in \eqref{sum},}
for $N,\ell \in \mathbb{N}$,
 define  $  S^{\text{BRW}}_{N,\ell} $ by
 \begin{align*} 
 S^{\text{BRW}}_{N,\ell}: = \max\Big\{\sum_{v\in I}\varphi_{N,v} : I \subseteq V_N, |I|=\ell\Big\}
 \end{align*}  
 (we set $ S^{\text{BRW}}_{N,\ell}:  = -\infty$ when $\ell>N$).

\begin{lemma} \label{lemma 6.2}
For any constant $\lambda < \frac{1}{\sqrt{2d}}$, for $N\geq 1$ and sufficiently large  $\ell$,
\begin{align} \label{611}
\mathbb{E}  S^{\textup{BRW}}_{N,\ell} \leq \ell(m_N-\lambda \log \ell).
\end{align}
\end{lemma}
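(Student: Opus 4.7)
My plan is to control the expected order statistics of the BRW individually and then sum them. Write $X_{(1)} \geq X_{(2)} \geq \cdots \geq X_{(N^d)}$ for the non-increasing reordering of $\{\varphi_{N,v}\}_{v \in V_N}$, so that $S^{\text{BRW}}_{N,\ell} = \sum_{k=1}^{\ell} X_{(k)}$. The goal reduces to establishing the uniform bound
\[
\mathbb{E} X_{(k)} \leq m_N - \frac{1}{\sqrt{2d}} \log k + C
\]
for a constant $C > 0$ independent of $N$ and $k$. Summing over $k = 1, \ldots, \ell$ and applying Stirling's formula $\sum_{k=1}^{\ell} \log k = \ell \log \ell - \ell + O(\log \ell)$ yields $\mathbb{E} S^{\text{BRW}}_{N,\ell} \leq \ell m_N - \frac{1}{\sqrt{2d}} \ell \log \ell + C' \ell$, and since $\lambda < \frac{1}{\sqrt{2d}}$, the $O(\ell)$ correction is absorbed into $(\frac{1}{\sqrt{2d}} - \lambda) \ell \log \ell$ once $\ell$ is large enough (in a way depending on $\lambda$ but not on $N$), giving the desired $\ell(m_N - \lambda \log \ell)$.

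The heart of the proof is the Bramson-type tail estimate
\[
\mathbb{P}\Big(X_{(k)} > m_N - \tfrac{1}{\sqrt{2d}} \log k + s\Big) \leq C(1+s) e^{-\sqrt{2d} s}, \qquad s \geq 0,
\]
valid uniformly in $1 \leq k \leq N^d$ and $N \geq 1$. For $k = 1$ this is the classical tail bound for the maximum of a BRW (a counterpart of Lemma \ref{lemma 2.4}). For general $k$ the approach is a truncated first-moment argument on the level set $\{v : \varphi_{N,v} > m_N - \frac{1}{\sqrt{2d}} \log k + s\}$: view the ancestor sequence $X_j(v) := \sum_{i=j}^{n-1} g_{i, B_i^*(v)}$ as a Gaussian random walk of length $n$ terminating at $X_0 = \varphi_{N,v}$, and restrict attention to those vertices whose path lies below a linearly-interpolated barrier with the logarithmic boundary correction of Lemma \ref{lemma 6.3}. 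The random-walk-bridge estimate in Lemma \ref{lemma 6.3}, applied with the endpoint at height $\approx m_N - \frac{1}{\sqrt{2d}} \log k + s$, yields
\[
\mathbb{E}\big|\{v : \varphi_{N,v} > m_N - \tfrac{1}{\sqrt{2d}} \log k + s,\ \text{barrier satisfied}\}\big| \leq C(1+s) k e^{-\sqrt{2d} s},
\]
where the crucial factor of $k$ arises from the shifted threshold, and the barrier-violation event has negligible total contribution by standard ballot-problem considerations. Markov's inequality then converts this first-moment bound into the claimed tail bound for $X_{(k)}$.

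Once the tail bound is in place, integration gives $\mathbb{E}(X_{(k)} - (m_N - \tfrac{1}{\sqrt{2d}} \log k))_+ \leq \int_0^\infty C(1+s) e^{-\sqrt{2d} s} ds < \infty$, and the matching lower-tail control (needed to bound the negative part of $X_{(k)}$) is obtained by partitioning $V_N$ into $k$ disjoint subboxes of size $\lfloor N/k^{1/d} \rfloor$ and applying the subbox max estimate of Lemma \ref{lemma 2.8}, which forces $X_{(k)} \geq m_N - \frac{1}{\sqrt{2d}} \log k - O(1)$ with exponentially small failure probability. The main obstacle is the uniform-in-$k$ Bramson-type tail bound in the second paragraph: it extends the classical barrier argument for the maximum of BRW, where one constrains a single ancestor path, to a first-moment estimate that tracks $k$ simultaneous near-maximal vertices, and getting the prefactor $(1+s)$ rather than something growing in $\log k$ requires careful use of the ballot-type constraint so that the $k$-dependence of the first moment is precisely absorbed by the $\frac{1}{\sqrt{2d}} \log k$ shift in the threshold.
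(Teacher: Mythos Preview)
Your overall strategy of bounding each $\mathbb{E} X_{(k)}$ and summing is legitimate, and the lower-tail discussion is in fact unnecessary for an \emph{upper} bound on $\mathbb{E} X_{(k)}$ (since $\mathbb{E} X_{(k)} \leq a + \mathbb{E}(X_{(k)}-a)_+$ for any $a$; incidentally Lemma~\ref{lemma 2.8} is stated for $\phi_N$ and $\theta_N$, not the BRW). The genuine gap is the claimed Bramson tail $\mathbb{P}(X_{(k)}>m_N-\tfrac{1}{\sqrt{2d}}\log k+s)\le C(1+s)e^{-\sqrt{2d}s}$ with prefactor \emph{uniform in $k$}. Your barrier sketch does not deliver it. With the $k$-independent barrier at slope $m_N/n$, Lemma~\ref{lemma 6.3} gives a ballot factor $(\tfrac{1}{\sqrt{2d}}\log k - s + O(1))^2/n$, hence a $(\log k)^2$ prefactor after Markov. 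With a $k$-dependent barrier at slope $(m_N-\tfrac{1}{\sqrt{2d}}\log k)/n$, the first moment of ``good'' vertices is indeed $\asymp(1+s)^2 k\,e^{-\sqrt{2d}s}$, but then the decomposition $\{X_{(k)}>y\}\subset\{|\text{good}|\ge k\}\cup\{|\text{bad}|\ge 1\}$ is useless: the first moment of ``bad'' vertices (those with $\varphi_{N,v}>y$ whose path violates this lower barrier) is $\asymp nk\,e^{-\sqrt{2d}s}$, so neither $\mathbb{P}(|\text{bad}|\ge 1)$ nor $\mathbb{P}(|\text{bad}|\ge k/2)$ is small uniformly in $N$. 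The curved-barrier device that handles barrier violation for the \emph{maximum} does not transfer here, because for the maximum one may over-count by dropping the condition that the leaf actually exceeds the threshold; for $X_{(k)}$ one cannot afford this.

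The paper avoids order statistics altogether. It bounds the tail of $S^{\textup{BRW}}_{N,\ell}$ directly by combining the level-set estimate Lemma~\ref{lemma 6.1} (itself proved not by a barrier but by a bootstrap: augment the tree by $r\asymp(\log\ell)^2$ generations and reduce to the max tail of Lemma~\ref{lemma 6.0}) with the max tail, to get $\mathbb{P}\bigl(S^{\textup{BRW}}_{N,\ell}\ge\ell(m_N-\tfrac{1-\varepsilon}{(1+\varepsilon)\sqrt{2d}}\log\ell)\bigr)\le C\varepsilon^{-1}\ell^{-\varepsilon/4}$, and then integrates in $\varepsilon$. Your route can be salvaged along the same lines: applying Lemma~\ref{lemma 6.1} (more precisely, the explicit bound in its proof) to $\{X_{(k)}>m_N-\tfrac{1}{\sqrt{2d}}\log k+s\}=\{|\Gamma^{\textup{BRW}}_N(\tfrac{1}{\sqrt{2d}}\log k - s)|\ge k\}$ gives $\le C(1+s)(\log k)^3 e^{-\sqrt{2d}s}$ for $s\gtrsim\log\log k$, which integrates (together with the trivial bound for $s\lesssim\log\log k$ and the max tail for $s>\tfrac{1}{\sqrt{2d}}\log k$) to $\mathbb{E}(X_{(k)}-(m_N-\tfrac{1}{\sqrt{2d}}\log k))_+=O(\log\log k)$. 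Summed over $k\le\ell$ this contributes $O(\ell\log\log\ell)=o(\ell\log\ell)$, and the slack $\tfrac{1}{\sqrt{2d}}-\lambda$ absorbs it.
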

{To deduce this lemma, we make use of a tail estimate for the maximum and the cardinality bound for level sets, recorded in the following lemmas.  We first state an almost matching lower and upper bound on the right tail of the maximum of BRW.}

\begin{lemma}[Lemma 3.2 in \cite{dz}] \label{lemma 6.0}
 There exist  constants $C_1,C_2>0$ such that for   any $t\geq 0$,
\begin{align} \label{600}
\mathbb{P}(\max_{v\in V_N}   \varphi_{N,v} \geq m_N + t)\leq C_1(1+t)e^{-\sqrt{2d}t},
\end{align}
and for any $0\leq t\leq \sqrt{n} $,
\begin{align} \label{601}
\mathbb{P}(\max_{v\in V_N}   \varphi_{N,v} \geq m_N + t)\geq  C_2 e^{-\sqrt{2d}t}.
\end{align}
\end{lemma}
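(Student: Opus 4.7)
Both bounds are classical tail estimates for the maximum of BRW, going back to Bramson's treatment of BBM, and my plan is to run the truncated first/second-moment method on the dyadic tree. Write $N=2^n$ and for each vertex $v\in V_N$ label the $n$ i.i.d.\ centered Gaussian increments of variance $\log 2$ along its root-to-leaf path as $g_0^v,\ldots,g_{n-1}^v$, with partial sums $S^v_k=\sum_{j<k}g_j^v$ so that $S^v_n=\varphi_{N,v}$. Introduce a barrier $f_n(k,t)$ that interpolates linearly between $t$ at $k=0$ and $m_N+t$ at $k=n$, with a logarithmic dip of slope $\tfrac{3}{2\sqrt{2d}}$ near the two endpoints that mirrors the log-correction in $m_N$, and define the truncated counting variable
\[
N_t := \#\big\{v\in V_N:\ \varphi_{N,v}\geq m_N+t,\ S^v_k\leq f_n(k,t)\ \text{for all }0\leq k\leq n\big\}.
\]

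For the upper bound \eqref{600}, I would write
\[
\{\max_v\varphi_{N,v}\geq m_N+t\}\subseteq \{N_t\geq 1\}\cup\bigcup_{k=1}^{n-1}\big\{\exists\,v\in V_N:\ \varphi_{N,v}\geq m_N+t,\ S^v_k>f_n(k,t)\big\},
\]
control the barrier-violation events by conditioning on the violation level $k$ and the excess above $f_n(k,t)$, then applying a Gaussian right-tail bound on the fresh remainder and a union bound over the $2^{dk}$ ancestors at level $k$, and bound $\mathbb{E}N_t$ by summing the ballot-type estimate of Lemma \ref{lemma 6.3} over the $N^d$ leaves. The key arithmetic is that by the definition of $m_N$, the Gaussian weight at height $m_N+t$ contributes $CN^{-d}(\log N)^{3/2}e^{-\sqrt{2d}t}$ and the barrier probability from Lemma \ref{lemma 6.3} contributes $C(1+t)^2 n^{-3/2}$; summing over the $N^d$ vertices cancels both the $N^{-d}$ factor and the $(\log N)^{3/2}/n^{3/2}=(\log 2)^{3/2}$ factor, giving $\mathbb{E}N_t\leq C(1+t)^2 e^{-\sqrt{2d}t}$, and a standard refinement (replacing $a=t$ with a two-sided tilt as in Bramson--Zeitouni) recovers the sharp $(1+t)$ factor.

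For the lower bound \eqref{601}, I would apply Paley--Zygmund to $N_t$. The matching first-moment lower bound $\mathbb{E}N_t\geq c(1+t)e^{-\sqrt{2d}t}$ in the range $0\leq t\leq \sqrt n$ comes from the same Gaussian arithmetic together with the matching lower bound on the barrier probability supplied by Lemma \ref{lemma 6.5}; the extra $1/n$ produced there is precisely what is cancelled by summing over the $N^d$ vertices. The substantive step is the second-moment estimate. Decomposing pairs $(v,w)$ by the generation $k$ at which their paths first separate gives
\[
\mathbb{E}N_t^2 \;=\; \sum_{k=0}^{n}2^{d(n-k)}\int\mathbb{P}\big(S_k=x,\ \text{barrier up to }k\big)\Big[\mathbb{P}_x\big(\text{reach }[m_N+t,\infty)\text{ in }n-k\text{ steps under barrier}\big)\Big]^2 dx,
\]
and the log-dip of $f_n$ near the endpoints, combined with the Gaussian density estimate, forces this sum to be bounded by $C(1+t)\,\mathbb{E}N_t$ uniformly in $t\in[0,\sqrt n]$; the restriction $t\leq \sqrt n$ enters exactly to keep the $k\approx n$ boundary term comparable with the others. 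Paley--Zygmund then gives $\mathbb{P}(N_t\geq 1)\geq (\mathbb{E}N_t)^2/\mathbb{E}N_t^2\geq c e^{-\sqrt{2d}t}$, which is \eqref{601}.

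The principal technical obstacle is the second-moment computation: one has to show that the sum over the ancestor level $k$ is dominated by its ``natural'' $k=0$ and $k=n$ boundary contributions, and this is the step where the shape of the log-correction in $f_n$ at both endpoints matters and where the restriction $t\leq\sqrt n$ is used in an essential way.
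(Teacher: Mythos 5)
The paper does not prove Lemma~\ref{lemma 6.0}: it is quoted verbatim from \cite[Lemma 3.2]{dz} (with the variance renormalized to $\log 2$), and the authors only remark that the two-dimensional argument in \cite{dz} extends to all $d$. So there is no ``paper proof'' to compare against, but your sketch can be assessed on its own merits, and it does follow the standard Bramson/Ding--Zeitouni truncated first/second moment scheme that \cite{dz} uses.

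Two points of imprecision are worth flagging, both concerning the $(1+t)$ bookkeeping. For the upper bound, your first-moment count $\E N_t$ with a barrier running from $t$ to $m_N+t+O(1)$, bounded via Lemma~\ref{lemma 6.3} or \ref{lemma 6.4}, yields $(1+t)^2$ rather than $(1+t)$, precisely because those lemmas control the ballot probability by $C(\max\{x_1,x_2\}+1)^2/n$ instead of the sharp $\asymp x_1x_2/n$. You correctly note that a refinement is needed, but the refinement is not a cosmetic re-tilt: one genuinely needs the two-endpoint ballot estimate (or the change-of-measure/entropic repulsion argument used in \cite{dz}), because the lemmas furnished in Section~\ref{section 6} of this paper are only applied to $\theta_N$ where a cruder exponent suffices. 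For the lower bound, your claim that $\E N_t \ge c(1+t)e^{-\sqrt{2d}t}$ cannot come out of Lemma~\ref{lemma 6.5}, which gives only $c_6/n$ with no $t$-dependence; the correct first-moment calibration is $\E N_t \asymp e^{-\sqrt{2d}t}$ with a barrier starting at $O(1)$, and correspondingly the second-moment target should be $\E N_t^2 \lesssim \E N_t$ (not $(1+t)\E N_t$), after which Paley--Zygmund closes the argument. Your explanation of the role of $t\le\sqrt n$ is also slightly misplaced: the constraint is already needed at the first-moment stage, to ensure that the quadratic correction $e^{-t^2/(2\log N)}$ in the Gaussian density at $m_N+t$ remains bounded below, i.e.\ that the tail really is $\asymp e^{-\sqrt{2d}t}$ rather than something strictly smaller. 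These are all fixable arithmetic issues, not strategic gaps; the overall plan — barrier event, truncated count, ballot/bridge estimates from Lemmas~\ref{lemma 6.3} and~\ref{lemma 6.5}, decomposition by branching time for the second moment, Paley--Zygmund — is exactly the route taken in \cite{dz}.
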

 (We  do not have a term $\log 2$ in the exponent in  \eqref{600} and \eqref{601} as opposed to  \cite[Lemma 3.2]{dz}, since the variance of increments of BRW is $\log 2$ in our case.)
Further, in  \cite[Lemma 3.2]{dz}, the argument is only provided   for the two-dimensional BRW with standard normal increments, {but the argument extends to any dimensions.}

Next,  we state a lemma on the cardinality of  level sets of BRW.  {Analogous to the previous notation  \eqref{level set notation},
for $z\in \R$, define  
\begin{align*}
\Gamma_N^{\text{BRW}}(z):= \{v\in V_N: \varphi_{N,v}  \geq m_N-z \}.
\end{align*}}
The two dimensional version appears as   \cite[Proposition 3.3]{dz}, and the argument readily extends to $d$-dimensions. We briefly sketch it  for completeness.

\begin{lemma} \label{lemma 6.1}

There exist  $C,c>0$ such that for any $N\geq 1$, $z\geq 0$ and $u\in \R$ such that  $z+u>c$ and $u> \frac{3}{\sqrt{2d}}\log (z+u)+2>0$,
\begin{align} \label{624}
\mathbb{P}(|\Gamma_N^{\textup{BRW}} (z)| \geq e^{\sqrt{2d}(z+u)}) \leq Ce^{-\sqrt{2d} u+C\log (z+u)}.
\end{align}
\end{lemma}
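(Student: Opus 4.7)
The plan is to establish \eqref{624} by a truncated first moment (or ``Bramson barrier'') argument, which is the standard approach and essentially the $d$-dimensional adaptation of the two-dimensional argument in \cite[Proposition 3.3]{dz}. First I would observe why a naive application of Markov's inequality is insufficient. Since each $\varphi_{N,v}$ is a centered Gaussian with variance $n \log 2 = \log N$, the Gaussian tail estimate together with the definition of $m_N$ gives
\[
\mathbb{P}(\varphi_{N,v} \geq m_N - z) \leq \frac{C \log N}{N^d} e^{\sqrt{2d}z},
\]
so summing over the $N^d$ vertices in $V_N$ yields $\mathbb{E}|\Gamma_N^{\textup{BRW}}(z)| \leq C(\log N) e^{\sqrt{2d}z}$. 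This is off by a factor of $\log N$ and therefore too weak to yield the claimed exponential tail estimate on the cardinality of the level set when combined directly with Markov's inequality.

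The remedy, which is the main body of the proof, is to restrict attention to vertices whose ancestral path obeys a suitable entropic barrier. For each $v \in V_N$ consider the Gaussian random walk $X_j(v) := \sum_{k=0}^{j-1} g_{k, \mathcal{B}^*_k(v)}$ with step variance $\log 2$, which satisfies $X_n(v) = \varphi_{N,v}$, and define the barrier event
\[
E_v := \Big\{ X_j(v) \leq \tfrac{j}{n}(m_N - z) + L_n(j) + (u+z) \ \text{for all} \ 0 \leq j \leq n \Big\},
\]
with $L_n$ as in \eqref{function l} for a suitably chosen constant $\gamma$. Applying Lemma \ref{lemma 6.3} to the random walk bridge (after rescaling by $\sqrt{\log 2}$) with $y = m_N - z$ and a straight-line barrier of slope $\tfrac{1}{n}(m_N - z)$ and intercept $u+z$, one obtains
\[
\mathbb{P}( \varphi_{N,v} \geq m_N - z, E_v) \leq C(z+u)^2 \, n^{-3/2} \, e^{-(m_N-z)^2/(2\log N)}.
\]
The crucial cancellation here is that the bridge factor $n^{-3/2} \asymp (\log N)^{-3/2}$ exactly kills the $(\log N)^{3/2}$ correction coming from $m_N = \sqrt{2d} \log N - \tfrac{3}{2\sqrt{2d}} \log \log N$. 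Summing over $v \in V_N$ yields
\[
\mathbb{E}\big|\Gamma_N^{\textup{BRW}}(z) \cap \{v : E_v \text{ holds}\}\big| \leq C(z+u)^C e^{\sqrt{2d}z},
\]
and Markov's inequality then bounds the probability that this truncated cardinality exceeds $e^{\sqrt{2d}(z+u)}$ by $C(z+u)^C e^{-\sqrt{2d}u}$.

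The remaining task, which I expect to be the main obstacle, is to show that the barrier $E_v$ is typically satisfied by every $v \in \Gamma_N^{\textup{BRW}}(z)$, i.e.,
\[
\mathbb{P}\big(\exists v \in \Gamma_N^{\textup{BRW}}(z): E_v \text{ fails}\big) \leq C(z+u)^C e^{-\sqrt{2d}u}.
\]
This is an ``entropic repulsion'' statement which I would establish by a union bound over the level $j$ at which the barrier is first violated. Exploiting the tree structure of BRW, at level $j$ there are only $2^{dj}$ distinct ancestors, and conditional on $X_j(v)$ the tail of $X_n(v) - X_j(v)$ is an independent Gaussian with variance $(n-j)\log 2$. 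Thus the joint probability at a fixed $v$ factors into (a) a Gaussian density at level $j$ for $X_j(v) > \tfrac{j}{n}(m_N - z) + L_n(j) + (u+z)$ and (b) the Gaussian tail that the remaining walk of length $n-j$ covers the deficit to reach $m_N - z$. Summing (a)$\times$(b) over the $2^{dj}$ distinct level-$j$ ancestors and over $j = 1, \ldots, n$ produces a convergent series in which the ``excess height'' $u+z$ contributes the desired factor $e^{-\sqrt{2d}(u+z)}$, the logarithmic slack from $L_n(j)$ contributes a polynomial $(z+u)^C$, and the combinatorial factor $2^{dj}$ is absorbed by the variance gain of the Gaussian at level $j$. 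Combining this bound with the truncated Markov estimate from the previous paragraph via a union bound yields \eqref{624}.
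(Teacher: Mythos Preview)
Your barrier/truncated-first-moment route is a legitimate alternative, but it is \emph{not} the argument the paper gives (nor, since the paper states it is sketching the argument of \cite[Proposition 3.3]{dz}, is it the one in \cite{dz}). The paper's proof is a short bootstrapping: set $r = \lfloor 2(z+u)^2 \rfloor$ and observe that if $|\Gamma_N^{\textup{BRW}}(z)| \geq L := \lceil e^{\sqrt{2d}(z+u)}\rceil$, then growing the tree by $r$ further generations yields $L$ independent sub-BRWs of depth $r$ rooted at the $L$ high vertices; by the lower tail \eqref{601}, with uniformly positive probability at least one of these sub-BRWs pushes $\max \varphi_{2^rN}$ above $m_{2^rN} + u - \tfrac{3}{2\sqrt{2d}}\log r$, and then the upper tail \eqref{600} bounds that probability from above. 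Rearranging gives \eqref{624}. This takes about fifteen lines and uses only Lemma \ref{lemma 6.0}.

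Your approach has a genuine gap in the entropic-repulsion step. First a bookkeeping point: with $X_j(v)=\sum_{k=0}^{j-1} g_{k,\mathcal B_k^*(v)}$ you are summing from the leaves upward, so every $v$ gives a distinct $X_j(v)$ and there are $N^d$ of them, not $2^{dj}$; for the tree structure to help you must index from the root, $Y_j(v)=\sum_{k=n-j}^{n-1} g_{k,\mathcal B_k^*(v)}$. More substantively, once you sum over the $2^{dj}$ level-$j$ ancestors, your factor (b) must be the probability that \emph{some} leaf descendant of a high ancestor reaches $m_N-z$, i.e.\ the tail of the maximum of a depth-$(n-j)$ sub-BRW, not the tail of a single Gaussian increment. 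If instead you union-bound over all $2^{d(n-j)}$ descendants (equivalently, sum the single-path (a)$\times$(b) over all $N^d$ leaves), then at middle levels $j \asymp n/2$ the barrier excess $u+z+L_n(j)$ is only $O(\log n)$ while the relevant standard deviation is $\Theta(\sqrt n)$, and the resulting contribution at that single level is already of order $(\log N)\, e^{\sqrt{2d}z}$ --- the same $\log N$ blowup you identified in the naive first moment --- so the claimed factor $e^{-\sqrt{2d}(u+z)}$ does not emerge uniformly in $N$. The correct fix is to bound (b) via Lemma \ref{lemma 6.0} applied to the sub-BRW (equivalently, bound $\mathbb P(\max_v Y_j(v)>b_j)$ level by level via Lemma \ref{lemma 6.0}); at that point the argument becomes a level-by-level refinement of the paper's bootstrapping, and is considerably longer than the paper's direct route.
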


\begin{proof}
Define $r:=   \left \lfloor{ 2(z+u)^2 }\right \rfloor  
$.
 Then, by \eqref{462} which implies $m_{2^rN} \leq m_N + \sqrt{2d}\log 2^r$,
\begin{align*}
m_{2^rN} + u - \frac{3}{2\sqrt{2d}} \log r & \leq (m_{N} - z)  +  \sqrt{2d}\log 2^r    - \frac{3}{2\sqrt{2d}} \log r + z +u \\
&=(m_{N} - z)  + ( m_{2^r} + z+u + a),
\end{align*}
{where $a$ is an explicit constant given by $a:=  - \frac{3}{2\sqrt{2d}} \log \log 2$.}
Thus, by the branching structure of BRW, for any positive integer  $L$   which will be chosen later, 
\begin{align} \label{621}
\mathbb{P}&\Big(\max_{v\in V_{2^ rN}} \varphi_{2^r N,v} \geq  m_{2^r N}+u-\frac{3}{2\sqrt{2d}} \log r\Big) \nonumber \\
&\geq \mathbb{P}(|\Gamma_N^{\text{BRW}} (z)|\geq L)(1-\mathbb{P}(\max_{v\in V_{2^r}} \varphi_{2^r,v} \leq m_{2^r}  +  z+u+a)^L) .
\end{align}
Since $ 0<  z+u+a < \sqrt{r}$  for sufficiently large $z+u$,
by {\eqref{601}  in Lemma \ref{lemma 6.0}},  
\begin{align}  \label{651}
\mathbb{P}(\max_{v\in V_{2^r}} \varphi_{2^r,v} \leq m_{2^r}  +  z+ u +a) \leq 1-C_2 e^{-\sqrt{2d}(   z+u+a)}.
\end{align}
Also, due to the condition   $u> \frac{3}{\sqrt{2d}}\log (z+u)+2\geq \frac{3}{2\sqrt{2d}} \log r$, by \eqref{600} in  Lemma \ref{lemma 6.0},
\begin{align}   \label{652}
\mathbb{P}\Big(\max_{v\in V_{2^r V_N}} \varphi_{2^r N,v} \geq  m_{2^r N}+u-\frac{3}{2\sqrt{2d}} \log r\Big) \leq  C_1 \Big(1+u -  \frac{3}{2\sqrt{2d}} \log r \Big)e^{-\sqrt{2d}(u-\frac{3}{2\sqrt{2d}} \log r)}.
\end{align}
Let us take $L=  \lceil{  e^{\sqrt{2d}(z+u)}     }  \rceil
 $ in \eqref{621}. Using the fact $(1-\frac{1}{x})^x \leq e^{-1}$ for $x>0$, \eqref{651} implies
\begin{align}  
1-\mathbb{P}(\max_{v\in V_{2^r}} \varphi_{2^r,v} \leq m_{2^r}  +  z+u+a)^L \geq 1-e^{-C_2e^{-\sqrt{2d}a} } =:a_1>0.
\end{align}
Applying this together with \eqref{652} to \eqref{621},
 \begin{align*}
 \mathbb{P}(|\Gamma_N^{\textup{BRW}} (z)| \geq e^{\sqrt{2d}(z+u)}) &\leq a_1^{-1} \cdot C_1 \Big(1+u -  \frac{3}{2\sqrt{2d}} \log r \Big) e^{-\sqrt{2d}(u-\frac{3}{2\sqrt{2d}} \log r)} \leq  C e^{-\sqrt{2d} u+C\log (z+u)},
 \end{align*}
 where the last inequality is obtained using the fact $1+u  - \frac{3}{2\sqrt{2d}} \log r \leq   r^{1/2} = e^{(\log r)/2}$ (recall  $r=   \left \lfloor{ 2(z+u)^2 }\right \rfloor  
$ and $u,z\geq 0$).
\end{proof}

Now, we prove Lemma \ref{lemma 6.2} with the aid of Lemmas \ref{lemma 6.0} and \ref{lemma 6.1}. A similar approach was indeed used  in \cite[Proof of Theorem 1.2]{dz} to deduce that for the two-dimensional BRW, there exists a constant $c>0$ such that for sufficiently large $\ell$,
\begin{align} \label{brw}
\mathbb{E}  S^{\text{BRW}}_{N,\ell} \leq \ell(m_N-c\log \ell).
\end{align}
For our purposes, we need the sharp bound in \eqref{611} which we provide the details for next.

\begin{proof}[Proof of Lemma \ref{lemma 6.2}]
Throughout the proof, we suppress the notation $\text{BRW}$ in  $ S^{\text{BRW}}_{N,\ell}$.
We claim that  there exists a constant $C_0>0$ such that the following holds: For any $\e_0>0$, for large enough $\ell$ (depending on $\e_0$), {$N\geq 1$} and  any $\e \in [\e_0,1]$,  
\begin{align} \label{626}
\mathbb{P}\Big(S_{N,\ell} \geq  \ell\Big(m_N - \frac{1-\e}{ (1+\e) \sqrt{2d}}\log \ell\Big)\Big)\leq  C_0 \frac{1}{\e \ell^{{\e/4}}}.  
\end{align}
Let $v_1,\cdots,v_\ell\in V_N$ be (random) points such that  $S_{N,\ell} = \varphi_{N,v_1}+\cdots+\varphi_{N,v_\ell}$.
Define 
the events  $A$ and $B$ by
\begin{align*}
A:= \Big\{\Big\vert \Big\{i=1,\cdots,\ell: \varphi_{N,v_i} \geq m_N - \frac{1}{(1+\e)\sqrt{2d}}\log \ell\Big\}\Big\vert < \frac{\e}{1+\e}\ell\Big\}
\end{align*}
and 
\begin{align*}
B:= \Big\{\max_{v\in V_N} \varphi_{N,v}  < m_N + \frac{\e}{(1+\e) \sqrt{2d}}\log \ell \Big\}.
\end{align*}
We claim that
\begin{align} \label{625}
A\cap B \Rightarrow S_{N,\ell} < \ell\Big(m_N - \frac{1-\e}{(1+\e) \sqrt{2d}}\log \ell\Big).
\end{align}
In fact, under the event $A$, among $v_1,\cdots,v_\ell$, there are at least $\frac{1}{1+\e}\ell$ many $v_i$s  ($i=1,\cdots,\ell$) whose values are at most $m_N - \frac{1}{(1+\e)\sqrt{2d}}\log \ell$. Thus, under the event  $A\cap B$, 
\begin{align*}
\ell \Big(m_N + \frac{\e}{(1+\e) \sqrt{2d}}\log \ell\Big) - S_{N,\ell} &>  \frac{1}{1+\e}\ell\cdot \Big (\frac{1}{(1+\e) \sqrt{2d}}\log \ell + \frac{\e}{(1+\e)\sqrt{2d}}\log \ell\Big) \\
&= \frac{1}{(1+\e) \sqrt{2d}}\ell \log \ell,
\end{align*}
which implies \eqref{625}.

By   Lemmas  \ref{lemma 6.0} and \ref{lemma 6.1} (with $z= \frac{1}{(1+\e) \sqrt{2d}}\log \ell$ and $z+u = \frac{1}{\sqrt{2d}}\log  (\frac{\e}{1+\e} \ell)$, such $z$ and $u$ satisfy the conditions in Lemma \ref{lemma 6.1}),  for sufficiently large $\ell$ (depending only on $\e_0$) and any $\e \in [\e_0,1]$,
\begin{align} \label{622}
\mathbb{P}(A^c) \leq C (\log \ell)^{C} \cdot  \frac{e^{\sqrt{2d} \frac{1}{(1+\e)\sqrt{2d}}\log \ell}}{\frac{\e}{1+\e}\ell} \leq  C \frac{1}{\e}\ell^{-{\e/4}}
\end{align}
and
\begin{align} \label{623}
\mathbb{P}(B^c) \leq  C \log \ell \cdot e^{-\sqrt{2d}\frac{\e}{(1+\e)\sqrt{2d}}\log \ell} \leq C \ell^{-{\e/4}}.
\end{align}
Thus, by \eqref{625}, \eqref{622}, and \eqref{623}, the statement \eqref{626} is proved.

Now, we conclude the proof using \eqref{626}.
Let us choose any $\lambda'\in (\lambda,\frac{1}{\sqrt{2d}})$. Then,  using \eqref{useful},
\begin{align} \label{627}
\mathbb{E}S_{N,\ell} \leq \ell(m_N - \lambda' \log \ell) + \int_{\ell(m_N - \lambda' \log \ell)}^{\ell m_N} \mathbb{P}(S_{N,\ell}>t)dt +  \int_{\ell m_N}^{\infty} \mathbb{P}(S_{N,\ell}>t)dt. 
\end{align}
Let us estimate above two integrals separately.
Take $\e_0>0$ such that $\lambda'  = \frac{1-\e_0}{\sqrt{2d}(1+\e_0)} $. Then, by \eqref{626},   for large enough $\ell$,
\begin{align} \label{628}
\int_{\ell(m_N - \lambda' \log \ell)}^{lm_N} \mathbb{P}(S_{N,\ell}>t)dt  &=  \int_{\e_0}^{1} \mathbb{P}\Big(S_{N,\ell}> \ell\Big(m_N- \frac{1-\e}{ (1+\e) \sqrt{2d}}\log \ell\Big)\Big) \frac{2}{(1+\e)^2} \frac{1}{\sqrt{2d}}\ell \log \ell d\e  \nonumber\\
&\leq  C_0 \ell\log \ell \int_{\e_0}^1  \frac{1}{\e (1+\e)^2}\ell^{-\e/4}  d\e .
\end{align}
{This implies that for any constant  $\eta >0$, for sufficiently large  $\ell$,}
  \begin{align} \label{631}
  \int_{\ell(m_N - \lambda' \log \ell)}^{\ell  m_N} \mathbb{P}(S_{N,\ell}>t)dt  <  \eta \ell \log \ell.
  \end{align}
{We assume that $\eta$ is small enough such that $\eta \in (0,\lambda'-\lambda)$.} We now estimate the second integral in \eqref{627}.
Since  $S_{N,\ell}>\ell(m_N+s)$ implies  $\max_{v\in V_N} \varphi_{N,v} \geq m_N+s$,  by Lemma \ref{lemma 6.0},
 \begin{align} \label{629}
  \int_{\ell m_N}^{\infty} \mathbb{P}(S_{N,\ell}>t)dt   \leq  \ell \int_0^\infty \mathbb{P}(\max_{v\in V_N} \varphi_{N,v} \geq m_N+s) ds <  C\ell.
 \end{align}
Thus, applying \eqref{631} and \eqref{629} to \eqref{627}, using the fact $\eta<\lambda'-\lambda$, for sufficiently large $\ell$,
\begin{align*}
\mathbb{E}S_{N,\ell}\leq   \ell(m_N - \lambda' \log \ell) + \eta \ell \log \ell +  C \ell <  \ell(m_N - \lambda \log \ell),
\end{align*}
 which concludes the proof.

\end{proof}

\subsection{Modified branching random walk}

In this subsection, we prove
Lemma \ref{lemma 3.7}, which provides a uniform lower bound on the right tail  of  $S_\ell  (\theta_N) - \ell(m_N-\frac{1}{\sqrt{2d}}\log \ell)$ for MBRW $\theta_N$.  
Proof   is based on the, by now common in this area, modified second moment argument together with random bridge estimates. The argument bears resemblance to the proof of  \cite[Proposition 5.3]{bz}, where the corresponding statement about  the centered maximum  of MBRW  was obtained.
 
 \begin{proof} [Proof of Lemma \ref{lemma 3.7}]
Recall $N=2^n$, and  throughout the proof, we use the function $L_n$ defined in \eqref{function l} (with large enough constant $\gamma>0$ chosen later). To prove the lemma, we will obtain a uniform lower bound on the cardinality of level sets of MBRW by applying the second moment method after suitable truncation.
For $x>0$ and  $t\in \R$, define the interval $I_x(t)$ by
\begin{align*}
I_x (t):=  \Big[m_N-\frac{1}{\sqrt{2d}}\log x-t,m_N-\frac{1}{\sqrt{2d}}\log x-t+1\Big].
\end{align*} 
Next, we introduce  MBRW truncated at each level: for $0\leq i\leq  j\leq n-1$,
\begin{align*}
  \theta_{N,v}(i,j) := \sum_{k=i}^j \sum_{B\in \mathcal{B}_k(v)}  g_{k,B}^N \overset{\text{law}}{\sim} \mathcal{N}(0, j-i+1).
\end{align*}
Note that $\theta_{N,v}(0,n-1)  = \theta_{N,v}$.
 For $v\in V_N$, define the event
 \begin{align*}
E_{v,x} := \cap_{j=1}^n \Big\{{\theta_{N,v}(n-j,n-1) \leq \frac{j}{n}\Big(m_N-\frac{1}{\sqrt{2d}}\log x+1\Big) -  L_n(j)+1}\Big \} \cap \{ \theta_{N,v} \in I_x(0) \},
 \end{align*}
 and  then define
 \begin{align*}
 T_{N,x}:= \sum_{v\in V_N} \1_{E_{v,x}}.
 \end{align*}
 
 The expert reader will immediately recognize that $E_{v,x}$ is the well known barrier event going back to the seminal work of Bramson \cite{b}, used to control the second moment which otherwise is known to blow up. 

 We claim that there exist constants $C_1,C_2>0$ such that for any $x>0$, for sufficiently large $N$,
 \begin{align}\label{371}
 \mathbb{E}T_{N,x} \geq  C_1x
 \end{align}
 and
 \begin{align} \label{372}
 \mathbb{E}T_{N,x}^2 \leq C_2x^2+C_2x,
 \end{align}
 which particularly implies that there exists $C_3>0$ such that  for any $x\geq 2/C_1$,
 \begin{align} \label{375}
  \mathbb{E}T_{N,x}^2 \leq C_3x^2.
\end{align}  
Indeed, if \eqref{371} and \eqref{375} hold, then by the Paley-Zygmund inequality, there exists a constant $c'>0$ such that  for any $x\geq 2/C_1$,
\begin{align} \label{377} 
\mathbb{P}\Big(T_{N,x} \geq  \frac{1}{2} C_1 x\Big) \geq c'.
\end{align} 
 For  $x\geq 2/C_1$ with $\frac{1}{2}C_1x \in \mathbb{N}$, the event $\{T_{N,x} \geq  \frac{1}{2} C_1 x\}$ implies that there exist at least $\frac{1}{2}C_1 x$  many points $v$ such that  $\theta_{N,v} \geq  m_N -\frac{1}{\sqrt{2d}}\log x$. This  particularly  implies that 
\begin{align} \label{370}
 \mathbb{P}\Big( S_{\frac{1}{2} C_1x }(\theta_N) \geq  \frac{1}{2} C_1 x \Big(m_N-\frac{1}{\sqrt{2d}}\log x \Big)\Big) \geq c'.
\end{align}
For any positive integer $\ell\geq 1$, by setting $x = \frac{2\ell}{C_1} \geq \frac{2}{C_1}$, we establish   Lemma \ref{lemma 3.7}.

From now on, we suppress the notation $x$ in $E_{v,x}$ and $T_{N,x}$. 
We first prove the first order estimate \eqref{371}. Since  $\theta_{N,v}  $ is distribued as $ \mathcal{N}(0,\log N )$,
by a standard Gaussian estimate,  
\begin{align} \label{373}
  \frac{N^d}{n} \mathbb{P}(\theta_{N,v} \in I_x(0))  \asymp x.
\end{align}
In addition, we have
\begin{align} \label{374}
\mathbb{P}(B_j \leq 1-  L_n(j), j=1,\cdots,n \mid B_n=0) \leq \frac{\mathbb{P}(E_{v})}{\mathbb{P}(\theta_{N,v} \in I_x(0))} \leq  \mathbb{P}(B_j\leq 2, j=1,\cdots,n\mid B_n=0).
\end{align}
In fact,  by  monotonicity of the  conditional probability in the conditioned value of $B_n$ and noting that $(\theta_{N,v}(n-j,n-1))_{j=1,\cdots,n} $ is distributed as a Brownian motion at integer times between 1 and $n$,
\begin{align*}
&\frac{\mathbb{P}(E_{v})}{\mathbb{P}(\theta_{N,v} \in I_x(0)) } \\
& \geq \mathbb{P}\Big(B_j \leq  \frac{j}{n}\Big(m_N-\frac{1}{\sqrt{2d}}\log x+1\Big) -  L_n(j)+1, j=1,\cdots,n \mid B_n= m_N - \frac{1}{\sqrt{2d}}\log x + 1\Big) \\
&\overset{\eqref{bridge}}{ =}\mathbb{P}(B_j\leq -  L_n(j)+1, j=1,\cdots,n \mid B_n=0) .
\end{align*} 
The upper bound of  \eqref{374} can be obtained similarly.
 By Lemma \ref{lemma 6.5} and \ref{lemma 6.4}, both LHS and RHS  in \eqref{374} are of order $\frac{1}{n}$. Thus, by \eqref{373} and \eqref{374},  
 \begin{align} \label{381}
  \mathbb{P}(E_{v}) \asymp \frac{x}{N^d}.
\end{align}  This provides the first moment estimate   $\mathbb{E}T_{N} \asymp x$ which  particularly implies \eqref{371}.

 Now, let us prove the second moment estimate \eqref{372}.
 For $u,v\in V_N$,  define
 \begin{align} \label{998}
 r(u,v): = n - \lceil  \log_2(|u-v|^{(N)}+1) \rceil 
 \end{align}
 (see Lemma \ref{lemma 2.3} for the definition of $|u-v|^{(N)}$). A crucial property of such quantity is that
 \begin{align} \label{indep}
 \{\theta_{N,v}(j,n-r(u,v)-1)\}_{0\leq j \leq n-r(u,v)-1}  \ \text{and} \  \{\theta_{N,u}(n-j,n-1)\}_{1\leq j\leq n} \  \text{are} \  \text{independent}.
\end{align}  This is because for $0\leq j\leq n-r(u,v)-1$, the set of boxes of size $2^j$ containing $u$ and $v$ (denoted by $\mathcal{B}_j(u)$ and   $\mathcal{B}_j(v)$ respectively)  are disjoint, which follows from the fact that $2^j\leq 2^{n-r(u,v) - 1} \leq  |u-v|^{(N)}$. Note that $0\leq r(u,v) \leq n$, and $r(u,v) = n$ if and only if $u= v$.

  We write
 \begin{align}\label{382}
 \mathbb{E}T_{N}^2 = 	\sum_{r(u,v)\geq n/2} \mathbb{P}(E_u\cap E_v) + 	\sum_{r(u,v)< n/2} \mathbb{P}(E_u\cap E_v).
 \end{align}
We compute the above two terms separately. Let us first analyze the first term, by decoupling two events $E_u$ and $E_v$ in an appropriate way.  
Let $ F_{u,v}$ be  the event
 \begin{align*}
 F_{u,v} :=\Big\{\theta_{N,v}(n-r(u,v),n-1)\leq \frac{r(u,v)}{n}\Big(m_N-\frac{1}{\sqrt{2d}}\log x+1\Big) -L_n(r(u,v))+1 \Big\} \cap \{ \theta_{N,v}\in I_x(0)  \}.
 \end{align*}
 The motivation behind the event $F_{u,v}$ (which holds under the event $E_v$) is that it implies   the event
 \begin{align*}
\Big\{ \theta_{N,v}(0,n-r(u,v)-1) \geq \Big(1-\frac{r(u,v)}{n}\Big)\Big(m_N-\frac{1}{\sqrt{2d}}\log x+1\Big) + L_n(r(u,v)) - 1 \Big\}
\end{align*}  
which is independent of $E_u$ (see \eqref{indep}). 
Abbreviating $r(u,v)$ to $r$, for sufficiently large $n$,
 \begin{align} \label{384}
 \mathbb{P}(E_u\cap E_v) & \leq \mathbb{P}(E_u \cap F_{u,v}) \nonumber \\
 &\leq  \mathbb{P}(E_u)\mathbb{P}\Big(\theta_{N,v}(0,n-r-1) \geq \Big(1-\frac{r}{n}\Big)\Big(m_N-\frac{1}{\sqrt{2d}}\log x+1\Big) + L_n(r) - 1\Big) \nonumber \\
 &\overset{\eqref{381}}{\leq} CxN^{-d}  \cdot  \exp\Big(-\frac{  ((1-\frac{r}{n})(m_N-\frac{1}{\sqrt{2d}}\log x+1) + \gamma\log (n-r) -1 )^2}{2 (\log 2)(n-r)}\Big) \nonumber \\
 &\leq Cx N^{-d}    \cdot 2^{-d\log_2 |u-v|^{(N)}}   e^{\frac{3}{2} \frac{n-r}{n}  \log n + \frac{n-r}{n}  \log x}  e^{-\gamma \log (n-r)\cdot  ( \sqrt{2d} -  \frac{3}{2(\log 2) \sqrt{2d}} \frac{\log n}{n} - \frac{1}{(\log 2) \sqrt{2d}} \frac{\log x}{n} )} \nonumber \\
 &\leq  Cx N^{-d}  \cdot   2^{-d\log_2 |u-v|^{(N)}}    e^{\frac{3}{2} \frac{\log n}{n}     (n-r) - \gamma \frac{\sqrt{2d}}{2}\log (n-r) }  x^{  \frac{n-r}{n} +  \frac{\gamma}{(\log 2) \sqrt{2d}} \frac{\log (n-r)}{n}}   .
 \end{align}
The derivation of the fourth line from the third line requires some computation and will be presented at the end of proof. Deducing the last line from the fourth line is straightforward, and we analyze the exponents of $e$ and $x$  in the last line.
 Since $x\mapsto \frac{\log x}{x}$ is decreasing  on $(1,\infty)$, there exists a  large enough constant $\gamma>0$ such that  for any $ 0 \leq r\leq n-1$,
 \begin{align} \label{387}
 \frac{3}{2} \frac{\log n}{n}     (n-r) - \gamma \frac{\sqrt{2d}}{2}\log  (n-r) < - 2 \log  (n-r).
\end{align} 
For such $\gamma$,  there exists a constant $r_0>0$ (depending on $\gamma$) such that for any $\frac{n}{2} \leq r\leq n-r_0$, 
\begin{align} \label{388}
 \frac{ n-r}{n} +  \frac{\gamma}{(\log 2) \sqrt{2d}} \frac{\log  (n-r)}{n} < \frac{2 (n-r)}{n} \leq 1.
\end{align} 
Applying \eqref{387} and  \eqref{388} to \eqref{384}, for any $\frac{n}{2} \leq r\leq n-r_0$, 
\begin{align} \label{389}
 \mathbb{P}(E_u\cap E_v) \leq    Cx^2 N^{-d} \cdot    2^{-d\log_2 |u-v|^{(N)}}   (n-r)^{-2} .
\end{align}
In addition, for $n-r_0 < r\leq n$,  
\begin{align} \label{380}
 \mathbb{P}(E_u\cap E_v) \leq   \mathbb{P}(E_u)  &\overset{\eqref{381}}{\leq}  C xN^{-d}.
\end{align}
Thus,  using the fact that for any $u\in V_N$, the number of points $v\in V_N$ with $|u-v|^{(N)}\in [2^k,2^{k+1}]$ is of order $2^{dk}$,  by  \eqref{389} and \eqref{380},
 \begin{align}  \label{383}
 \sum_{r(u,v)\geq  n/2}  \mathbb{P}(E_u\cap E_v) &\leq   \sum_{r=\lceil  n/2 \rceil  }^{n-r_0} Cx^2 (n-r)^{-2}  +  \sum_{r=n-r_0+1}^n   C  xN^{-d}\cdot N^d  \leq Cx^2+Cx.
 \end{align}

 Now, let us bound the second term in \eqref{382}, i.e.  $r:= r(u,v) < n/2$. We consider two cases $1\leq r<n/2$ and $r=0$. Let us first consider the case $1\leq r<n/2$. Setting 
 \begin{align} \label{390}
 \Lambda (j) :=  \frac{j}{n}\Big(m_N-\frac{1}{\sqrt{2d}}\log x+1\Big) - L_n(j)+1 , \quad j=1,\cdots,n,
\end{align}  define the event 
 \begin{align*}
  F^1_{u,v} :=\{\theta_{N,v}(n-r,n-1)\leq   \Lambda (r) \}.
 \end{align*}
{In addition, for $t\in \R$, define the event}
 \begin{align*}
  F^2_{u,v}(t) :=&  \cap_{j=r+1}^n \Big \{\theta_{N,v}(n-j,n-r-1) \leq \frac{j}{n}\Big(m_N-\frac{1}{\sqrt{2d}}\log x+1\Big) +1 -t \Big\} \\
  &\cap \{\theta_{N,v}(0,n-r-1)   \in I_x  (t) \}.
 \end{align*}
The motivation behind these events is that, as already noted in \eqref{indep},  since $\{ \theta_{N,v}(n-j,n-r-1)\}_{ r+1\leq j\leq n}$   is   independent of  $E_u$,  one can decouple the event
 $F^2_{u,v}(\theta_{N,v}( n- r,n-1) )$ 	  (which is implied by the event $E_v$) from the event $E_u$, conditionally on $\theta_{N,v}( n- r,n-1)$. 
Since  $\{ \theta_{N,v}(n-j,n-r-1)\}_{ r+1\leq j\leq n}$ is independent of  the joint law of $\theta_{N,v}( n- r,n-1) $ and   $\{ \theta_{N,u}(j,n-1)\}_{0\leq  j\leq n-1}$, we deduce that  $\{ \theta_{N,v}(n-j,n-r-1)\}_{ r+1\leq j\leq n}$  (hence the event $  F^2_{u,v}(t)$)  is conditionally independent of the event $E_u$, given $\theta_{N,v}( n- r,n-1) $. Hence, noting that  $E_v \subseteq F^1_{u,v}\cap   F^2_{u,v}( \theta_{N,v}( n- r,n-1) ) $, 
\begin{align} \label{379}
  \mathbb{P}(E_u\cap E_v) & \leq \mathbb{P}(E_u \cap F^1_{u,v}\cap   F^2_{u,v}( \theta_{N,v}( n- r,n-1) )) \nonumber \\
 &= \mathbb{E}[\mathbb{P}(E_u \cap   F^2_{u,v}( \theta_{N,v}( n- r,n-1) ) \mid \theta_{N,v}( n- r,n-1) ) \1_{ F^1_{u,v}}] \nonumber \\
  &= \mathbb{E}[\mathbb{P}(E_u  \mid \theta_{N,v}( n- r,n-1) ) \cdot  \mathbb{P}( F^2_{u,v}( \theta_{N,v}( n- r,n-1) )  \mid \theta_{N,v}( n- r,n-1) ) \1_{ F^1_{u,v}}]  \nonumber \\
  &\leq  \mathbb{E}[\mathbb{P}(E_u   \mid  \theta_{N,v}( n- r,n-1) ) \cdot   \max_{t\leq  \Lambda (r)} \mathbb{P}(F^2_{u,v}(t) ) ] \nonumber \\
  &  =   \mathbb{P}(E_u) \max_{t\leq  \Lambda (r)} \mathbb{P}(F^2_{u,v}(t) ).
 \end{align}
 We use Lemma \ref{lemma 6.3} to bound  the probability $\mathbb{P}(F^2_{u,v}( t) )$.
Note that there exists a constant $\kappa>0$ such that $I_x(t) > \kappa (n-r)$ for any $t\leq \Lambda (r)$ and large enough $n$. Thus,  noting that $(\theta_{N,v}(n-j,n-r-1))_{j=r+1,\cdots,n} $ is distributed as a Brownian motion at integer times from 1 to $n-r$,  by  Lemma \ref{lemma 6.3} with 
\begin{align*}
a:=  \frac{r}{n}\Big(m_N-\frac{1}{\sqrt{2d}}\log x + 1\Big ) +1-t, \quad b:= \frac{1}{n} \Big(m_N-\frac{1}{\sqrt{2d}}\log x + 1\Big ),\quad  y:=  m_N-\frac{1}{\sqrt{2d}}\log x-t,
\end{align*} 
we obtain that for $t \leq \Lambda (r)$,
\begin{align} \label{378}
\mathbb{P}(F^2_{u,v}( t) )&\leq C(n-r)^{-3/2}    ( \max \{a, a+b(n-r)-y \} +1)^2  \exp\Big( -\frac{y^2 }{2(\log 2)(n-r)} \Big) \nonumber \\
&\leq Cn^{-3/2} \Big(\frac{r}{n}\Big(m_N-\frac{1}{\sqrt{2d}}\log x + 1\Big ) +4-t\Big)^2 \exp\Big( -\frac{(m_N-\frac{1}{\sqrt{2d}}\log x - t)^2 }{2(\log 2)(n-r)} \Big),
\end{align}
where  in the last inequality, we used the fact $r<n/2$ and  
\begin{align*}
b(n-r)-y &\leq     \frac{n-r}{n} \Big(m_N-\frac{1}{\sqrt{2d}}\log x + 1\Big ) - \Big( m_N-\frac{1}{\sqrt{2d}}\log x \Big)+\Lambda (r)  \overset{\eqref{390}}{=} -L_n(r)+2 \leq 2.
\end{align*}
 Note that we have additional factor $\log 2$ in the exponent in \eqref{378}, since the increment of MBRW has a variance $\log 2$.
For $t\leq \Lambda(r)$, RHS above is maximized at  $t=\Lambda(r)$ (it can be deduced by taking the derivative of RHS in $t$ and details will be deferred to the end of proof).
Hence,
\begin{align} \label{355}
 \max_{t\leq  \Lambda (r)} \mathbb{P}(F^2_{u,v}(t) )& \leq  C  n^{-3/2}  ( L_n(r) + 3)^2  \exp\Big(-\frac{  ((1-\frac{r}{n})(m_N-\frac{1}{\sqrt{2d}}\log x+1) + \gamma\log r-2 )^2}{2 (\log 2)(n-r)}\Big)\nonumber \\
  &\leq C   n^{-3/2}   (\log r+1)^2  \cdot 2^{-d\log_2 |u-v|^{(N)}}    e^{\frac{3}{2} \frac{\log n}{n}     (n-r) - \gamma \frac{\sqrt{2d} }{2}\log r}  x^{  \frac{n-r}{n} +  \frac{\gamma}{(\log 2) \sqrt{2d}} \frac{\log r}{n}}  .
\end{align}
Derivation of the second inequality follows similarly as in \eqref{384} and will be presented at the end of proof.
We analyze the exponents of $e$ and $x$ respectively.    The exponent of $e$ is bounded by
\begin{align*}
\frac{3}{2} \frac{\log n}{n}     (n-r) - \gamma \frac{\sqrt{2d} }{2}\log r\leq  \frac{3}{2}\log n - 0.5 \gamma \log r,
\end{align*}
and the exponent of $x$ is bounded by
\begin{align*} 
 \frac{ n-r}{n} +  \frac{\gamma}{(\log 2) \sqrt{2d}} \frac{\log r}{n} \leq 1  + \frac{C}{n}.
\end{align*} 
Hence, by  \eqref{379} and \eqref{355},  
\begin{align} \label{357}
\mathbb{P}(E_u\cap E_v) & \overset{\eqref{381}}{\leq}   C xN^{-d}\cdot  n^{-3/2}   (\log r+1)^2  \cdot 2^{-d\log_2 |u-v|^{(N)}}   n^{3/2} r^{ - 0.5 \gamma }  x^{1 + \frac{C}{n}} \nonumber \\
&\leq  C  x^2N^{-d}\cdot   r^{ - 0.5 \gamma }   (\log r +1)^2  \cdot 2^{-d\log_2 |u-v|^{(N)}},
\end{align}
where the last inequality holds for large enough $n$, for any fixed $x>0$.

In the case when $r(u,v)=0$, using the fact that the events $E_u$ and $E_v$ are independent (since  $\mathcal{B}_j(u)$ and $\mathcal{B}_j(v)$ are disjoint for all $0\leq j\leq n-1$ if $r(u,v)=0$),
\begin{align} \label{356}
\mathbb{P}(E_u\cap E_v) = \mathbb{P}(E_u)\mathbb{P}(E_v)\overset{\eqref{381}}{ \leq}C  x^2 N^{-{2d}}.
\end{align}
Hence, applying \eqref{357} and \eqref{356} to  \eqref{379},  
 \begin{align} \label{3790}
  \sum_{r=0}^{ \lfloor  n/2 \rfloor  } \mathbb{P}(E_u\cap E_v)\leq    \sum_{r=1}^{ \lfloor  n/2 \rfloor  } C  x^2  \cdot  r^{ - 0.5 \gamma } (\log r+1)^2  + Cx^2 \leq Cx^{2 }.
 \end{align}
Therefore,  the second moment bound \eqref{372} follows from \eqref{383} and  \eqref{3790}, which together with the first moment bound concludes the proof by the Paley-Zygmund inequality.

~
 
Finally, we verify that for $t\leq \Lambda(r)$,  RHS of \eqref{378} is maximized at  $t= \Lambda(r)$.  We do this by proving that its derivative in $t$ is positive for $t\leq \Lambda (r)$. Taking  derivative in $t$, it suffices to prove the positivity of
\begin{align*}
&  \frac{2( m_N-\frac{1}{\sqrt{2d}}\log x - t)}{2(\log 2 )(n-r)}\Big(\frac{r}{n}\Big(m_N-\frac{1}{\sqrt{2d}}\log x + 1\Big ) +4-t\Big)^2-2\Big(\frac{r}{n}\Big(m_N-\frac{1}{\sqrt{2d}}\log x + 1\Big ) +4-t\Big) .
\end{align*}
Since
\begin{align*}
 \frac{r}{n}\Big(m_N-\frac{1}{\sqrt{2d}}\log x + 1\Big ) +4-t \geq L_n(r)+3 \geq 3
\end{align*}
for $t\leq \Lambda(r)$, it reduces to prove the positivity of
\begin{align*}
  &  \frac{ \frac{n-r}{n} (m_N-\frac{1}{\sqrt{2d}}\log x+1 ) + L_n(r)-2 }{(\log 2 )(n-r)} \cdot 3-2 \geq 3   \Big( \sqrt{2d}-  \frac{c}{n} (\log n + \log x) -\frac{2}{(\log 2)(n-r)} \Big) -2
\end{align*}
($c>0$ is a constant). This quantity is positive for large enough $N$, since $\sqrt{2d}-  \frac{c}{n} (\log n + \log x) -\frac{2}{(\log 2)(n-r)} >1$ for any  $r<n/2$ and sufficiently large $n$.

~

We end the proof by verifying the fourth inequality  in \eqref{384}. Note that
\begin{align} \label{999}
m_N =   \sqrt{2d}\log N -\frac{3}{2\sqrt{2d}}\log \log N =  \sqrt{2d} ( \log 2 )n  -\frac{3}{2\sqrt{2d}} \log  n -a
\end{align}
for some constant $a>0$. Using this, we lower bound 
\begin{align*}
\frac{  1}{2 (\log 2)(n-r)} \Big (\Big(1-\frac{r}{n}\Big)\Big(m_N-\frac{1}{\sqrt{2d}}\log x+1 \Big) + \gamma\log (n-r) -1 \Big)^2 = \frac{  1}{2 (\log 2)(n-r)}  (A+B-1)^2,
\end{align*} 
where $A:=(1-\frac{r}{n})(m_N-\frac{1}{\sqrt{2d}}\log x+1)$ and $B:=  \gamma\log (n-r)$, by analyzing the diagonal and crossing term. We bound the diagonal term
\begin{align*}
 \frac{A^2}{2(\log 2)(n-r)}  & \overset{\eqref{999}}{=}   \frac{1}{2(\log 2)} \frac{n-r}{n^2} \Big(    \sqrt{2d} ( \log 2 )n  -\frac{3}{2\sqrt{2d}} \log  n -\frac{1}{\sqrt{2d}}\log x-a+1 \Big)^2 \\
 &\geq   \frac{1}{2(\log 2)} \frac{n-r}{n^2} (2d (\log 2)^2 n^2 - 3 (\log 2) n \log n - 2(\log 2) (\log x) n - 4\sqrt{2d} ( \log 2 ) an   ) \\
 & \geq   d (\log 2) ( n-r)  -\frac{3}{2} \frac{n-r}{n}\log n - \frac{n-r}{n}\log x - 2\sqrt{2d} a 
\end{align*}
and the cross term 
\begin{align*}
\frac{AB}{(\log 2)(n-r)}  \geq  \gamma \log (n-r)\cdot \Big  ( \sqrt{2d} -  \frac{3}{2(\log 2) \sqrt{2d}} \frac{\log n}{n} - \frac{1}{(\log 2) \sqrt{2d}} \frac{\log x}{n}  \Big)-1.
\end{align*}
In addition,
\begin{align*}
\frac{A+B}{(\log 2)(n-r)}  \overset{\eqref{999}}{\leq}   \frac{1}{(\log 2)(n-r)}  \Big( \frac{n-r}{n} (    \sqrt{2d} ( \log 2 )n+1 ) + \gamma \log (n-r)\Big) < 2 \sqrt{2d}.
\end{align*}
Combining these estimates together, using the fact
\begin{align*}
e^{ -d (\log 2) ( n-r) } \overset{\eqref{998}}{ = }2^{ -d \lceil  \log_2(|u-v|^{(N)}+1) \rceil   }  \leq   2^{ -d   \log_2  |u-v|^{(N)}    }  ,
\end{align*} we deduce \eqref{384}. The proof of \eqref{355} follows similarly as above, by noticing that the only difference is a quantity $B$ which is replaced with $\gamma \log r$.
\end{proof}

\section{Basics of random measures and their convergence} \label{appendix a}

In this section we record all the  crucial definitions and properties from the theory of   random measures and Poisson point processes that  have featured throughout the article.

\subsection{Random measures}
We begin by setting up the concept of random measures, specifying the underlying topology and state a crucial criteria for the relative compactness of a collection of  random measures. 
We first recall the notion of convergence of measures (and probability measures) defined on a topological space   $S$.
A sequence of measures   $(\mu_n)_{n\in \mathbb{N}}$ is said to converge to the measure  $\mu$ in \emph{vague} topology if
\begin{align*}
\lim_{n\ri} \langle \mu_n, f\rangle  =  \langle \mu, f\rangle   \quad  \forall \text{ continuous and compactly supported functions}   \ f:S\rightarrow  [0,\infty),
\end{align*} 
where we use the standard notation $\langle \mu, f\rangle =\int f d\mu.$
 In contrast, recall that a sequence of \emph{probability}  measures   $(\mu_n)_{n\in \mathbb{N}}$ weakly converges to the probability measure  $\mu$ if
\begin{align*}
\lim_{n\ri} \langle \mu_n, f\rangle  =  \langle \mu, f\rangle   \quad  \forall \text{ bounded and continuous  functions}   \ f:S\rightarrow  [0,\infty).
\end{align*} 
In particular, when the underlying space is compact, vague convergence of a sequence of probability measures $\mu_n$ is equivalent to their  weak convergence.

~

When the sequence of probability measures  $(\mu_n)_{n\in \mathbb{N}}$ weakly converges to $\mu$, one can upgrade this to almost sure convergence  on a common probability space. 
\begin{lemma}[Skorokhod's representation theorem]\label{represent}
Suppose that the  sequence of probability measures $(\mu_n)_{n\in \mathbb{N}}$  on a metric space $S$  weakly converges to some probability measure $\mu$ as $n\ri$. Suppose also that the support of  $\mu$ is separable. Then, there exist $S$-valued random variables  $X_n$ and $X$ defined on a common probability space  $(\Omega,\mathcal{F},\mathbb{P})$ such that the law of $X_n$ (resp. $X$) is $\mu_n$ (resp. $\mu$)  and  $X_n\rightarrow X$, $\mathbb{P}$-almost surely as $n\ri$.
\end{lemma}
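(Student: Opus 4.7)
The plan is to construct all variables on the canonical probability space $(\Omega, \cF, \P) = ([0,1], \cB([0,1]), \mathrm{Leb})$, with $\omega = U$ the identity uniform variable, via a multiscale coupling. The key geometric input, which I would establish first, is the following: for each integer $k \geq 1$, separability of $\supp(\mu)$ produces a countable Borel partition $\{A_{k,j}\}_{j \geq 1}$ of $S$ whose pieces have diameter less than $2^{-k}$ and satisfy $\mu(\partial A_{k,j}) = 0$. One covers $\supp(\mu)$ by balls around a countable dense subset with radii in $(2^{-k-1}, 2^{-k})$, observing that for any $s \in S$ the set of $r$ with $\mu(\partial B(s,r)) > 0$ is at most countable, so $\mu$-continuity radii are always available; then one disjointifies and unions with the complement of $\supp(\mu)$. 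A standard intersection trick further arranges that the $(k+1)$-th partition refines the $k$-th. Pick any $s_{k,j} \in A_{k,j}$ for each nonempty piece.

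Next, order the pieces at each level consistently with the refinement, and define $I_{k,j}, I^{(n)}_{k,j} \subseteq [0,1)$ to be the consecutive half-open intervals of lengths $\mu(A_{k,j})$ and $\mu_n(A_{k,j})$ respectively, partitioning $[0,1)$ up to a null set. The ordering is chosen so that refinement of partitions in $S$ is reflected by nesting of the intervals in $[0,1)$. Set
\[
X^{(k)}(\omega) := s_{k,j} \text{ for } \omega \in I_{k,j}, \qquad X_n^{(k)}(\omega) := s_{k,j} \text{ for } \omega \in I^{(n)}_{k,j}.
\]
By construction these discrete random variables have the correct finite-level marginals, and nesting of partitions combined with the diameter bound gives $d(X^{(k+1)}(\omega), X^{(k)}(\omega)) < 2^{-k}$ pointwise (and likewise for $X_n^{(k)}$), where $d$ denotes the metric on $S$. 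Hence $(X^{(k)}(\omega))_{k \geq 1}$ and $(X_n^{(k)}(\omega))_{k \geq 1}$ are Cauchy in $S$ for every $\omega$; define $X(\omega) := \lim_k X^{(k)}(\omega)$ and $X_n(\omega) := \lim_k X_n^{(k)}(\omega)$, extending by an arbitrary reference point on the exceptional null set (and checking measurability). A routine continuity-set approximation identifies the laws as $\mu$ and $\mu_n$ respectively.

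The core assertion is the almost sure convergence $X_n \to X$. Applying the Portmanteau theorem, $\mu_n(A_{k,j}) \to \mu(A_{k,j})$ for every $k,j$, since each $A_{k,j}$ is a $\mu$-continuity set; consequently the endpoints of the intervals $I^{(n)}_{k,j}$, being finite cumulative partial sums $\sum_{i \leq j} \mu_n(A_{k,i})$, converge as $n \to \infty$ to those of $I_{k,j}$. Fix $k$. The union over $j$ of the endpoints of $I_{k,j}$ is countable, hence Lebesgue-negligible; for any $\omega$ outside this null set, $\omega$ lies in the interior of a unique $I_{k,j(k,\omega)}$ at strictly positive distance from its endpoints, so for all sufficiently large $n$ the perturbed interval $I^{(n)}_{k,j(k,\omega)}$ still contains $\omega$, giving $X_n^{(k)}(\omega) = X^{(k)}(\omega)$. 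Intersecting over all $k \in \mathbb{N}$ preserves full Lebesgue measure; combining with the uniform bounds $d(X^{(k)}, X), d(X_n^{(k)}, X_n) \leq 2^{-k+1}$ produces
\[
\limsup_{n \to \infty} d(X_n(\omega), X(\omega)) \leq 2^{-k+2} \quad \text{for every } k,
\]
whence $X_n(\omega) \to X(\omega)$ $\P$-almost surely.

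The main obstacle I anticipate is the multiscale partition construction of the first paragraph: simultaneously arranging that the pieces are small in diameter, are $\mu$-continuity sets, form a countable Borel partition of all of $S$, and nest across levels. Separability of $\supp(\mu)$ is used essentially here; without it one cannot secure a countable exhaustive collection of small continuity sets carrying the full mass of $\mu$. Once those partitions are in place, the remainder of the argument is essentially a careful manipulation of cumulative distribution functions on $[0,1]$, using nothing beyond Portmanteau and the fact that a countable set of interval endpoints has Lebesgue measure zero.
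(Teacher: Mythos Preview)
The paper does not prove this lemma; Skorokhod's representation theorem is quoted as a classical result and no argument is supplied. So there is no paper proof to compare against, and your proposal must be evaluated on its own.

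Your construction of $X$ is sound: since the leftover (large-diameter) piece at each level has $\mu$-measure zero, the corresponding interval in $[0,1)$ has length zero and the Cauchy property for $(X^{(k)})_k$ holds Lebesgue-almost everywhere. The genuine gap is in the construction of $X_n$. Your claim that $(X_n^{(k)}(\omega))_k$ is Cauchy for every $\omega$, and more importantly that the limit $X_n := \lim_k X_n^{(k)}$ has law $\mu_n$, fails in general. The partitions only resolve points near $\supp(\mu)$; mass that $\mu_n$ places away from $\supp(\mu)$ sits in a cell that is never subdivided below a fixed scale, so the discretizations $X_n^{(k)}$ freeze at the single representative point of that cell and the limit cannot recover the true $\mu_n$-distribution inside it. A concrete instance: take $S = \R$, $\mu = \delta_0$, $\mu_n = \delta_{1/n}$. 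After the intersection-trick refinement, the annulus $B(0, r_{k_0}) \setminus B(0, r_{k_0+1})$ containing $1/n$ remains a single partition cell at every subsequent level, so your $X_n$ is the constant representative $s$ of that annulus --- a point mass $\delta_s$, not $\delta_{1/n}$. The ``routine continuity-set approximation'' you invoke cannot identify the law as $\mu_n$, because the partitions do not separate points away from $\supp(\mu)$.

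In the standard proofs (Wichura, Dudley, Billingsley) this is circumvented by \emph{not} building $X_n$ as a limit in $k$. One instead produces each $X_n$ directly with the correct law $\mu_n$ and uses the partition structure only at a single, $n$-dependent level $k_n$ to show that $d(X_n, X)$ is small outside a set of small probability; the key input there is that the leftover piece, being a $\mu$-continuity set of $\mu$-measure zero, has $\mu_n$-mass tending to zero by Portmanteau. The multiscale Cauchy limit is needed only for $X$.
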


Now, we introduce random measures. A \emph{random measure}  $\xi$ is  a (almost surely) locally finite transition kernel from the probability space $(\Omega, \mathcal{F},\mathbb{P})$ to the measurable space $(E,\mathcal{E})$.  Being a transition kernel means that the mapping $w \mapsto \xi (w,B)$ is measurable  for any $B\in \mathcal{E}$, and the mapping $B\mapsto \xi(w,B)$ is  a measure on $(E,\mathcal{E})$ for every $w\in \Omega$. 
Equivalently, a random measure is a random element from the probability space $(\Omega, \mathcal{F},\mathbb{P})$ to the collection of locally finite measures (on a measurable space $(E,\mathcal{E})$), equipped with the canonical $\sigma$-algebra $\mathbb{M}$   induced by the mappings $\mu \mapsto \mu(B)$ for all bounded and measurable sets $B\in \mathcal{E}$.  We say  that $\xi$ is a random Borel measure if $E$ is a topological space and $\mathcal{E}$ is the $\sigma$-algebra of Borel sets.

   Now, we arrive at the vague convergence of random measures, when the underlying topological space $S$ is a Polish space.  Let $\mathcal{M}_S$ be the collection of measures on $S$, equipped with the vague topology. A particularly convenient property is that  for a Polish space $S$, the space  $\mathcal{M}_S$ can be metrized to also form a Polish space (see \cite[Theorem 4.2]{random}). 
 This  enables us to apply the standard theory of weak convergence. 
   We say the sequence of  random measures   $(\mu_n)_{n\in \mathbb{N}}$ weakly converges to the random measure  $\mu$ in \emph{vague} topology if
   \begin{align*}
\lim_{n\ri} \mathbb{E} g(\mu_n)  =  \mathbb{E}g(\mu) \quad  \forall \text{ bounded and continuous functions}   \ g:\mathcal{M}_S \rightarrow  [0,\infty)
\end{align*}
(the continuity of $g$ is w.r.t. vague topology on $\mathcal{M}_S $). Once $S$ is a locally compact Polish space,
this turns out to be equivalent to
\begin{align*}
\lim_{n\ri} \langle \mu_n, f\rangle  =  \langle \mu, f\rangle   \quad \text{in law}, \quad   \forall \text{ continuous and compactly supported functions}   \ f:S\rightarrow  [0,\infty)
\end{align*}
(see \cite[Theorem 14.16]{modern} for details).

The following lemma relates  the tightness and  relative compactness of random Borel measures.

\begin{lemma} [Lemma 14.15 in \cite{modern}] \label{tight converge}
Let  $(\eta_n)_{n\in \mathbb{N}}$ be a sequence of random  Borel measures on a locally compact  Polish space $S$. Assume  that for any compact set $K$ in $S$, the collection of random variables  $\{\eta_n(K)\}_{n\in \mathbb{N}}$ is tight.  Then, there
is a subsequence  $(n_k)_{k\in \mathbb{N}}$  in $\mathbb{N}$ and a   random Borel measure $\eta$ on $S$  such that  $\eta_{n_k}\rightarrow \eta$ in law as $k\ri$ w.r.t. vague topology.
\end{lemma}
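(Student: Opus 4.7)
The plan is to reduce the statement to Prokhorov's theorem applied on the space $\mathcal{M}_S$ of locally finite Borel measures equipped with the vague topology. Because $S$ is a locally compact Polish space, the space $\mathcal{M}_S$ is itself Polish (this is the fact quoted just before the statement as \cite[Theorem 4.2]{random}); in particular, it is metrizable, so relative compactness of a sequence of probability measures on $\mathcal{M}_S$ is equivalent to the existence of subsequential weak limits. Thus it suffices to show that the laws $\{\mathcal{L}(\eta_n)\}_{n \in \mathbb{N}}$ form a tight family of probability measures on $\mathcal{M}_S$, after which we extract a weakly convergent subsequence and identify the limit (a probability measure on $\mathcal{M}_S$) with the law of some random Borel measure $\eta$.

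The central analytic input is a Helly-type characterization of compact subsets of $\mathcal{M}_S$ in the vague topology: a set $A \subseteq \mathcal{M}_S$ has compact closure if and only if $\sup_{\mu \in A} \mu(K) < \infty$ for every compact $K \subseteq S$. I would prove (or cite) this by taking a countable dense collection $\{f_j\}_{j \in \mathbb{N}}$ in $C_c(S)$ (which exists because $S$ is second countable), observing that a uniform bound $\sup_{\mu \in A} \mu(K) < \infty$ for each compact $K$ yields a uniform bound on $\langle \mu, f_j\rangle$ for every $j$, and then using a diagonal Bolzano--Weierstrass extraction together with a Riesz-representation identification of the limit of the sequence of linear functionals as a locally finite Borel measure.

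Given this characterization, I exploit $\sigma$-compactness of $S$: since $S$ is locally compact and Polish, there exists a nested exhaustion $K_1 \subseteq K_2 \subseteq \cdots$ of $S$ by compact sets with $\bigcup_i K_i = S$. By the tightness hypothesis on $\{\eta_n(K_i)\}_{n \in \mathbb{N}}$, for each $\varepsilon > 0$ and each $i$ I can find $M_{i,\varepsilon} > 0$ such that
\begin{align*}
\mathbb{P}\bigl(\eta_n(K_i) > M_{i,\varepsilon}\bigr) \leq \varepsilon\, 2^{-i} \qquad \text{for every } n.
\end{align*}
Define $C_\varepsilon := \{\mu \in \mathcal{M}_S : \mu(K_i) \leq M_{i,\varepsilon} \text{ for all } i \in \mathbb{N}\}$. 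By the characterization above $C_\varepsilon$ is relatively compact in $\mathcal{M}_S$, and by a union bound $\mathbb{P}(\eta_n \in C_\varepsilon) \geq 1 - \varepsilon$ uniformly in $n$. This is exactly the tightness of $\{\mathcal{L}(\eta_n)\}$ as probability measures on the Polish space $\mathcal{M}_S$.

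Prokhorov's theorem now yields a subsequence $(n_k)$ along which $\mathcal{L}(\eta_{n_k})$ converges weakly to some Borel probability measure $\mathbf{P}$ on $\mathcal{M}_S$. The canonical identity map on $\mathcal{M}_S$ under $\mathbf{P}$ is then a random Borel measure $\eta$ with $\eta_{n_k} \to \eta$ in law with respect to vague topology. The main technical obstacle in this program is establishing (or invoking cleanly) the vague-compactness criterion for subsets of $\mathcal{M}_S$; once that is in hand, the rest is a routine combination of $\sigma$-compactness, Markov's inequality, and Prokhorov's theorem.
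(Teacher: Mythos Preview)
The paper does not supply its own proof of this lemma: it is merely quoted from \cite{modern} (Lemma 14.15 there) as background in Section \ref{appendix a}. Your proposal is the standard, correct argument---tightness on $\mathcal{M}_S$ via a compact exhaustion plus the vague-compactness criterion, then Prokhorov---and nothing more is needed. One small point: for the set $C_\varepsilon$ to be relatively compact you implicitly need every compact $K\subseteq S$ to sit inside some $K_i$, so the exhaustion should be chosen with $K_i\subseteq \mathrm{int}(K_{i+1})$; and your closing reference to ``Markov's inequality'' is a slip, since you actually invoke the tightness hypothesis directly.
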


Equivalently,  we deduce the same conclusion if the collection of random variables  $\{\langle \eta_n,f \rangle\}_{n\in \mathbb{N}}$ is tight for any continuous and compactly supported function $f:S\rightarrow [0,\infty)$.
 
 ~
  
\subsection{Laplace functional}
In this subsection, we discuss some basic properties of the Laplace functional of random measures, a key tool to study their convergence. From now on, we assume that the topological space $S$ is a Polish space, equipped with the Borel $\sigma$-algebra. Let $\mu$ be a random measure on $S$. Then, the
 \emph{Laplace functional} of $\mu$   is defined as
\begin{align*}
L_\mu(f): =\mathbb{E} e^{-\langle \mu, f \rangle},
\end{align*}
where $f: S\rightarrow [0,\infty) $ is any measurable function.

 Laplace functional tested against continuous  and compactly supported  functions {$f:S\rightarrow [0,\infty)$} uniquely determines the law of random measures (see \cite[Corollary 2.3]{random}), i.e.
\begin{align} \label{900}
L_\mu(f) = L_\nu(f),\quad \forall \text{ continuous and compactly supported functions}  \ f:S\rightarrow [0,\infty)
\end{align}
implies that {$\mu \overset{\text{law}}{\sim} \nu$.}

~

In addition, the convergence of random measures  is equivalent to the convergence of the corresponding Laplace functionals \cite[Theorem 4.11]{random}, i.e. the followings are equivalent:
\begin{enumerate}
\item  $\mu_n\rightarrow \mu$ in law  (w.r.t. vague topology).
\item  $L_{\mu_n} (f) \rightarrow L_\mu (f)$ for any continuous and compactly supported function $f:S \rightarrow [0,\infty)$.
\end{enumerate}

\subsection{Poisson point process} 
In this subsection, we record some properties of Poisson point processes.  For a random measure $\mu$  defined on a Polish space $S$, let  $\textup{PPP}(\mu)$ be the  Poisson point process with  intensity measure $\mu$ (As already alluded to, such process are called Cox process).  Then, by \cite[Lemma 3.1]{random}, for any (deterministic)  measure $\xi$ on $S$,
\begin{align*}
\mathbb{E}(e^{-\langle \mu,f \rangle} \mid  \mu =\xi) = \exp\Big(-\int (1-e^{-f}) \xi(dx)\Big) ,
\end{align*}
where $f:S\rightarrow [0,\infty)$ is a measurable function.
Taking the expectation with respect to $\mu$, we deduce that the 
Laplace functional of  $\textup{PPP}(\mu)$ is 
 \begin{align} \label{902}
 \mathbb{E}\exp\Big(-\int (1-e^{-f}) \mu(dx)\Big) .
 \end{align}
 
 The following lemma says that the law of Poisson point processes uniquely determines the law of intensity measures.
 \begin{lemma} \label{poisson unique}
 Let $\mu$ and $\nu$ be random measures  on the Polish space $S$. If  $\textup{PPP}(\mu) \overset{\text{law}}{\sim} \textup{PPP}(\nu) $, then $\mu \overset{\text{law}}{\sim}  \nu$.
 \end{lemma}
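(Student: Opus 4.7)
The plan is to translate equality in law of the two Poisson point processes into equality of the Laplace functionals of $\mu$ and $\nu$, and then invoke the uniqueness statement \eqref{900}. First, assuming $\textup{PPP}(\mu) \overset{\text{law}}{\sim} \textup{PPP}(\nu)$, the Laplace functionals of the two Poisson processes agree on every continuous, compactly supported $f: S \rightarrow [0,\infty)$. Using the explicit formula \eqref{902}, this reads
\begin{align*}
\mathbb{E}\exp\Big(-\int (1-e^{-f}) \mu(dx)\Big) = \mathbb{E}\exp\Big(-\int (1-e^{-f}) \nu(dx)\Big).
\end{align*}

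The key observation is that the map $f \mapsto 1-e^{-f}$ is a bijection between continuous, compactly supported functions $f: S \rightarrow [0,\infty)$ and continuous, compactly supported functions $g: S \rightarrow [0,1)$: given such a $g$, the inverse is given by $f = -\log(1-g)$, which is continuous, non-negative, and has the same (compact) support as $g$ since $g$ takes values strictly less than $1$. Substituting $g = 1 - e^{-f}$ in the displayed identity yields
\begin{align*}
L_\mu(g) = \mathbb{E} e^{-\langle \mu, g\rangle} = \mathbb{E} e^{-\langle \nu, g\rangle} = L_\nu(g)
\end{align*}
for every continuous, compactly supported $g: S \rightarrow [0,1)$.

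It remains to upgrade this to equality of $L_\mu$ and $L_\nu$ on all continuous, compactly supported functions $S \rightarrow [0,\infty)$. Given such a function $h$ with $\|h\|_\infty = M$, for every $\alpha \in [0, 1/(M+1))$ the rescaled function $\alpha h$ takes values in $[0,1)$, so $L_\mu(\alpha h) = L_\nu(\alpha h)$ by what we just showed. Now fix $h$ and consider the two Laplace transforms of the non-negative random variables $\langle \mu, h\rangle$ and $\langle \nu, h\rangle$, namely
\begin{align*}
\varphi_\mu(s) := \mathbb{E} e^{-s \langle \mu, h\rangle}, \qquad \varphi_\nu(s) := \mathbb{E} e^{-s\langle \nu, h\rangle}, \quad s \geq 0.
\end{align*}
Both are the Laplace transforms of non-negative random variables and hence are real-analytic on $(0,\infty)$. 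They coincide on the interval $[0, 1/(M+1))$, so by analytic continuation (or equivalently by uniqueness of Laplace transforms agreeing on a set with accumulation point in $(0,\infty)$) they agree for all $s \geq 0$. Setting $s=1$ gives $L_\mu(h) = L_\nu(h)$.

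Thus $L_\mu = L_\nu$ on all continuous, compactly supported non-negative functions on $S$, and \eqref{900} yields $\mu \overset{\text{law}}{\sim} \nu$. No step should present a serious obstacle; the only mildly subtle point is the scaling-and-analyticity argument needed to pass from test functions with values in $[0,1)$ to arbitrary continuous, compactly supported non-negative test functions, which is required because only the latter class is guaranteed by \eqref{900} to determine the law.
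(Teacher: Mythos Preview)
Your proof is correct and follows essentially the same approach as the paper's own proof: both use the formula \eqref{902} to convert equality of Laplace functionals of the Poisson processes into $L_\mu(g)=L_\nu(g)$ for continuous compactly supported $g:S\to[0,1)$, then extend to all continuous compactly supported $h:S\to[0,\infty)$ via the scaling-and-analyticity argument, and finally invoke \eqref{900}. The paper's presentation is slightly terser but the logic is identical.
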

\begin{proof}
For any measurable function $f:S \rightarrow [0,\infty)$,
\begin{align*}
\mathbb{E}\exp\Big(-\int (1-e^{-f}) \mu(dx)\Big) = \mathbb{E}\exp\Big(-\int (1-e^{-f}) \nu(dx)\Big).
\end{align*}
For any  continuous and compactly supported function $g:S \rightarrow [0,1)$, there exists a measurable function $f: S \rightarrow [0,\infty)$ such that $g= 1-e^{-f}$. Thus, we deduce that for any such  $g:S \rightarrow [0,1)$,
\begin{align} \label{901}
\mathbb{E} e^{-\langle \mu, g \rangle} = \mathbb{E} e^{-\langle \nu, g \rangle}.
\end{align}
Let $h:S \rightarrow [0,\infty)$ be any   continuous and compactly supported function. Since $h$ is bounded, by the above display, there exists $\e>0$ such that for any $t\in [0,\e]$,
\begin{align*}
\mathbb{E} e^{-\langle \mu, th \rangle} = \mathbb{E} e^{-\langle \nu, th \rangle}.
\end{align*} 
Since both sides are analytic in $t\in (0,\infty)$,  the  above relation
extends to all $t\geq 0$, and in particular for $t=1$. This implies that $L_\mu(h) = L_\nu(h)$ for any    continuous and compactly supported function $h$. By  \eqref{900}, we conclude that $\mu \overset{\text{law}}{\sim}  \nu$.
\end{proof}

{Next,  we state a lemma regarding the law of Poisson point processes decorated with independent i.i.d. sample  points.}

\begin{lemma} \label{poisson joint}
Let $S_1$ and $S_2$ be topological spaces.
Let $\mu_1$   be a Borel measure on $S_1$ and $\mu_2$ be a probability measure on $S_2$.
Suppose that $\{X_{n,i}\}_{n,i\in \mathbb{N}}$ is a collection of (random) points in $S_1$ such that 
\begin{align*}
 \lim_{n\ri}  \sum_{i=1}^\infty \delta_{X_{n,i}} = \textup{PPP} (\mu_1) \quad \textup{in law}
\end{align*}
with respect to the vague topology on the set of  measures on $S_1$. Assume also that   $\{Y_i\}_{i\in \mathbb{N}}$ is a collection of (random) points in $S_2$ i.i.d. distributed as $\mu_2$, independent of $\{X_{n,i}\}_{n,i\in \mathbb{N}}$. Then,
\begin{align*}
 \lim_{n\ri}  \sum_{i=1}^\infty \delta_{X_{n,i}} \otimes  \delta_{Y_{i}}  = \textup{PPP} (\mu_1 \otimes \mu_2)  \quad \textup{in law}
\end{align*}
with respect to the vague topology on the set of   measures on $S_1\times S_2$.
\end{lemma}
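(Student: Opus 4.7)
The plan is to reduce to the Laplace functional characterization of random measure convergence (recorded in Section \ref{appendix a}) and then carry out a standard conditioning computation. For any continuous, compactly supported function $f:S_1\times S_2\to[0,\infty)$, let $\nu_n:=\sum_{i=1}^\infty \delta_{X_{n,i}}\otimes\delta_{Y_i}$. It suffices to show that
\begin{align*}
\mathbb{E}\exp(-\langle \nu_n,f\rangle)\;\longrightarrow\;\exp\Bigl(-\int_{S_1\times S_2}(1-e^{-f(x,y)})\,d(\mu_1\otimes\mu_2)\Bigr),
\end{align*}
since the right-hand side is exactly the Laplace functional of $\textup{PPP}(\mu_1\otimes\mu_2)$ (see \eqref{902}).

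The key step is to integrate out the marks $\{Y_i\}$ conditionally on $\{X_{n,i}\}$. Using the independence of $(Y_i)_{i\in\mathbb{N}}$ from $(X_{n,i})_{n,i\in\mathbb{N}}$ together with their i.i.d. property,
\begin{align*}
\mathbb{E}\bigl[\exp(-\langle \nu_n,f\rangle)\mid \{X_{n,i}\}\bigr]
=\prod_{i=1}^\infty \int_{S_2} e^{-f(X_{n,i},y)}\,d\mu_2(y)
=\exp\Bigl(-\sum_{i=1}^\infty g(X_{n,i})\Bigr),
\end{align*}
where I set $g(x):=-\log\int_{S_2} e^{-f(x,y)}\,d\mu_2(y)\geq 0$. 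Taking the outer expectation, this rewrites the quantity of interest as $\mathbb{E}\exp(-\langle\xi_n,g\rangle)$ with $\xi_n:=\sum_i\delta_{X_{n,i}}$.

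The main (mild) obstacle is to justify that the hypothesis $\xi_n\to\textup{PPP}(\mu_1)$ in law (vaguely) can be applied against $g$. For this I would verify that $g$ is continuous and compactly supported. Continuity follows from dominated convergence since $|e^{-f(\cdot,y)}|\le 1$ and $f$ is continuous; compact support follows because the projection $\pi_1(\textup{supp}\,f)\subseteq S_1$ of the compact set $\textup{supp}\,f$ is compact, and for $x\notin\pi_1(\textup{supp}\,f)$ we have $f(x,y)=0$ for every $y\in S_2$, hence $\int e^{-f(x,y)}d\mu_2(y)=1$ and $g(x)=0$. With this in hand, the convergence criterion from Section \ref{appendix a} gives
\begin{align*}
\mathbb{E}\exp(-\langle\xi_n,g\rangle)\;\longrightarrow\;\mathbb{E}\exp(-\langle\textup{PPP}(\mu_1),g\rangle)
=\exp\Bigl(-\int_{S_1}(1-e^{-g(x)})\,d\mu_1(x)\Bigr).
\end{align*}

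To finish, I would identify the right-hand side with the target Laplace functional by Fubini:
\begin{align*}
1-e^{-g(x)}=1-\int_{S_2} e^{-f(x,y)}d\mu_2(y)=\int_{S_2}(1-e^{-f(x,y)})d\mu_2(y),
\end{align*}
so that $\int_{S_1}(1-e^{-g(x)})d\mu_1(x)=\int_{S_1\times S_2}(1-e^{-f})\,d(\mu_1\otimes\mu_2)$, proving the convergence of Laplace functionals and hence the claimed convergence in law w.r.t.\ the vague topology.
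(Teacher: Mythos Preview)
Your proof is correct and follows essentially the same route as the paper: both reduce to Laplace functionals, integrate out the i.i.d.\ marks $\{Y_i\}$ by conditioning to obtain a function $g$ (the paper's $\tilde f$) on $S_1$, verify it is continuous and compactly supported, apply the assumed convergence $\xi_n\to\textup{PPP}(\mu_1)$, and finish with Fubini. The only cosmetic difference is that you justify continuity of $g$ via dominated convergence whereas the paper invokes uniform continuity of $f$; both are valid.
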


\begin{proof}
It suffices to show that for any continuous and compactly supported function $f:S_1\times S_2 \rightarrow [0, \infty)$,
\begin{align} \label{810}
 \lim_{n\ri}   \mathbb{E} e^{-   \sum_{i=1}^\infty f(X_{n,i}, Y_i)} = \exp \Big(-\int ( 1-e^{-f(x,y)}) (\mu_1 \otimes \mu_2)(dx \  dy)  \Big).
\end{align}
Define a function $\tilde{f}:S_1\rightarrow [0,\infty)$ by 
\begin{align} \label{811}
e^{-\tilde{f}(x)} := \mathbb{E}e^{-f(x,Y_i)} = \int e^{-f(x,y)} \mu_2 (dy).
\end{align}
Since $f$ is compactly supported and $\mu_2$ is a probability measure, $\tilde{f}$ is also compactly supported. {Also, $\tilde{f}$ is continuous since $f$ is a uniformly continuous function.}
Therefore, 
\begin{align*}
 \lim_{n\ri} \mathbb{E}  [ \mathbb{E} (  e^{-   \sum_{i=1}^\infty f(X_{n,i}, Y_i)} | \{X_{n,i} \}_{n,i\in \mathbb{N}}) ] &=   \lim_{n\ri} \mathbb{E} \Big[ \prod_{i=1}^\infty \mathbb{E}(e^{- f(X_{n,i}, Y_i)} | \{X_{n,i} \}_{n,i\in \mathbb{N}})  \Big] \\
&=  \lim_{n\ri}  \mathbb{E}    e^{-   \sum_{i=1}^\infty  \tilde{f}(X_{n,i})} \\
&=  \exp \Big(-\int ( 1-e^{- \tilde{f}(x) }) \mu_1  (dx) \Big) \\
& \overset{\eqref{811}}{ = }\exp \Big(-\int ( 1-e^{-f(x,y)}) (\mu_1 \otimes \mu_2)(dx \  dy)  \Big),
\end{align*}
where in the last equality, we used Fubini's theorem and the fact that $\mu_2$ is a probability measure. This concludes the proof of \eqref{810}.
\end{proof}

By considering the special case when $\{X_{n,i}\}_{n,i\in \mathbb{N}}$ enumerates the  sample points in $\textup{PPP} (\mu_1)$ for each $n$, we obtain the following corollary.
\begin{corollary}  \label{904}
Let $S_1$ and $S_2$ be topological spaces.
Let $\mu_1$   be a Borel measure on $S_1$ and $\mu_2$ be a probability measure on $S_2$. Suppose that  $\{X_i\}_{i\in \mathbb{N}}$ is the enumeration of sample points in $\textup{PPP} (\mu_1)$ and $\{Y_i\}_{i\in \mathbb{N}}$ is i.i.d. distributed as $\mu_2$, independent of $\{X_{i}\}_{i\in \mathbb{N}}$. Then,  $\{(X_i,Y_i)\}_{i\in \mathbb{N}}$ has the same law as the   enumeration of sample points in $\textup{PPP} (\mu_1 \otimes \mu_2)$.
\end{corollary}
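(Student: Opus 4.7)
The plan is to deduce this corollary as an immediate specialization of Lemma \ref{poisson joint}. I would set $X_{n,i} := X_i$ for every $n,i \in \mathbb{N}$, so that the sequence of random measures $\sum_{i=1}^\infty \delta_{X_{n,i}}$ is constant in $n$ and equal (pointwise on $\Omega$) to $\sum_{i=1}^\infty \delta_{X_i}$, which by hypothesis has the law of $\text{PPP}(\mu_1)$. A constant sequence trivially converges to itself in the vague topology on the space of measures on $S_1$, so the convergence hypothesis of Lemma \ref{poisson joint} is satisfied in the strongest possible way; no approximation or tightness argument is needed.

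Since $\{Y_i\}_{i\in\mathbb{N}}$ is i.i.d. distributed as $\mu_2$ and independent of $\{X_i\}_{i\in\mathbb{N}} = \{X_{n,i}\}_{n,i\in\mathbb{N}}$, Lemma \ref{poisson joint} then yields
\begin{align*}
\sum_{i=1}^\infty \delta_{(X_i,Y_i)} \;=\; \sum_{i=1}^\infty \delta_{X_i} \otimes \delta_{Y_i} \;\overset{\text{law}}{=}\; \text{PPP}(\mu_1 \otimes \mu_2),
\end{align*}
which is precisely the desired identity at the level of random point measures on $S_1 \times S_2$ (here again both sides are constant in $n$, so the limit in Lemma \ref{poisson joint} is this distributional identity itself).

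To pass from this equality in law of random point measures to the claim about enumerations, I would invoke the standard fact that a locally finite simple point measure on a Polish space is determined by, and determines, the multiset of its atoms. Fixing once and for all a measurable enumeration procedure on both sides therefore transfers the distributional identity of point measures to a distributional identity of the sequences $\{(X_i,Y_i)\}_{i\in\mathbb{N}}$ and an enumeration of the atoms of $\text{PPP}(\mu_1\otimes\mu_2)$. I do not anticipate any substantive obstacle, as all the real work was already absorbed into Lemma \ref{poisson joint} (the averaging of $Y_i$ via $\tilde f(x)=-\log\int e^{-f(x,y)}\mu_2(dy)$ and the independence structure); the only mild care needed is the routine enumeration step just described.
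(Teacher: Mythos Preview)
Your proposal is correct and matches the paper's approach exactly: the paper derives the corollary by taking $X_{n,i}=X_i$ for each $n$ in Lemma \ref{poisson joint}, so that the constant sequence trivially converges to $\textup{PPP}(\mu_1)$, and then reads off the distributional identity. Your added remark about passing from point measures to enumerations is a reasonable elaboration of what the paper leaves implicit.
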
 
Since  $\{Y_i\}_{i\in \mathbb{N}}$ is i.i.d, no matter how we enumerate the sample points in $\textup{PPP} (\mu_1)$, we obtain the identical law for the collection of points $\{(X_i,Y_i)\}_{i\in \mathbb{N}}$.

~

{The final two statements concern the point process featuring in the proof of Theorem \ref{theorem 1.1}.}
For a constant $\gamma \in (0,1)$, let $\{q_i\}_{i\in \mathbb{N}}$ be the enumeration of the sample points of the Poisson point process on $[0,\infty)$ with the intensity measure $x^{-1-\gamma}dx$, in a decreasing order, i.e. $q_1>q_2>\cdots$ (Observe that the intensity measure is integrable on $[\e,\infty)$ for any positive $\e$ and consequently, almost surely, the  point process has only finitely many points in $[\e, \infty)$)).
 We show that with high probability, the sum of the largest $q_i$s  takes a majority of the total sum  of $q_i$s.
\begin{lemma} \label{ppp}
For any $\iota,\e>0$, there exists $K = K(\iota,\e)\in \mathbb{N}$ such that
\begin{align*}
\mathbb{P}\Big(\sum_{i=1}^K q_i  > (1-\iota) \sum_{i\in \mathbb{N}} q_i\Big)  \geq 1-\e.
\end{align*}
\end{lemma}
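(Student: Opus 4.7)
The plan is to split the sum at a threshold $\delta > 0$, control the contribution from below the threshold in $L^1$, and bound the number of points above the threshold by a Poisson count with finite mean. Throughout, let $S := \sum_{i \in \mathbb{N}} q_i$, and for $\delta > 0$ define
\[
T_\delta := \sum_{i \in \mathbb{N}} q_i \mathbf{1}_{q_i \leq \delta}, \qquad N_\delta := |\{i \in \mathbb{N} : q_i > \delta\}|.
\]
Since the intensity measure $x^{-1-\gamma}dx$ is integrable on $[\delta, \infty)$ for each $\delta > 0$ (as $\gamma > 0$) and $\int_0^1 x \cdot x^{-1-\gamma} dx < \infty$ (as $\gamma < 1$), all three objects are a.s. finite; in particular $S < \infty$ a.s. and $S > 0$ a.s.

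First, I would pick a lower scale $\eta > 0$ for $S$. Since $S \geq q_1$ and $\mathbb{P}(q_1 \leq \eta) = \exp\bigl(-\eta^{-\gamma}/\gamma\bigr) \to 0$ as $\eta \to 0$, one can choose $\eta > 0$ so that $\mathbb{P}(S \leq \eta) \leq \e/3$. Next, by Campbell's formula for Poisson processes,
\[
\mathbb{E} T_\delta = \int_0^\delta x \cdot x^{-1-\gamma} dx = \frac{\delta^{1-\gamma}}{1-\gamma},
\]
which tends to $0$ as $\delta \downarrow 0$; pick $\delta > 0$ small enough that $\mathbb{E} T_\delta \leq \iota \eta \e / 3$, so that Markov's inequality gives $\mathbb{P}(T_\delta > \iota \eta) \leq \e/3$. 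Finally, $N_\delta$ is Poisson with mean $\int_\delta^\infty x^{-1-\gamma} dx = \delta^{-\gamma}/\gamma < \infty$, so one can choose $K = K(\iota, \e) \in \mathbb{N}$ large enough that $\mathbb{P}(N_\delta > K) \leq \e/3$.

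On the intersection of the three events $\{S > \eta\}$, $\{T_\delta \leq \iota \eta\}$, $\{N_\delta \leq K\}$, whose total failure probability is at most $\e$ by a union bound, one has: since the $q_i$ are arranged in decreasing order, $q_i \leq \delta$ for all $i > N_\delta$, hence
\[
\sum_{i > K} q_i \;\leq\; \sum_{i > N_\delta} q_i \;\leq\; T_\delta \;\leq\; \iota \eta \;<\; \iota S,
\]
which rearranges to $\sum_{i=1}^K q_i > (1-\iota) S$. This is exactly the claimed event. No single step is truly the main obstacle; the only non-routine point is the initial observation that $S > 0$ a.s., which is immediate from $S \geq q_1 > 0$ a.s.\ (equivalently, the PPP has points in every neighborhood of the origin with probability tending to $1$).
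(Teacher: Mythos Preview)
Your proposal is correct and follows essentially the same approach as the paper: lower bound $S$ with high probability via the largest point, control the tail below a threshold in $L^1$ via Campbell's formula plus Markov, and bound the number of points above the threshold by a Poisson count. The only differences are cosmetic (you split $\e$ into thirds up front rather than absorbing a factor of $3$ at the end, and you have the correct exponent $\delta^{1-\gamma}$ in the Campbell computation where the paper writes $\kappa^{\gamma}$, an apparent typo).
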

\begin{proof}
Note that for $a>0$,  the probability  of the existence of a sample point in   $[a,\infty)$ is $1-\exp(-\int_a^\infty x^{-1-\gamma}dx)$. Since  $\int_a^\infty x^{-1-\gamma}dx =1/\gamma a^\gamma \rightarrow \infty$ as $a\downarrow 0$, there exists $\delta = \delta(\e)>0$ such that the  probability  of the existence of a sample point in $[\delta,\infty)$ is at least $1-\e$. Setting $S:= \sum_{i\in \mathbb{N}} q_i$, this implies that 
\begin{align} \label{801}
\mathbb{P}(S < \delta)  \leq \e.
\end{align}
Take   a small enough  $\kappa = \kappa(\iota,\e)>0$ such that
\begin{align} \label{804}
\frac{2}{\e}\frac{\kappa^\gamma}{1-\gamma} < \iota \delta.
\end{align}
 Note that
\begin{align} \label{806}
\mathbb{E}\sum_{q_i < \kappa } q_i  = \int_0^\kappa x \cdot x^{-1-\gamma} dx = \frac{\kappa^\gamma}{1-\gamma}.
\end{align}
{To see this, first partition $[0,\kappa]$ into infinitely many intervals whose lengths go to zero. Then, the sum of $q_i$s in each interval is bounded from below and above by a Poisson random variable, multiplied by  the left and right boundary value of the interval.  Taking expectations provides a Riemann sum type approximation to the integral and then sending the size of the intervals to zero, we obtain \eqref{806}.}
In particular, this   implies
 \begin{align} \label{802}
 \mathbb{P}\Big(\sum_{q_i < \kappa } q_i> \frac{1}{\e}\frac{\kappa^\gamma}{1-\gamma} \Big)  \leq \e.
 \end{align}
  Therefore, by \eqref{801}, \eqref{804} and \eqref{802},
 \begin{align} \label{805}
 \mathbb{P} \Big( \iota S < 2  \sum_{q_i < \kappa } q_i   \Big) \leq 2\e. 
 \end{align}
 Next, since the number of sample points in $[\kappa,\infty)$, denoted by $N(\kappa)$, is the Poisson random variable with intensity $\int_\kappa^\infty x^{-1-\gamma}dx = 1/\gamma \kappa^\gamma $, there exists $K = K(\iota,\e)>0$ such that
 \begin{align} \label{803}
 \mathbb{P}(N(\kappa) \geq  K ) \leq \e.
 \end{align} 
 By   \eqref{805} and \eqref{803},
 \begin{align*}
 \mathbb{P}\Big( \iota S < 2\Big(S - \sum_{i=1}^K q_i \Big) \Big) \leq 3\e.
 \end{align*}
 Arranging this, 
 \begin{align*}
\mathbb{P}\Big(\sum_{i=1}^K q_i  > (1-\iota/2) \sum_{i\in \mathbb{N}} q_i\Big)  \geq 1-3\e.
\end{align*}
 Since $\iota,\e>0$ are arbitrary, we conclude the proof.
 
\end{proof}

Finally, with $q_i$ as before, and a probability measure $\mu$ on some topological space $S$, let $\{X_i\}_{i\in \mathbb{N}}$ be i.i.d. distributed as $\mu$,  independent of $\{q_i\}_{i\in \mathbb{N}}$. Then, for $c>0$, define the random measure
\begin{align*}
\xi:=c \sum_{i\in \mathbb{N}} q_i \delta_{X_i}.
\end{align*}

\begin{lemma} \label{poisson}
The Laplace functional of $\xi$ is given by
\begin{align*}
L_\xi(f) = \mathbb{E} \exp \Big(    -c^{\gamma } \int (1-e^{-f(x)s}  ) \mu( dx) \otimes  s^{-1-\gamma} ds\Big)
\end{align*}
for any  measurable $f:S\rightarrow [0,\infty)$.
\end{lemma}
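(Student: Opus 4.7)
The plan is to reduce this to the Laplace functional formula \eqref{902} for a Poisson point process on the product space $[0,\infty)\times S$, followed by a scaling change of variables that produces the factor $c^\gamma$. The key observation is that, since $\{q_i\}_{i\in\mathbb{N}}$ enumerates the sample points of $\textup{PPP}(\nu)$ on $[0,\infty)$ with $\nu(ds)=s^{-1-\gamma}\,ds$, and $\{X_i\}_{i\in\mathbb{N}}$ are i.i.d.\ with law $\mu$ independent of $\{q_i\}$, Corollary \ref{904} (with $\mu_1=\nu$ and $\mu_2=\mu$) yields that the point measure
\[
\eta := \sum_{i\in\mathbb{N}} \delta_{(q_i,X_i)}
\]
is itself a Poisson point process on $[0,\infty)\times S$ with intensity $s^{-1-\gamma}\,ds\otimes \mu(dx)$.

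Next, I would rewrite the Laplace functional of $\xi$ in terms of $\eta$. Given a measurable $f:S\to[0,\infty)$, define $g:[0,\infty)\times S\to[0,\infty)$ by $g(s,x):=c\, s\, f(x)$. Then
\[
\langle \xi,f\rangle \;=\; c\sum_{i\in\mathbb{N}} q_i\, f(X_i) \;=\; \sum_{i\in\mathbb{N}} g(q_i,X_i) \;=\; \langle \eta,g\rangle,
\]
so $L_\xi(f)=\mathbb{E}\,e^{-\langle \eta,g\rangle}$. Applying the explicit formula \eqref{902} for the Laplace functional of a Poisson point process to $\eta$, this equals
\[
\exp\!\left(-\int_{[0,\infty)\times S}\bigl(1-e^{-c\, s\, f(x)}\bigr)\, s^{-1-\gamma}\,ds\,\mu(dx)\right).
\]

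Finally, I would perform the substitution $t=cs$ in the inner integral. Since $ds=dt/c$ and $s^{-1-\gamma}=c^{1+\gamma}t^{-1-\gamma}$, we have $s^{-1-\gamma}\,ds = c^{\gamma}\, t^{-1-\gamma}\,dt$, which pulls the constant $c^\gamma$ out of the integral and yields exactly
\[
\exp\!\left(-c^{\gamma}\int \bigl(1-e^{-f(x)\, t}\bigr)\,\mu(dx)\otimes t^{-1-\gamma}\,dt\right),
\]
matching the claimed formula. The only step requiring any care is Fubini on the inner integral to legitimize writing the product measure and to perform the change of variables coordinatewise; since the integrand is nonnegative, Tonelli applies without issue, so there is no real obstacle. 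The result is deterministic, so the outer expectation in the statement is vacuous.
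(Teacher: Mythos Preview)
Your proposal is correct and follows essentially the same route as the paper: invoke Corollary \ref{904} to recognize $\{(q_i,X_i)\}$ as a Poisson point process with intensity $s^{-1-\gamma}\,ds\otimes\mu(dx)$, rewrite $\langle\xi,f\rangle$ as the integral of $g(s,x)=csf(x)$ against that point process, apply the Laplace functional formula \eqref{902}, and then substitute $t=cs$ to extract the factor $c^\gamma$. Your additional remark that the resulting expression is deterministic (so the outer $\mathbb{E}$ is vacuous) is correct and a nice clarification the paper leaves implicit.
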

\begin{proof}
By Corollary \ref{904},  $\{(X_i,q_i)\}_{i\in \mathbb{N}}$ has the same law as the enumeration of sample points in  $\text{PPP} ( \mu(dx) \otimes t^{-1-\gamma}dt) $. Thus, for any measurable function $f:S\rightarrow [0,\infty)$,  using a test function $F:S\times [0,\infty) \rightarrow [0,\infty)$ defined as $F(x,t):=c f(x)t$,
\begin{align*}
L_\xi(f) =\mathbb{E}e^{-c \sum_{i\in \mathbb{N}}  q_if(X_i)} = \mathbb{E}e^{-  \sum_{i\in \mathbb{N}}   F(X_i,q_i)} & 
=\mathbb{E}\exp\Big(- \int (1-e^{-c f(x)t} )   \mu (dx) \otimes t^{-1-\gamma}dt \Big) \\
&=\mathbb{E}\exp\Big(-c^{\gamma }\int (1-e^{- f(x)s} )   \mu (dx) \otimes s^{-1-\gamma}ds\Big ).
\end{align*}
\end{proof}

\section{Appendix} \label{appendix b}
In the  Appendix,
we provide the outstanding proofs of  Lemmas \ref{lemma 3.11} and \ref{tensor}.
\begin{proof}[Proof of Lemma \ref{lemma 3.11}]$\empty$\\
\textbf{Case 1. $u\neq v$ belong to the same $D_i$.}  Using the fact  $|u-v|^{(N)} = |u-v|$ (since $|u-v|_\infty  \leq KN' < N/2$), by Lemma \ref{lemma 2.3},
\begin{align*}
\cov   (\theta_{N,u}, \theta_{N,v}) \leq \log \frac{N}{|u-v|}+c_1.
\end{align*}
Also, by the same argument as in \eqref{351}, 
\begin{align*}
\cov  (  \tilde{\theta}_{N,u}^{N',K},  \tilde{\theta}_{N,v}^{N',K})  \geq  \log \frac{N}{|u-v|} +\log K -c_1 .
\end{align*} Thus, we obtain  \eqref{285} for   large constant $K$. 

\textbf{Case 2. $u\in D_i$ and $v\in D_j$ with $i\neq j$.}  Since $|u-v|^{(N)} \geq \min \{  KN', \frac{2}{10}N  \} = KN'$ (recall that $N\geq 6KN'$ and $u,v\in V_N^{1/10}$), by Lemma \ref{lemma 2.3},
\begin{align*}
 \cov   (\theta_{N,u}, \theta_{N,v}) \leq \log \frac{N}{KN'} + c_1 = \log N - \log N' - \log K  + c_1  . 
\end{align*} 
Also, by the same argument as in \eqref{352},  using \eqref{353},
\begin{align*}
\cov  (  \tilde{\theta}_{N,u}^{N',K},  \tilde{\theta}_{N,v}^{N',K}) = \log N- \log N' .
\end{align*}
Thus, \eqref{285} holds for any large constant $K$. 
\end{proof}

 \begin{proof}[Proof of Lemma \ref{tensor}]
Let $\e \in (0,1)$ and  $A = (a_{ij})_{1\leq i,j\leq n}$ be a  covariance matrix of $X$, divided by $\sigma^2$. Then, there is a coupling on $(X,Y)$ such that $X=AY$. Since $|a_{ii}-1|<\delta $ and $|a_{ij}| < \delta$ for $i\neq j$ under the condition \eqref{gaussian condition}, 
\begin{align} \label{980}
|X_i -Y_i| = \Big\vert \sum_{j=1}^n a_{ij}  Y_j - Y_i \Big\vert  \leq |a_{ii} -1| |Y_i| + \sum_{j\neq i} |a_{ij} | |Y_j| \leq \delta \norm{Y}_1.
\end{align}
 Due to the uniform continuity of $g$, there exists  $ \kappa>0$ such that
\begin{align}\label{981}
|z-z'| + |w-w'| \leq \kappa \Rightarrow |g(z,w) -g(z',w')| \leq \e/2m.
\end{align}
Let $K>0$ be a large number which will be chosen later. If $\delta \leq  \kappa / K$, then for  $\norm{Y}_1 \leq K $, we have  $|X_i -Y_i| \overset{\eqref{980}}{  \leq}  \delta K \leq  \kappa  $ for $1\leq i\leq n$, which   implies
\begin{align*}
\Big\vert \sum_{i=1}^n g(z_i+X_i,w_i) -\sum_{i=1}^n g(z_i+Y_i,w_i) \Big\vert \leq  \sum_{i=1}^n | g(z_i+X_i,w_i) -g(z_i+Y_i,w_i)|  \overset{\eqref{981}}{  \leq} n\cdot \e/2m \leq \e/2.
\end{align*}
In addition, $\mathbb{P}(\norm{Y}_1 > K)  \leq   n \mathbb{P}(|Y_1| >K/n) \leq n\cdot Cm/K \cdot e^{-K^2/2m^2} \leq    C e^{-K^2/2m^2} /K  $.  Hence,  using the fact that  $n\leq m$ and $g$ is bounded from below and above,
\begin{align*}
\mathbb{E }\exp \Big(  \sum_{i=1}^n g(z_i+X_i,w_i)  \Big) &=  \mathbb{E} \Big[ \exp \Big(  \sum_{i=1}^n g(z_i+X_i,w_i)  \Big)  \1_{\norm{Y}_1\leq K} \Big] + \iota_1 \\
&=  \mathbb{E} \Big[  e^{\zeta}\exp \Big(  \sum_{i=1}^n g(z_i+Y_i,w_i)  \Big)  \1_{\norm{Y}_1\leq K} \Big]  +\iota_1 \\
&=   \mathbb{E} \Big[  e^{\zeta}\exp \Big(  \sum_{i=1}^n g(z_i+Y_i,w_i)  \Big) \Big] + \iota_2 .
\end{align*}
where $\zeta$ is a random quantity with $|\zeta|\leq \e/2 < 1/2$ and $\iota_i$ ($i=1,2$) satisfies $|\iota_i| \leq  C e^{-K^2/2m^2} /K$.  Thus, using the boundedness of $g$ again, the above quantity can be written as $e^\gamma  \cdot \mathbb{E} \exp (  \sum_{i=1}^n g(z_i+Y_i,w_i)  )$ with  $|\gamma|<\e$ for    sufficiently large  $K$.  This concludes the proof.
\end{proof}

 \bibliography{bib2}
\bibliographystyle{plain}

\end{document}